\date{today}
\newtheorem{remark}{Remark}[section]
\newtheorem{theorem}{Theorem}[section]
\newtheorem{proposition}{Proposition}[section]
\newtheorem{lemma}{Lemma}[section]
\newtheorem{corollary}{Corollary}[section]
\newcommand{\beq}{\begin{equation}}
\newcommand{\eeq}{\end{equation}}
\newcommand{\ben}{\begin{eqnarray}}
\newcommand{\een}{\end{eqnarray}}
\newcommand{\beno}{\begin{eqnarray*}}
\newcommand{\eeno}{\end{eqnarray*}}
\numberwithin{equation}{section}
\begin{document}
\title[\vspace{-0.2cm}Suppression of blow-up for PKS-NS system]{ On the sharp critical mass threshold for the 3D Patlak-Keller-Segel-Navier-Stokes system via  Couette flow}
\author{Shikun~Cui}
\address[Shikun~Cui]{School of Mathematical Sciences, Dalian University of Technology, Dalian, 116024,  China}
\email{cskmath@163.com}
\author{Lili~Wang}
\address[Lili~Wang]{School of Mathematical Sciences, Dalian University of Technology, Dalian, 116024,  China}
\email{wllmath@163.com}
\author{Wendong~Wang}
\address[Wendong~Wang]{School of Mathematical Sciences, Dalian University of Technology, Dalian, 116024,  China}
\email{wendong@dlut.edu.cn}
\author{Juncheng~Wei}
\address[Juncheng~Wei]{Department of Mathematics, Chinese University of Hong Kong, Shatin, NT, Hong Kong}
\email{wei@math.cuhk.edu.hk}
\date{\today}

\vspace*{-0.6cm}
\maketitle


\vspace{-0.4cm}
\begin{abstract}
	As is well-known, the solution of the Patlak-Keller-Segel system in 3D may blow up in finite time regardless of any initial cell mass. In this paper, we are interested in the suppression of blow-up and the critical mass threshold for the 3D Patlak-Keller-Segel-Navier-Stokes system via the Couette flow $(Ay, 0, 0)$.
	It is proved that if the Couette flow is sufficiently strong ($A$ is large enough), then the solutions for the system are global in time in 
	the periodic domain $(x,y,z)\in\mathbb{T}^{3}$ as long as the initial cell mass is less than $16\pi^{2}$. This result seems to be sharp, since the zero-mode function (the mean value in $x-$direction) of the three dimensional density  is a complication of the two-dimensional Keller-Segel equations, whose critical mass in 2D is $8\pi$.
One new observation is the dissipative  decay of $(\widetilde{u}_{2,0},\widetilde{u}_{3,0})$ (see Lemma \ref{lem:u20 u30} for more details), then we combine the quasi-linear method proposed by Wei-Zhang (Comm. Pure Appl. Math., 2021) with the zero-mode estimate of the density by the logarithmic Hardy-Littlewood-Sobolev inequality as Bedrossian-He (SIAM J. Math. Anal., 2017) or He (Nonlinearity, 2025) to obtain the bounded-ness of the density and the velocity. 
	
	
\end{abstract}

{\small {\bf Keywords:} 	suppression of blow-up;		Patlak-Keller-Segel-Navier-Stokes; stability; 
	Couette flow}
\tableofcontents

\parskip5pt
\parindent=1.5em
\section{Introduction}

Consider the following parabolic-elliptic Patlak-Keller-Segel (PKS) system
\begin{equation}\label{eq:ks}
	\left\{
	\begin{array}{lr}
		\partial_tn=\triangle n-\nabla\cdot(n\nabla c), \\
		\triangle c+n=0,
	\end{array}
	\right.
\end{equation}
which is designed to depict the diffusion and chemotactic motion of chemical substances within a population of cells or microorganisms.
Patlak made significant contributions in \cite{Patlak1}, and later, Keller and Segel further advanced it in \cite{Keller1}. Extensive applications have been found across diverse scientific fields, including biology, ecology, and medicine \cite{HP1}.
In the realm of biology, this system provides crucial insights into the complex behavior of cells, such as their migration, aggregation, and diffusion \cite{HT1}. 

A well-known characteristic of the PKS system is its critical dependence on spatial dimension.
For the one-dimensional PKS system, all its solutions are globally well-posed.
When the spacial dimension  is higher than one, the solutions of the PKS system \eqref{eq:ks} may blow up in finite time. In the two-dimensional space, the PKS system for both parabolic-elliptic and parabolic-parabolic forms have a critical mass of $8\pi$. In the 2D space, the parabolic-elliptic PKS system is globally well-posed if and only if the total mass $M\leq8\pi$ by Wei \cite{wei11}, see also  Blanchet-Dolbeault-Perthame \cite{BDP2006}  for $M<8\pi$. When $M=8\pi$, the solution may blow up at infinity, see Blanchet-Carrillo-Masmoudi \cite{BCM2008} and Davila-del Pino-Dolbeault-Musso-Wei \cite{DDDMW} for the blow-up rate at infinity.
The parabolic-parabolic PKS model ($\triangle c$ is replaced by $\triangle c-\partial_tc$ in $\eqref{eq:ks}_2$) also has a critical mass of $8\pi$ in 2D: if the cell mass $M:=||n_{\rm in}||_{L^1}$ is less than $8\pi$,  the solutions of the system are global in time proved by Calvez-Corrias \cite{Calvez1};
if the cell mass is greater than $8\pi$, the solutions will blow up in finite time proved by Schweyer \cite{Schweyer1}.

When the spatial dimension is higher than two, the PKS system \eqref{eq:ks} becomes supercritical.
In this condition, regardless of  parabolic-elliptic form or parabolic-parabolic form, the finite-time blow-up may occur for arbitrarily
small values of the initial mass. 
In this time, the solution with any initial mass may blow up in finite time. This behavior has been established in various settings: for the parabolic-elliptic case by Nagai \cite{Na2000} and Souplet-Winkler \cite{SW2019}, and for the parabolic-parabolic case by Winkler \cite{winkler1}. Additional results and further developments on this topic can be found in \cite{BCM2008,DDDMW,TW2017} and the related therein.

Generally, the processes involving chemical attraction take place in fluids. As said in \cite{Kiselev1}:
{\it A natural question is whether the presence of fluid flow
	can affect singularity formation by mixing the bacteria thus making concentration
	harder to achieve.}
%


In this paper, we investigate the suppression of blow-up and the critical mass threshold of the following three-dimensional parabolic-elliptic Patlak-Keller-Segel (PKS) system coupled with Navier-Stokes (NS) equations in $(x,y,z)\in\mathbb{T}^{3}$ with $ \mathbb{T}=[0,2\pi] $:
\begin{equation}\label{ini}
	\left\{
	\begin{array}{lr}
		\partial_tn+v\cdot\nabla n=\triangle n-\nabla\cdot(n\nabla c), \\
		\triangle c+n-\bar{n}=0, \\
		\partial_tv+v\cdot\nabla v+\nabla P=\triangle v+n\nabla \phi, \\
		\nabla\cdot v=0, 
	\end{array}
	\right.
\end{equation}
along with initial conditions
$$(n,v)\big|_{t=0}=(n_{\rm in},v_{\rm in}),$$
where $n$ represents the cell density, $c$ denotes the chemoattractant density, and $v$ denotes the velocity of fluid. In addition, $\bar{n}=\frac{1}{|\mathbb{T}|^{3}}\int_{\mathbb{T}^{3}}ndxdydz$ denotes the average of $ n $, $P$ is the pressure and $\phi$ is the given potential function.




Let us by briefly review some results obtained in the 2D case, which implies the $8\pi$ critical mass threshold vanishes by the mixing effect of the fluid.
For the parabolic-elliptic PKS system of $(\ref{ini})_1-(\ref{ini})_2$, Kiselev-Xu \cite{Kiselev1} showed no critical mass threshold by suppressing the  blow-up by stationary relaxation
enhancing flows and time-dependent Yao-Zlatos near-optimal mixing flows in $\mathbb{T}^d$ with $d=2,3$. Bedrossian-He \cite{Bedro2} studied the suppression of blow-up by non-degenerate shear flows in
$\mathbb{T}^2$ for the 2D parabolic-elliptic case. He \cite{he0} investigated the suppression of blow-up for the parabolic-parabolic PKS model near the large strictly monotone shear flow in $\mathbb{T}\times\mathbb{R}$. For the coupled PKS-NS system, Zeng-Zhang-Zi \cite{zeng} firstly considered the 2D PKS-NS system near the Couette flow in $\mathbb{T}\times\mathbb{R}$, and
proved that if the Couette flow is sufficiently strong, the solution stays globally regular. He \cite{he05} considered the blow-up suppression for the parabolic-elliptic PKS-NS system in $\mathbb{T}\times\mathbb{R}$ with the coupling of buoyancy effects for a class of initial data with small vorticity.
Wang-Wang-Zhang \cite{Wanglili}  studied the blow-up suppression of 2D supercritical parabolic-elliptic PKS-NS system near the Couette flow in $\mathbb{T}\times\mathbb{R}.$ 
\textcolor[rgb]{0,0,0}{Li-Xiang-Xu \cite{Li0} suppressed the blow-up for the PKS-NS system via the Poiseuille flow  in $\mathbb{T}\times\mathbb{R},$
	and proved that the solution is global as long as the  Poiseuille flow is enough strong.
	Furthermore, Cui-Wang \cite{cui1} considered the blow-up suppression for the PKS-NS system in $\mathbb{T}\times \mathbb{I}$ with Navier-slip boundary condition.
	In addition, Hu \cite{Hu2023} proved that sufficiently large buoyancy can also suppress the blow-up of the PKS system, see also the recent results by Hu-Kiselev-Yao \cite{Hu0} and Hu-Kiselev \cite{Hu1}.} The  blow-up also can be suppressed by adding some logistic terms, see \cite{TW2016} and the references therein.

In 3D, some results have  been made in the study of the corresponding problem.
Bedrossian-He \cite{Bedro2} studied the suppression of blow-up for the parabolic-elliptic PKS system by shear flows in $\mathbb{T}^3$ and $\mathbb{T}\times\mathbb{R}^2$. 
Feng-Shi-Wang \cite{Feng1}  suppressed the blow-up for the advective Kuramoto-Sivashinsky and
the Keller-Segel equations via the planar helical flows. Shi-Wang \cite{wangweike2} considered the suppression effect of the Couette-Poiseuille flow $(z,z^2,0)$ in $\mathbb{T}^2\times\mathbb{R}$, and Deng-Shi-Wang \cite{wangweike1} proved the Couette flow with a sufficiently large amplitude prevents the blow-up of solutions in the whole space for exponential decay data. For the parabolic-parabolic PKS system, He \cite{he24-1} introduced a family of time-dependent alternating shear flows in the domain $\mathbb{T}^3$, and proved the solution remains globally regular as long as the flow is sufficiently strong. For a time-dependent shear flow, He \cite{he24-2} demonstrated that when the total
mass of the cell density is below a specific threshold ($8\pi|\mathbb{T}|$), the solution remains globally regular  in $\mathbb{T}^3$ as long as the
flow is sufficiently strong.

For the 3D PKS-NS system, fewer results are obtained. The first three authors \cite{CWW1} considered the PKS system coupled with
the linearized NS equations near the Couette flow in  $\mathbb{T}\times\mathbb{I}\times\mathbb{T},$ and showed that 
when $A$ is big enough the solutions are global in time as long as  $M< \frac{8\pi}{9}$ and $A(\|u_{2,\rm in}\|_{L^2}+\|u_{3,\rm in}\|_{L^2})\leq C.$
Then, the first three authors \cite{CWW2025} studied the blow-up suppression and the nonlinear stability of the PKS-NS system for the Couette flow $(Ay,0,0)$ in $\mathbb{T}\times\mathbb{R}\times\mathbb{T},$ and proved that 
the solutions are global in time as long as the initial cell mass $M< \frac{24}{5}\pi^2$ and initial velocity $A^{\frac13+}\|u_{\rm in}\|_{H^2}\leq C$.

%

Our main goal is to investigate the suppression of blow-up and obtain the optimal critical mass for the PKS-NS system (\ref{ini}) near the 3D Couette flow $( Ay,0,0 )$. Introduce a perturbation $u=(u_1,u_2,u_3)$ around the Couette flow $( Ay,0,0 )$, which $u(t,x,y,z)=v(t,x,y,z)-( Ay,0,0 )$ satisfying $u\big|_{t=0}=u_{\rm in}=(u_{1,\rm in}, u_{2,\rm in}, u_{3,\rm in})$. Assume $\phi=x,$  then we rewrite the system (\ref{ini}) into
\begin{equation}\label{ini1}
	\left\{
	\begin{array}{lr}
		\partial_tn+Ay\partial_x n+u\cdot\nabla n-\triangle n=-\nabla\cdot(n\nabla c), \\
		\triangle c+n-\bar{n}=0, \\
		\partial_tu+Ay\partial_x u+\left(
		\begin{array}{c}
			Au_2 \\
			0 \\
			0 \\
		\end{array}
		\right)
		-\triangle u+u\cdot\nabla u+\nabla P^{N_1}+\nabla P^{N_2}=\left(
		\begin{array}{c}
			n \\
			0 \\
			0 \\
		\end{array}
		\right), \\
		\nabla \cdot u=0,
	\end{array}
	\right.
\end{equation}
where the pressure $P^{N_1}$ and $P^{N_2}$ are determined by
\begin{equation}\label{pressure_1}
	\begin{aligned}
		&\triangle P^{N_1}=-2A\partial_xu_2+\partial_{x}n,\\
		&\triangle P^{N_2}=-{\rm div}~(u\cdot\nabla u).
	\end{aligned}
\end{equation}

To  estimate non-zero norms more conveniently, we introduce the vorticity $\omega_2=\partial_{z}u_{1}-\partial_{x}u_{3}$ and $\triangle u_{2},$ satisfying
\begin{equation*}
	\partial_{t}\omega_2+Ay\partial_{x}\omega_2+A\partial_{z}u_{2}-\triangle\omega_2=\partial_{z}n-\partial_z(u\cdot\nabla u_1)+\partial_x(u\cdot\nabla u_3)
\end{equation*} 
and
\begin{equation*}
	\partial_{t}\triangle u_{2}+Ay\partial_{x}\triangle u_{2}-\triangle^2u_{2}=-\partial_{y}\partial_{x}n
	-(\partial_x^2+\partial_z^2)(u\cdot\nabla u_2)
	+\partial_y[\partial_x(u\cdot\nabla u_1)+\partial_z(u\cdot\nabla u_3)].
\end{equation*}
After the time rescaling $t\mapsto\frac{t}{A}$, we get
\begin{equation}\label{ini11}
	\left\{
	\begin{array}{lr}
		\partial_tn+y\partial_x n+\frac{1}{A}u\cdot\nabla n-\frac{1}{A}\triangle n=-\frac{1}{A}\nabla\cdot(n\nabla c), \\
		\triangle c+n-\bar{n}=0, \\
		\partial_{t}\omega_2+y\partial_{x}\omega_2-\frac{1}{A}\triangle\omega_2+\partial_{z}u_{2}=-\frac{1}{A}\partial_{z}(u\cdot\nabla u_{1})+\frac{1}{A}\partial_{x}(u\cdot\nabla u_{3})+\frac{1}{A}\partial_{z}n, \\
		\partial_{t}\triangle u_{2}+y\partial_{x}\triangle u_{2}-\frac{1}{A}\triangle(\triangle u_{2})=-\frac{1}{A}\partial_{y}\partial_{x}n-\frac{1}{A}(\partial_{x}^{2}+\partial_{z}^{2})(u\cdot\nabla u_{2})\\
		\qquad\qquad\qquad\qquad\qquad\qquad\qquad+\frac{1}{A}\partial_{y}\left[\partial_{x}(u\cdot\nabla u_{1})+\partial_{z}(u\cdot\nabla u_{3}) \right],\\
		{\nabla \cdot u=0.}
	\end{array}
	\right.
\end{equation}

Before stating the result, we need to define the following modes
\begin{equation*}\label{define:f0 fneq}
	P_{0}f=f_{0}=\frac{1}{|\mathbb{T}|}\int_{\mathbb{T}}f(t,x,y,z)dx,~~~{\rm and}~~~P_{\neq}f=f_{\neq}=f-f_{0}.
\end{equation*}
Throughout this paper, $f_{0}$ and $f_{\neq}$ respectively represent the zero  and non-zero modes of $f.$

Our main result  is stated as follows.
\begin{theorem}\label{result0}
	Assume that $0<n_{\rm in}(x,y,z)\in H^{2}(\mathbb{T}^{3})$ and $u_{\rm in}(x,y,z)\in H^{2}(\mathbb{T}^{3}).$
	There exist a sufficiently small positive constant $\epsilon$ depending on
	$\|n_{\rm in}\|_{ H^{2}(\mathbb{T}^{3})}$ and $\|(u_{\rm in})_{\neq}\|_{H^{2}(\mathbb{T}^{3})},$
	and a positive constant $A_{1}$ depending on $\|n_{\rm in}\|_{ H^{2}(\mathbb{T}^{3})}$ and $\|u_{\rm in}\|_{H^{2}(\mathbb{T}^{3})}$,
	such that if $A\geq A_{1},$
	\begin{equation}\label{conditions:u20 u30}
		\|(u_{2,\rm in})_{0}\|_{H^{2}}+\|(u_{3,\rm in})_{0}\|_{H^{1}} \leq \epsilon
	\end{equation}
	 and \begin{equation}\label{condition: M}
		M=\int_{\mathbb{T}^{3}} n_{\rm in}dxdydz< 16\pi^{2}.
	\end{equation}
	Then the solution of \eqref{ini11} is global in time.
\end{theorem}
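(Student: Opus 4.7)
The plan is a standard continuation/bootstrap argument carried out on the rescaled system \eqref{ini11}. Let $T^{*}\in(0,\infty]$ be the maximal time of existence; we suppose $T^{*}<\infty$ and impose on $[0,T^{*})$ a set of a~priori hypotheses which, once improved by a strict factor, contradict maximality. The hypotheses split along the $x$-average decomposition $f=f_{0}+f_{\neq}$: on the non-zero modes $n_{\neq}$, $\omega_{2,\neq}$, $\Delta u_{2,\neq}$ we assume enhanced-dissipation decay at rate $e^{-c A^{-1/3}t}$ in suitable weighted $H^{2}$-type norms together with inviscid damping for $u_{2,\neq}$; on the zero mode $n_{0}$ we assume a uniform $L\log L\cap L^{p}$ bound; on the zero modes $(\widetilde u_{2,0},\widetilde u_{3,0})$ we assume the exponential dissipative decay provided by Lemma \ref{lem:u20 u30}. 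The dichotomy is natural because the Couette transport $y\partial_{x}$ only acts on $f_{\neq}$, so the zero-mode sector behaves essentially like a 2D PKS on $\mathbb{T}^{2}_{yz}$, while the non-zero-mode sector inherits strong mixing.

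For the non-zero modes I would apply the quasi-linear method of Wei--Zhang to the vorticity system \eqref{ini11}$_{3,4}$. One conjugates $(\omega_{2,\neq},\Delta u_{2,\neq})$ by carefully chosen time-dependent Fourier multipliers adapted to the Orr-mechanism in $(k,\eta,\xi)$-phase space; the resulting good commutator produces the enhanced-dissipation estimate and, combined with the Biot--Savart structure, yields the inviscid-damping estimate for $u_{2,\neq}$. The chemotactic forcings $\partial_{z}n$ and $\partial_{y}\partial_{x}n$ are split as $n_{0}+n_{\neq}$: the $n_{\neq}$ piece is absorbed into the enhanced-dissipation scheme, whereas the $n_{0}$ piece enters as a driver and is controlled by the $L\log L\cap L^{p}$ bound from the next step. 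The quadratic nonlinearities $u\cdot\nabla u$, $u\cdot\nabla n$ are handled via anisotropic product estimates, with the pressure corrections \eqref{pressure_1} treated by Calder\'on--Zygmund; each term gains a factor $A^{-1}$ from the rescaling so that, once the bootstrap decay is invoked, the contributions close with room to spare.

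The zero mode $n_{0}(t,y,z)$, obtained by averaging \eqref{ini11}$_{1}$ in $x$, satisfies a 2D parabolic-elliptic PKS equation on $\mathbb{T}^{2}_{yz}$ driven by $u_{0}$ and perturbed by the reaction term $-\tfrac{1}{A}\nabla_{yz}\cdot\overline{u_{\neq}n_{\neq}+n_{\neq}\nabla_{yz} c_{\neq}}^{x}$. Mass conservation gives $\int_{\mathbb{T}^{2}}n_{0}\,dydz=M/(2\pi)<8\pi$, which places the unperturbed equation in the subcritical regime of 2D PKS. Following Bedrossian--He and He, I would write down the free-energy functional
\begin{equation*}
\mathcal{F}[n_{0}]=\int_{\mathbb{T}^{2}} n_{0}\log n_{0}\,dydz -\tfrac{1}{2}\int_{\mathbb{T}^{2}} n_{0}\,c_{0}\,dydz,
\end{equation*}
differentiate in time, and invoke the logarithmic Hardy--Littlewood--Sobolev inequality on $\mathbb{T}^{2}$, whose best constant under the constraint $M/(2\pi)<8\pi$ produces a coercive lower bound. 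The perturbation terms contribute errors that are $O(A^{-1})$ after inserting the enhanced-dissipation decay of $n_{\neq},u_{\neq}$ and the smallness of $(\widetilde u_{2,0},\widetilde u_{3,0})$, so $\mathcal{F}[n_{0}]$ remains uniformly bounded, giving the $L\log L$ bound and, via a standard Moser/$L^{p}$ iteration, the required $L^{p}$ control used in the preceding step.

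Finally, $(u_{2,0},u_{3,0})$ solve a 2D Navier--Stokes-type system on $\mathbb{T}^{2}_{yz}$ forced only by the averaged quadratic term $\overline{u_{\neq}\cdot\nabla u_{\neq}}^{x}$ (the chemotactic force $n\mathbf e_{1}$ does not enter these components); applying Lemma \ref{lem:u20 u30} with the smallness \eqref{conditions:u20 u30} of the initial data then closes the bootstrap on these modes. The real difficulty is the simultaneous closure: the quasi-linear estimate for $n_{\neq}$ demands a uniform bound on $\|n_{0}\|_{L^{p}}$, while the free-energy estimate for $n_{0}$ tolerates only summable-in-time errors with explicit $A^{-\alpha}$ gain coming from the $n_{\neq}$ and $u_{\neq}$ tails. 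The bootstrap norms and decay exponents have to be tuned so that every coupling term carries a net factor $A^{-\alpha}$, $\alpha>0$, after applying enhanced dissipation and inviscid damping; any logarithmic-in-$A$ loss here would force the mass threshold below the optimal value $16\pi^{2}=8\pi\cdot|\mathbb{T}|$, so the sharpness of the theorem is exactly what is at stake in this balancing.
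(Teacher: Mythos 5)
Your proposal correctly identifies the overall architecture — a bootstrap on energy functionals, the split into zero and non-zero $x$-modes, the free-energy functional and logarithmic Hardy--Littlewood--Sobolev inequality for $n_{0}$ on $\mathbb{T}^{2}$ with the mass constraint $m=M/(2\pi)<8\pi$, and the dissipative decay of $(\widetilde{u}_{2,0},\widetilde{u}_{3,0})$ from Lemma \ref{lem:u20 u30} — and these do match the paper's Sections 5--6.

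However, there is a genuine gap: you never address $u_{1,0}$, the streamwise zero-mode velocity, which in 3D suffers the lift-up instability and is the most dangerous term. In the paper this is the central technical issue (Section 2.2). One cannot simply treat $u_{1,0}$ as a bounded coefficient in the transport terms $\frac{1}{A}u_{1,0}\partial_{x}n_{\neq}$, $\frac{1}{A}u_{1,0}\partial_{x}u_{j,\neq}$, because $u_{1,0}$ grows with $t$ (driven by $u_{2,0}$ through lift-up and by $n_{0}$ through $\phi=x$), so the naive estimate does not give a summable-in-time contribution. The paper resolves this by decomposing $u_{1,0}=\mathbf{U}_{1}+\mathbf{U}_{2}$ and \emph{absorbing the bad part $\mathbf{U}_{2}$ into the transport operator}, replacing $\mathcal{L}=\partial_{t}+y\partial_{x}-\frac{1}{A}\triangle$ by $\mathcal{L}_{V}=\partial_{t}+(y+\frac{\mathbf{U}_{2}}{A})\partial_{x}-\frac{1}{A}\triangle$ and proving space--time estimates for $\mathcal{L}_{V}$ via the coordinate change $V=y+\mathbf{U}_{2}/A$ (Proposition \ref{Lvf 0}). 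This is precisely what the Wei--Zhang quasi-linear method does; note that it is a \emph{change of variables}, not the Fourier-multiplier conjugation you describe. Relatedly, you omit the auxiliary quantity $W=u_{2,\neq}+\kappa u_{3,\neq}$ with $\kappa=\partial_{z}V/\partial_{y}V$ and the auxiliary energies $E_{4},E_{5,1},E_{5,2}$ that control $\partial_{x}^{2}u_{j,\neq}$ and close the estimates for the pressure-coupled non-zero modes. Without the $\mathbf{U}_{1}+\mathbf{U}_{2}$ splitting and the resulting $\mathcal{L}_{V}$ framework, the non-zero-mode bootstrap step would fail, so this is not a cosmetic omission but the missing key idea.
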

\begin{remark}
	For the three-dimensional space, the index $16\pi^2$ in \eqref{condition: M} seems to be the sharp threshold for initial cell mass. 
	Recall that $ n_{0} $ satisfies (see also \eqref{eq:n0})
	\begin{equation}\label{eq:n000}
		\begin{aligned}
			\partial_{t}n_{0}=&\frac{1}{A}\triangle n_{0}-\frac{1}{A}\nabla\cdot(n_{0}\nabla c_{0})-\frac{1}{A}\nabla\cdot(n_{\neq}\nabla c_{\neq})_{0}-\frac{1}{A}(u_{0}\cdot\nabla n_{0})-\frac{1}{A}(u_{\neq}\cdot\nabla n_{\neq})_{0}.
		\end{aligned}
	\end{equation}
	When the velocity $u$ vanishes and $n(t,x,y,z)=n(t,y,z)$, which does not depend on the variable $x$. Then \eqref{eq:n000} is reduced to
	\begin{equation}\label{eq:n000'}
		\begin{aligned}
			\partial_{t}n_{0}=\frac{1}{A}\triangle n_{0}-\frac{1}{A}\nabla\cdot(n_{0}\nabla c_{0})+``{\rm good~terms}",
		\end{aligned}
	\end{equation}
	which is similar to the 2D Keller-Segel equations and the critical mass is $\int_{\mathbb{T}^{2}} n_{0} dydz=8\pi=\frac{1}{2\pi}\int_{\mathbb{T}^{3}} ndxdydz$. It implies that  the critical mass threshold for the initial cell mass to the system \eqref{eq:n000} is just $16\pi^2$. A new observation is the logarithmic Hardy-Littlewood-Sobolev inequality and the dissipative decay of the $(\widetilde{u}_{2,0},\widetilde{u}_{3,0})$ (see Lemma \ref{lem:u20 u30} in Section 4.2 for more details) to estimate the norm of $\|n_0\|_{L^2}$.
\end{remark}

\begin{remark}\label{local}
	The small constant $\epsilon$  in  \eqref{conditions:u20 u30} depends on $\|n_{\rm in}\|_{ H^{2}(\mathbb{T}^{3})}$ and $\|(u_{\rm in})_{\neq}\|_{H^{2}(\mathbb{T}^{3})},$ which can be found in \eqref{eq:epsilon}, that is $\epsilon m^{\frac13}E_{3}^{\frac53}\leq C$. Moreover, the estimate of $E_3$ is decided by  \eqref{eq:n0 L2},
\eqref{eq:E3} and 
{Corollary} \ref{cor:E2}.
\end{remark}

\begin{remark}
	In a very recent work \cite{CWW2025}, the first three authors investigated the suppression of blow-up and the nonlinear stability of the PKS-NS system via the 3D Couette flow
	in a different domain of $\mathbb{T}\times\mathbb{R}\times\mathbb{T}$ and $\phi=y$. When $A$ is big enough, as long as
	$$\left\{
	\begin{array}{lr}
		A^{\frac{1}{3}+}(\|u_{\rm in}\|_{H^2(\mathbb{T}\times\mathbb{R}\times\mathbb{T})}
		+\|(n_{\rm in})_{(0,\neq)}\|_{L^2(\mathbb{T}\times\mathbb{R}\times\mathbb{T})})\leq C, \\
		M = \int_{\mathbb{T}\times\mathbb{R}\times\mathbb{T}}n_{\rm in}dxdydz < \frac{24}{5}\pi^2,
	\end{array}
	\right.$$
	the solutions for the PKS-NS system are global in time, where a new energy estimate for  $\frac{\partial_z}{\sqrt{1-\triangle}}n_{(0,\neq)}$ was introduced to control the density. It's still unknown whether  the threshold for initial cell mass can be $16\pi^2$, since it is difficult to apply the logarithmic Hardy-Littlewood-Sobolev inequality  without the dissipative decay of the $(u_{2,0},u_{3,0})$ at this time.
\end{remark}
\begin{remark}\label{local well-posedness}
	According to the standard arguments, the result of local well-posedness of the system (\ref{ini11}) exists, which can be refered to \cite{CWW2025, Hu1, winkler1}, and we omitted it.
\end{remark}


Here are some notations used in this paper.

\noindent\textbf{Notations}:
\begin{itemize}
	\item 
	For given $f(t,x,y,z)$, the Fourier transform can be defined by
	\begin{equation}
		f(t,x,y,z)=\sum_{k_{1},k_{2},k_{3}\in\mathbb{Z}}\widehat{f}_{k_{1},k_{2},k_{3}}(t)e^{i\left(k_{1}x+k_{2}y+k_{3}z \right)}, \nonumber
	\end{equation}
	where 
	$\widehat{f}_{k_{1},k_{2},k_{3}}(t)=\frac{1}{|\mathbb{T}|^{3}}\int_{\mathbb{T}^{3}}f(t,x,y,z)e^{-i\left(k_{1}x+k_{2}y+k_{3}z \right)}dxdydz$.		
	\item 
	Especially, we use $u_{j,0}$, and $u_{j,\neq}$ to represent the zero mode and non-zero mode of the velocity $u_{j} (j=1,2,3)$, respectively. Similarly, we use $\omega_{2,0}$ and $\omega_{2,\neq}$ to represent the zero mode and non-zero mode of the vorticity $\omega_2$, respectively.		
	\item The norm of the $L^p$ space and the time-space norm $\|f\|_{L^{q}L^{p}}$ are defined as	
	$\|f\|_{L^p(\mathbb{T}^{3})}=\left(\int_{\mathbb{T}^{3}}|f|^p dxdydz\right)^{\frac{1}{p}},$
	and 
	$\|f\|_{L^qL^p}=\left\|  \|f\|_{L^p(\mathbb{T}^{3})}\ \right\|_{L^q(0,t)}.$
	Moreover, $\langle\cdot,\cdot\rangle$ denotes the standard $L^2$ scalar product. For simplicity, we write $\|f\|_{L^p(\mathbb{T}^{3})}$ as $\|f\|_{L^p}.$
	
	\item For $a>0,$ we define the norms
	\begin{equation*}
		\begin{aligned}
			&	\|f\|_{X_{a}}^2
			=\|{\rm e}^{aA^{-\frac{1}{3}}t}f\|^2_{L^{\infty}L^{2}}
			+\|{\rm e}^{aA^{-\frac{1}{3}}t}\nabla\triangle^{-1}\partial_{x} f\|^2_{L^{2}L^{2}} 
			+\frac{\|{\rm e}^{aA^{-\frac{1}{3}}t}f\|^2_{L^{2}L^{2}}}{A^{\frac{1}{3}}}
			+\frac{\|{\rm e}^{aA^{-\frac{1}{3}}t}\nabla f\|^2_{L^{2}L^{2}}}{A},
			\\
			&\|f\|_{Y_{0}}^2
			=\|f\|^2_{L^{\infty}L^{2}}
			+\frac{1}{A}\|\nabla f\|^2_{L^{2}L^{2}}.
		\end{aligned}
	\end{equation*}		
	\item  We sometimes denote the partial derivatives $\partial_x,$ $\partial_y$ and $\partial_z$ by  
	$\partial_1,$ $\partial_2$ and $\partial_3,$  respectively.
	\item In this paper, unless otherwise specified, we use the Einstein summation convention.
	\item The total mass $ \|n(t)\|_{L^{1}}$ is denoted by $ M ,$ and let $m:=\|n_{0}\|_{L^{1}}.$ Clearly,
	\begin{equation*}
		\begin{aligned}
			&M:=\|n(t)\|_{L^{1}}=\|n_{\rm in}\|_{L^{1}},\\
			&m:=\|(n_{\rm in})_{0}\|_{L^{1}}=\frac{\|n\|_{L^{1}}}{|\mathbb{T}|}=\frac{M}{|\mathbb{T}|}.
		\end{aligned}
	\end{equation*}

	\item Throughout this paper, we denote $C$ by  a positive constant independent of $A$, $t$ and the initial data, and it may be different from line to line.
\end{itemize}


\section{Key ideas and proof of Theorem \ref{result0}}\label{sec2}
\subsection{Fourier analysis}
For the given function $f$, by Fourier series, we get
$$f(t,x,y,z)=\sum_{k_{1},k_{2},k_{3}\in\mathbb{Z}}\widehat{f}_{k_{1},k_{2},k_{3}}(t){\rm e}^{i\left(k_{1}x+k_{2}y+k_{3}z \right)},$$
where $\widehat{f}_{k_{1},k_{2},k_{3}}(t)=\frac{1}{|\mathbb{T}|^{3}}\int_{\mathbb{T}^{3}}f(t,x,y,z){\rm e}^{-i\left(k_{1}x+k_{2}y+k_{3}z \right)}dxdydz.$

According to the frequency $k_1,$ $f$ can be decomposed into  $$f=f_{\neq}+f_0,$$ 
where 
\begin{equation*}
	\begin{aligned}
		f_{\neq}(t,y,z)=\sum_{k_{2},k_{3}\in\mathbb{Z}, k_1\neq0}\widehat{f}_{k_1,k_{2},k_{3}}(t){\rm e}^{i\left(k_1x+k_{2}y+k_{3}z \right)},~~
		f_0(t,y,z)=\sum_{k_{2},k_{3}\in\mathbb{Z}}\widehat{f}_{0,k_{2},k_{3}}(t){\rm e}^{i\left(k_{2}y+k_{3}z \right)}.
	\end{aligned}
\end{equation*}
In fact, $f_{0}$  is the zero mode, and $f_{\neq}$ is the non-zero mode.

Furthermore, according to the frequency $k_3,$  the zero mode $f_0$ can be decomposed into two parts $f_{0}=\overline{f}_{0}+\widetilde{f}_{0}$ satisfying
\begin{equation}\label{def:check{f_0}}
	\begin{aligned}
		&\overline{f}_{0}(t)=\widehat{f}_{0,0,0}(t)=\frac{1}{
			{|\mathbb{T}|}^{3}}\int_{\mathbb{T}^{3}}f(t,x,y,z)dxdydz,\\
		&\widetilde{f}_0(t,y,z)=\sum_{ k_{2}^2+k_{3}^2\neq0}\widehat{f}_{0,k_{2},k_{3}}(t){\rm e}^{i\left(k_{2}y+k_{3}z \right)},
	\end{aligned}
\end{equation}
where $\overline{f}_{0}(t)$ is called the average of $f$ in the periodic domain $\mathbb{T}^3.$

Assume that $f$ satisfying 
\begin{equation*}
	\left\{
	\begin{array}{lr}
		\partial_tf+y\partial_x f-\frac{1}{A}\triangle f=0,\\
		f|_{t=0}=f_{\rm in}.
	\end{array}
	\right.
\end{equation*}
The function $f$ is divided into three different modes:
\begin{equation*}
	f(t,x,y,z)=f_{\neq}(t,x,y,z)+\widetilde{f}_{0}(t,y,z)+\overline{f}_{0}(t).
\end{equation*}
Then we consider the dynamics for different modes.
The non-zero mode $f_{\neq}$ experiences the enhanced dissipation \cite{BCM2008}:
\begin{equation*}
	\|f_{\neq}(t)\|_{L^2}\leq C{\rm e}^{aA^{-\frac{1}{3}}t}
	\|(f_{\rm in})_{\neq}\|_{L^2}.
\end{equation*}
The non-average part  $\widetilde{f}_{0}$ experiences the heat dissipation 
with
\begin{equation*}
	\|\widetilde{f}_{0}(t)\|_{L^2}\leq {\rm e}^{-\frac{t}{2A}}
	\|\widetilde{f_{\rm in}}\|_{L^2}.
\end{equation*}
The average part $\overline{f}_{0}(t)$ is a constant satisfying
\begin{equation*}
	\overline{f}_{0}(t)=\overline{(f_{\rm in})_0}.
\end{equation*}

What we need to emphasize is that the decomposition of different modes and the application of the corresponding dynamics are the keys to research the blow-up suppression problem in the periodic domain $\mathbb{T}^3$.

\subsection{The decomposition of the first component of the velocity}
The first component $u_{1,0}$ is affected by the 3D lift-up effect and is the worst part (see Section 2.1 in \cite{Chen1}). Therefore, we must handle it more carefully and meticulously.

Recall that $u_{1,0}$ satisfies
$$\partial_tu_{1,0}-\frac{1}{A}\triangle u_{1,0}
+u_{2,0}=-\frac{u_{2,0}\partial_yu_{1,0}
	+u_{3,0}\partial_zu_{1,0}}{A}
-\frac{(u_{\neq}\cdot\nabla u_{1,\neq})_0}{A}+\frac{n_0}{A}.$$
First of all, we decompose $u_{1,0}$ into   
$u_{1,0}(t,y,z)=\mathbf{G}_1(t,y,z)+\mathbf{B}_1(t,y,z)+\mathbf{B}_2(t,y,z),$ satisfying
\begin{equation}\label{u_decom_1}
	\begin{aligned}
		&\partial_t\mathbf{G}_1-\frac{1}{A}\triangle \mathbf{G}_1=
		-\frac{u_{2,0}\partial_y\mathbf{G}_1
			+u_{3,0}\partial_z\mathbf{G}_1}{A}-
		\frac{(u_{\neq}\cdot\nabla u_{1,\neq})_0}{A},\\
		&\partial_t\mathbf{B}_1-\frac{1}{A}\triangle \mathbf{B}_1
		=-\frac{u_{2,0}\partial_y\mathbf{B}_1
			+u_{3,0}\partial_z\mathbf{B}_1}{A}+\frac{n_0}{A},\\
		&\partial_t\mathbf{B}_2-\frac{1}{A}\triangle \mathbf{B}_2
		+u_{2,0}=-\frac{u_{2,0}\partial_y\mathbf{B}_2
			+u_{3,0}\partial_z\mathbf{B}_2}{A},
	\end{aligned}
\end{equation}
along with the initial conditions
\begin{equation*}
	\mathbf{G}_1|_{t=0}=(u_{1,\rm in})_0,\quad\mathbf{B}_1|_{t=0}=0,\quad\mathbf{B}_2|_{t=0}=0.
\end{equation*}
In this way, $\mathbf{G}_1(t,y,z)$ is a good term 
satisfying the following energy estimate
\begin{equation*}
	\|\mathbf{G}_1\|^2_{L^{\infty}H^1}+\frac{1}{A}\|\nabla \mathbf{G}_1\|_{L^2H^1}^2\leq C(\|(u_{1,\rm in})_0\|_{H^1}^2+\epsilon^2).
\end{equation*}
In addition, according to the frequency $k_3$, we decompose $\mathbf{B}_1(t,y,z)$ and $\mathbf{B}_2(t,y,z)$ into two parts 
\begin{equation*}\label{u1:decom}
	\begin{aligned}
		\mathbf{B}_1(t,y,z)=\overline{\mathbf{B}}_{1}(t)+\widetilde{\mathbf{B}}_{1}(t,y,z),\\
		\mathbf{B}_2(t,y,z)=\overline{\mathbf{B}}_{2}(t)+\widetilde{\mathbf{B}}_{2}(t,y,z),\\
	\end{aligned}
\end{equation*}
satisfying
\begin{equation}\label{u1:decom1}
	\left\{
	\begin{array}{lr}
		\partial_t\widetilde{\mathbf{B}}_{1}(t,y,z)-\frac{1}{A}
		\triangle \widetilde{\mathbf{B}}_{1}(t,y,z)
		=-\frac{1}{A}(u_{2,0}\partial_y\widetilde{\mathbf{B}}_{1}
		+u_{3,0}\partial_z\widetilde{\mathbf{B}}_{1})+\frac{1}{A}\widetilde{n}_{0}(t,y,z),\\
		\partial_t\overline{\mathbf{B}}_{1}(t)=\frac{1}{A}\overline{n}_{0}=\frac{M}{A|\mathbb{T}|^3},
	\end{array}
	\right.
\end{equation}
and 
\begin{equation}\label{u1:decom2}
	\left\{
	\begin{array}{lr}
		\partial_t\widetilde{\mathbf{B}}_{2}(t,y,z)-\frac{1}{A}
		\triangle \widetilde{\mathbf{B}}_{2}(t,y,z)+\widetilde{u}_{2,0}(t,y,z)=-
		\frac{1}{A}(u_{2,0}\partial_y\widetilde{\mathbf{B}}_{2}
		+u_{3,0}\partial_z\widetilde{\mathbf{B}}_{2}),\\
		\partial_t\overline{\mathbf{B}}_{2}(t)+\overline{u}_{2,0}(t)=0.
	\end{array}
	\right.
\end{equation}
Lastly, we rewrite above decompositions into  
\begin{equation*}
	\begin{aligned}
		&\mathbf{U}_1(t,y,z)=\mathbf{G}_1(t,y,z)+\widetilde{\mathbf{B}}_{1}(t,y,z),\\
		&\mathbf{U}_2(t,y,z)=\widetilde{\mathbf{B}}_{2}(t,y,z)+\overline{\mathbf{B}}_{2}(t)
		+\overline{\mathbf{B}}_{1}(t),
	\end{aligned}
\end{equation*}
satisfying $\mathbf{U}_1(t,y,z)+\mathbf{U}_2(t,y,z)=u_{1,0}(t,y,z).$

In this way, we decompose the velocity $u_{1,0}$ into the good part $\mathbf{U}_1(t,y,z)$ and the  bad part $\mathbf{U}_2(t,y,z)$ satisfying
\begin{equation*}
	\left\{
	\begin{array}{lr}
		\partial_t\mathbf{U}_1-\frac{1}{A}
		\triangle \mathbf{U}_1
		=-\frac{1}{A}(u_{2,0}\partial_y\mathbf{U}_1
		+u_{3,0}\partial_z\mathbf{U}_1)+\frac{1}{A}\widetilde{n}_{0}(t,y,z)
		-\frac{(u_{\neq}\cdot\nabla u_{1,\neq})_0}{A},\\
		\mathbf{U}_1|_{t=0}=(u_{1,\rm in})_0,
	\end{array}
	\right.
\end{equation*}
and 
\begin{equation*}
	\left\{
	\begin{array}{lr}
		\partial_t\mathbf{U}_2-\frac{1}{A}
		\triangle \mathbf{U}_2+{u}_{2,0}=-
		\frac{1}{A}(u_{2,0}\partial_y\mathbf{U}_2
		+u_{3,0}\partial_z\mathbf{U}_2)+\frac{M}{A|\mathbb{T}|^{3}},\\
		\mathbf{U}_2|_{t=0}=0.
	\end{array}
	\right.
\end{equation*}
For the good velocity $\mathbf{U}_1,$ 
the $H^1$ norm is enough for us to finish all calculations,
and it has a good relation with the cell density $n_0$
by $$\|\mathbf{U}_1\|_{L^{\infty}H^1}\leq C(\|(u_{1,\rm in})_0\|_{H^1}
+\|n_0\|_{L^{\infty}L^2}+\epsilon).$$
For the bad velocity $\mathbf{U}_2$, when it satisfies
$\frac{\|\triangle \mathbf{U}_2\|_{L^{\infty}H^2}}{A}
+\|\partial_t \mathbf{U}_2\|_{L^{\infty}L^{\infty}}<c$
for some small $c,$ the operator $\mathcal{L}_V$ can be regarded as perturbation of $\mathcal{L}$, which allow us to finish the time-space
estimates for the zero modes. 
Here,  $\mathcal{L}_V$ and  $\mathcal{L}$ 
can be found in \eqref{ope1}.

\subsection{The construction of energy functional}
Firstly, we introduce the energy functional
\begin{equation*}
	\begin{aligned}
		&E_{1,1}(t)=\|u_{2,0}\|_{Y_{0}}+\|u_{3,0}\|_{Y_{0}}+\|\nabla u_{2,0}\|_{Y_{0}}+\|\nabla u_{3,0}\|_{Y_{0}}+\|\triangle u_{2,0}\|_{Y_{0}}\\&\qquad\qquad+\|\min\{(A^{-\frac23}+A^{-1}t)^{\frac12}, 1\}\triangle u_{3,0}\|_{Y_{0}},
		\\&E_{1,2}(t)= A^{-1}(\|\triangle\mathbf{U}_2\|_{L^{\infty}H^2}
		+A^{-\frac12}{\|\nabla\triangle\mathbf{U}_2\|_{L^{2}H^2}})
		+\|\partial_t\mathbf{U}_2\|_{L^{\infty}H^2},
		\\&E_{2,1}(t)=\|\partial_{x}^{2}n_{\neq}\|_{X_{b}},
		\\&E_{2,2}(t)=\|\triangle u_{2,\neq}\|_{X_{a}}+\|\partial_{x}\omega_{2,\neq}\|_{X_{a}}+A^{-\frac13}\left(\|\partial_{y}\omega_{2,\neq}\|_{X_{a}}+\|\partial_{z}\omega_{2,\neq}\|_{X_{a}} \right),
		\\&E_{3}(t)=\|n\|_{L^{\infty}L^{\infty}},
		\\&E_{4}(t)=\|\partial_{x}^{2}u_{2,\neq}\|_{X_{b}}+\|\partial_{x}^{2}u_{3,\neq}\|_{X_{b}},
	\end{aligned}
\end{equation*}
where $ a $ and $ b $ are given positive constants with $ 0<a<b<2a $. Moreover, we denote 
\begin{equation*}
	E_{1}(t)=E_{1,1}(t)+E_{1,2}(t),\quad E_{2}(t)=E_{2,1}(t)+E_{2,2}(t).
\end{equation*}
To close the energy, when estimating the nonlinear interactions between the non-zero modes of the velocity of $\|e^{2aA^{-\frac13}t}\nabla(u_{\neq}\cdot\nabla u_{3,\neq})\|_{L^{2}L^{2}}$, it is required that the degree of $A$ must be strictly less than $\frac53$ (see Lemma \ref{lemma_neq1}). To this end, we introduce the first auxiliary energy
\begin{equation*}
	E_{5,1}(t)=A^{-\frac23}\|\triangle u_{3,\neq}\|_{X_{b}}.
\end{equation*}
Besides, to estimate $E_{4}$ and handle the nonlinear interaction between the bad component $\mathbf{U}_{2}$ of $u_{1,0}$ and the non-zero mode $u_{\neq},$ we also need the second auxiliary energy
\begin{equation*}
	\begin{aligned}
		E_{5,2}(t)=\sum_{j=2}^{3}\left(\|\partial_{x}^{2}u_{j,\neq}\|_{X_{b}}+\|\partial_{x}(\partial_{z}-\kappa\partial_{y})u_{j,\neq}\|_{X_{b}} \right)+\|\partial_{x}\nabla W\|_{X_{b}},		
	\end{aligned}
\end{equation*}
where the definition of $\kappa$ and the expression of $W$ can be found in Section \ref{sec 8}.

Let us briefly describe the roles of the above-mentioned energy functionals.
\begin{itemize}
	\item $E_1$ is introduced to control the zero modes of the velocity $u_0.$
	$E_{1,1}$ is used to control the good velocities $u_{2,0}$ and  $u_{3,0}.$ 
	$E_{1,2}$ is used to control the bad part $\mathbf{U}_2$ of  the velocity $u_{1,0}.$
	\item   $E_2$ is introduced to  deal with the non-zero mode of the cell density $n_{\neq}$ and the velocity $u_{\neq}.$
	\item   $E_3$ is introduced to  estimate $\|n_0(t)\|_{L^2}$
	and deal with the nonlinear interaction items in the density $n$.
	\item   $E_4$ is introduced to deal with the nonlinear interaction items with 3D lift-up effect in $u_{\neq}$.
\end{itemize}

\subsection{Main steps}\
\begin{proof}[Proof of {\rm Theorem \ref{result0}}]
	We prove Theorem \ref{result0} in the following three steps.
	
	\textbf{Step~1:} Let's designate $T$ as the terminal point of the largest range $[0, T]$ such that the following hypothesis hold
	\begin{equation}\label{assumption}
		E_{1}(t)\leq 2E_{1},\quad E_{2}(t)\leq 2E_{2},\quad E_{3}(t)\leq 2E_{3},\quad E_{4}(t)\leq 2E_{4},\quad E_{5,1}(t)\leq 2E_{5}
	\end{equation}
	for any $t\in[0,T],$ where $E_{1}, E_{2}, E_{3}, E_{4}$ and $E_{5}$ are constants independent of $t$ and $A$, and will be decided during the calculation.
	
	\textbf{Step~2:} We need to prove the following propositions:
	\begin{proposition}\label{prop:E0}
		Assume that the initial data $(n_{\rm in}, u_{\rm in})$ satisfy the assumptions of Theorem \ref{result0} and  \eqref{assumption}, there exists a positive constant $A_{2}$ independent of $A$ and $t,$ such that if $A\geq A_{2},$ there holds
		\begin{equation*}
			E_{1}(t)\leq E_{1}
		\end{equation*}
		for all $t\in[0,T].$
	\end{proposition}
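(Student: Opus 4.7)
The plan is to estimate $E_{1,1}$ and $E_{1,2}$ separately, via energy identities on the zero-mode momentum equations for $u_{2,0}, u_{3,0}$ and on the equation \eqref{u1:decom2} defining the ``bad'' part $\mathbf{U}_2$ of $u_{1,0}$. The key structural observation is that the Couette lift-up term only feeds $u_1$, and $P^{N_1}_0$ is harmless since $\triangle P^{N_1}_0 = -2A(\partial_x u_2)_0 + (\partial_x n)_0 = 0$. Consequently, after the time rescaling, the equations for $u_{2,0}$ and $u_{3,0}$ are heat equations with viscosity $1/A$, forced only by $\frac{1}{A}(u\cdot\nabla u)_{j,0}$ and $\frac{1}{A}\partial_j P^{N_2}_0$, which sets up a hierarchy of $L^2$, $H^1$ and $H^2$ energy estimates.

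First I would run the $L^2$, $H^1$ and $H^2$ energy estimates for $u_{2,0}$, and the analogous $L^2$, $H^1$ and weighted-$H^2$ estimate for $u_{3,0}$; the time weight $\min\{(A^{-2/3}+A^{-1}t)^{1/2},1\}$ on $\triangle u_{3,0}$ is used precisely to absorb the initial short-time singularity of the forcing that comes from $u_{2,0}$. The quadratic nonlinearity $(u\cdot\nabla u)_{j,0}$ splits into a zero-zero piece $(u_0\cdot\nabla u_0)$ and a nonzero-nonzero piece $(u_{\neq}\cdot\nabla u_{\neq})_0$. The zero-zero interactions involving only $u_{2,0}, u_{3,0}$ are closed by the smallness \eqref{conditions:u20 u30}; interactions involving $u_{1,0}=\mathbf{U}_1+\mathbf{U}_2$ are handled by combining the good bound $\|\mathbf{U}_1\|_{L^{\infty}H^1}\le C(\|(u_{1,\mathrm{in}})_0\|_{H^1}+\|n_0\|_{L^{\infty}L^2}+\epsilon)$, coming from the $\mathbf{G}_1$-equation in \eqref{u_decom_1}, with the bootstrap hypothesis on $E_{1,2}$ for the $\mathbf{U}_2$-factor; the nonzero-nonzero piece, together with the pressure contribution from $P^{N_2}_0=\triangle^{-1}\mathrm{div}(u\cdot\nabla u)_0$, is absorbed using the exponentially weighted norms in $E_{2,2}$, $E_{4}$ and $E_{5,1}$, where Cauchy-Schwarz in time against $e^{-2aA^{-1/3}t}$ yields an $A^{1/3}$ that, combined with the $1/A$ prefactor from the rescaling, furnishes an $A^{-2/3}$ smallness.

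Next, for $E_{1,2}$ I would apply $\triangle$ to \eqref{u1:decom2} and perform an $H^2$ energy estimate on $\triangle\mathbf{U}_2$. The forcing reduces to $-\triangle u_{2,0}$ (already controlled by the $\|\triangle u_{2,0}\|_{Y_0}$ piece of $E_{1,1}$), the constant $M/(A|\mathbb{T}|^3)$, and the quasilinear transport $\frac{1}{A}(u_{2,0}\partial_y\mathbf{U}_2+u_{3,0}\partial_z\mathbf{U}_2)$, which is absorbable for $A$ large by the bootstrap on $E_{1,1}$ and $E_{1,2}$. This yields the $L^\infty H^2$ and $L^2 H^3$ components of $E_{1,2}$; the bound on $\|\partial_t\mathbf{U}_2\|_{L^\infty H^2}$ is then read off directly from \eqref{u1:decom2}, using the already established controls on $u_{2,0}$, $\triangle\mathbf{U}_2$ and the small transport term.

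The main obstacle is the careful $A$-power bookkeeping in the nonzero-nonzero interactions, because the pressure $\partial_j P^{N_2}_0$ demands inverting $\triangle$ on $\mathrm{div}(u_{\neq}\cdot\nabla u_{\neq})_0$ and thus pairs two nonzero modes against a zero-mode test function; the strict inequality $b<2a$ between the exponential weights defining $X_a$ and $X_b$ is essential here, since it leaves surplus enhanced-dissipation weight to integrate in time into an $A^{-\alpha}$ smallness with $\alpha>0$. Once every right-hand side is shown to take the form $C_0+o_A(1)$ with $C_0$ depending only on $\|(u_{\mathrm{in}})_0\|_{H^2}$, $\|n_0\|_{L^\infty L^2}$, $\epsilon$ and the remaining bootstrap constants, setting $E_1:=2C_0$ and choosing $A_2$ large enough improves the bootstrap $E_1(t)\le 2E_1$ strictly to $E_1(t)\le E_1$, as required.
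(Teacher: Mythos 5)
Your high-level plan — split into $E_{1,1}$ and $E_{1,2}$, observe that $\nabla P^{N_1}_0=0$, run the $L^2$/$H^1$/$H^2$ ladder on the zero-mode momentum equations, and use the bootstrap on $E_2,E_4,E_5$ for the $(u_\neq\cdot\nabla u_\neq)_0$ and $P^{N_2}_0$ contributions — matches the paper's architecture. However, there are three points where your reasoning either misreads the structure or glosses over the step that actually makes the proof close.

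First, you claim that zero-zero interactions involving $u_{1,0}=\mathbf{U}_1+\mathbf{U}_2$ appear in the equations for $u_{2,0}$, $u_{3,0}$ and must be handled via the $\mathbf{U}_1$ bound and the $E_{1,2}$ bootstrap. This is false: for $j\in\{2,3\}$ one has $u_0\cdot\nabla u_{j,0}=u_{1,0}\partial_x u_{j,0}+u_{2,0}\partial_y u_{j,0}+u_{3,0}\partial_z u_{j,0}$ and $\partial_x u_{j,0}\equiv 0$, so $u_{1,0}$ drops out; likewise $\mathrm{div}(u\cdot\nabla u)_0=\partial_y(u\cdot\nabla u_2)_0+\partial_z(u\cdot\nabla u_3)_0$ (the $\partial_x$ part has zero mean), so $u_{1,0}$ does not enter $P^{N_2}_0$ through the zero-zero channel either. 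The $E_{1,1}$ estimate therefore involves only $u_{2,0},u_{3,0}$ and the non-zero modes; $\mathbf{U}_1$ plays no role in Proposition~\ref{prop:E0}.

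Second, your explanation of the time weight $\min\{(A^{-2/3}+A^{-1}t)^{1/2},1\}$ on $\triangle u_{3,0}$ is wrong: it is not about a short-time singularity of a forcing from $u_{2,0}$ (there is no direct $u_{2,0}$-forcing in the $u_{3,0}$ equation). The weight is there because \eqref{conditions:u20 u30} only imposes $\|(u_{3,\rm in})_0\|_{H^1}\leq\epsilon$, not $H^2$, so $\|\triangle u_{3,0}(0)\|_{L^2}$ can be $O(1)$. The weight vanishes to order $A^{-2/3}$ at $t=0$, so the initial contribution is $A^{-2/3}\|(u_{3,\rm in})_0\|_{H^2}^2$, which becomes small once $A$ is large enough. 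Parabolic smoothing then sustains the bound.

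Third — and this is the genuine gap — when you carry out the $H^2$ estimate on $\triangle\mathbf{U}_2$ (equivalently $H^4$ on $\widetilde{\mathbf{B}}_2$), the transport term $u_{3,0}\partial_z\widetilde{\mathbf{B}}_2$ produces, after three derivatives, a term $\nabla\triangle u_{3,0}\,\partial_z\widetilde{\mathbf{B}}_2$. You only control $\nabla\triangle u_{3,0}$ in $L^2L^2$ with the time weight, so the naive pairing $\|\nabla\triangle u_{3,0}\|_{L^2L^2}\|\widetilde{\mathbf{B}}_2\|_{L^\infty H^2}$ does not close. The paper's remedy (\eqref{B2 t}) is the interpolation
\[
\frac{\|\widetilde{\mathbf{B}}_{2}\|_{H^{3}}^{2}}{A^{-1}t}\leq A\|\partial_{t}\widetilde{\mathbf{B}}_{2}\|_{L^{\infty}H^{2}}\|\widetilde{\mathbf{B}}_{2}\|_{H^{4}},
\]
which exploits $\widetilde{\mathbf{B}}_2(0)=0$ to write $\|\widetilde{\mathbf{B}}_2\|_{H^2}\leq t\|\partial_t\widetilde{\mathbf{B}}_2\|_{L^\infty H^2}$ and trades the missing power of $t$ in the weight on $\triangle u_{3,0}$ for the growth of $\widetilde{\mathbf{B}}_2$, then absorbs the resulting $\|\partial_t\widetilde{\mathbf{B}}_2\|_{L^\infty H^2}$ into the $E_{1,2}$ bootstrap by smallness of $\epsilon$. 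Without this step your estimate of the transport term, and consequently of $\|\partial_t\mathbf{U}_2\|_{L^\infty H^2}$ as well, does not close; "absorbable for $A$ large by the bootstrap" is precisely the assertion that needs this identity.
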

	\begin{proposition}\label{prop:E3}
		Assume that the initial data $(n_{\rm in}, u_{\rm in})$ satisfy the assumptions of Theorem \ref{result0} and  \eqref{assumption}, there exists a positive constant $A_{5}$ independent of $A$ and $t,$ such that if $A\geq A_{5},$ there holds
		\begin{equation*}
			E_{3}(t)\leq E_{3}
		\end{equation*}
		for all $t\in[0,T].$
	\end{proposition}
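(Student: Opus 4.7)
The plan is to improve the bootstrap constant on $E_3(t)=\|n\|_{L^\infty L^\infty}$ by splitting $n = n_0 + n_{\neq}$ and controlling the two modes separately, since they satisfy equations of very different character: $n_{\neq}$ enjoys enhanced dissipation on the time scale $A^{1/3}$, while $n_0$ satisfies a perturbed 2D Keller--Segel equation on $\mathbb{T}^2_{y,z}$ whose zero mode has $L^1$ mass $m = M/(2\pi) < 8\pi$ by \eqref{condition: M}, i.e.\ exactly the subcritical regime.

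For the non-zero mode, I would use the bootstrap bound $E_{2,1}(t)=\|\partial_x^2 n_{\neq}\|_{X_b}\leq 2E_2$. Combining the $L^\infty_t L^2_{xyz}$ part of the $X_b$-norm with anisotropic Sobolev embeddings on $\mathbb{T}^3$ (using $\partial_x^2 n_{\neq}$ and the already available $H^2$ regularity on the other frequencies via $E_{2,2}$) yields a bound of the form $\|n_{\neq}\|_{L^\infty L^\infty}\lesssim E_2$ with constant independent of $A$; the exponential factor $\mathrm{e}^{bA^{-1/3}t}$ in $X_b$ further shows that this contribution is small for large $A$ once time is integrated.

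The main work concerns $n_0$. Taking the $x$-average of the density equation gives \eqref{eq:n000}, which after factoring $1/A$ is a 2D parabolic-elliptic Keller--Segel equation for $n_0$ driven by the forcings $(u_{\neq}\cdot\nabla n_{\neq})_0$, $\nabla\cdot(n_{\neq}\nabla c_{\neq})_0$ and $u_0\cdot\nabla n_0$. Following the Bedrossian--He/He strategy, I would use the free-energy functional
\begin{equation*}
\mathcal{F}(n_0)=\int_{\mathbb{T}^2} n_0\log n_0\,dydz-\tfrac{1}{2}\int_{\mathbb{T}^2}|\nabla c_0|^2\,dydz
\end{equation*}
and compute its time derivative along \eqref{eq:n000}. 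The dissipation is essentially $-A^{-1}\int n_0|\nabla(\log n_0 - c_0)|^2$, while the logarithmic Hardy--Littlewood--Sobolev inequality on $\mathbb{T}^2$ gives, since $m<8\pi$, a lower bound $\mathcal{F}(n_0)\geq -C(m)$ that is uniform in time. This yields uniform control of $\|n_0\log n_0\|_{L^\infty L^1}$ and $\|\nabla c_0\|_{L^\infty L^2}$. A standard energy identity on $n_0$, together with these bounds and the Gagliardo--Nirenberg inequality in 2D, then upgrades to $\|n_0\|_{L^\infty L^2}$; finally a Moser iteration (or a maximum-principle argument using the transport plus heat kernel for $\partial_t-A^{-1}\Delta$) produces $\|n_0\|_{L^\infty L^\infty}\leq C(m,\|n_{\rm in}\|_{H^2})$ independent of $A$ provided $A\geq A_5$. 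Summing the two modes and choosing $E_3$ larger than this universal constant then yields $E_3(t)\leq E_3$.

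The hard part will be absorbing the fluid-coupling forcings into the free-energy dissipation. The transport term $u_0\cdot\nabla n_0$ is handled by splitting $u_{j,0}=\overline{u}_{j,0}+\widetilde{u}_{j,0}$: the constant-in-$(y,z)$ part drops out of the $\mathbb{T}^2$ divergence, and the oscillating parts $\widetilde{u}_{2,0},\widetilde{u}_{3,0}$ enjoy the heat-type decay from Lemma \ref{lem:u20 u30}, so that after integrating in time one picks up a small factor $A^{-1}$. The cross-chemotaxis $\nabla\cdot(n_{\neq}\nabla c_{\neq})_0$ and the feedback $(u_{\neq}\cdot\nabla n_{\neq})_0$ must be estimated in $L^2_t$ using the enhanced-dissipation norms encoded in $E_{2,1}$, $E_{2,2}$, $E_4$ and $E_{5,1}$; each such term carries an $\mathrm{e}^{-2bA^{-1/3}t}$ (or better) decay, yielding an extra $A^{-1/3}$ smallness that dominates the $A^{-1}$ prefactor from \eqref{eq:n000}. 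The delicate point is that the subcriticality gap $8\pi - m$ must not be consumed by these corrections, which forces us to track constants in the log HLS inequality carefully and to choose $A_5$ depending on $8\pi-m$, as well as on $\|n_{\rm in}\|_{H^2}$ and $\|(u_{\rm in})_{\neq}\|_{H^2}$, but not on $t$.
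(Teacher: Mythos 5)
Your overall architecture is close in spirit — both you and the paper route through the free energy, the logarithmic HLS inequality, and a Moser iteration — but you propose to split $n=n_0+n_{\neq}$ and bound each mode in $L^\infty$ separately, whereas the paper runs the $L^p$ energy estimates and the Moser--Alikakos iteration directly on the \emph{full} 3D equation for $n$, feeding in only the a priori $L^2$ bound $\|n_0\|_{L^\infty L^2}\le H_1$ (Lemma~\ref{lem:n0 L2}) and the $X_b$-control of $\partial_x^2 n_{\neq}$ through the elliptic estimate $\|\nabla c\|_{L^\infty L^4}\lesssim E_2+H_1$. This single iteration closes cleanly precisely because the nonlinear chemotaxis term has the product structure $n^p\nabla c$ and the iteration never needs an $L^\infty$ bound on $n_{\neq}$ by itself.

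The two places where your route has genuine gaps are the following. First, you claim that $\|n_{\neq}\|_{L^\infty L^\infty}\lesssim E_2$ follows from $E_{2,1}=\|\partial_x^2 n_{\neq}\|_{X_b}$ by anisotropic Sobolev embedding, invoking ``$H^2$ regularity on the other frequencies via $E_{2,2}$,'' but $E_{2,2}$ controls only the velocity, not the density. The $X_b$ norm gives $L^\infty_t L^2$ control of $\partial_x^2 n_{\neq}$ and only $L^2_t L^2$ control of $\nabla\partial_x^2 n_{\neq}$; none of the anisotropic $L^\infty$ lemmas (Lemma~\ref{sob_inf_2}) produce a pointwise-in-time $L^\infty_{xyz}$ bound on $n_{\neq}$ from this data, since they also demand $y,z$-derivatives in $L^\infty_t L^2$. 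Indeed the paper, when it needs $\|n_{\neq}\|_{L^\infty L^\infty}$ (e.g.\ in Lemma~\ref{lem:E_21}), takes $\|n_{\neq}\|_{L^\infty L^\infty}\le 2\|n\|_{L^\infty L^\infty}\le 4E_3$ from the bootstrap rather than from $E_{2,1}$, precisely because no such embedding is available. Second, a Moser iteration on the \emph{2D} equation \eqref{eq:n000} for $n_0$ alone would have to absorb the source terms $(n_{\neq}\nabla c_{\neq})_0$ and $(u_{\neq}\cdot\nabla n_{\neq})_0$ into the $L^p$-scale estimates; these are only available in $L^2_t L^2_{yz}$, which is insufficient to close the high-$p$ recursion. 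Both difficulties disappear by doing what the paper does: test the 3D equation with $n^{2p-1}$, use $\|\nabla c\|_{L^\infty L^4}$ as the only nonlinear input, and only afterwards read off the uniform-in-$A$ constant $E_3$ from the base case $\|n\|_{L^\infty L^2}\le |\mathbb T|H_1+E_2$. Your discussion of the $n_0$ free-energy step and the role of $\widetilde u_{2,0},\widetilde u_{3,0}$ is correct in outline, but it reproduces Section~5 of the paper (Lemmas~\ref{lem: L}--\ref{lem:n0 L2}) rather than the proof of Proposition~\ref{prop:E3} itself.
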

	\begin{proposition}\label{prop:E5}
		Assume that the initial data $(n_{\rm in}, u_{\rm in})$ satisfy the assumptions of Theorem \ref{result0} and  \eqref{assumption}, there exists a positive constant $A_{7}$ independent of $A$ and $t,$ such that if $A\geq A_{7},$ there holds
		\begin{equation*}
			\begin{aligned}
				E_{4}(t)+E_{5,1}(t)&\leq E_{4}+E_{5},\\E_{2}(t)&\leq E_{2},
			\end{aligned}
		\end{equation*}
		for all $t\in[0,T].$
	\end{proposition}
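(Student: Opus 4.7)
The plan is to close the non-zero-mode bounds in $E_{2}$, $E_{4}$, and $E_{5,1}$ simultaneously via weighted energy estimates on the equations in \eqref{ini11}, using the exponential weights $e^{aA^{-1/3}t}$ and $e^{bA^{-1/3}t}$ built into the $X_{a}$ and $X_{b}$ norms. The backbone of the strategy is the quasi-linear method of Wei--Zhang: I multiply the equations for $\triangle u_{2,\neq}$, $\omega_{2,\neq}$, $\partial_{x}^{2} u_{j,\neq}$, and $\partial_{x}^{2} n_{\neq}$ by suitable weighted test functions so that the combination of the shear transport $y\partial_{x}$ and the dissipation $-A^{-1}\triangle$ generates enhanced dissipation at rate $A^{-1/3}$, which then absorbs the derivative of the exponential weight. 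The choice $0 < a < b < 2a$ leaves room for nonlinear interactions between $X_{a}$- and $X_{b}$-level quantities whose product inherits a weight $e^{2aA^{-1/3}t} \le e^{bA^{-1/3}t}e^{bA^{-1/3}t}$ that fits into either norm.

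First I would close $E_{2,2}$ by testing the equations for $\triangle u_{2,\neq}$ and $\omega_{2,\neq}$ in \eqref{ini11} against $e^{2aA^{-1/3}t}$ times $\triangle u_{2,\neq}$, $\omega_{2,\neq}$, and their horizontal and vertical derivatives. The right-hand sides split as $u\cdot\nabla u = u_{0}\cdot\nabla u_{\neq} + u_{\neq}\cdot\nabla u_{0} + (u_{\neq}\cdot\nabla u_{\neq})_{\neq}$; the zero/non-zero pieces consume the $A^{-1/3}\|\cdot\|_{L^{2}L^{2}}^{2}$ enhanced-dissipation gain together with the $E_{1}$ bounds, while the non-zero self-interactions are estimated by $E_{4}$ and $E_{2}$ via Sobolev embedding in the transverse variables. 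The lift-up term $\partial_{z} u_{2,\neq}$ appearing in the $\omega_{2,\neq}$ equation is absorbed by coupling the estimates of $\omega_{2,\neq}$ and $\triangle u_{2,\neq}$; the prefactor $A^{-1/3}$ in front of $\|\partial_{y}\omega_{2,\neq}\|_{X_{a}}$ and $\|\partial_{z}\omega_{2,\neq}\|_{X_{a}}$ renders those norms subcritical relative to the energy produced by the $\triangle u_{2,\neq}$ estimate, and the source $\partial_{z} n_{\neq}$ is controlled by $E_{2,1}$.

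Next, $E_{4}$ and $E_{5,1}$ are produced in parallel. I apply $\partial_{x}^{2}$ to the $u_{2,\neq}$ and $u_{3,\neq}$ equations derived from \eqref{ini11} (reconstructing $u_{3,\neq}$ from $u_{2,\neq}$ and $\omega_{2,\neq}$ via $\nabla\cdot u = 0$ and $\omega_{2} = \partial_{z} u_{1} - \partial_{x} u_{3}$, which on non-zero modes amounts to bounded Fourier multipliers), perform the same weighted energy estimate, and collect the result. The auxiliary bound $A^{-2/3}\|\triangle u_{3,\neq}\|_{X_{b}}$ must carry precisely the weight $A^{-2/3}$ because the worst interaction $e^{2bA^{-1/3}t}\nabla(u_{\neq}\cdot\nabla u_{3,\neq})$, as flagged in Lemma \ref{lemma_neq1}, saturates at $A^{-5/3+}$; any smaller power would fail to close. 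Then $E_{2,1}$ follows from testing the equation for $\partial_{x}^{2} n_{\neq}$ against $e^{2bA^{-1/3}t}\partial_{x}^{2} n_{\neq}$, splitting the chemotactic source as $\nabla\cdot(n_{0}\nabla c_{\neq}) + \nabla\cdot(n_{\neq}\nabla c_{0}) + \nabla\cdot(n_{\neq}\nabla c_{\neq})_{\neq}$, and bounding each factor by $E_{3}$ (via $L^{\infty}$), $E_{1}$, $E_{2}$, $E_{4}$ respectively through Hölder in space and Cauchy--Schwarz in time.

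The principal obstacle is the nonlinear interaction between the bad component $\mathbf{U}_{2}$ of $u_{1,0}$ and the non-zero modes $u_{\neq}$. Because $\mathbf{U}_{2}$ only enjoys the coarse control $A^{-1}(\|\triangle\mathbf{U}_{2}\|_{L^{\infty}H^{2}} + \ldots) + \|\partial_{t}\mathbf{U}_{2}\|_{L^{\infty}H^{\infty}}$ encoded in $E_{1,2}$, a naive estimate of $\mathbf{U}_{2}\partial_{x} u_{\neq}$ does not close at the $X_{b}$ level. This is the reason the twisted derivative $\partial_{x}(\partial_{z} - \kappa\partial_{y})$ and the corrector $W$ packaged into $E_{5,2}$ are introduced: one passes to a change-of-unknown adapted to the quasi-linear operator $\mathcal{L}_{V}$ (treated as a perturbation of $\mathcal{L}$, per \eqref{ope1}), so that $\mathbf{U}_{2}$ enters only through harmless lower-order terms. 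Once the auxiliary energies $E_{5,2}$ are brought in and the smallness of $\epsilon$ together with $A \geq A_{7}$ large absorb all polynomially-in-$E_{i}$ nonlinear contributions, the bootstrap bounds $2E_{\bullet}$ improve to $E_{\bullet}$, which completes the proposition.
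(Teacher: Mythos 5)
Your proposal correctly identifies the overall skeleton of the argument: enhanced dissipation at rate $A^{-1/3}$ via weighted space--time estimates, the decomposition of the lift-up and transport terms, the introduction of $W = u_{2,\neq} + \kappa u_{3,\neq}$ and the twisted derivative $\partial_x(\partial_z - \kappa\partial_y)$ adapted to the quasi-linear operator $\mathcal{L}_V$, and the role of the auxiliary norms $E_{5,1}, E_{5,2}$. The treatment of $E_{2,1}$ and $E_{2,2}$ matches the paper's in spirit (the paper applies the coordinate-transform propositions from Wei--Zhang rather than a direct weighted energy, but the underlying mechanism is the same).

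However, there is a genuine gap in the $E_4 + E_{5,1}$ estimate, which is the crux of the proposition. You state that after the change-of-unknown to $W$, ``$\mathbf{U}_2$ enters only through harmless lower-order terms.'' This is too optimistic, and the step that would fail is the estimate of $\triangle W$. Taking $\triangle$ of equation \eqref{eq:Lv} produces the singular commutator $\triangle\bigl(-\tfrac{2}{A}\nabla\kappa\cdot\nabla u_{3,\neq}\bigr)$, which cannot be absorbed at the $X_b$ level by the dissipation $A^{-1}\|\nabla\triangle W\|_{L^2L^2}^2$ because it carries three derivatives of $u_{3,\neq}$ without a compensating factor of $A^{-1}$. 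The paper handles this with three additional devices that your proposal omits: first, the decomposition $\nabla\kappa\cdot\nabla u_{3,\neq} = \rho_1\nabla V\cdot\nabla u_{3,\neq} + \rho_2(\partial_z-\kappa\partial_y)u_{3,\neq}$ (see \eqref{kappa u3}--\eqref{def rho}), isolating a ``good'' piece aligned with the twisted derivative and a ``bad'' piece aligned with $\nabla V$; second, the splitting $W = W^{(1)} + A^{-1}W^{(2)}$ with $\mathcal{L}_V W^{(2)} = -\rho_1\nabla V\cdot\nabla u_{3,\neq}$, which peels the bad piece off into a quantity of size $A^{-1}$ that is estimated only at lower regularity (Lemma \ref{lem W}(ii)); and third, the auxiliary unknown $W^{(3)}$ with $\mathcal{L}_V W^{(3)} = -\nabla V\cdot\nabla u_{3,\neq}$, used to write $\triangle u_{3,\neq} = (\triangle u_{3,\neq} - 2\partial_x W^{(3)}) + 2\partial_x W^{(3)}$ so that the residual satisfies a tame equation (\ref{u3''-W}) and the commutator $W^{(2)} - \rho_1 W^{(3)}$ gains smallness (Lemma \ref{lem W}(iii)). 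Without this layered decomposition and the four-step bootstrap in Lemma \ref{E5 E6} (estimate $\|\triangle u_{3,\neq} - 2\partial_x W^{(3)}\|_{X_b}$, then $\|\triangle W^{(1)}\|_{X_b}$, then $E_{5,2}$, then close), the argument does not close. Your sketch also attributes the introduction of $W$ to the $\mathbf{U}_2$ interaction; in the paper the primary driver is the pressure term, since $\triangle P^{N_1}_{\neq} + \triangle P^{N_2}_{\neq}$ is precisely what forces $\partial_y V\,\partial_x W$ to appear, and $\mathbf{U}_2$ is absorbed at the operator level through the coordinate transform $V = y + A^{-1}\mathbf{U}_2$ rather than through $W$ directly.
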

	
	{\bf{Step 3:}} Combining the above propositions with the well-posedness of system (\ref{ini11}) given by Remark \ref{local well-posedness}, and taking $A_{1}=\max\{A_{2}, A_{5}, A_{7}\},$ we  complete the proof.
	
\end{proof}

\section{A priori estimates}
Firstly, we give some relationships between velocity $u_{\neq}$ and the new vorticity $\omega_{2,\neq}$, which will be frequently used in the later calculations.
Since ${\rm div}~u_{\neq}=0$, the result follows immediately from Fourier series (see Lemma 3.13 in \cite{CWW2025}),  we omit the proof. 
\begin{lemma}[]\label{lemma_u}
	There holds
	\begin{equation}\label{eq:velocity embed}
		\begin{aligned}
			&\left\|(\partial_x,
			\partial_z)\partial_xu_{\neq}\right\|_{L^2}\leq C(\|\partial_x\omega_{2,\neq}\|_{L^2}
			+\|\triangle u_{2,\neq}\|_{L^2}),\\
			&\left\|(\partial_x,
			\partial_z)\partial_yu_{\neq}\right\|_{L^2}\leq C(\|\partial_y\omega_{2,\neq}\|_{L^2}
			+\|\triangle u_{2,\neq}\|_{L^2}),\\
			&\left\|(\partial_x,
			\partial_z)\partial_zu_{\neq}\right\|_{L^2}\leq C(\|\partial_z\omega_{2,\neq}\|_{L^2}
			+\|\triangle u_{2,\neq}\|_{L^2}),\\
			&\left\|(\partial_x^2,
			\partial_z^2)u_{3,\neq}\right\|_{L^2}
			\leq C(\|\partial_x\omega_{2,\neq}\|_{L^2}
			+\|\triangle u_{2,\neq}\|_{L^2}),\\
			&\left\|(\partial_x,
			\partial_z)\partial_x\nabla u_{\neq}\right\|_{L^2}\leq C(\|\partial_x\nabla\omega_{2,\neq}\|_{L^2}
			+\|\nabla\triangle u_{2,\neq}\|_{L^2}),\\
			&\left\|(\partial_x,
			\partial_z)\partial_y\nabla u_{\neq}\right\|_{L^2}\leq C(\|\partial_y\nabla\omega_{2,\neq}\|_{L^2}
			+\|\nabla\triangle u_{2,\neq}\|_{L^2}).
		\end{aligned}
	\end{equation}
\end{lemma}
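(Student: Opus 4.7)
My plan is to reduce everything to a Fourier multiplier computation, exploiting the two linear constraints available for $u_{\neq}$: the divergence-free condition $\partial_x u_{1,\neq}+\partial_y u_{2,\neq}+\partial_z u_{3,\neq}=0$ and the definition $\omega_{2,\neq}=\partial_z u_{1,\neq}-\partial_x u_{3,\neq}$. Writing $u_{\neq}=\sum_{k_1\neq 0,k_2,k_3}\widehat{u}_k e^{i(k_1 x+k_2 y+k_3 z)}$, the Fourier images of these two equations form a $2\times 2$ linear system in $(\widehat{u}_{1,k},\widehat{u}_{3,k})$ with coefficient matrix of determinant $-(k_1^2+k_3^2)$. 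Because we are on the non-zero modes $k_1\neq 0$, this determinant is bounded below by $k_1^2\geq 1$, so the system is invertible and I obtain
\begin{equation*}
\widehat{u}_{1,k}=\frac{-k_1 k_2\,\widehat{u}_{2,k}-i k_3\,\widehat{\omega}_{2,k}}{k_1^2+k_3^2},\qquad
\widehat{u}_{3,k}=\frac{-k_2 k_3\,\widehat{u}_{2,k}+i k_1\,\widehat{\omega}_{2,k}}{k_1^2+k_3^2}.
\end{equation*}

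With these explicit expressions, each of the six inequalities in \eqref{eq:velocity embed} becomes a pointwise-in-frequency bound on a rational symbol. For example, for the first inequality I split $\partial_x u_{\neq}$ into its three components and, using $|k|^2=k_1^2+k_2^2+k_3^2$ together with the elementary estimates
\begin{equation*}
\frac{k_1^2 k_3^2}{(k_1^2+k_3^2)^2}\leq 1,\qquad \frac{k_1^4 k_2^2}{(k_1^2+k_3^2)^2}\leq k_2^2\leq |k|^2,\qquad \frac{k_1^2 k_2^2 k_3^2}{(k_1^2+k_3^2)^2}\leq |k|^4,
\end{equation*}
I check that $|k_1|\cdot|\widehat{(\partial_x,\partial_z)u_{\neq,k}}|\lesssim |k_1|\,|\widehat{\omega}_{2,k}|+|k|^2|\widehat{u}_{2,k}|$, and Plancherel gives the desired $L^2$ estimate. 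The second and third inequalities are handled identically with $k_1$ replaced by $k_2$ and $k_3$ respectively in the outer multiplier; the symbols are the same up to harmless shuffling. For the fourth inequality the bound $|(k_1^2,k_3^2)\widehat{u}_{3,k}|\lesssim |k_1|\,|\widehat{\omega}_{2,k}|+|k|^2|\widehat{u}_{2,k}|$ follows from
\begin{equation*}
\frac{(k_1^2+k_3^2)^2 k_1^2}{(k_1^2+k_3^2)^2}=k_1^2,\qquad \frac{(k_1^2+k_3^2)^2 k_2^2 k_3^2}{(k_1^2+k_3^2)^2}=k_2^2 k_3^2\leq |k|^4.
\end{equation*}
The last two inequalities are the same estimates applied one derivative higher; after pulling out the extra $\nabla$ as an $i k$ factor, the multipliers acquire an extra power of $|k|$ that is absorbed by replacing $\omega_{2,\neq}$ by $\nabla\omega_{2,\neq}$ and $\triangle u_{2,\neq}$ by $\nabla\triangle u_{2,\neq}$ on the right-hand side.

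The only point where any care is required is the divisor $k_1^2+k_3^2$, which could in principle be small; but the restriction to non-zero modes in $x$ forces $k_1^2+k_3^2\geq 1$, so the symbol is bounded and no loss occurs. I expect no genuine obstacle in the argument, consistent with the authors' remark that the lemma is an immediate consequence of the Fourier representation and that a completely analogous statement is proved as Lemma 3.13 of \cite{CWW2025}.
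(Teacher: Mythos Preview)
Your approach is correct and is precisely the one the paper has in mind: the authors state that the result follows immediately from Fourier series together with ${\rm div}\,u_{\neq}=0$ and refer to Lemma~3.13 of \cite{CWW2025}, which is exactly the multiplier computation you outline.
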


Secondly, we present the nonlinear interactions between the non-zero modes of the velocity, which will often be used to estimate $E_1$, $E_2$ and $E_4$.
\begin{lemma}\label{lemma_neq1}
	It holds that 
	{\small
		\begin{equation}\label{eq:non-neq0}
			\begin{aligned}
				\|{\rm e}^{2aA^{-\frac{1}{3}}t}|u_{\neq}|^2\|_{L^2L^2}^2&\leq
				CA^{\frac{2}{3}}E_2^4,\\
				\|{\rm e}^{2aA^{-\frac{1}{3}}t}u_{\neq}\cdot\nabla u_{\neq}\|_{L^2L^2}^2&\leq
				CA^{\frac23}E_2^4,\\
				\|{\rm e}^{2aA^{-\frac{1}{3}}t}\partial_x(u_{\neq}\cdot\nabla u_{\neq})\|^2_{L^2L^2}
				&\leq CA^{\frac16+\alpha}E_2^4,\\		
				\|{\rm e}^{2aA^{-\frac{1}{3}}t}\nabla(u_{\neq}\cdot\nabla u_{2,\neq})\|^2_{L^2L^2}
				&\leq CA^{\frac{1}{2}+\frac{2}{3}\alpha}E_2^4,\\	
				\|{\rm e}^{2aA^{-\frac{1}{3}}t}\partial_z(u_{\neq}\cdot\nabla u_{3,\neq})\|_{L^2L^2}^2	
				&\leq CA^{\frac{1}{2}+\frac{2}{3}\alpha}E_2^4,\\		
				\|{\rm e}^{2aA^{-\frac{1}{3}}t}\partial_z(u_{\neq}\cdot\nabla u_{1,\neq})\|^2_{L^2L^2}
				&\leq  CA^{\frac43}E_2^4,\\
				\|{\rm e}^{2aA^{-\frac{1}{3}}t}\nabla(u_{\neq}\cdot\nabla u_{3,\neq})\|^2_{L^2L^2}
				&\leq   C(A^{\frac76+\frac{1}{3}\alpha }E_2^2E_{5}^2+A^{\frac43} E_2^4),
			\end{aligned}
	\end{equation}}
	where $\alpha$ is a constant with $\alpha\in(\frac12, \frac34).$
\end{lemma}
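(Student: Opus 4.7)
The plan is to treat each of the seven estimates as a bilinear H\"older estimate in space--time, splitting every product $f\cdot g$ into a pairing of one factor in $L^{\infty}_{t}L^{p}_{x}$ against another in $L^{2}_{t}L^{q}_{x}$; the exponential weight $e^{2aA^{-1/3}t}$ distributes symmetrically as $e^{aA^{-1/3}t}\cdot e^{aA^{-1/3}t}$. On each factor I would apply a Gagliardo--Nirenberg interpolation in three dimensions to turn the $L^{p}$ norm into $L^{2}$ norms of the velocity and its first or second derivatives, then invoke Lemma \ref{lemma_u} together with the Poincar\'e inequality $\|u_{\neq}\|_{L^{2}}\leq C\|\partial_{x}u_{\neq}\|_{L^{2}}$ on the non-zero modes to express each such derivative in terms of the base quantities $\triangle u_{2,\neq}$, $\partial_{x}\omega_{2,\neq}$, $\partial_{y}\omega_{2,\neq}$, $\partial_{z}\omega_{2,\neq}$ appearing in $E_{2,2}$.

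The $X_{a}$ norm is tailored for this: for any non-zero-mode function it yields simultaneously a weighted $L^{\infty}_{t}L^{2}_{x}$ bound, a $L^{2}_{t}L^{2}_{x}$ bound at cost $A^{1/6}$, a $L^{2}_{t}L^{2}_{x}$ bound on the gradient at cost $A^{1/2}$, and an enhanced-dissipation control on $\nabla\triangle^{-1}\partial_{x}$. When these four pieces are interpolated against each other, every unit of spatial derivative and each passage from $L^{\infty}_{t}$ to $L^{2}_{t}$ introduces a definite power of $A$, and the seven right-hand sides are obtained by choosing the most economical distribution. For instance, for $\||u_{\neq}|^{2}\|_{L^{2}L^{2}}^{2}$ I would use $\|u_{\neq}\|_{L^{4}}^{2}\leq C\|u_{\neq}\|_{L^{2}}^{1/2}\|\nabla u_{\neq}\|_{L^{2}}^{3/2}$, pair the first factor with the $L^{\infty}_{t}L^{2}_{x}$ control of $u_{\neq}$ and the second with the $L^{2}_{t}L^{2}_{x}$ control of $\nabla u_{\neq}$ at cost $A^{1/2}$, producing exactly $A^{2/3}E_{2}^{4}$. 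The four estimates containing $\alpha$ require an interpolation between the $L^{2}_{t}L^{2}_{x}$-weight $A^{1/6}$ and the $L^{2}_{t}L^{2}_{x}$-gradient-weight $A^{1/2}$; the exponent $\alpha\in(1/2,3/4)$ is precisely the interpolation parameter that makes the total $A$-power strictly less than $5/3$ while remaining above the thresholds needed to exploit enhanced dissipation.

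The main obstacle is the final inequality for $\|e^{2aA^{-1/3}t}\nabla(u_{\neq}\cdot\nabla u_{3,\neq})\|_{L^{2}L^{2}}^{2}$. After distributing the derivative, the dangerous term is $u_{\neq}\cdot\nabla^{2}u_{3,\neq}$, which demands control of the full Hessian of $u_{3,\neq}$. Lemma \ref{lemma_u} only bounds $(\partial_{x}^{2},\partial_{z}^{2})u_{3,\neq}$ by $E_{2}$, while $\partial_{y}^{2}u_{3,\neq}$ cannot be recovered from $\triangle u_{2,\neq}$ and $\partial_{x}\omega_{2,\neq}$ alone. This is precisely the gap filled by the auxiliary energy $E_{5,1}=A^{-2/3}\|\triangle u_{3,\neq}\|_{X_{b}}$. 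I would therefore split the product into a piece where $\nabla^{2}u_{3,\neq}$ is replaced by $(\partial_{x}^{2},\partial_{z}^{2})u_{3,\neq}$, yielding the $A^{4/3}E_{2}^{4}$ contribution exactly as in the preceding estimates, and a piece involving $\triangle u_{3,\neq}$ controlled by $E_{5}$ at cost $A^{2/3}$. Combined with a Gagliardo--Nirenberg interpolation on the $u_{\neq}$ factor calibrated by $\alpha$, this produces the claimed $A^{7/6+\alpha/3}E_{2}^{2}E_{5}^{2}$ term; keeping $\alpha<3/4$ ensures $7/6+\alpha/3<5/3$, which is the numerology needed to close the energy in the subsequent propositions.
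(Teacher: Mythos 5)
Your broad framework—split each bilinear term, interpolate spatially via Gagliardo--Nirenberg, then pair $L^{\infty}_{t}$ factors against $L^{2}_{t}$ factors using the $X_{a}$-norm hierarchy—is the right skeleton, and your discussion of the last estimate correctly identifies why $E_{5,1}$ is needed to control $\triangle u_{3,\neq}$. But the specific interpolation you propose is too lossy, and the sample computation you give to justify $A^{2/3}$ for $\||u_{\neq}|^{2}\|_{L^{2}L^{2}}^{2}$ does not close.

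The gap is that you rely on the \emph{isotropic} 3D Gagliardo--Nirenberg estimate $\|u_{\neq}\|_{L^{4}}^{4}\leq C\|u_{\neq}\|_{L^{2}}\|\nabla u_{\neq}\|_{L^{2}}^{3}$. Feeding this into the temporal Hölder inequality forces you to control $\|\nabla u_{\neq}\|_{L^{2}}$ three times, and $\|\nabla u_{\neq}\|_{L^{2}}$ is bounded only through $\Gamma_{2}=\|\nabla\omega_{2,\neq}\|_{L^{2}}+\|\triangle u_{2,\neq}\|_{L^{2}}$, which costs $A^{1/3}E_{2}$ in $L^{\infty}_{t}$ and $A^{1/2}E_{2}$ in $L^{2}_{t}$ because of the $A^{-1/3}$ prefactor on $\partial_{y}\omega_{2,\neq},\partial_{z}\omega_{2,\neq}$ in $E_{2,2}$. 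The cheapest distribution $L^{\infty}\cdot L^{2}\cdot L^{2}$ gives $A^{1/3+1/2+1/2}=A^{4/3}$, so your scheme produces $A^{4/3}E_{2}^{4}$, not the claimed $A^{2/3}E_{2}^{4}$. The paper avoids this by replacing isotropic GN with the \emph{anisotropic} Sobolev embeddings of Lemma~\ref{sob_inf_2}, which put as much weight as possible on $\partial_{x}$ and $\partial_{z}$. This lets the paper bound $\||u_{\neq}|^{2}\|_{L^{2}}^{2}\leq C\Gamma_{1}^{3}\Gamma_{2}$, where $\Gamma_{1}=\|\partial_{x}\omega_{2,\neq}\|_{L^{2}}+\|\triangle u_{2,\neq}\|_{L^{2}}$ is controlled at cost $E_{2}$ in $L^{\infty}_{t}$ and $A^{1/6}E_{2}$ in $L^{2}_{t}$; pairing $L^{\infty}\cdot L^{\infty}\cdot L^{2}\cdot L^{2}$ then gives $E_{2}^{2}\cdot A^{1/6}E_{2}\cdot A^{1/2}E_{2}=A^{2/3}E_{2}^{4}$. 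The anisotropy is not cosmetic; without it the powers of $A$ in all of estimates (1)--(6) degrade to the point where the later energy closures in Sections 4--7 fail. Relatedly, your description of $\alpha$ as a temporal interpolation parameter between the weights $A^{1/6}$ and $A^{1/2}$ is not what happens: $\alpha$ is the \emph{spatial} interpolation exponent appearing inside the anisotropic embeddings $\eqref{sob_result_2}_{1}$--$\eqref{sob_result_2}_{9}$, and it governs how much of the product is charged to the cheap $\Gamma_{4}=\|\partial_{x}\nabla u_{2,\neq}\|_{L^{2}}$ (controlled at cost $E_{2}$ in $L^{2}_{t}$) versus the expensive $\Gamma_{3}$; the $A^{1/6+\alpha}$ and $A^{1/2+2\alpha/3}$ exponents emerge from that spatial balance, not from temporal interpolation.
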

\begin{proof}
	For convenience, we denote by 
	\begin{equation*}
		\begin{aligned}
			&\Gamma_1=\|\partial_x\omega_{2,\neq}\|_{L^2}+\|\triangle u_{2,\neq}\|_{L^2},\\
			&\Gamma_2=\|\nabla\omega_{2,\neq}\|_{L^2}+\|\triangle u_{2,\neq}\|_{L^2},\\
			&\Gamma_3=\|\partial_x\nabla\omega_{2,\neq}\|_{L^2}+\|\nabla\triangle u_{2,\neq}\|_{L^2},\\
			&\Gamma_4=\|\partial_x\nabla u_{2,\neq}\|_{L^2}.
		\end{aligned}
	\end{equation*}
	According to the definition of  $E_2(t)$ and assumptions (\ref{assumption}), there holds
	\begin{equation*}
		\|{\rm e}^{aA^{-\frac{1}{3}}t}\Gamma_1\|_{L^{\infty}}+\frac{\|{\rm e}^{aA^{-\frac{1}{3}}t}\Gamma_1\|_{L^{2}}}{A^{\frac16}}
		+\frac{\|{\rm e}^{aA^{-\frac{1}{3}}t}(\Gamma_2,\Gamma_3)\|_{L^{2}}}{A^{\frac12}}+\frac{\|{\rm e}^{aA^{-\frac{1}{3}}t}\Gamma_2\|_{L^{\infty}}}{A^{\frac13}}
		+\|{\rm e}^{aA^{-\frac{1}{3}}t}\Gamma_4\|_{L^{2}}\leq CE_2,
	\end{equation*}
	which will be frequently used in later calculations.
	
	{\bf Estimate of $(\ref{eq:non-neq0})_{1}.$}
	Using $\eqref{sob_result_2}_5$ in Lemma \ref{sob_inf_2} and $\eqref{eq:velocity embed}_1$ in Lemma \ref{lemma_u}, we have
	\begin{align*}
		\|u_{\neq}\|_{L^{\infty}_{x,z}L^2_y}^2
		\leq C\big(\|\partial_xu_{\neq}\|^{2\alpha}_{L^2}
		\|u_{\neq}\|^{2-2\alpha}_{L^2}
		+\|\partial_x\partial_zu_{\neq}\|^{2\alpha}_{L^2}
		\|u_{\neq}\|^{2-2\alpha}_{L^2}\big)
		\leq C\Gamma_1^2.
	\end{align*}
	Therefore, by $\eqref{sob_result_2}_8$ and $\eqref{eq:velocity embed}_2,$ we arrive 
	\begin{equation*}
		\begin{aligned}
			\||u_{\neq}|^2\|_{L^2}^2\leq
			C\big(\|\partial_xu_{\neq}\|^{2\alpha}_{L^2}
			\|u_{\neq}\|^{2-2\alpha}_{L^2}
			+\|\partial_x\partial_zu_{\neq}\|^{2\alpha}_{L^2}
			\|u_{\neq}\|^{2-2\alpha}_{L^2}\big)\|\nabla u_{\neq}\|_{L^2}\|u_{\neq}\|_{L^2}
			\leq C\Gamma_1^3\Gamma_2,
		\end{aligned}
	\end{equation*}
	which indicates that 
	$$\|{\rm e}^{2aA^{-\frac{1}{3}}t}|u_{\neq}|^2\|_{L^2L^2}^2\leq CA^{\frac{2}{3}}(\|\partial_x\omega_{2,\neq}\|^4_{X_a}+\|\triangle u_{2,\neq}\|^4_{X_a})\leq CA^{\frac23} E_2^4. $$	
	
	{\bf Estimate of $(\ref{eq:non-neq0})_{2}.$}
	Using Lemma \ref{sob_inf_2} and Lemma \ref{lemma_u}  again, there hold
	\begin{equation*}\label{appa_1}
		\begin{aligned}
			&\|u_{2,\neq}\|_{L^{\infty}_{x,y}L^2_z}^2
			\leq C\|\partial_x u_{2,\neq}\|_{L^2}\|\partial_x\nabla u_{2,\neq}\|_{L^2}\leq C\Gamma_4^2,\\
			&\|u_{\neq}\|_{L^{\infty}_{x,z}L^2_y}^2
			\leq C\big(\|\partial_xu_{\neq}\|^{2}_{L^2}
			+\|(\partial_x,\partial_z)\partial_xu_{\neq}\|_{L^2}^{2}
			\big)\leq C\Gamma_1^2,\\
			&\|\partial_y u_{\neq}\|_{L^{\infty}_{x}L^2_{y,z}}^2+\|\partial_y u_{\neq}\|_{L^{\infty}_{z}L^2_{x,y}}^2
			\leq C\Gamma_2^2,\\
			&\|(\partial_x,\partial_z)u_{\neq}\|_{L^{\infty}_{y}L^2_{x,z}}^2
			\leq C\Gamma_1\Gamma_2.	
		\end{aligned}
	\end{equation*}
	Therefore, one gets 
	\begin{equation*}
		\begin{aligned}
			\|u_{\neq}\cdot\nabla u_{\neq}\|^2_{L^2}
			\leq& \|u_{1,\neq}\partial_x u_{\neq}\|_{L^2}^2+
			\|u_{2,\neq}\partial_y u_{\neq}\|_{L^2}^2+
			\|u_{3,\neq}\partial_z u_{\neq}\|_{L^2}^2\\
			\leq& \|u_{\neq}\|_{L^{\infty}_{x,z}L^2_y}^2\|(\partial_x,\partial_z)u_{\neq}\|_{L^{\infty}_{y}L^2_{x,z}}^2+
			\|u_{2,\neq}\|_{L^{\infty}_{x,y}L^2_z}^2
			\|\partial_y u_{\neq}\|_{L^{\infty}_{z}L^2_{x,y}}^2\\
			\leq& C(\Gamma_1^3\Gamma_2+\Gamma_2^2\Gamma_4^2),
		\end{aligned}
	\end{equation*}
	which implies that  
	$$\|{\rm e}^{2aA^{-\frac{1}{3}}t}u_{\neq}\cdot\nabla u_{\neq}\|^2_{L^2L^2}\leq CA^{\frac23} E_2^4.$$
	
	{\bf Estimate of $(\ref{eq:non-neq0})_{3}.$}
	For $j\in\{1,3\},$ by Lemma \ref{sob_inf_2} and Lemma \ref{lemma_u}, we have 
	\begin{equation*}
		\begin{aligned}
			\|u_{j,\neq}\|^2_{L^{\infty}_{x,z}L^2_y}+\|\partial_xu_{j,\neq}\|^2_{L^{\infty}_{z}L^2_{x,y}}&\leq C\|\partial_x(\partial_x,\partial_z)u_{j,\neq}\|^2_{L^2}
			\leq C\Gamma_1^2
		\end{aligned}
	\end{equation*}
	and 
	\begin{equation*}
		\begin{aligned}
			&\quad\|\partial_x\partial_ju_{\neq}\|^2_{L^{\infty}_{y}L^2_{x,z}}+
			\|\partial_ju_{\neq}\|^2_{L^{\infty}_{x,y}L^2_{z}}\leq C\|\partial_x\partial_j\nabla u_{\neq}\|_{L^2}\|\partial_x\partial_ju_{\neq}\|_{L^2}
			\leq C\Gamma_1\Gamma_3,
		\end{aligned}
	\end{equation*}
	which indicate that 
	\begin{equation}\label{ap:u1}
		\begin{aligned}
			\|\partial_x(u_{j,\neq}\partial_j u_{\neq})\|_{L^2}^2\leq 
			C\Gamma_1^3\Gamma_3.
		\end{aligned}
	\end{equation}
	Using $\eqref{sob_result_2}_1$, $\eqref{sob_result_2}_4$ and $\eqref{sob_result_2}_7$, there hold  
	\begin{equation*}
		\begin{aligned}
			\|\partial_xu_{2,\neq}\|^2_{L^{\infty}_{x,y}L^2_z}+\|u_{2,\neq}\|^2_{L^{\infty}}
			\leq C\|\partial_x\nabla u_{2,\neq}\|_{L^2}^{3-2\alpha}
			\|\nabla\triangle u_{2,\neq}\|_{L^2}^{2\alpha-1}
			\leq C\Gamma_4^{3-2\alpha}
			\Gamma_3^{2\alpha-1}
		\end{aligned}
	\end{equation*}
	and 
	\begin{equation*}
		\begin{aligned}
			\|\partial_yu_{\neq}\|^2_{L^{\infty}_{z}L^2_{x,y}}
			\leq C\|(\partial_x,\partial_z)\partial_yu_{\neq}\|^2_{L^2}
			\leq C\Gamma_2^2,
		\end{aligned}
	\end{equation*}
	which show that 
	\begin{equation}\label{ap:u2}
		\begin{aligned}
			\|\partial_x(u_{2,\neq}\partial_y u_{\neq})\|_{L^2}^2\leq 
			C\Gamma_2^2\Gamma_3^{2\alpha-1}\Gamma_4^{3-2\alpha}.
		\end{aligned}
	\end{equation}
	
	Combining \eqref{ap:u1} with \eqref{ap:u2}, we conclude that 
	$$\|{\rm e}^{2aA^{-\frac{1}{3}}t}\partial_x(u_{\neq}\cdot\nabla u_{\neq})\|^2_{L^2L^2}
	\leq CA^{\frac16+\alpha}E_2^4.$$
	
	{\bf Estimate of $(\ref{eq:non-neq0})_{4}.$}
	By Lemma \ref{sob_inf_2} and Lemma \ref{lemma_u}, direct calculations show that 
	\begin{equation}\label{ap:u11}
		\begin{aligned}
			&\| u_{\neq}\|^2_{L^{\infty}}\leq C(\|\partial_x\omega_{2,\neq}\|_{L^2}+\|\triangle u_{2,\neq}\|_{L^2})
			(\|\nabla\omega_{2,\neq}\|_{L^2}+\|\triangle u_{2,\neq}\|_{L^2})\leq C\Gamma_1\Gamma_2,\\
			&\|\nabla u_{\neq}\|^2_{L^{\infty}_{z}L^2_{x,y}}+\|\nabla u_{\neq}\|^2_{L^{\infty}_{x}L^2_{y,z}}\leq C\Gamma_2^2,\\
			&\|\nabla u_{2,\neq}\|^2_{L^{\infty}_{x,y}L^{2}_{z}}\leq C\|\partial_x\nabla u_{2,\neq}\|_{L^{2}}\|\nabla\triangle u_{2,\neq}\|^{2\alpha-1}_{L^{2}}
			\|\triangle u_{2,\neq}\|^{2-2\alpha}_{L^{2}}\leq C\Gamma_4\Gamma_3^{2\alpha-1}\Gamma_1^{2-2\alpha}.
		\end{aligned}
	\end{equation}
	Combining above results with 
	\begin{equation*}
		\begin{aligned}
			\|\nabla(u_{\neq}\cdot\nabla u_{2,\neq})\|^2_{L^2}&\leq C(\|\nabla u_{\neq}\|^2_{L^{\infty}_{z}L^2_{x,y}}\|\nabla u_{2,\neq}\|^2_{L^{\infty}_{x,y}L^{2}_{z}}+\|u_{\neq}\|^2_{L^{\infty}}\|\triangle u_{2,\neq}\|^2_{L^2})\\
			&\leq C(\Gamma_4\Gamma_3^{2\alpha-1}
			\Gamma_1^{2-2\alpha}\Gamma_2^2+\Gamma_1^3\Gamma_2),
		\end{aligned}
	\end{equation*}
	there holds 
	\begin{equation*}
		\begin{aligned}
			\|{\rm e}^{2aA^{-\frac{1}{3}}t}\nabla(u_{\neq}\cdot\nabla u_{2,\neq})\|^2_{L^2L^2}\leq A^{\frac{1}{2}+\frac{2}{3}\alpha}E_2^4.
		\end{aligned}
	\end{equation*}
	
	{\bf Estimate of $(\ref{eq:non-neq0})_{5}.$}
	For $j\in\{1,3\}$, using Lemma \ref{sob_inf_2} and Lemma \ref{lemma_u}, we have 
	\begin{equation*}
		\begin{aligned}
			\|\partial_zu_{j,\neq}\|_{L^{\infty}_{x}L^2_{y,z}}^{2}\leq C\Gamma_1^2,\quad
			\|\partial_ju_{3,\neq}\|_{L^{\infty}_{y,z}L^2_{x}}^{2}\leq C\Gamma_1\Gamma_3,
		\end{aligned}
	\end{equation*}
	which along with $\eqref{ap:u11}_1$ indicates that 
	\begin{equation}\label{temp:u1}
		\begin{aligned}
			\|\partial_z(u_{j,\neq}\partial_ju_{3,\neq})\|_{L^2}^2
			\leq C\Gamma_1^3\Gamma_3.
		\end{aligned}
	\end{equation}
	For $j=2,$ by Lemma \ref{lemma_u} and Lemma \ref{sob_inf_2}, there holds 
	\begin{equation}\label{temp:u111}
		\begin{aligned}
			&\|u_{2,\neq}\|^2_{L^{\infty}}\leq C\|\partial_x\nabla u_{2,\neq}\|_{L^2}^{3-2\alpha}
			\|\triangle u_{2,\neq}\|_{L^2}^{2\alpha-1}
			\leq C\Gamma_4^{3-2\alpha}\Gamma_1^{2\alpha-1},\\
			&\|\partial_y u_{3,\neq}\|^2_{L^{\infty}_{z}L^2_{x,y}}\leq C\|(\partial_x,\partial_z)\nabla u_{3,\neq}\|^2_{L^2}\leq C\Gamma_2^2,
		\end{aligned}
	\end{equation}
	which along with  $\eqref{eq:velocity embed}_2$ and $\eqref{ap:u11}_3$ shows that 
	\begin{equation}\label{temp:u2}
		\begin{aligned}
			\|\partial_z(u_{2,\neq}\partial_yu_{3,\neq})\|_{L^2}^2
			\leq C(\Gamma_4\Gamma_3^{2\alpha-1}
			\Gamma_1^{2-2\alpha}\Gamma_2^2+\Gamma_4^{3-2\alpha}\Gamma_1^{2\alpha-1}\Gamma_2^2).
		\end{aligned}
	\end{equation}
	Combining \eqref{temp:u1} and \eqref{temp:u2},  we obtain that 
	\begin{equation*}
		\begin{aligned}
			\|{\rm e}^{2aA^{-\frac{1}{3}}t}\partial_z(u_{\neq}\cdot\nabla u_{3,\neq})\|_{L^2L^2}^2	
			\leq A^{\frac{1}{2}+\frac{2}{3}\alpha}E_2^4.
		\end{aligned}
	\end{equation*}
	
	{\bf Estimate of $(\ref{eq:non-neq0})_{6}.$}
	According to \eqref{temp:u111} and  \eqref{temp:u2}, after replacing $u_{3,\neq}$ with $u_{1,\neq}$ , we can prove that
	\begin{equation}\label{temp:u41}
		\begin{aligned}
			\|\partial_z(u_{2,\neq}\partial_yu_{1,\neq})\|_{L^2}^2
			\leq C(\Gamma_4\Gamma_3^{2\alpha-1}
			\Gamma_1^{2-2\alpha}\Gamma_2^2+\Gamma_4^{3-2\alpha}\Gamma_1^{2\alpha-1}\Gamma_2^2).
		\end{aligned}
	\end{equation}
	Similar to \eqref{temp:u1}, for $j\in\{1,3\},$ by using $(\ref{eq:velocity embed})_3,$ one deduces
	\begin{equation}\label{temp:u42}
		\begin{aligned}
			\|\partial_z(u_{j,\neq}\partial_ju_{1,\neq})\|_{L^2}^2
			\leq C\Gamma_1\Gamma_2 ^2\Gamma_3.
		\end{aligned}
	\end{equation}
	Collecting \eqref{temp:u41} and  \eqref{temp:u42}, there holds
	\begin{equation*}
		\|{\rm e}^{2aA^{-\frac{1}{3}}t}\partial_z(u_{\neq}\cdot\nabla u_{1,\neq})\|^2_{L^2L^2}
		\leq  CA^{\frac43}E_2^4.
	\end{equation*}

	{\bf Estimate of $(\ref{eq:non-neq0})_{7}.$}
	For $j\in\{1,3\}$, using Lemma \ref{sob_inf_2} and Lemma \ref{lemma_u} again, we have  
	\begin{equation*}
		\begin{aligned}
			\|\partial_ju_{3,\neq}\|_{L^{\infty}_{y,z}L^2_{x}}^2\leq C\|\partial_z\partial_j\nabla u_{3,\neq}\|_{L^2}\|\partial_z\partial_ju_{3,\neq}\|_{L^2}
			\leq C\Gamma_1\Gamma_3,
		\end{aligned}
	\end{equation*}
	which along with $\eqref{ap:u11}_1$ and $\eqref{ap:u11}_2$ indicates that 
	\begin{equation}\label{temp:u3}
		\begin{aligned}
			\|\nabla(u_{j,\neq}\partial_ju_{3,\neq})\|_{L^2}^2
			&\leq C(\|u_{j,\neq}\|_{L^{\infty}}^2\|\nabla\partial_ju_{3,\neq}\|_{L^2}^2+
			\|\nabla u_{j,\neq}\|_{L^{\infty}_xL^2_{y,z}}^2\|\partial_ju_{3,\neq}\|_{L^{\infty}_{y,z}L^2_{x}}^2)\\
			&\leq C(\Gamma_1\Gamma_2^3+\Gamma_2^2\Gamma_1\Gamma_3).
		\end{aligned}
	\end{equation}
	According to $\eqref{ap:u11}_3$ and \eqref{temp:u111}, one obtains 
	\begin{equation}\label{temp:u4}
		\begin{aligned}
			\|\nabla(u_{2,\neq}\partial_y u_{3,\neq})\|_{L^2}^2
			&\leq C(\|u_{2,\neq}\|_{L^{\infty}}^2\|\partial_y \nabla u_{3,\neq}\|_{L^2}^2
			+\|\nabla u_{2,\neq}\|_{L^{\infty}_{x,y}L^2_z}^2\|\partial_y u_{3,\neq}\|_{L^{\infty}_zL^2_{x,y}}^2)\\
			&\leq C(\Gamma_4^{3-2\alpha}\Gamma_1^{2\alpha-1}\|\triangle u_{3,\neq}\|_{L^2}^2
			+\Gamma_4\Gamma_3^{2\alpha-1}\Gamma_1^{2-2\alpha}\Gamma_2^2).
		\end{aligned}
	\end{equation} 
	Therefore, we infer from \eqref{temp:u3} and \eqref{temp:u4} that 
	\begin{equation*}
		\begin{aligned}
			\|{\rm e}^{2aA^{-\frac{1}{3}}t}\nabla(u_{\neq}\cdot\nabla u_{3,\neq})\|^2_{L^2L^2}
			\leq C(A^{\frac76+\frac{1}{3}\alpha }E_2^2E_{5}^2+A^{\frac43} E_2^4).
		\end{aligned}
	\end{equation*}
	
	We finish the proof.
\end{proof}

Next, we give the nonlinear interaction between the zero mode $u_{1,0}$ and the non-zero mode $u_{\neq}$.
Recall that the zero mode $u_{1,0}$  is decomposed into two components with $u_{1,0}=\mathbf{U}_1+\mathbf{U}_2,$
where $\mathbf{U}_1$ is the good component and $\mathbf{U}_2$ is the bad component.

The following result will be used in estimating the energies  $\{\|\triangle u_{2,\neq}\|_{X_a}^2, \|\nabla \omega_{2,\neq}\|_{X_a}^2\}$.
Consequently, to close the energy estimates, the degree of $A$ must be strictly less than 1.
\begin{lemma}\label{lem_uneq2}
	It holds that 
	\begin{equation}\label{lemma_neq2_2}
		\begin{aligned}
			&\|{\rm e}^{aA^{-\frac{1}{3}}t}
			\mathbf{U}_1\partial_x\nabla u_{2,\neq}\|_{L^2L^2}^2
			\leq CA^{\frac12}
			\|\mathbf{U}_1\|_{L^{\infty}H^1}^2
			\|\triangle u_{2,\neq}\|_{X_a}^2,\\
			&\|{\rm e}^{aA^{-\frac{1}{3}}t}
			\mathbf{U}_1\partial_x(\partial_x,\partial_z) u_{3,\neq}\|_{L^2L^2}^2\leq
			CA^{\frac23}\|\mathbf{U}_1\|_{L^{\infty}H^1}^2
			(\|\partial_x\omega_{2,\neq}\|^2_{X_a}
			+\|\triangle u_{2,\neq}\|^2_{X_a}),\\
			&\|{\rm e}^{aA^{-\frac{1}{3}}t}\mathbf{U}_1
			\partial_x^{2} u_{1,\neq}\|_{L^2L^2}^2\leq
			CA^{\frac23}\|\mathbf{U}_1\|_{L^{\infty}H^1}^2
			(\|\partial_x\omega_{2,\neq}\|^2_{X_a}
			+\|\triangle u_{2,\neq}\|^2_{X_a}),\\
			&\|{\rm e}^{aA^{-\frac13}t}\mathbf{U}_1\partial_{x}\partial_{z}u_{1,\neq}\|_{L^{2}L^{2}}^{2}\leq CA^{\frac23}\|\mathbf{U}_1\|_{L^{\infty}H^1}^2\left(\|\partial_{x}\omega_{2,\neq}\|^2_{X_{a}}+\|\triangle u_{2,\neq}\|^2_{X_{a}} \right),
			\\&\|{\rm e}^{aA^{-\frac{1}{3}}t}
			(\partial_y,\partial_z) \mathbf{U}_1\partial_x u_{\neq}\|_{L^2L^2}^2\leq
			CA^{\frac23}\|\mathbf{U}_1\|_{L^{\infty}H^1}^2(\|\partial_x\omega_{2,\neq}\|^2_{X_a}+\|\triangle u_{2,\neq}\|^2_{X_a}),
			\\&\|{\rm e}^{aA^{-\frac{1}{3}}t}
			\partial_y \mathbf{U}_1\partial_y u_{2,\neq}\|_{L^2L^2}^2\leq
			CA^{\frac23}\|\mathbf{U}_1\|_{L^{\infty}H^1}^2\|\triangle u_{2,\neq}\|^2_{X_a}.
		\end{aligned}
	\end{equation}
\end{lemma}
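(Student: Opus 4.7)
The plan is to exploit the fact that $\mathbf{U}_1=\mathbf{U}_1(t,y,z)$ is independent of $x$, while the second factor in each of the six inequalities involves a non-zero mode carrying (except in the sixth bound) at least one $\partial_x$-derivative. Fixing $t$ and writing $\|\mathbf{U}_1 G\|_{L^2(\mathbb{T}^3)}^2=\int_{\mathbb{T}^2}\mathbf{U}_1^2(y,z)\|G(\cdot,y,z)\|_{L^2_x}^2\,dy\,dz$, I apply H\"older in $(y,z)$ with the 2D embedding $H^1(\mathbb{T}^2)\hookrightarrow L^4$ for $\mathbf{U}_1$ and the 2D Gagliardo-Nirenberg inequality $\|\cdot\|_{L^4_{y,z}}^2\leq C\|\cdot\|_{L^2_{y,z}}\|\cdot\|_{H^1_{y,z}}$ for the $L^2_x$-norm of $G$, followed by Cauchy-Schwarz in $x$, to obtain the clean pointwise-in-$t$ bound
\[
\|\mathbf{U}_1 G\|_{L^2(\mathbb{T}^3)}^2\leq C\|\mathbf{U}_1\|_{H^1_{y,z}}^2\,\|G\|_{L^2}\,\|G\|_{L^2_xH^1_{y,z}}.
\]
For each of the first five choices of $G$, I reduce $\|G\|_{L^2}$ and $\|G\|_{L^2_xH^1_{y,z}}$ to the quantities $\Gamma_1=\|\partial_x\omega_{2,\neq}\|_{L^2}+\|\triangle u_{2,\neq}\|_{L^2}$ and $\Gamma_3=\|\partial_x\nabla\omega_{2,\neq}\|_{L^2}+\|\nabla\triangle u_{2,\neq}\|_{L^2}$ via Lemma \ref{lemma_u}; terms containing $u_{1,\neq}$ are handled either via $(\ref{eq:velocity embed})_3$ directly or by the incompressibility relation $\partial_x u_{1,\neq}=-\partial_y u_{2,\neq}-\partial_z u_{3,\neq}$, which transfers the $\partial_x$ onto $u_{2,\neq}$ and $u_{3,\neq}$ where Lemma \ref{lemma_u} applies cleanly.

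The second stage is to integrate in time against the weight $e^{2aA^{-1/3}t}$, apply Cauchy-Schwarz in $t$, and match the two resulting factors to pieces of the $X_a$ norm. For the first bound the decisive saving is that with $f=\triangle u_{2,\neq}$, the term $\nabla\triangle^{-1}\partial_x f=\partial_x\nabla u_{2,\neq}$ is already one of the summands in $\|f\|_{X_a}^2$, so $\|e^{aA^{-1/3}t}\partial_x\nabla u_{2,\neq}\|_{L^2L^2}\leq\|\triangle u_{2,\neq}\|_{X_a}$ with no power of $A$, while $\|e^{aA^{-1/3}t}\nabla\triangle u_{2,\neq}\|_{L^2L^2}\leq A^{1/2}\|\triangle u_{2,\neq}\|_{X_a}$; multiplying yields precisely $A^{1/2}$. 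For the remaining bounds no such ``free'' gain is available and one uses instead the cruder pair $\|e^{aA^{-1/3}t}h\|_{L^2L^2}\leq A^{1/6}\|h\|_{X_a}$ together with $\|e^{aA^{-1/3}t}\nabla h\|_{L^2L^2}\leq A^{1/2}\|h\|_{X_a}$, whose product is $A^{2/3}$. For the sixth estimate, where $\partial_y$ sits on $\mathbf{U}_1$, I proceed differently: bound $\|\partial_y\mathbf{U}_1\|_{L^2_{y,z}}\leq\|\mathbf{U}_1\|_{H^1}$ and control $\partial_y u_{2,\neq}$ in $L^2_xL^\infty_{y,z}$ via the 2D Agmon inequality $\|g\|_{L^\infty(\mathbb{T}^2)}^2\leq C\|g\|_{L^2}\|g\|_{H^2}$, obtaining $\|\partial_y u_{2,\neq}\|_{L^2_xL^\infty_{y,z}}^2\leq C\|\partial_y u_{2,\neq}\|_{L^2}\|\nabla\triangle u_{2,\neq}\|_{L^2}$; the time integration then produces the same $A^{2/3}$ scaling.

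The main delicate point is ensuring that the 2D interpolation step on $\mathbb{T}^2_{y,z}$ transfers exactly one $\nabla_{y,z}$-derivative onto the $u_{\neq}$-factor without costing any $\partial_x$-derivative or an extra power of $A$; since $\mathbf{U}_1\in H^1(\mathbb{T}^2)$ lies outside $L^\infty$ in two dimensions, the direct bound via $\|\mathbf{U}_1\|_{L^\infty}$ is unavailable and the Gagliardo-Nirenberg/Agmon route is forced. A secondary technicality is that Lemma \ref{lemma_u} does not bound $u_{1,\neq}$ by itself, so the three inequalities in $(\ref{lemma_neq2_2})_{3,4,5}$ containing $u_{1,\neq}$ must be reduced to $u_{2,\neq}, u_{3,\neq}$ and $\omega_{2,\neq}$ via either $(\ref{eq:velocity embed})_3$ or the divergence-free identity before the scheme above can be applied.
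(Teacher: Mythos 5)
Your overall plan is sound for the first four estimates and the sixth: the unified bound
\[
\|\mathbf{U}_1 G\|_{L^2}^2\leq C\|\mathbf{U}_1\|_{H^1_{y,z}}^2\,\|G\|_{L^2}\,\|G\|_{L^2_xH^1_{y,z}}
\]
derived from the $H^1(\mathbb{T}^2)\hookrightarrow L^4$ embedding is a perfectly good substitute for the paper's more granular route via the anisotropic estimates $\|\mathbf{U}_1\|_{L^\infty_zL^2_y}\cdot\|G\|_{L^\infty_yL^2_{x,z}}$ from Lemmas~\ref{sob_inf_1} and~\ref{sob_inf_2}; both reduce to a product $\Gamma_1\Gamma_3$ and the subsequent time-integration against the weight is identical. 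Your observation that the first estimate earns exactly $A^{1/2}$ because $\partial_x\nabla u_{2,\neq}=\nabla\triangle^{-1}\partial_x(\triangle u_{2,\neq})$ is already a building block of $\|\triangle u_{2,\neq}\|_{X_a}$ is precisely the mechanism the paper exploits.

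However, there is a genuine gap in your handling of the fifth estimate. You place it under the unified $L^4$-Gagliardo--Nirenberg framework (``for each of the first five choices of $G$...''), but in $(\ref{lemma_neq2_2})_5$ the $x$-independent factor is not $\mathbf{U}_1$; it is $(\partial_y,\partial_z)\mathbf{U}_1$. The H\"older split that you set up requires this factor to lie in $L^4(\mathbb{T}^2)$, which for $(\partial_y,\partial_z)\mathbf{U}_1$ would need $\mathbf{U}_1\in H^2$. But the lemma is stated and must close with only $\|\mathbf{U}_1\|_{L^\infty H^1}$ on the right-hand side (Lemma~\ref{lemma:u1:1} controls $\mathbf{U}_1$ only in $H^1$), so $(\partial_y,\partial_z)\mathbf{U}_1$ is available only in $L^2_{y,z}$. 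You correctly recognize that the sixth estimate --- ``where $\partial_y$ sits on $\mathbf{U}_1$'' --- must be treated differently, putting $\partial_y\mathbf{U}_1$ in $L^2_{y,z}$ and using a 2D Agmon-type bound on the velocity factor; the fifth estimate has exactly the same structure and must be handled the same way, precisely as the paper does (it bounds $\|\nabla\mathbf{U}_1\|_{L^2}^2\|\partial_x u_{\neq}\|_{L^\infty_{y,z}L^2_x}^2$ directly). As written, the proposal misassigns the fifth estimate to the wrong framework; the remedy is to fold it into the sixth-case argument, but that step is missing from the text.

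A minor remark: your appeal to $(\ref{eq:velocity embed})_3$ for the $u_{1,\neq}$-terms is imprecise --- that inequality controls $\|(\partial_x,\partial_z)\partial_z u_{\neq}\|_{L^2}$ and so covers $\partial_x\partial_z u_{1,\neq}$, while $\partial_x^2 u_{1,\neq}$ needs $(\ref{eq:velocity embed})_1$. Both are available and the incompressibility route is also fine, so this does not affect the correctness of those cases.
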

\begin{proof}
	Using Lemma \ref{sob_inf_1} and Lemma \ref{sob_inf_2}, we have 
	\begin{equation*}
		\begin{aligned}
			\|\mathbf{U}_1\partial_x\nabla u_{2,\neq}\|_{L^2}^2&\leq 
			\|\mathbf{U}_1\|_{L^{\infty}_{z}L^2_y}^2
			\|\partial_x\nabla u_{2,\neq}\|_{L^{\infty}_{y}L^2_{x,z}}^2\\
			&\leq C\|\mathbf{U}_1\|_{H^1}^2
			(\|\partial_x\nabla u_{2,\neq}\|_{L^2}
			\|\partial_x\partial_y\nabla u_{2,\neq}\|_{L^2}
			+\|\partial_x\nabla u_{2,\neq}\|_{L^2}^2),
		\end{aligned}
	\end{equation*}
	which implies that 
	\begin{equation*}
		\|{\rm e}^{aA^{-\frac{1}{3}}t}
		\mathbf{U}_1\partial_x\nabla u_{2,\neq}\|_{L^2L^2}^2
		\leq CA^{\frac12}
		\|\mathbf{U}_1\|_{L^{\infty}H^1}^2
		\|\triangle u_{2,\neq}\|_{X_a}^2.
	\end{equation*}
	
	For $j\in\{1,3\},$ by Lemma \ref{sob_inf_1}, Lemma \ref{sob_inf_2} and Lemma \ref{lemma_u}, there holds
	\begin{equation*}
		\begin{aligned}
			&\quad \|\mathbf{U}_1\partial_x(\partial_x,\partial_z) u_{j,\neq}\|_{L^2}^2\\
			&\leq C\|\mathbf{U}_1\|_{H^1}^2
			\|\partial_x(\partial_x,\partial_z) u_{j,\neq}\|_{L^2}
			\|\partial_x(\partial_x,\partial_z)\nabla u_{j,\neq}\|_{L^2}\\
			&\leq C\|\mathbf{U}_1\|_{H^1}^2
			(\|\partial_x\omega_{2,\neq}\|_{L^2}
			+\|\triangle u_{2,\neq}\|_{L^2})
			(\|\partial_x\nabla\omega_{2,\neq}\|_{L^2}
			+\|\nabla\triangle u_{2,\neq}\|_{L^2})
		\end{aligned}
	\end{equation*}
	and 
	\begin{equation*}
		\|{\rm e}^{aA^{-\frac{1}{3}}t}
		\mathbf{U}_1\partial_x(\partial_x,\partial_z) u_{j,\neq}\|_{L^2L^2}^2\leq
		CA^{\frac23}\|\mathbf{U}_1\|_{L^{\infty}H^1}^2
		(\|\partial_x\omega_{2,\neq}\|^2_{X_a}
		+\|\triangle u_{2,\neq}\|^2_{X_a}),
	\end{equation*}
	which give $\eqref{lemma_neq2_2}_2$, $\eqref{lemma_neq2_2}_3$ and $\eqref{lemma_neq2_2}_4$.
	
	Using Lemma \ref{sob_inf_2}, we obtain that 
	\begin{equation*}
		\begin{aligned}
			\|(\partial_y,\partial_z) \mathbf{U}_1\partial_x u_{\neq}\|_{L^2}^2&\leq
			\|\nabla\mathbf{U}_1\|_{L^2}^2\|\partial_xu_{\neq}\|_{L^{\infty}_{y,z}L^2_x}^{2}\\
			&\leq C\|\nabla\mathbf{U}_1\|_{L^2}^2\|\partial_x(\partial_x,\partial_z)u_{\neq}\|_{L^2}\|\partial_x(\partial_x,\partial_z)\nabla u_{\neq}\|_{L^2}\\
			&\leq C\|\nabla\mathbf{U}_1\|_{L^2}^2(\|\partial_x\omega_{2,\neq}\|_{L^2}
			+\|\triangle u_{2,\neq}\|_{L^2})
			(\|\partial_x\nabla\omega_{2,\neq}\|_{L^2}
			+\|\nabla\triangle u_{2,\neq}\|_{L^2}),
		\end{aligned}
	\end{equation*}
	which indicates  that 
	\begin{equation*}
		\|{\rm e}^{aA^{-\frac{1}{3}}t}
		\nabla \mathbf{U}_1\partial_x u_{\neq}\|_{L^2L^2}^2\leq
		CA^{\frac23}\|\mathbf{U}_1\|_{L^{\infty}H^1}^2(\|\partial_x\omega_{2,\neq}\|^2_{X_a}+\|\triangle u_{2,\neq}\|^2_{X_a}).
	\end{equation*}
	
	Using Lemma \ref{sob_inf_2}, we obtain that 
	\begin{equation*}
		\begin{aligned}
			\|\partial_y \mathbf{U}_1\partial_y u_{2,\neq}\|_{L^2}^2&\leq
			\|\nabla\mathbf{U}_1\|_{L^2}^2\|\partial_yu_{2,\neq}\|_{L^{\infty}_{y,z}L^2_x}^{2}\\
			&\leq C\|\nabla\mathbf{U}_1\|_{L^2}^2
			\|\triangle u_{2,\neq}\|_{L^2}\|\nabla\triangle u_{2,\neq}\|_{L^2}
		\end{aligned}
	\end{equation*}
	and 
	\begin{equation*}
		\|{\rm e}^{aA^{-\frac{1}{3}}t}
		\partial_y \mathbf{U}_1\partial_y u_{2,\neq}\|_{L^2L^2}^2\leq
		CA^{\frac23}\|\mathbf{U}_1\|_{L^{\infty}H^1}^2\|\triangle u_{2,\neq}\|^2_{X_a}.
	\end{equation*}
\end{proof}

The following result is only used to estimate   $\|(\partial_y,\partial_z) \omega_{2,\neq}\|_{X_a}^2$.
Hence, as long as the degree of $A$ is less than $\frac{5}{3}$, the energy estimates become achievable.
Sometimes, we need to use the results of Lemma \ref{lemma_u23_1}, and we will prove them in next section.
\begin{lemma}\label{lemma_neq2}
	Under the conditions of Theorem \ref{result0} and the assumptions (\ref{assumption}), there exists a constant $A_3$ independent of $A$ and $t$, such that if $A>A_3,$ 
	it holds that 
	\begin{equation}\label{lem:uneq3}
		\begin{aligned}
			&\|{\rm e}^{aA^{-\frac{1}{3}}t}(\partial_y,\partial_z)
			(u_{2,\neq}\nabla \mathbf{U}_1 )\|^2_{L^2L^2}\leq
			C\big(A^{\frac23}\|\mathbf{U}_1\|^2_{L^{\infty}H^1}E_2^2+AE_2^{2\alpha}E_4^{2-2\alpha}\big),\\
			&\|{\rm e}^{aA^{-\frac{1}{3}}t}\partial_z
			(u_{3,\neq}\nabla \mathbf{U}_1 )\|^2_{L^2L^2}\leq
			C\big(A^{\frac23}\|\mathbf{U}_1\|^2_{L^{\infty}H^1}E_2^2+A^{\frac43}E_2^{2\alpha}E_4^{2-2\alpha}\big),
		\end{aligned}
	\end{equation}
	where $\alpha$ is a constant with $\alpha\in(\frac12, \frac34).$
\end{lemma}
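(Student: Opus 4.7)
The plan is to apply the product rule to expand $(\partial_y,\partial_z)(u_{j,\neq}\nabla\mathbf{U}_1)$ for $j\in\{2,3\}$ as the sum of $\mathrm{I}_j:=(\partial_y,\partial_z)u_{j,\neq}\cdot\nabla\mathbf{U}_1$ and $\mathrm{II}_j:=u_{j,\neq}(\partial_y,\partial_z)\nabla\mathbf{U}_1$, and to estimate each piece in $L^2_tL^2_{x,y,z}$ with the exponential weight $e^{aA^{-1/3}t}$.

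For $\mathrm{I}_j$, since $\nabla\mathbf{U}_1=\nabla\mathbf{U}_1(t,y,z)$, I would apply H\"older in $(y,z)$ to pull $\nabla\mathbf{U}_1$ out in a Lebesgue norm controlled by $\|\mathbf{U}_1\|_{L^{\infty}H^1}$ through the 2D Sobolev embedding of Lemma~\ref{sob_inf_1}, and place the factor $(\partial_y,\partial_z)u_{j,\neq}$ in the complementary anisotropic space (typically $L^{\infty}_{y,z}L^2_x$). The latter is then handled by Lemma~\ref{sob_inf_2} combined with the velocity--vorticity relations of Lemma~\ref{lemma_u}, exactly as in the proof of Lemma~\ref{lemma_neq1}: both reduce to the $\Gamma_1,\Gamma_2$ quantities whose integrals against the exponential weight are controlled by $E_2$ via the definition of the $X_a$-norm. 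This route produces the $A^{2/3}\|\mathbf{U}_1\|^2_{L^{\infty}H^1}E_2^2$ piece on both lines.

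For $\mathrm{II}_j$ we have no pointwise-in-time bound on $\nabla^2\mathbf{U}_1$; the essential input is the parabolic dissipation $\tfrac{1}{A}\|\nabla^2\mathbf{U}_1\|^2_{L^2L^2}\leq C\|\mathbf{U}_1\|^2_{L^{\infty}H^1}$ coming from the $\mathbf{G}_1,\widetilde{\mathbf{B}}_1$ decomposition and made rigorous through Lemma~\ref{lemma_u23_1}. Using this input costs a factor $\sqrt{A}$ and forces $u_{j,\neq}$ into $L^{\infty}_tL^{\infty}_{y,z}L^2_x$. I would then invoke an anisotropic Gagliardo--Nirenberg interpolation (Lemma~\ref{sob_inf_2}) with parameter $\alpha\in(\tfrac12,\tfrac34)$ to bound $\|u_{j,\neq}\|^2_{L^{\infty}_{y,z}L^2_x}$ between a $\partial_x^2 u_{j,\neq}$-type norm (read off from $E_4$ via the $X_b$-norm) and a higher-derivative norm of the form $\|\partial_x\omega_{2,\neq}\|_{L^2}+\|\triangle u_{2,\neq}\|_{L^2}$ (resp.\ $\|\nabla\omega_{2,\neq}\|_{L^2}+\|\triangle u_{2,\neq}\|_{L^2}$ for $j=3$, by Lemma~\ref{lemma_u}), which is controlled by $E_2$. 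Taking $L^{\infty}_t$ and using the $L^{\infty}L^2$ components of $X_a$ and $X_b$ yields the interpolated factor $E_2^{2\alpha}E_4^{2-2\alpha}$. The asymmetry between $j=2$ and $j=3$ is caused exactly by Lemma~\ref{lemma_u}: for $u_{3,\neq}$ we must use $\|\nabla\omega_{2,\neq}\|_{L^2}$ instead of the better $\|\partial_x\omega_{2,\neq}\|_{L^2}$, and the $L^{\infty}L^2$ component of the $X_a$-norm for $\nabla\omega_{2,\neq}$ carries an extra $A^{1/3}$, upgrading the prefactor from $A$ to $A^{4/3}$.

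The main obstacle is $\mathrm{II}_j$: because $\mathbf{U}_1$ has no better than $L^{\infty}_tH^1$ regularity, the full $A$-loss through the time-integrated dissipation of $\nabla^2\mathbf{U}_1$ is unavoidable, so the interpolation on the $u_{j,\neq}$ side must be sharp enough to absorb it without destroying the closure of the energy system. Balancing this $\sqrt{A}$-loss against the three-dimensional $L^{\infty}$-type embedding of $u_{j,\neq}$, which would naturally want an $H^{3/2^+}$-bound, is precisely what forces the two-parameter interpolation with $\alpha\in(\tfrac12,\tfrac34)$ --- the lower bound making the embedding valid, and the upper bound keeping the $E_4$-exponent $2-2\alpha$ strictly positive.
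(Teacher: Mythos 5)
There is a genuine gap in the treatment of the piece $\mathrm{II}_j=u_{j,\neq}\,\partial_j\nabla\mathbf{U}_1$. Your claimed input
\[
\frac{1}{A}\|\nabla^2\mathbf{U}_1\|^2_{L^2L^2}\le C\|\mathbf{U}_1\|^2_{L^\infty H^1}
\]
is false on a global time interval. The equation for $\mathbf{U}_1$ is forced by $\tfrac{1}{A}\widetilde{n}_0$, which is only known to be bounded (not decaying) in $L^\infty_tL^2_{y,z}$, so the $H^1$ energy estimate produces a term $\frac{1}{A}\int_0^T\|\widetilde n_0\|_{L^2}^2\,dt\sim T/A$ on the right-hand side, and $\frac{1}{A}\|\triangle\mathbf{U}_1\|^2_{L^2L^2}$ grows linearly in $T$. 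The paper handles this by establishing a \emph{weighted} dissipation estimate, multiplying the $\mathbf{U}_1$-equation by $e^{-\epsilon_1A^{-1/3}t}$ and testing against $-\triangle\mathbf{U}_1$; the decaying weight makes $\int_0^\infty e^{-2\epsilon_1A^{-1/3}t}\|\widetilde n_0\|^2\,dt\lesssim A^{1/3}$, yielding the finite bound \eqref{u1:result1}
\[
\frac{1}{A}\|e^{-\epsilon_1 A^{-1/3}t}\triangle\mathbf{U}_1\|_{L^2L^2}^2\le C\Big(\|u_{1,\rm in}\|^2_{H^1}+\frac{E_3M}{\epsilon_1 A^{2/3}}+\frac{E_2^4}{A^{1/3}}\Big).
\]
Your plan does not explain where that decaying weight on $\triangle\mathbf{U}_1$ would come from.

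This matters at the bookkeeping level too. You propose to take $L^\infty_t$ of the $u_{j,\neq}$ factor and pair it with an unweighted $L^2_tL^2$ bound for $\nabla^2\mathbf{U}_1$. But the anisotropic interpolation you invoke bounds $\|u_{j,\neq}\|_{L^\infty_{y,z}L^2_x}$ by a product whose $e^{aA^{-1/3}t}$-weighted part ($X_a$, exponent $\alpha$, contributing $E_2$) and $e^{bA^{-1/3}t}$-weighted part ($X_b$, exponent $1-\alpha$, contributing $E_4$) combine to the weight $e^{(\alpha a+(1-\alpha)b)A^{-1/3}t}$, which is \emph{strictly larger} than the $e^{aA^{-1/3}t}$ appearing in the norm to be estimated, because $b>a$. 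The surplus $e^{(1-\alpha)(b-a)A^{-1/3}t}$ cannot be absorbed into $\|\partial_j\nabla\mathbf{U}_1\|_{L^2L^2}$ with no weight; it is precisely this surplus, transferred to the $\mathbf{U}_1$-factor with the sign flipped, that matches the $e^{-2(1-\alpha)(b-a)A^{-1/3}t}$ weight the paper uses (taking $\epsilon_1=(1-\alpha)(b-a)$) in \eqref{equ:r3}--\eqref{equ:r4}. The rest of your outline --- the product-rule split, Lemma \ref{sob_inf_2} for $\mathrm{I}_j$, the interpolation parameter $\alpha\in(\tfrac12,\tfrac34)$, and the $A^{1/3}$ surcharge on $\partial_y\omega_{2,\neq}$ explaining the $A^{4/3}$ versus $A$ asymmetry between the two lines --- is in line with the paper's argument, but without the weighted dissipation estimate and the correct exponential bookkeeping the proof of $\mathrm{II}_j$ does not close.
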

\begin{proof}
	First, for $j\in\{2,3\},$ direct calculations show that 
	\begin{equation}\label{equ:r1}
		\begin{aligned}
			&\|\partial_j(u_{2,\neq}\nabla \mathbf{U}_1 )\|^2_{L^2}
			\leq C\left(\|\partial_j u_{2,\neq}\nabla \mathbf{U}_1\|^2_{L^2}+\| u_{2,\neq}\partial_j\nabla \mathbf{U}_1\|^2_{L^2}\right)
			,\\
			&\|\partial_z(u_{3,\neq}\nabla \mathbf{U}_1 )\|^2_{L^2}
			\leq C\left(\|\partial_z u_{3,\neq}\nabla \mathbf{U}_1\|^2_{L^2}+\| u_{3,\neq}\partial_z\nabla \mathbf{U}_1\|^2_{L^2}\right).
		\end{aligned}
	\end{equation}
	By Lemma \ref{sob_inf_2}, there hold
	\begin{equation*}\label{equ:u11_1}
		\begin{aligned}
			\|\partial_j u_{2,\neq}\nabla \mathbf{U}_1\|^2_{L^2}\leq 
			C\|\nabla\mathbf{U}_1\|^2_{L^2}\|\triangle u_{2,\neq}\|_{L^2}\|\nabla\triangle u_{2,\neq}\|_{L^2}
		\end{aligned}
	\end{equation*}
	and 
	\begin{equation*}\label{equ:u11_2}
		\begin{aligned}
			\|\partial_z u_{3,\neq}\nabla \mathbf{U}_1\|^2_{L^2}&\leq 
			C\|\nabla\mathbf{U}_1\|^2_{L^2}\|\partial_x(\partial_x,\partial_z) u_{3,\neq}\|_{L^2}
			\|\partial_x(\partial_x,\partial_z)\partial_yu_{3,\neq}\|_{L^2}\\
			&\leq C\|\mathbf{U}_1\|_{H^1}^2(\|\partial_x\omega_{2,\neq}\|_{L^2}
			+\|\triangle u_{2,\neq}\|_{L^2})
			(\|\partial_x\nabla\omega_{2,\neq}\|_{L^2}
			+\|\nabla\triangle u_{2,\neq}\|_{L^2}),
		\end{aligned}
	\end{equation*}
	which indicate that 
	\begin{equation}\label{equ:r2}
		\begin{aligned}
			&\|{\rm e}^{aA^{-\frac{1}{3}}t}\partial_j u_{2,\neq}\nabla \mathbf{U}_1\|^2_{L^2L^2}\leq 
			CA^{\frac23}\|\mathbf{U}_1\|^2_{L^{\infty}H^1}\|\triangle u_{2,\neq}\|_{X_a}^2
			\leq CA^{\frac23}\|\mathbf{U}_1\|^2_{L^{\infty}H^1}E_2^2,\\
			&\|{\rm e}^{aA^{-\frac{1}{3}}t}\partial_z u_{3,\neq}\nabla \mathbf{U}_1\|^2_{L^2L^2}\leq 
			CA^{\frac23}\|\mathbf{U}_1\|^2_{L^{\infty}H^1}(\|\partial_x \omega_{2,\neq}\|_{X_a}^2+\|\triangle u_{2,\neq}\|_{X_a}^2)
			\leq CA^{\frac23}\|\mathbf{U}_1\|^2_{L^{\infty}H^1}E_2^2.
		\end{aligned}
	\end{equation}
	
	Then, we need to deal with $\|{\rm e}^{aA^{-\frac{1}{3}}t} u_{2,\neq}\partial_j\nabla \mathbf{U}_1\|^2_{L^2L^2}$ and 
	$\|{\rm e}^{aA^{-\frac{1}{3}}t} u_{3,\neq}\partial_z\nabla \mathbf{U}_1\|^2_{L^2L^2}.$
	Since it is difficult to estimate $\|\partial_j\nabla \mathbf{U}_1\|_{L^{\infty}L^2}$, 
	the traditional Sobolev embedding is insufficient to handle $\|{\rm e}^{aA^{-\frac{1}{3}}t} u_{2,\neq}\partial_j\nabla \mathbf{U}_1\|^2_{L^2L^2}$ and 
	$\|{\rm e}^{aA^{-\frac{1}{3}}t} u_{3,\neq}\partial_z\nabla \mathbf{U}_1\|^2_{L^2L^2}.$
	
	According to \eqref{u_decom_1} and \eqref{u1:decom1}, $\mathbf{U}_1$ satisfies 
	\begin{equation}\label{u1:1}
		\left\{
		\begin{array}{lr}
			\partial_t\mathbf{U}_1-\frac{1}{A}
			\triangle\mathbf{U}_1
			=\frac{1}{A}\widetilde{n}_{0}-\frac{1}{A}(u_{2,0}\partial_y\mathbf{U}_1
			+u_{3,0}\partial_z\mathbf{U}_1)-\frac{1}{A}(u_{\neq}\cdot \nabla u_{1,\neq})_0,\\
			\mathbf{U}_1|_{t=0}=(u_{1,\rm in})_0.
		\end{array}
		\right.
	\end{equation}
	Therefore, for the given positive constant $\epsilon_1,$ we have 
	\begin{equation*}
		\begin{aligned}
			&\partial_t({\rm e}^{-\epsilon_1A^{-\frac13}t}\mathbf{U}_1)
			+\frac{\epsilon_1{\rm e}^{-\epsilon_1A^{-\frac13}t}\mathbf{U}_1}{A^{\frac13}}-\frac{{\rm e}^{-\epsilon_1A^{-\frac13}t}\triangle\mathbf{U}_1}{A}\\
			=&\frac{{\rm e}^{-\epsilon_1A^{-\frac13}t}\widetilde{n}_{0}}{A}-\frac{{\rm e}^{-\epsilon_1A^{-\frac13}t}u_{j,0}\partial_j\mathbf{U}_1
			}{A}-\frac{{\rm e}^{-\epsilon_1A^{-\frac13}t}(u_{\neq}\cdot \nabla u_{1,\neq})_0}{A},
		\end{aligned}
	\end{equation*}
	where $j\in\{2,3\}.$
	Multiplying $-\frac{{\rm e}^{-\epsilon_1A^{-\frac13}t}\triangle\mathbf{U}_1}{2}$ on both sides of the above equation, the energy estimate shows that 
	\begin{equation}\label{U1:T1}
		\begin{aligned}
			&\|{\rm e}^{-\epsilon_1A^{-\frac13}t}\nabla\mathbf{U}_1\|_{L^{\infty}L^2}^2
			+\frac{\|{\rm e}^{-\epsilon_1A^{-\frac13}t}\triangle\mathbf{U}_1\|^2_{L^2L^2}}{A}\\
			\leq& \|u_{1,\rm in}\|^2_{H^1}
			+\frac{C\big(\|{\rm e}^{-\epsilon_1A^{-\frac13}t}\widetilde{n}_{0}\|^2_{L^2L^2}
				+\|{\rm e}^{-\epsilon_1A^{-\frac13}t}u_{j,0}\partial_j\mathbf{U}_1\|^2_{L^2L^2}
				+\|{\rm e}^{-\epsilon_1A^{-\frac13}t}u_{\neq}\cdot\nabla u_{1,\neq}\|^2_{L^2L^2}\big)}{A}.
		\end{aligned}
	\end{equation} 
	When $A>A_2,$ by Lemma \ref{sob_inf_1} and Lemma \ref{lemma_u23_1}, we get
	\begin{equation}\label{u20 u30 infty}
		\begin{aligned}
			\|u_{2,0}\|_{L^{\infty}L^{\infty}}^2+\|u_{3,0}\|_{L^{\infty}L^{\infty}}^2
			\leq C(\|u_{2,0}\|_{L^{\infty}H^{2}}^2+\|u_{3,0}\|_{L^{\infty}H^{1}}^2)\leq C\epsilon^2,
		\end{aligned}
	\end{equation}
	which implies that 
	\begin{equation}\label{U1:T2}
		\frac{\|{\rm e}^{-\epsilon_1A^{-\frac13}t}u_{j,0}\partial_j\mathbf{U}_1\|^2_{L^2L^2}}{A}
		\leq \frac{C\epsilon^2\|{\rm e}^{-\epsilon_1A^{-\frac13}t}\triangle\mathbf{U}_1\|^2_{L^2L^2}}{A}.
	\end{equation}
	Using  $\eqref{eq:non-neq0}_2$ and \eqref{U1:T2}, we infer from \eqref{U1:T1} that 
	\begin{equation*}\label{U1:T3}
		\begin{aligned}
			\|{\rm e}^{-\epsilon_1A^{-\frac13}t}\nabla\mathbf{U}_1\|_{L^{\infty}L^2}^2
			+\frac{\|{\rm e}^{-\epsilon_1A^{-\frac13}t}\triangle\mathbf{U}_1\|^2_{L^2L^2}}{A}
			\leq C\Big(\|u_{1,\rm in}\|^2_{H^1}
			+\frac{\|{\rm e}^{-\epsilon_1A^{-\frac13}t}\widetilde{n}_{0}\|^2_{L^2L^2}}{A}+\frac{E_2^4}{A^{\frac13}}\Big).
		\end{aligned}
	\end{equation*} 
	Due to $$\|\widetilde{n}_{0}\|_{L^2}^2\leq \|n_{0}\|_{L^2}^2\leq C\|n\|_{L^{\infty}L^{\infty}}\|n_{0}\|_{L^1}\leq CE_3M$$
	and $$\|{\rm e}^{-\epsilon_1A^{-\frac13}t}\|^2_{L^2(0,T)}\leq \frac{A^{\frac13}}{2\epsilon_1},$$
	there holds
	\begin{equation}\label{u1:result1}
		\begin{aligned}
			\|{\rm e}^{-\epsilon_1A^{-\frac13}t}\nabla\mathbf{U}_1\|_{L^{\infty}L^2}^2
			+\frac{\|{\rm e}^{-\epsilon_1A^{-\frac13}t}\triangle\mathbf{U}_1\|^2_{L^2L^2}}{A}
			\leq C\Big(\|u_{1,\rm in}\|^2_{H^1}
			+\frac{E_3M}{\epsilon_1A^{\frac23}}+\frac{E_2^4}{A^{\frac13}}\Big).
		\end{aligned}
	\end{equation} 
	When $A\geq\max\{A_2, {\epsilon_1}^{-\frac32}M^{\frac32}E_3^{\frac32},E_2^{12}\}=:A_3,$ by $\eqref{sob_result_2}_3$, $\eqref{u1:result1}$ and Lemma \ref{lemma_u}, we have
	\begin{equation}\label{equ:r3}
		\begin{aligned}
			\|{\rm e}^{aA^{-\frac{1}{3}}t} u_{2,\neq}\partial_j\nabla \mathbf{U}_1\|^2_{L^2L^2}
			&\leq C\|\triangle u_{2,\neq}\|^{2\alpha}_{X_a}\|u_{2,\neq}\|_{X_b}^{2-2\alpha}
			\|{\rm e}^{-2(1-\alpha)(b-a)A^{-\frac{1}{3}}t}\triangle \mathbf{U}_1\|_{L^2L^2}^2\\
			&\leq CAE_2^{2\alpha}E_4^{2-2\alpha},
		\end{aligned}
	\end{equation}
	where $\alpha$ is a constant satisfying  $\alpha\in\big(\frac12,\frac34\big).$
	By  $\eqref{sob_result_2}_3$ and Lemma \ref{lemma_u}, there holds 
	\begin{equation*}
		\begin{aligned}
			\|u_{3,\neq}\|^2_{L^{\infty}_{y,z}L^{2}_{x}}&
			\leq C\|\nabla(\partial_x, \partial_z) u_{3,\neq}\|_{L^2}
			\|(\partial_x, \partial_z) u_{3,\neq}\|^{2\alpha-1}_{L^2}\|u_{3,\neq}\|_{L^2}^{2-2\alpha}\\
			&\leq C(\|\partial_y\omega_{2,\neq}\|_{L^2}+\|\triangle u_{2,\neq}\|_{L^2})
			(\|\partial_x\omega_{2,\neq}\|_{L^2}
			+\|\triangle u_{2,\neq}\|_{L^2})^{2\alpha-1}\|u_{3,\neq}\|_{L^2}^{2-2\alpha},
		\end{aligned}
	\end{equation*}
	which along with $\eqref{u1:result1}$ implies that 
	\begin{equation}\label{equ:r4}
		\begin{aligned}
			\|{\rm e}^{aA^{-\frac{1}{3}}t} u_{3,\neq}\partial_z\nabla \mathbf{U}_1\|^2_{L^2L^2}
			&\leq C(\|\partial_y\omega_{2,\neq}\|_{X_a}+\|\triangle u_{2,\neq}\|_{X_a})
			(\|\partial_x\omega_{2,\neq}\|_{X_a}
			+\|\triangle u_{2,\neq}\|_{X_a})^{2\alpha-1}\\
			&\quad\cdot\|\partial_x^2u_{3,\neq}\|_{X_b}^{2-2\alpha}
			\|{\rm e}^{-2(1-\alpha)(b-a)A^{-\frac{1}{3}}t}\triangle \mathbf{U}_1\|_{L^2L^2}^2\\
			&\leq CA^{\frac43}E_2^{2\alpha}E_4^{2-2\alpha}.
		\end{aligned}
	\end{equation}
	Collecting \eqref{equ:r1}, \eqref{equ:r2}, \eqref{equ:r3} and \eqref{equ:r4}, we finish the proof.
	
\end{proof}

Furthermore, we show the  the nonlinear interactions between the bad component $\mathbf{U}_2$ of $u_{1,0}$ and the non-zero mode $u_{\neq}$.
\begin{lemma}\label{lemma_neq22}
	For $j\in\{2,3\}$, there holds
	\begin{equation}\label{lemneq2_2}
		\begin{aligned}
			&\|{\rm e}^{aA^{-\frac{1}{3}}t}
			\nabla \mathbf{U}_2\partial_x u_{j,\neq}\|_{L^2L^2}^2\leq
			CAE_{1}^2E_4^{\frac12}(t)(\|\partial_x \omega_{2,\neq}\|_{X_a}+\|\triangle u_{2,\neq}\|_{X_a})^{\frac32}
			,\\
			&\|{\rm e}^{aA^{-\frac13}t}\partial_{z}\mathbf{U}_2\partial_{x}^2u_{1,\neq}\|_{L^{2}L^{2}}^{2}\leq CA^{\frac{17}{12}}
			E_{1}^{2}E_4^{\frac14}(\|\partial_x \omega_{2,\neq}\|_{X_a}+\|\triangle u_{2,\neq}\|_{X_a})^{\frac74},
			\\
			&\|{\rm e}^{aA^{-\frac13}t}\partial_{j}\mathbf{U}_2
			\partial_{x}\partial_ju_{2,\neq}\|_{L^{2}L^{2}}^{2}\leq CA^{\frac43}E_{1}^{2}E_4^{\frac12}\|\triangle u_{2,\neq}\|_{X_a}^{\frac32},
			\\
			&\|{\rm e}^{aA^{-\frac{1}{3}}t}\partial_z
			(u_{2,\neq}\nabla \mathbf{U}_2 )\|^2_{L^2L^2}\leq
			CAE_{1}^2E_4^{\frac12}\|\triangle u_{2,\neq}\|_{X_a}^{\frac32},\\
			&\|{\rm e}^{aA^{-\frac{1}{3}}t}\partial_z
			(u_{3,\neq}\nabla \mathbf{U}_2 )\|^2_{L^2L^2}\leq
			CE_{1}^2A^{\frac43}E_4(\|\partial_x \omega_{2,\neq}\|_{X_a}+\|\triangle u_{2,\neq}\|_{X_a}).
		\end{aligned}
	\end{equation}
\end{lemma}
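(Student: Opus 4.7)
The proof will follow the pattern of Lemmas \ref{lem_uneq2} and \ref{lemma_neq2}, combining Hölder's inequality, the anisotropic Sobolev embeddings of Lemmas \ref{sob_inf_1} and \ref{sob_inf_2}, the velocity reconstruction bounds of Lemma \ref{lemma_u}, and interpolation between the $X_a$ and $X_b$ norms. The crucial structural point is that $\mathbf{U}_2(t,y,z)$ depends only on the transverse variables, so its mixed norm $\|\nabla\mathbf{U}_2\|_{L^{\infty}_{y,z}L^{2}_{x}}=|\mathbb{T}|^{1/2}\|\nabla\mathbf{U}_2\|_{L^{\infty}_{y,z}}$ is controlled by $\|\mathbf{U}_2\|_{H^{3}}$, and since $\nabla\mathbf{U}_{2}=\nabla\widetilde{\mathbf{B}}_{2}$ has zero mean, a Poincaré inequality gives $\|\nabla\mathbf{U}_{2}\|_{L^{\infty}_{y,z}}\leq C\|\triangle\mathbf{U}_{2}\|_{H^{2}}$. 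The defining bound $\|\triangle\mathbf{U}_{2}\|_{L^{\infty}H^{2}}\leq A\,E_{1,2}(t)\leq A\,E_{1}$ then feeds a factor of size $A\,E_{1}$ into every estimate; similarly, the $L^{2}H^{2}$ bound $\|\nabla\triangle\mathbf{U}_{2}\|_{L^{2}H^{2}}\leq A^{3/2}E_{1}$ is used whenever we need to place $\nabla^{2}\mathbf{U}_{2}$ in a $L^{2}$-in-time norm.

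For each of the six estimates I would apply Hölder's inequality in space with the $\mathbf{U}_{2}$-factor in an $L^{\infty}_{y,z}L^{2}_{x}$ (or $L^{\infty}$) mixed norm and the non-zero-mode factor in a dual $L^{2}_{y,z}L^{\infty}_{x}$-type (or $L^{2}$) norm. Lemma \ref{lemma_u} reduces the latter to quantities of the form $\|\partial_{x}\omega_{2,\neq}\|_{L^{2}}$, $\|\triangle u_{2,\neq}\|_{L^{2}}$ and their first transverse derivatives, plus $\|\partial_{x}^{2}u_{j,\neq}\|_{L^{2}}$ in the lift-up-sensitive estimate $\eqref{lemneq2_2}_{2}$. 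A Gagliardo–Nirenberg interpolation then distributes one half of the transverse gradient budget onto $\mathbf{U}_{2}$ using $\|\nabla\triangle\mathbf{U}_{2}\|_{L^{2}H^{2}}$, and the rest onto the $u_{\neq}$ factor.

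Time integration is where the fractional exponents $E_{4}^{1/2}$, $E_{4}^{1/4}$, $E_{4}^{1/4}$ and the powers $(\|\partial_{x}\omega_{2,\neq}\|_{X_{a}}+\|\triangle u_{2,\neq}\|_{X_{a}})^{3/2}$, $(\,\cdot\,)^{7/4}$ emerge. Since $0<a<b<2a$, the elementary identity $e^{aA^{-1/3}t}=\bigl(e^{bA^{-1/3}t}\bigr)^{\theta}\bigl(e^{(a-b\theta)A^{-1/3}t}\bigr)$ with a suitable $\theta\in(0,1)$ lets us trade a piece of the $L^{p}_{t}$-norm of the $X_{b}$ quantity $\partial_{x}^{2}u_{j,\neq}$ against a larger piece of an $X_{a}$ quantity at the price of a bounded exponential weight. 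Combining this with the standard conversions $\|e^{aA^{-1/3}t}f\|_{L^{2}L^{2}}\leq A^{1/6}\|f\|_{X_{a}}$ and $\|e^{aA^{-1/3}t}\nabla f\|_{L^{2}L^{2}}\leq A^{1/2}\|f\|_{X_{a}}$, together with the analogue $\|e^{bA^{-1/3}t}\partial_{x}^{2}u_{j,\neq}\|_{L^{\infty}L^{2}}\leq E_{4}$ read off from the definition of $E_{4}$, produces, after tracking the $A$-powers coming from the three $\mathbf{U}_{2}$-inputs $A$, $A^{3/2}$, and from $u_{\neq}$ via Lemma \ref{lemma_u}, exactly the prefactors $A$, $A^{17/12}$, $A^{4/3}$ claimed in \eqref{lemneq2_2}.

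The main obstacle will be the second inequality $\eqref{lemneq2_2}_{2}$ involving $\partial_{x}^{2}u_{1,\neq}$. Because $u_{1,\neq}$ is not directly controlled by the $X_{a}$-energies of $\omega_{2,\neq}$ and $\triangle u_{2,\neq}$ (only its $x,z$-derivatives are), the estimate requires using the divergence-free condition $\partial_{x}u_{1,\neq}=-\partial_{y}u_{2,\neq}-\partial_{z}u_{3,\neq}$ or the Biot--Savart type reconstruction of Lemma \ref{lemma_u} to rewrite $\partial_{x}^{2}u_{1,\neq}$ in terms of $\omega_{2,\neq}$ and $\triangle u_{2,\neq}$, and then a more delicate interpolation of $\partial_{z}\mathbf{U}_{2}$ between $\|\nabla\mathbf{U}_{2}\|_{L^{\infty}H^{2}}$ and $\|\nabla\triangle\mathbf{U}_{2}\|_{L^{2}H^{2}}$ at a non-standard Lebesgue exponent, which is precisely what generates the odd power $A^{17/12}$ and the exponents $E_{4}^{1/4}$ and $(\,\cdot\,)^{7/4}$. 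The remaining five estimates are by comparison more routine variants of the same scheme.
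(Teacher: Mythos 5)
The key mechanism of the paper's proof is absent from your proposal, and the route you describe would lose a factor of $A^{4/3}$ that cannot be recovered. You propose to control the $\mathbf{U}_2$-factor by the uniform-in-time bound $\|\triangle\mathbf{U}_{2}\|_{L^{\infty}H^{2}}\leq AE_{1}$, together with $\|\nabla\triangle\mathbf{U}_{2}\|_{L^{2}H^{2}}\leq A^{3/2}E_{1}$; both inputs carry at least a factor $A$, so after squaring you feed $A^{2}E_{1}^{2}$ into the estimate, and the remaining time integral $\int_0^t e^{-cA^{-1/3}s}\,ds\sim A^{1/3}$ gives, e.g.\ for $\eqref{lemneq2_2}_1$, a total of order $A^{7/3}E_{1}^{2}$ rather than the claimed $AE_{1}^{2}$. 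This is not merely a weaker constant: dividing by $A$ (as in the term $T_{2,2}$ of Lemma \ref{lem:E2(t)}) leaves an uncontrolled factor $A^{4/3}$ that grows as $A\to\infty$, so the bootstrap does not close.

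What the paper actually uses, and what you omit, is the third piece of $E_{1,2}$: the $A$-independent bound $\|\partial_{t}\mathbf{U}_{2}\|_{L^{\infty}H^{2}}\leq E_{1}$. Since $\mathbf{U}_{2}(0)=0$ (both $\widetilde{\mathbf{B}}_{2}(0)=0$ and $\overline{\mathbf{B}}_{1}(0)=\overline{\mathbf{B}}_{2}(0)=0$), the fundamental theorem of calculus gives
\begin{equation*}
\|\partial_{j}\mathbf{U}_{2}(t)\|_{H^{1}}\leq\int_{0}^{t}\|\partial_{s}\partial_{j}\mathbf{U}_{2}(s)\|_{H^{1}}\,ds\leq CE_{1}\,t,
\end{equation*}
so the $\mathbf{U}_2$-factor grows only \emph{linearly} in $t$ with an $O(1)$ slope, not like $A$. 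The paper then absorbs this polynomial growth into the decaying exponential weight $e^{\frac{a-b}{2}A^{-1/3}s}$ coming from the split $e^{2aA^{-1/3}s}=e^{\frac{a-b}{2}A^{-1/3}s}\,e^{\frac{b}{2}A^{-1/3}s}\,e^{\frac{3a}{2}A^{-1/3}s}$, via $\int_{0}^{\infty}s^{2}e^{-\frac{b-a}{2}A^{-1/3}s}\,ds=CA$. That single integral is the source of the first power of $A$ in $\eqref{lemneq2_2}_1$ (and analogously for the others), and it is strictly better than the $A^{2}\cdot A^{1/3}=A^{7/3}$ your proposal produces; this is precisely what your claim of reproducing ``exactly the prefactors $A$, $A^{17/12}$, $A^{4/3}$'' cannot support. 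The rest of your outline — anisotropic Sobolev embeddings, Lemma \ref{lemma_u}, the divergence-free rewriting of $\partial_{x}^{2}u_{1,\neq}$, the split of exponential weights between the $X_{a}$- and $X_{b}$-controlled factors — is in line with the paper, but without the linear-in-time bound on $\mathbf{U}_{2}$ the argument cannot yield the stated powers of $A$.
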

\begin{proof}
	First of all, there holds
	\begin{equation*}
		\begin{aligned}
			\|\partial_j\mathbf{U}_2\|_{H^1}
			\leq \int_0^t \|\partial_s\partial_j\mathbf{U}_2(s)\|_{H^1.}ds
			\leq CE_{1}t.
		\end{aligned}
	\end{equation*}
	For the given positive constant $\epsilon_2,$ since 
	$\lim_{t\rightarrow\infty}A^{-\frac13}t{\rm e}^{-\epsilon_2A^{-\frac13}t}=0,$
	there holds
	\begin{equation}\label{u221}
		\big\|{\rm e}^{-\epsilon_2A^{-\frac13}t}\|\partial_j\mathbf{U}_2\|_{H^1}\big\|_{L^{\infty}_t} 
		\leq CA^{\frac13}E_{1}.
	\end{equation}

	By Lemma \ref{sob_inf_1} and Lemma \ref{sob_inf_2}, we have 
	\begin{equation*}
		\begin{aligned}
			\|\nabla \mathbf{U}_2\partial_x u_{j,\neq}\|_{L^2}^2
			\leq \|\nabla \mathbf{U}_2\|_{L^{\infty}_{y}L^2_{z}}^2
			\|\partial_x u_{j,\neq}\|_{L^{\infty}_{z}L^2_{x,y}}^2
			\leq  C\|\nabla \mathbf{U}_2\|_{H^1}^2
			\|\partial_x u_{j,\neq}\|_{L^2}^{\frac12}
			\|\partial_x\partial_z u_{j,\neq}\|_{L^2}^{\frac32},
		\end{aligned}
	\end{equation*}
	which along with \eqref{u221} and Lemma \ref{lemma_u} indicates that 
	\begin{equation*}
		\begin{aligned}
			\|{\rm e}^{aA^{-\frac{1}{3}}t}
			\nabla \mathbf{U}_2\partial_x u_{j,\neq}\|_{L^2L^2}^2
			&\leq C\int_0^t{\rm 	e}^{\frac{a-b}{2}A^{-\frac13}s}\|\nabla \mathbf{U}_2\|^2_{H^1}
			{\rm e}^{\frac{b+3a}{2}A^{-\frac13}s}
			\|\partial_x^2 u_{j,\neq}\|^{\frac12}_{L^2}
			\|\partial_x\partial_z u_{j,\neq}\|^{\frac32}_{L^2}ds\\
			&\leq CAE_{1}^2E_4^{\frac12}(t)(\|\partial_x \omega_{2,\neq}\|_{X_a}+\|\triangle u_{2,\neq}\|_{X_a})^{\frac32}.
		\end{aligned}
	\end{equation*}

	Similarly, one obtains that 
	\begin{equation*}
		\begin{aligned}
			&\quad\|\partial_z \mathbf{U}_2\partial_x^2 u_{1,\neq}\|_{L^2}^2
			\leq \|\partial_z \mathbf{U}_2\|_{L^{\infty}_{z}L^2_{y}}^2
			\|\partial_x^2 u_{1,\neq}\|_{L^{\infty}_{y}L^2_{x,z}}^2
			\leq  C\|\partial_z \mathbf{U}_2\|_{H^1}^2
			\|\partial_x^2 u_{1,\neq}\|_{L^2}
			\|\partial_x^2\nabla u_{1,\neq}\|_{L^2}\\
			&\leq C\|\partial_z \mathbf{U}_2\|_{H^1}^2
			\|\partial_x^2 u_{1,\neq}\|_{L^2}^{\frac14}
			(\|\partial_x\omega_{2,\neq}\|_{L^2}+\|\triangle u_{2,\neq}\|_{L^2})^{\frac34}
			(\|\partial_x\nabla\omega_{2,\neq}\|_{L^2}
			+\|\nabla\triangle u_{2,\neq}\|_{L^2}).
		\end{aligned}
	\end{equation*}
	By the divergence-free property 
	\begin{equation*}
		\partial_{x}^2u_{1,\neq}=-\partial_{x}(\partial_{y}u_{2,\neq}+\partial_{z}u_{3,\neq} ),
	\end{equation*}
	we can prove the second result.
	
	Using Lemma \ref{sob_inf_1} and Lemma \ref{sob_inf_2} again, by H\"{o}lder's inequality,  there is 
	\begin{equation*}
		\begin{aligned}
			\|\partial_{j}\mathbf{U}_2\partial_{x}\partial_ju_{2,\neq}\|_{L^2}^{2}
			&\leq C\|\partial_{j}\mathbf{U}_2\|_{H^1}^{2}
			\|\partial_{x}\partial_ju_{2,\neq}\|_{L^2}
			\|\partial_{x}\partial_j\nabla u_{2,\neq}\|_{L^2}\\
			&\leq C\|\partial_{j}\mathbf{U}_2\|_{H^1}^{2}
			\|\partial_{x}^2u_{2,\neq}\|_{L^2}^{\frac12}
			\|\triangle u_{2,\neq}\|_{L^2}^{\frac12}
			\|\partial_{x}\partial_j\nabla u_{2,\neq}\|_{L^2},
		\end{aligned}
	\end{equation*}
	which implies that 
	$$\|{\rm e}^{aA^{-\frac13}t}\partial_{j}\mathbf{U}_2
	\partial_{x}\partial_ju_{2,\neq}\|_{L^{2}L^{2}}^{2}\leq CA^{\frac43}E_{1}^{2}E_4^{\frac12}
	\|\triangle u_{2,\neq}\|_{X_a}^{\frac32}.$$
	
	According to  Lemma \ref{sob_inf_1} and Lemma \ref{sob_inf_2},  by H\"{o}lder's inequality, we get 
	\begin{equation*}
		\begin{aligned}
			\|\partial_z
			(u_{2,\neq}\nabla \mathbf{U}_2 )\|^2_{L^2}
			&\leq C\|\nabla \mathbf{U}_2\|^2_{H^1}
			\|\nabla u_{2,\neq}\|_{L^2}
			\|\triangle	u_{2,\neq}\|_{L^2}\\
			&\leq C\|\nabla \mathbf{U}_2\|^2_{H^1}
			\| u_{2,\neq}\|_{L^2}^{\frac12}
			\|\triangle	u_{2,\neq}\|_{L^2}^{\frac32},
		\end{aligned}
	\end{equation*}
	which  indicates that 
	\begin{equation*}
		\begin{aligned}
			&\quad\|{\rm e}^{aA^{-\frac{1}{3}}t}\partial_z
			(u_{2,\neq}\nabla \mathbf{U}_2 )\|^2_{L^2L^2}\\
			&\leq C\big\|{\rm 	e}^{\frac{a-b}{4}A^{-\frac13}t}\|\nabla \mathbf{U}_2\|_{H^1}\big\|_{L^{\infty}_t}^2
			\|{\rm e}^{bA^{-\frac{1}{3}}t}
			u_{2,\neq}\|_{L^2L^2}^{\frac12}
			\|{\rm e}^{aA^{-\frac{1}{3}}t}\triangle u_{2,\neq}\|_{L^2L^2}^{\frac32}
			\leq CE_{1}^2E_4^{\frac12}A\|\triangle u_{2,\neq}\|_{X_a}^{\frac32}.
		\end{aligned}
	\end{equation*}
	
	Using Lemma \ref{sob_inf_2}, there holds
	\begin{equation*}
		\begin{aligned}
			\|\partial_zu_{3,\neq}\|^2_{L^{\infty}_yL^2_{x,z}}
			&\leq C\|\partial_z\partial_yu_{3,\neq}\|_{L^2}
			\|\partial_zu_{3,\neq}\|_{L^2}\\
			&\leq C
			\|u_{3,\neq}\|^{\frac12}_{L^2}
			\|\partial_z^2u_{3,\neq}\|^{^\frac12}_{L^2}
			\|\partial_yu_{3,\neq}\|^{\frac12}_{L^2}
			\|\partial_y\partial_z^2u_{3,\neq}\|
			^{^\frac12}_{L^2}
		\end{aligned}
	\end{equation*}
	and 
	\begin{equation*}
		\begin{aligned}
			\|u_{3,\neq}\|^2_{L^{\infty}_{y,z}L^2_x}
			&\leq C\|(\partial_x,\partial_z)\partial_yu_{3,\neq}\|_{L^2}
			\|(\partial_x,\partial_z)u_{3,\neq}\|_{L^2}\\
			&\leq C
			\|u_{3,\neq}\|^{\frac12}_{L^2}
			\|(\partial_x^2,\partial_z^2)u_{3,\neq}\|^{^\frac12}_{L^2}
			\|\partial_yu_{3,\neq}\|^{\frac12}_{L^2}
			\|\partial_y(\partial_x^2,\partial_z^2)u_{3,\neq}\|
			^{^\frac12}_{L^2}.
		\end{aligned}
	\end{equation*}
	Using  Lemma \ref{lemma_u}, we get
	\begin{equation*}
		\begin{aligned}
			\quad\|{\rm e}^{aA^{-\frac{1}{3}}t}\partial_z
			(u_{3,\neq}\nabla \mathbf{U}_2 )\|^2_{L^2L^2}
			\leq CE_{1}^2A^{\frac43}E_4(\|\partial_x \omega_{2,\neq}\|_{X_a}+\|\triangle u_{2,\neq}\|_{X_a}).
		\end{aligned}
	\end{equation*}
\end{proof}

Lastly, we show the  the nonlinear interaction between the zero modes $\{u_{2,0},u_{3,0}\}$ and the non-zero mode $u_{\neq}$.
The proof can be found in Lemma 3.16 of \cite{CWW2025}, and we omit it.
\begin{lemma}\label{lemma_neq3}
	For $j\in\{2,3\}$, it holds that
	\begin{equation*}\label{eq:zeroneq0}
		\begin{aligned}
			&\|{\rm e}^{aA^{-\frac{1}{3}}t}u_{j,0}(\partial_x,\partial_z)
			\nabla u_{\neq}\|_{L^2L^2}^2\leq 
			CA(\|u_{2,0}\|^2_{L^{\infty}H^2}+\|u_{3,0}\|^2_{L^{\infty}H^1})(\|\partial_x\omega_{2,\neq}\|^2_{X_a}
			+\|\triangle u_{2,\neq}\|^2_{X_a}),\\
			&\|{\rm e}^{aA^{-\frac{1}{3}}t}\partial_z\nabla u_{j,0}
			\cdot u_{\neq}\|_{L^2L^2}^2\leq CA^{\frac23}(\|u_{2,0}\|^2_{L^{\infty}H^2}+\|u_{3,0}\|^2_{L^{\infty}H^1})(\|\partial_x\omega_{2,\neq}\|^2_{X_a}
			+\|\triangle u_{2,\neq}\|^2_{X_a}),\\
			&\|{\rm e}^{aA^{-\frac{1}{3}}t}\partial_z u_{j,0}
			\nabla u_{\neq}\|_{L^2L^2}^2\leq CA(\|u_{2,0}\|^2_{L^{\infty}H^2}+\|u_{3,0}\|^2_{L^{\infty}H^1})(\|\partial_x\omega_{2,\neq}\|^2_{X_a}
			+\|\triangle u_{2,\neq}\|^2_{X_a}),\\
			&\|{\rm e}^{aA^{-\frac13}t}\partial_{y}u_{3,0}(\partial_{x},\partial_{z})u_{2,\neq}\|_{L^{2}L^{2}}^{2}\leq CA^{\frac13}\left(\|u_{2,0}\|_{L^{\infty}H^{2}}^{2}+\|u_{3,0}\|_{L^{\infty}H^{1}}^{2} \right)\|\triangle u_{2,\neq}\|_{X_{a}}^{2}.\quad\quad\quad\quad
		\end{aligned}
	\end{equation*}
\end{lemma}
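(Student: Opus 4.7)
Each of the four bounds pairs a zero-mode quantity $u_{j,0}(t,y,z)$ (or one of its derivatives) with a non-zero-mode quantity involving $u_\neq$. The general strategy I would follow is: (i) a H\"older split that places the zero-mode factor in an $L^\infty$-type norm on $\mathbb{T}^2_{y,z}$ and the non-zero-mode factor in the complementary $L^2$-type norm on $\mathbb{T}^3$, exploiting that $u_{j,0}$ is $x$-independent while $u_\neq$ carries only non-trivial $x$-modes; (ii) two-dimensional Sobolev embeddings from Lemma~\ref{sob_inf_1} for the zero-mode factor (the asymmetric appearance of $H^2$ for $u_{2,0}$ and $H^1$ for $u_{3,0}$ on the right reflects that $H^1(\mathbb{T}^2)\not\hookrightarrow L^\infty$, so for $u_{3,0}$ one uses only the anisotropic bound $\|u_{3,0}\|_{L^\infty_y L^2_z}\leq C\|u_{3,0}\|_{H^1}$); (iii) the anisotropic Sobolev embeddings from Lemma~\ref{sob_inf_2} for the non-zero factor; and (iv) Lemma~\ref{lemma_u} to convert the resulting derivative norms of $u_\neq$ into quantities built from $\omega_{2,\neq}$ and $\triangle u_{2,\neq}$.

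Once the spatial H\"older split is completed, I would integrate in time against the weight $e^{2aA^{-1/3}t}$ and read off powers of $A$ from the three elementary identities
\begin{equation*}
\|e^{aA^{-1/3}t}g\|_{L^\infty L^2}\leq \|g\|_{X_a},\quad \|e^{aA^{-1/3}t}g\|_{L^2L^2}^2\leq A^{1/3}\|g\|_{X_a}^2,\quad \|e^{aA^{-1/3}t}\nabla g\|_{L^2L^2}^2\leq A\|g\|_{X_a}^2,
\end{equation*}
that are encoded in the definition of $X_a$. Inequalities one and three, in which the $L^2_t$ factor after H\"older still carries a full spatial gradient of $u_\neq$, consume the last identity once and therefore produce the announced prefactor $A$. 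Inequality two, in which the derivative sits on the zero mode, is the gentlest: here Lemma~\ref{sob_inf_2} combined with Lemma~\ref{lemma_u} yields $\|u_\neq\|_{L^\infty}^2\leq C(\|\partial_x\omega_{2,\neq}\|_{L^2}+\|\triangle u_{2,\neq}\|_{L^2})^2$, so after a single loss of $A^{1/3}$ from the non-differentiated $L^2_t$ factor one arrives at the claimed $A^{2/3}$. Inequality four exploits the stronger $H^2$-regularity of $u_{2,\neq}$ via an anisotropic bound of the form $\|(\partial_x,\partial_z)u_{2,\neq}\|_{L^\infty_{y,z}L^2_x}\leq C\|(\partial_x,\partial_z)u_{2,\neq}\|_{L^2}^{1/2}\|\triangle u_{2,\neq}\|_{L^2}^{1/2}$, thereby avoiding any full-gradient loss and yielding the sharper $A^{1/3}$.

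The principal bookkeeping obstacle is ensuring that only $\|\partial_x\omega_{2,\neq}\|_{X_a}$ and $\|\triangle u_{2,\neq}\|_{X_a}$ survive on the right, since a direct application of Lemma~\ref{lemma_u} to mixed second derivatives of $u_\neq$ naturally produces $\|\partial_y\omega_{2,\neq}\|_{L^2}$ and $\|\partial_z\omega_{2,\neq}\|_{L^2}$ terms as well. To eliminate these I would combine the identity $\omega_{2,\neq}=\partial_z u_{1,\neq}-\partial_x u_{3,\neq}$ with the divergence-free condition $\partial_x u_{1,\neq}+\partial_y u_{2,\neq}+\partial_z u_{3,\neq}=0$, trading stray $\partial_y$- and $\partial_z$-derivatives of $u_{1,\neq}$ and $u_{3,\neq}$ for $\partial_x\omega_{2,\neq}$ and second derivatives of $u_{2,\neq}$ that can be absorbed into $\|\triangle u_{2,\neq}\|_{L^2}$ by the isotropy of the Laplacian. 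After this bookkeeping each of the four bounds closes with precisely the powers of $A$ and the norms asserted.
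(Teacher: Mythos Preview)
The paper itself does not prove this lemma; it simply states ``The proof can be found in Lemma~3.16 of \cite{CWW2025}, and we omit it.'' Your strategy---H\"older splits placing the $x$-independent zero-mode factor in an anisotropic $L^\infty$-type norm on $\mathbb{T}^2_{y,z}$ via Lemma~\ref{sob_inf_1}, the non-zero-mode factor in the complementary norm via Lemma~\ref{sob_inf_2}, and then converting to $\omega_{2,\neq}$, $\triangle u_{2,\neq}$ through Lemma~\ref{lemma_u}---is exactly the toolkit the paper uses for all the neighbouring lemmas (e.g.\ Lemmas~\ref{lemma_neq1}, \ref{lem_uneq2}, \ref{lemma_neq2}, \ref{lemma_neq22}), and it is certainly the right approach.

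Two remarks on the execution. First, you list only three of the four pieces of the $X_a$-norm; you omit the inviscid-damping term $\|e^{aA^{-1/3}t}\nabla\triangle^{-1}\partial_x f\|_{L^2L^2}^2$, which carries \emph{no} power of $A$. This term is what makes $\|e^{aA^{-1/3}t}\partial_x\nabla u_{2,\neq}\|_{L^2L^2}^2\leq \|\triangle u_{2,\neq}\|_{X_a}^2$ hold without loss, and it is the mechanism that delivers the sharper $A^{1/3}$ in the fourth estimate rather than the $A^{2/3}$ you would otherwise obtain. Second, your proposed bookkeeping for eliminating $\partial_y\omega_{2,\neq}$ and $\partial_z\omega_{2,\neq}$ via the explicit formula for $\omega_2$ is vaguer than necessary: the cleaner route is to first apply Poincar\'e in $x$ (Lemma~\ref{lem: poincare}) to gain a $\partial_x$, and then invoke $\eqref{eq:velocity embed}_5$, which bounds $\|(\partial_x,\partial_z)\partial_x\nabla u_\neq\|_{L^2}$ directly by $\|\partial_x\nabla\omega_{2,\neq}\|_{L^2}+\|\nabla\triangle u_{2,\neq}\|_{L^2}$, placing all the $\omega$-dependence on $\partial_x\omega_{2,\neq}$ alone. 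With these two adjustments your plan closes.
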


\section{Estimates for the zero modes of velocity $E_1(t)$: Proof of Proposition \ref{prop:E0}}\label{sec3}
In this section, we give some estimates for the zero modes of the velocity, which will be used in estimating the zero mode of the density and the non-zero modes.
\subsection{Energy estimates for $u_{2,0}$ and $u_{3,0}$}
We recall that 
\begin{equation}\label{u_zero_1}
	\left\{
	\begin{array}{lr}
		\partial_tu_{2,0}-\frac{1}{A}\triangle u_{2,0}
		+\frac{1}{A}(u\cdot\nabla u_{2})_0
		+\frac{1}{A}\partial_yP^{N_1}_0
		+\frac{1}{A}\partial_y P^{N_2}_0=0, \\
		\partial_tu_{3,0}-\frac{1}{A}\triangle u_{3,0}
		+\frac{1}{A}(u\cdot\nabla u_3)_0
		+\frac{1}{A}\partial_zP^{N_1}_0
		+\frac{1}{A}\partial_z P^{N_2}_0=0,\\
		\partial_yu_{2,0}+\partial_zu_{3,0}=0,
	\end{array}
	\right.
\end{equation}
where 
\begin{equation*}
	\left\{
	\begin{array}{lr}
		\triangle P^{N_1}=-2A\partial_xu_2+\partial_{x}n,\\
		\triangle P^{N_2}=-{\rm div}~(u\cdot\nabla u).
	\end{array}
	\right.
\end{equation*}

\begin{lemma}\label{lemma_u23_1}
	Under the conditions of Theorem \ref{result0} and the assumptions \eqref{conditions:u20 u30} and \eqref{assumption}, there exists a positive constant $A_{2,3}$ independent of $A$ and $t$, such that if $A\geq A_{2,3},$  
	it holds that  
	\begin{equation}\label{eq:u20u30}
		\begin{aligned}
			&\|u_{2,0}\|_{Y_0}+\|u_{3,0}\|_{Y_0}\leq C\epsilon,\\
			&\|\nabla u_{2,0}\|_{Y_0}+\|\nabla u_{3,0}\|_{Y_0}\leq C\epsilon,\\
			&\|\triangle u_{2,0}\|_{Y_0}\leq C\epsilon,
			\\& \|\min\{(A^{-\frac23}+A^{-1}t)^{\frac12}, 1\}\triangle u_{3,0}\|_{Y_{0}}\leq C\epsilon.                                                                   
		\end{aligned}
	\end{equation}
\end{lemma}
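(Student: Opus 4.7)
The plan is to treat the system \eqref{u_zero_1} as a two-dimensional Navier--Stokes-type problem for $(u_{2,0},u_{3,0})$ on the $(y,z)$-torus with viscosity $1/A$, driven by forcing from the non-zero modes. My first step is a reduction: since $\triangle P^{N_1}_0=(-2A\partial_x u_2+\partial_x n)_0=0$ (every term is $\partial_x$ of something, so its zero mode vanishes), the pressure $P^{N_1}_0$ is a constant and drops out. Using $(u\cdot\nabla u_j)_0=u_{2,0}\partial_y u_{j,0}+u_{3,0}\partial_z u_{j,0}+(u_{\neq}\cdot\nabla u_{j,\neq})_0$ and $\partial_y u_{2,0}+\partial_z u_{3,0}=0$, the system reduces to a 2D NS system with the pressure $P^{N_2}_0$ on $\mathbb{T}^2$ and forcing $-\frac{1}{A}(u_{\neq}\cdot\nabla u_{j,\neq})_0$ for $j=2,3$.

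The main body of the proof is standard energy estimates at the $L^2$, $H^1$, and $H^2$ (for $u_{2,0}$ only) levels. At the $L^2$ level I test against $u_{j,0}$: the transport and pressure terms vanish by incompressibility, and the forcing is rewritten using divergence-free structure as $-\frac{1}{A}\langle(u_{m,\neq}u_{j,\neq})_0,\partial_m u_{j,0}\rangle$, then bounded via Cauchy--Schwarz together with $\eqref{eq:non-neq0}_1$ of Lemma \ref{lemma_neq1}, giving a contribution of order $E_2^2/A^{2/3}$. At the $H^1$ level I test against $-\triangle u_{j,0}$: the cubic self-interaction is absorbed using the $L^{\infty}$ smallness \eqref{u20 u30 infty} (applied via the bootstrap \eqref{assumption}), the pressure $\partial_j P^{N_2}_0$ is handled by the Calderón--Zygmund-type boundedness on $\mathbb{T}^2$ combined with bilinear bounds on products of $u_0$ and $u_{\neq}$, and the non-zero forcing is controlled by $\eqref{eq:non-neq0}_2$. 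The $H^2$ estimate for $u_{2,0}$ (test against $\triangle^2 u_{2,0}$) follows the same template; smallness of the initial data is supplied by \eqref{conditions:u20 u30}, and the remaining nonlinear pieces are closed because each of them carries at least one power of $\epsilon$ or a negative power of $A$.

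The fourth bound, the weighted estimate on $\triangle u_{3,0}$, is the delicate point and the main obstacle: \eqref{conditions:u20 u30} supplies only $H^1$ data for $u_{3,0}$, so a direct unweighted $L^{\infty}L^2$ estimate of $\triangle u_{3,0}$ is out of reach. My plan is to set $w(t)=\min\{A^{-2/3}+A^{-1}t,\,1\}$ and test the evolution of $\triangle u_{3,0}$ against $w(t)\triangle u_{3,0}$, yielding
\begin{equation*}
\tfrac{1}{2}\tfrac{d}{dt}\bigl(w(t)\|\triangle u_{3,0}\|_{L^2}^2\bigr)+\tfrac{w(t)}{A}\|\nabla\triangle u_{3,0}\|_{L^2}^2=\tfrac{\dot w(t)}{2}\|\triangle u_{3,0}\|_{L^2}^2+\text{nonlinear and pressure terms}.
\end{equation*}
Since $\dot w(t)\leq A^{-1}$, the first term on the right integrates to at most $\tfrac{1}{2A}\|\triangle u_{3,0}\|_{L^2L^2}^2$, which is precisely the quantity already bounded by $C\epsilon^2$ in the preceding $H^1$ estimate. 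At $t=0$ the weight $w(0)=A^{-2/3}$ contributes $A^{-2/3}\|\triangle u_{3,\rm in}\|_{L^2}^2\lesssim A^{-2/3}\|u_{\rm in}\|_{H^2}^2$, which is $\leq C\epsilon^2$ once $A\geq A_{2,3}$ is large enough in terms of $\|u_{\rm in}\|_{H^2}/\epsilon$. The nonlinear and pressure contributions are treated exactly as in the $u_{2,0}$ case, only now dressed with the uniformly bounded multiplier $w(t)\leq 1$, which does not degrade any of the bilinear estimates.

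The chief technical obstacle throughout is the pressure $P^{N_2}_0$ at the $H^2$ level: it is nonlocal and its worst contribution comes from the non-zero-mode product $(u_{k,\neq}u_{j,\neq})_0$, whose two derivatives must be dominated using the enhanced-dissipation energy $E_2$ with room to spare. The enhanced-dissipation gain of $A^{-1/3}$ (together with the assumed largeness of $A$) is what ultimately pushes every non-zero-mode contribution below $\epsilon^2$, closing the bootstrap and yielding \eqref{eq:u20u30}.
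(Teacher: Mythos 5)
Your overall architecture matches the paper's: unweighted $L^2$ and $H^1$ estimates for the pair $(u_{2,0},u_{3,0})$, unweighted $H^2$ for $u_{2,0}$, weighted $H^2$ for $u_{3,0}$, with the non-zero-mode forcing controlled by Lemma \ref{lemma_neq1}. Your observation that $\triangle P^{N_1}_0=0$ so $P^{N_1}_0$ is constant is correct and is what the paper implicitly uses.

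However there is a genuine gap in your treatment of the fourth bound. You treat the weight $w(t)=\min\{A^{-2/3}+A^{-1}t,\,1\}$ as serving only two purposes --- softening the initial data ($w(0)=A^{-2/3}$) and producing a $\dot w\,\|\triangle u_{3,0}\|_{L^2}^2$ term absorbed by the $H^1$ estimate --- and then explicitly discard it on the forcing via $w(t)\leq 1$, claiming the nonlinear terms ``are treated exactly as in the $u_{2,0}$ case.'' This does not close. The full-gradient nonlinear forcing for $u_{3,0}$ is controlled by $(\ref{eq:non-neq0})_7$,
$\|{\rm e}^{2aA^{-\frac13}t}\nabla(u_{\neq}\cdot\nabla u_{3,\neq})\|^2_{L^2L^2}\leq C(A^{\frac76+\frac13\alpha}E_2^2E_5^2+A^{\frac43}E_2^4)$,
which is strictly worse than the $u_{2,\neq}$ bound $(\ref{eq:non-neq0})_4$ because of the $\partial_y$ component and the lift-up. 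Dividing by $A$ alone leaves $A^{\frac16+\frac13\alpha}E_2^2E_5^2+A^{\frac13}E_2^4$, which grows with $A$. The paper keeps $w(t)\leq A^{-\frac23}+A^{-1}t$ on this term and exploits
$\|(A^{-\frac23}+A^{-1}t)\,{\rm e}^{-aA^{-\frac13}t}\|_{L^\infty_t}\leq CA^{-\frac23}$,
pairing the polynomially growing weight against the exponential decay of the nonlinearity to gain precisely the extra factor $A^{-\frac23}$ needed to make both exponents negative. This gain is the whole point of the weight; without it the proof does not close.

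A secondary remark: at the $L^2$ and $H^1$ levels the pressure contributions $\langle\partial_y P^{N_2}_0,\triangle u_{2,0}\rangle+\langle\partial_z P^{N_2}_0,\triangle u_{3,0}\rangle$ cancel identically because $\partial_y u_{2,0}+\partial_z u_{3,0}=0$, so the Calder\'on--Zygmund machinery you invoke there is unnecessary (it is only needed in the scalar $H^2$ estimates where the pair no longer cancels). Likewise the paper absorbs the cubic self-interaction at the $H^1$ level through interpolation inequalities in terms of $\|(u_{2},u_{3})_0\|_{L^2}$ (already controlled at the $L^2$ stage), rather than the $L^\infty$ smallness \eqref{u20 u30 infty}; the latter route can be made to work under the bootstrap but is cleaner to avoid.
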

\begin{proof}
	{\bf Estimate of $(\ref{eq:u20u30})_{1}.$}	Due to $\nabla\cdot u_{0}=0$, we have
	\begin{equation*}
		<\partial_{y}P^{N_{k}}_{0}, u_{2,0}>+<\partial_{z}P^{N_{k}}_{0}, u_{3,0}>=-<P^{N_{k}}_{0}, \partial_{y}u_{2,0}+\partial_{z}u_{3,0}>=0,~~{\rm for}~~k\in\{1,2\}
	\end{equation*}
	\begin{equation*}
		<u_0\cdot\nabla u_{2,0},u_{2,0}>+<u_0\cdot\nabla u_{3,0},u_{3,0}>=0.
	\end{equation*}
	When $A\geq\max\{1,\epsilon^{-6}E_2^{12}\}=:A_{2,1},$ 
	using Lemma \ref{lemma_neq1} and assumption (\ref{conditions:u20 u30}), 
	energy estimates of \eqref{u_zero_1} show that 
	\begin{equation*}\label{u0_ans11}
		\begin{aligned}
			&\|u_{2,0}\|_{L^{\infty}L^2}^2+\|u_{3,0}\|_{L^{\infty}L^2}^2
			+\frac{1}{A}\left(\|\nabla u_{2,0}\|_{L^2L^2}^2
			+\|\nabla u_{3,0}\|_{L^2L^2}^2\right)\\
			\leq &\|(u_{2,{\rm in}})_0\|_{L^2}^2+\|(u_{3,{\rm in}})_0\|_{L^2}^2+\frac{C\| |u_{\neq}|^2 \|^2_{L^2L^2}}{A}
			\leq \|(u_{2,{\rm in}})_0\|_{L^2}^2+\|(u_{3,{\rm in}})_0\|_{L^2}^2+\frac{CE_2^4}{A^{\frac{1}{3}}}\leq C\epsilon^2.
		\end{aligned}
	\end{equation*}
	
	{\bf Estimate of $(\ref{eq:u20u30})_{2}.$}
	Multiplying $2\triangle u_{2,0}$ on $(\ref{u_zero_1})_1$ and 
	$2\triangle u_{3,0}$ on $(\ref{u_zero_1})_2,$ energy estimates give that 
	\begin{equation}\label{u23_1}
		\begin{aligned}
			&\frac{d}{dt}\left(\|\nabla u_{2,0}\|^2_{L^2}+\|\nabla u_{3,0}\|^2_{L^2}\right)+\frac{1}{A}
			\left(\|\triangle u_{2,0}\|^2_{L^{2}}+\|\triangle u_{3,0}\|^2_{L^{2}}\right)\\ 
			\leq &C\frac{\|u_{0}\cdot\nabla u_{2,0}\|_{L^2}^2+\|u_{0}\cdot\nabla u_{3,0}\|_{L^2}^2+\|u_{\neq}\cdot\nabla u_{\neq}\|_{L^2}^2}{A},
		\end{aligned}
	\end{equation}
	where we use 
	$<\partial_y P_0^{N_k}, \triangle u_{2,0}>+<\partial_z P_0^{N_k}, \triangle u_{3,0}>=0$ for $k\in\{1,2\}$.
	
	For $j\in\{2,3\},$ by H\"{o}lder's inequality and Gagliardo-Nirenberg inequality, there holds
	\begin{equation}\label{u23_2}
		\begin{aligned}
			&\|\nabla u_{j,0}\|_{L^2}^2\leq \| u_{j,0}\|_{L^2}\|\triangle u_{j,0}\|_{L^2},\\
			&\|u_{0}\cdot\nabla u_{j,0}\|_{L^2}^2\leq C\|(u_{2},u_{3})_0\|_{L^2}\|\nabla(u_{2},u_{3})_0\|_{L^2}
			\|\nabla u_{j,0}\|_{L^2}\|\triangle u_{j,0}\|_{L^2}\\&\qquad\qquad\qquad\quad+C\|(u_{2},u_{3})_{0}\|_{L^{2}}^{2}\|\nabla u_{j,0}\|_{L^{2}}\|\triangle u_{j,0}\|_{L^{2}}.
		\end{aligned}		
	\end{equation} 
	According to \eqref{u23_2} and Young's inequality, we rewrite \eqref{u23_1} into 
	\begin{equation}\label{u23_3}
		\begin{aligned}
			\frac{d}{dt}\|\nabla (u_{2},u_{3})_{0}\|^2_{L^2}
			\leq& -\frac{1}{2A}\left(\frac{\|\nabla u_{2,0}\|_{L^2}^4}{\|u_{2,0}\|_{L^2}^2}
			+\frac{\|\nabla u_{3,0}\|_{L^2}^4}{\|u_{3,0}\|_{L^2}^2}\right)+\frac{C\|u_{\neq}\cdot\nabla u_{\neq}\|_{L^2}^2}{A}\\
			&+ C\frac{\|(u_{2},u_{3})_0\|_{L^2}^2\|\nabla(u_{2},u_{3})_0\|^4_{L^2}+\|(u_{2},u_{3})_{0}\|_{L^{2}}^{4}\|\nabla(u_{2},u_{3})_{0}\|_{L^{2}}^{2}}{A}.
		\end{aligned}
	\end{equation}
	Thanks to $(\ref{eq:u20u30})_{1}$, by taking $\epsilon$ small enough, we infer from \eqref{u23_3} that 
	\begin{equation}\label{u23_4}
		\begin{aligned}
			\frac{d}{dt}\|\nabla (u_{2},u_{3})_{0}\|^2_{L^2}
			\leq C\frac{\|u_{\neq}\cdot\nabla u_{\neq}\|_{L^2}^2}{A}+ C\frac{\|(u_{2},u_{3})_{0}\|_{L^{2}}^{4}\|\nabla(u_{2},u_{3})_{0}\|_{L^{2}}^{2}}{A}.
		\end{aligned}
	\end{equation}
	From this, along with Lemma \ref{lemma_neq1}, (\ref{conditions:u20 u30}) and $(\ref{eq:u20u30})_{1}$, when $A\geq A_{2,1},$ one deduces
	\begin{equation}\label{u2351}
		\begin{aligned}
			\|\nabla u_{2,0}\|^2_{L^{\infty}L^2}+\|\nabla u_{3,0}\|^2_{L^{\infty}L^2}
			&\leq \|(u_{2,{\rm in}})_0\|_{H^1}^2+\|(u_{3,{\rm in}})_0\|_{H^1}^2+\frac{C\|u_{\neq}\cdot\nabla u_{\neq}\|_{L^2L^2}^2}{A}+C\epsilon^6\\
			&\leq \|(u_{2,{\rm in}})_0\|_{H^1}^2+\|(u_{3,{\rm in}})_0\|_{H^1}^2+\frac{CE_2^4}{A^{\frac13}}+C\epsilon^6\leq C\epsilon^2.
		\end{aligned}
	\end{equation} 
	By Lemma \ref{sob_inf_1}, $(\ref{eq:u20u30})_{1}$ and (\ref{u2351}), we have
	\begin{equation*}
		\|u_{0}\cdot\nabla u_{j,0}\|_{L^2L^2}^2\leq C \|(u_{2},u_3)_0\|_{L^{\infty}H^1}^2\|\triangle u_{j,0}\|_{L^2L^2}^2\leq C\epsilon^2\|\triangle u_{j,0}\|_{L^2L^2}^2.
	\end{equation*} 
	Integrating in time for \eqref{u23_1} and taking $\epsilon$ small enough, we obtain  
	\begin{equation}\label{u2352}
		\begin{aligned}
			&\|\nabla u_{2,0}\|^2_{L^{\infty}L^2}+\|\nabla u_{3,0}\|^2_{L^{\infty}L^2}+
			\frac{\|\triangle u_{2,0}\|^2_{L^{2}L^2}+\|\triangle u_{3,0}\|^2_{L^{2}L^2}}{A}\\ 
			\leq &C\left(\|(u_{2,{\rm in}})_0\|_{H^1}^2+\|(u_{3,{\rm in}})_0\|_{H^1}^2+\frac{\|u_{\neq}\cdot\nabla u_{\neq}\|_{L^2L^2}^2}{A}\right)
			\leq C\epsilon^2.
		\end{aligned}
	\end{equation}
	
	{\bf Estimate of $(\ref{eq:u20u30})_{3}.$} 
	The $H^{2}$ energy estimate for $(\ref{u_zero_1})_{1}$ shows that
	\begin{equation}\label{energy u20''}
		\begin{aligned}
			&\|\triangle u_{2,0}\|_{L^{\infty}L^{2}}^{2}+\frac{\|\nabla\triangle u_{2,0}\|_{L^{2}L^{2}}^{2}}{A}\\\leq&\|(u_{2,\rm in})_{0}\|_{H^{2}}^{2}+\frac{C}{A}\left(\|\nabla(u_{0}\cdot\nabla u_{2,0})\|_{L^{2}L^{2}}^{2}+\|\nabla(u_{\neq}\cdot\nabla u_{2,\neq})\|_{L^{2}L^{2}}^{2}+\|\triangle P_{0}^{N_{2}}\|_{L^{2}L^{2}}^{2}\right).
		\end{aligned}
	\end{equation}
	Using Lemma \ref{sob_inf_1}, $(\ref{eq:u20u30})_{1,2}$ and $\partial_{z}u_{3,0}=-\partial_{y}u_{2,0},$ for $j\in\{2,3\},$ there holds
	\begin{equation}\label{u0 uj0}
		\begin{aligned}
			\frac{\|\nabla(u_{0}\cdot\nabla u_{j,0})\|_{L^{2}L^{2}}^{2}}{A}\leq&\frac{C}{A}\left(\|u_{2,0}\|_{L^{\infty}H^{2}}^{2}+\|u_{3,0}\|_{L^{\infty}H^{1}}^{2} \right)\left(\|\nabla u_{2,0}\|_{L^{2}H^{1}}^{2}+\|\nabla u_{3,0}\|_{L^{2}H^{1}}^{2} \right)\\\leq& C\epsilon^{4}+C\epsilon^{2}\|\triangle u_{2,0}\|_{L^{\infty}L^{2}}^{2}.
		\end{aligned}
	\end{equation}
	Moreover, due to ${\rm div}~(u\cdot\nabla u)=\partial_{x}(u\cdot\nabla u_{1})+\partial_{y}(u\cdot\nabla u_{2})+\partial_{z}(u\cdot\nabla u_{3}),$ there holds
	\begin{equation*}
		\begin{aligned}
			&\|{\rm div}~(u\cdot\nabla u)_{0}\|_{L^{2}}^{2}\leq\|\partial_{y}(u\cdot\nabla u_{2})_{0}\|_{L^{2}}^{2}+\|\partial_{z}(u\cdot\nabla u_{3})_{0}\|_{L^{2}}^{2}\\\leq&\|\partial_{y}(u_{0}\cdot\nabla u_{2,0})\|_{L^{2}}^{2}+\|\partial_{z}(u_{0}\cdot\nabla u_{3,0})\|_{L^{2}}^{2}+\|\partial_{y}(u_{\neq}\cdot\nabla u_{2,\neq})\|_{L^{2}}^{2}+\|\partial_{z}(u_{\neq}\cdot\nabla u_{3,\neq})\|_{L^{2}}^{2}.
		\end{aligned}
	\end{equation*}
	From this, along with Lemma \ref{eq:non-neq0} and (\ref{u0 uj0}), we have
	\begin{equation}\label{PN2''}
		\begin{aligned}
			\frac{\|\triangle P_{0}^{N_{2}}\|_{L^{2}L^{2}}^{2}}{A}=\frac{\|{\rm div}~(u\cdot\nabla u)_{0}\|_{L^{2}L^{2}}^{2}}{A}\leq C\left(\epsilon^{4}+\frac{E_{2}^{4}}{A^{\frac12-\frac23\alpha}} \right)+C\epsilon^{2}\|\triangle u_{2,0}\|_{L^{\infty}L^{2}}^{2},
		\end{aligned}
	\end{equation}
	where $\alpha\in(\frac12, \frac{3}{4})$ is a constant.
	
	Then using assumption (\ref{conditions:u20 u30}), (\ref{u0 uj0}), (\ref{PN2''}) and Lemma \ref{eq:non-neq0}, we get from (\ref{energy u20''}) that
	\begin{equation*}
		\begin{aligned}
			\|\triangle u_{2,0}\|_{L^{\infty}L^{2}}^{2}+\frac{\|\nabla\triangle u_{2,0}\|_{L^{2}L^{2}}^{2}}{A}\leq \epsilon^{2}+C\left(\epsilon^{4}+\frac{E_{2}^{4}}{A^{\frac12-\frac23\alpha}} \right)+C\epsilon^{2}\|\triangle u_{2,0}\|_{L^{\infty}L^{2}}^{2}.
		\end{aligned}
	\end{equation*}
	By taking $\epsilon$ small enough,  one obtains 
	\begin{equation*}
		\begin{aligned}
			\|\triangle u_{2,0}\|_{Y_{0}}^{2}\leq C\epsilon^{2}
		\end{aligned}
	\end{equation*}
	provided with $A\geq\max\{A_{2,1}, \big(E_{2}^{2}\epsilon^{-1} \big)^{\frac{12}{3-4\alpha}} \}=:A_{2.2}.$

	{\bf Estimate of $(\ref{eq:u20u30})_{4}.$}
	Taking $H^{2}$ energy estimate for $(\ref{u_zero_1})_{2}$, there holds
	\begin{equation*}
		\begin{aligned}
			\frac{d}{dt}\|\triangle u_{3,0}\|_{L^{2}}^{2}+\frac{\|\nabla\triangle u_{3,0}\|_{L^{2}}^{2}}{A}\leq\frac{C}{A}\left(\|\nabla(u_{0}\cdot\nabla u_{3,0})\|_{L^{2}}^{2}+\|\nabla(u_{\neq}\cdot\nabla u_{3,\neq})\|_{L^{2}}^{2}+\|\triangle P_{0}^{N_{2}}\|_{L^{2}}^{2} \right),
		\end{aligned}
	\end{equation*}
	which follows that
	\begin{equation}\label{u30''}
		\begin{aligned}
			&\frac{d}{dt}\left(\min\{A^{-\frac23}+A^{-1}t, 1\}\|\triangle u_{3,0}\|_{L^{2}}^{2} \right)+\frac{\min\{A^{-\frac23}+A^{-1}t, 1 \}}{A}\|\nabla\triangle u_{3,0}\|_{L^{2}}^{2}\\\leq&\frac{C}{A}\left(\|\nabla(u_{0}\cdot\nabla u_{3,0})\|_{L^{2}}^{2}+(A^{-\frac23}+A^{-1}t)\|\nabla(u_{\neq}\cdot\nabla u_{3,\neq})\|_{L^{2}}^{2}+\|\triangle P_{0}^{N_{2}}\|_{L^{2}}^{2}+\|\triangle u_{3,0}\|_{L^{2}}^{2} \right).
		\end{aligned}
	\end{equation}
	Thanks to $(\ref{eq:u20u30})_{3}$, (\ref{u0 uj0}) and (\ref{PN2''}), we get
	\begin{equation}\label{u0 uj0 PN2''}
		\begin{aligned}
			\frac{\|\nabla(u_{0}\cdot\nabla u_{j,0})\|_{L^{2}L^{2}}^{2}}{A}\leq C\epsilon^{4},\quad 	\frac{\|\triangle P_{0}^{N_{2}}\|_{L^{2}L^{2}}^{2}}{A}\leq C\left(\epsilon^{4}+\frac{E_{2}^{4}}{A^{\frac12-\frac23\alpha}} \right).
		\end{aligned}
	\end{equation}
	Notice that $\|(A^{-\frac23}+A^{-1}t){\rm e}^{-aA^{-\frac13}t}\|_{L^{\infty}_{t}}\leq CA^{-\frac23},$
	using $(\ref{eq:u20u30})_{2},$ (\ref{u0 uj0 PN2''}) and Lemma \ref{eq:non-neq0}, one obtains
	\begin{equation*}
		\begin{aligned}
			&\|\min\{(A^{-\frac23}+A^{-1}t)^{\frac12}, 1\}\triangle u_{3,0}\|_{Y_{0}}^{2}\\\leq&\frac{\|(u_{3,\rm in})_{0}\|_{H^{2}}^{2}}{A^{\frac23}}+C\left(\epsilon^{2}+\frac{E_{2}^{4}}{A^{\frac12-\frac23\alpha}} \right)+\frac{C}{A}\|(A^{-\frac23}+A^{-1}t){\rm e}^{-aA^{-\frac13}t}\|_{L^{\infty}_{t}}\|{\rm e}^{2aA^{-\frac13}t}\nabla(u_{\neq}\cdot\nabla u_{3,\neq})\|_{L^{2}L^{2}}^{2}\\\leq&\frac{\|(u_{3,\rm in})_{0}\|_{H^{2}}^{2}}{A^{\frac23}}+C\left(\epsilon^{2}+\frac{E_{2}^{4}}{A^{\frac12-\frac23\alpha}}+\frac{E_{2}^{2}E_{5}^{2}}{A^{\frac12-\frac13\alpha}} \right),\quad\alpha\in\Big(\frac12, \frac34\Big).
		\end{aligned}
	\end{equation*}
	When $$A\geq \max\{ A_{2,2}, (\|(u_{3,\rm in})_{0}\|_{H^{2}}\epsilon^{-1})^{3}, (E_{2}E_{5}\epsilon^{-1})^{\frac{12}{3-2\alpha}} \}=:A_{2,3},$$ we infer from the above inequality that
	\begin{equation*}
		\|\min\{(A^{-\frac23}+A^{-1}t)^{\frac12}, 1\}\triangle u_{3,0}\|_{Y_{0}}^{2}\leq C\epsilon^{2}.
	\end{equation*}
	
	The proof is complete.
\end{proof}

\subsection{Heat dissipation estimates for $u_{2,0}$ and $u_{3,0}$}\
Next, we consider the standard heat equation in $(y,z)\in\mathbb{T}^{2}:$
\begin{equation}\label{eq:f g}
	\partial_{t}f-\frac{1}{A}\triangle f=g,\quad t\in[0,T],
\end{equation}
where $g(t,y,z)$ is a given function. By performing a decomposition similar to $(\ref{def:check{f_0}})$, we obtain the following heat dissipation estimate for the non-average part $\widetilde{f}=\sum_{k_{2}^2+k_{3}^2\neq0}\widehat{f}_{k_{2},k_{3}}(t){\rm e}^{i(k_{2}y+k_{3}z)}$.
\begin{lemma}\label{lem:heat estimate}
	Let $f(t,y,z)$ be a solution of (\ref{eq:f g}), there holds
	\begin{equation*}
		\|{\rm e}^{\frac{t}{2A}}\widetilde{f}\|_{L^{\infty}L^{2}}^{2}+\frac{1}{A}\|{\rm e}^{\frac{t}{2A}}\nabla\widetilde{f}\|_{L^{2}L^{2}}^{2}
		\leq C\left(\|f_{\rm in}\|_{L^{2}}^{2}+A\|{\rm e}^{\frac{t}{2A}}(-\triangle)^{-\frac{1}{2}}\widetilde{g}\|_{L^{2}L^{2}}^{2}\right),
	\end{equation*}
	where $\widetilde{g}=\sum_{k_{2}^2+k_{3}^2\neq0}\widehat{g}_{k_{2},k_{3}}(t){\rm e}^{i(k_{2}y+k_{3}z)}.$
\end{lemma}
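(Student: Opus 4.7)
The plan is to project onto the non-average modes, invoke the Poincar\'e inequality on $\mathbb{T}^2$, and then run a weighted $L^2$ energy estimate with the exponential weight ${\rm e}^{t/(2A)}$ built in.

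First I would apply the projection $P_{\neq(0,0)}$ (the one that removes the $(k_2,k_3)=(0,0)$ mode) to \eqref{eq:f g}. Since the equation has constant coefficients and the Laplacian commutes with this spectral projection, $\widetilde f$ satisfies the same linear heat equation with forcing $\widetilde g$:
\begin{equation*}
\partial_{t}\widetilde{f}-\frac{1}{A}\triangle\widetilde{f}=\widetilde{g},\qquad \widetilde{f}\big|_{t=0}=\widetilde{f_{\rm in}}.
\end{equation*}
The key observation is that on $\mathbb{T}^2$ any non-average function has Fourier modes with $k_2^2+k_3^2\geq 1$, so the Poincar\'e inequality $\|\widetilde f\|_{L^2}^2\leq \|\nabla\widetilde f\|_{L^2}^2$ holds.

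Next I would take the $L^2$ inner product of the equation with ${\rm e}^{t/A}\widetilde f$ and rearrange to get
\begin{equation*}
\frac{d}{dt}\Bigl({\rm e}^{t/A}\|\widetilde{f}\|_{L^2}^2\Bigr)
=\frac{{\rm e}^{t/A}}{A}\|\widetilde{f}\|_{L^2}^2
-\frac{2{\rm e}^{t/A}}{A}\|\nabla\widetilde{f}\|_{L^2}^2
+2{\rm e}^{t/A}\langle\widetilde{f},\widetilde{g}\rangle.
\end{equation*}
Using Poincar\'e, the first two terms are bounded above by $-A^{-1}{\rm e}^{t/A}\|\nabla\widetilde f\|_{L^2}^2$. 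For the forcing term I would write $\langle\widetilde f,\widetilde g\rangle=\langle(-\triangle)^{1/2}\widetilde f,(-\triangle)^{-1/2}\widetilde g\rangle$, which is well defined on the non-average subspace, and then use $\|(-\triangle)^{1/2}\widetilde f\|_{L^2}=\|\nabla\widetilde f\|_{L^2}$ together with Young's inequality
\begin{equation*}
2{\rm e}^{t/A}|\langle\widetilde f,\widetilde g\rangle|
\leq \frac{{\rm e}^{t/A}}{2A}\|\nabla\widetilde f\|_{L^2}^2
+2A\,{\rm e}^{t/A}\|(-\triangle)^{-1/2}\widetilde g\|_{L^2}^2.
\end{equation*}

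Combining these bounds gives
\begin{equation*}
\frac{d}{dt}\Bigl({\rm e}^{t/A}\|\widetilde f\|_{L^2}^2\Bigr)
+\frac{{\rm e}^{t/A}}{2A}\|\nabla\widetilde f\|_{L^2}^2
\leq 2A\,{\rm e}^{t/A}\|(-\triangle)^{-1/2}\widetilde g\|_{L^2}^2.
\end{equation*}
Integrating in time from $0$ to $t$ and using $\|\widetilde{f_{\rm in}}\|_{L^2}\leq\|f_{\rm in}\|_{L^2}$ yields exactly the desired inequality (after noting ${\rm e}^{t/A}=({\rm e}^{t/(2A)})^2$). There is no real obstacle here; the only point requiring attention is to verify that the exponential weight $ {\rm e}^{t/(2A)} $ is compatible with the spectral gap of $-\triangle$ restricted to the non-average subspace, which is exactly the reason we use $\frac{1}{2A}$ rather than a constant independent of $A$ in the exponent.
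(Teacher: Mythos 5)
Your proof is correct, but it takes a genuinely different (and in fact more economical) route than the paper. The paper's proof works Fourier-mode by Fourier-mode: it writes the ODE satisfied by each $\widehat f_{k_2,k_3}$ with $k_2^2+k_3^2>0$, applies Duhamel's formula, bounds the homogeneous and forced pieces separately using $k_2^2+k_3^2\geq 1$, and then recombines via Plancherel; the gradient term is handled by a second, separate energy argument on each mode. You instead run a single weighted $L^2$ energy estimate on the full non-average projection, getting both the $L^\infty L^2$ and the $L^2L^2$ gradient bounds simultaneously from one differential inequality. The crucial ingredient is identical in both — the spectral gap $k_2^2+k_3^2\geq 1$ on the non-average subspace, which in your version manifests as Poincar\'e, $\|\widetilde f\|_{L^2}\leq\|\nabla\widetilde f\|_{L^2}$, so that the exponential weight ${\rm e}^{t/(2A)}$ can be absorbed by the dissipation; and the dual-pairing trick $\langle\widetilde f,\widetilde g\rangle=\langle(-\triangle)^{1/2}\widetilde f,(-\triangle)^{-1/2}\widetilde g\rangle$ (well defined because $\widetilde g$ has no zero mode) reproduces the $A\|(-\triangle)^{-1/2}\widetilde g\|_{L^2L^2}^2$ factor after Young's inequality with parameter $1/(2A)$. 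Your argument is shorter and avoids the explicit Duhamel representation; the paper's mode-by-mode version has the minor advantage of giving explicit per-frequency control, but that is not needed for the statement. Everything in your sketch checks out numerically, including the choice of Young parameter and the final integration in time, so filling in the routine details yields a complete alternative proof.
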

\begin{proof}
	Doing the Fourier transform to (\ref{eq:f g}), the Fourier mode $\widetilde{f}$ satisfies
	\begin{equation}\label{eq:f}
		\partial_{t}\widehat{f}_{k_{2},k_{3}}+\frac{k_{2}^{2}+k_{3}^{2}}{A}\widehat{f}_{k_{2},k_{3}}=\widehat{g}_{k_{2},k_{3}},\quad k_{2}^{2}+k_{3}^{2}>0.
	\end{equation}
	The solution of (\ref{eq:f}) is given by
	\begin{equation}\label{check f solution}
		\widehat{f}_{k_{2},k_{3}}={\rm e}^{-\frac{k_{2}^{2}+k_{3}^{2}}{A}t}
		(\widehat{f}_{{\rm in}})_{k_2,k_3}
		+\int_{0}^{t}{\rm e}^{\frac{k_{2}^{2}+k_{3}^{2}}{A}(s-t)}\widehat{g}_{k_{2},k_{3}}(s)ds=:F_{(1)}+F_{(2)}.
	\end{equation}
	For $F_{(1)},$ due to $k_{2}^{2}+k_{3}^{2}>0,$ there holds
	$|F_{(1)}|\leq {\rm e}^{-\frac{t}{2A}}|\widehat{f}_{\rm in}|,$
	which shows that
	\begin{equation*}
		\|{\rm e}^{\frac{t}{2A}}F_{(1)}\|_{L^{\infty}(0,T)}^{2}\leq |\widehat{f}_{\rm in}|^{2}.
	\end{equation*}
	For $F_{(2)},$ using H$\ddot{\rm o}$lder's inequality and $k_{2}^{2}+k_{3}^{2}>0,$ we have
	\begin{equation*}
		\begin{aligned}
			|F_{(2)}|\leq&\|{\rm e}^{\frac{k_{2}^{2}+k_{3}^{2}}{2A}(s-t)}\|_{L^{2}}
			\|{\rm e}^{\frac{k_{2}^{2}+k_{3}^{2}}{2A}(s-t)}\widehat{g}_{k_{2},k_{3}}(s)\|_{L^{2}}\\\leq&\left(\frac{A}{k_{2}^{2}+k_{3}^{2}} \right)^{\frac12}
			\|{\rm e}^{\frac{k_{2}^{2}+k_{3}^{2}}{2A}(s-t)}\widehat{g}_{k_{2},k_{3}}(s)\|_{L^{2}}\leq\left(\frac{A}{k_{2}^{2}+k_{3}^{2}}\right)^{\frac12}
			\|{\rm e}^{\frac{s-t}{2A}}\widehat{g}_{k_{2},k_{3}}(s)\|_{L^{2}}.
		\end{aligned}
	\end{equation*}
This implies that
	\begin{equation*}
		\|{\rm e}^{\frac{t}{2A}}F_{(2)}\|_{L^{\infty}(0,T)}^{2}\leq CA(k_{2}^{2}+k_{3}^{2})^{-1}\|{\rm e}^{\frac{t}{2A}}\widehat{g}_{k_{2},k_{3}}\|_{L^{2}(0,T)}^{2}.
	\end{equation*}
	Collecting the estimates of $F_{(1)}, F_{(2)}$, and using the Plancherel's theorem,  we get from (\ref{check f solution}) that
	\begin{equation}\label{infty 2}
		\begin{aligned}
			\|{\rm e}^{\frac{t}{2A}}\widetilde{f}\|_{L^{\infty}L^{2}}^{2}=&|\mathbb{T}|^{2}\sum_{k_2^2+k_3^2>0}\|{\rm e}^{\frac{t}{2A}}\widehat{f}_{k_{2},k_{3}}(t)\|_{L^{\infty}(0,T)}^{2}
			\\\leq&|\mathbb{T}|^{2}\sum_{k_2^2+k_3^2>0}\left(\|{\rm e}^{\frac{t}{2A}}F_{(1)}\|_{L^{\infty}(0,T)}^{2}
			+\|{\rm e}^{\frac{t}{2A}}F_{(2)}\|_{L^{\infty}(0,T)}^{2} \right)\\\leq&C|\mathbb{T}|^{2}\sum_{k_2^2+k_3^2>0}
			\left(|(\widehat{f}_{\rm in})_{k_2,k_3}|^{2}+A(k_{2}^{2}+k_{3}^{2})^{-1}\|{\rm e}^{\frac{t}{2A}}\widehat{g}_{k_{2},k_{3}}\|_{L^{2}(0,T)}^{2} \right)\\\leq&C\left(\|f_{\rm in}\|_{L^{2}}^{2}+A\|{\rm e}^{\frac{t}{2A}}(-\triangle)^{-\frac{1}{2}}\widetilde{g}\|_{L^{2}L^{2}}^{2} \right).
		\end{aligned}
	\end{equation}
	
	We also need to estimate $\frac{1}{A}\|{\rm e}^{\frac{t}{2A}}\nabla\widetilde{f}\|_{L^{2}L^{2}}^{2}.$ Multiplying (\ref{eq:f}) by ${\rm e}^{\frac{t}{2A}},$ the energy estimate gives that
	\begin{equation*}
		\begin{aligned}
			-\frac{1}{2A}\|{\rm e}^{\frac{t}{2A}}\widehat{f}_{k_{2},k_{3}}\|_{L^{2}}^{2}+\frac{2(k_{2}^{2}
				+k_{3}^{2})}{3A}\|{\rm e}^{\frac{t}{2A}}\widehat{f}_{k_{2},k_{3}}\|_{L^{2}}^{2}
			\leq-\frac12\left(|{\rm e}^{\frac{t}{2A}}\widehat{f}_{k_{2},k_{3}}|^{2}\right)|_{t=0}^{t=T}
			+\frac{CA\|{\rm e}^{\frac{t}{2A}}\widehat{g}_{k_{2},k_{3}}\|_{L^{2}}^{2}}{k_{2}^{2}+k_{3}^{2}},
		\end{aligned}
	\end{equation*}
	which follows that
	\begin{equation*}
		\frac{k_{2}^{2}+k_{3}^{2}}{A}\|{\rm e}^{\frac{t}{2A}}\widehat{f}_{k_{2},k_{3}}\|_{L^{2}}^{2}\leq C\left(|(\widehat{f}_{\rm in})_{k_2,k_3}|^{2}+A(k_{2}^{2}+k_{3}^{2})^{-1}\|{\rm e}^{\frac{t}{2A}}\widehat{g}_{k_{2},k_{3}}\|_{L^{2}}^{2} \right).
	\end{equation*}
	From this, along with the Plancherel's theorem, we get
	\begin{equation}\label{2 2}
		\begin{aligned}
			\frac{1}{A}\|{\rm e}^{\frac{t}{2A}}\nabla\widetilde{f}\|_{L^{2}L^{2}}^{2}=&\frac{1}{A}|\mathbb{T}|^{2}\sum_{k_2^2+k_3^2>0}
			\|{\rm e}^{\frac{t}{2A}}(k_{2}^{2}+k_{3}^{2})^{\frac12}\widehat{f}_{k_{2},k_{3}}\|_{L^{2}(0,T)}^{2}
			\\\leq&C|\mathbb{T}|^{2}\sum_{k_2^2+k_3^2>0}\left(|(\widehat{f}_{\rm in})_{k_2,k_3}|^{2}+A(k_{2}^{2}+k_{3}^{2})^{-1}
			\|{\rm e}^{\frac{t}{2A}}\widehat{g}_{k_{2},k_{3}}\|_{L^{2}}^{2} \right)
			\\\leq&C\left(\|f_{\rm in}\|_{L^{2}}^{2}+A\|{\rm e}^{\frac{t}{2A}}(-\triangle)^{-\frac{1}{2}}\widetilde{g}\|_{L^{2}L^{2}}^{2} \right).
		\end{aligned}
	\end{equation}
	
	Combining (\ref{infty 2}) with (\ref{2 2}), one obtains
	\begin{equation*}
		\|{\rm e}^{\frac{t}{2A}}\widetilde{f}\|_{L^{\infty}L^{2}}^{2}+\frac{1}{A}\|{\rm e}^{\frac{t}{2A}}\nabla\widetilde{f}\|_{L^{2}L^{2}}^{2}\leq C\left(\|f_{\rm in}\|_{L^{2}}^{2}+A\|{\rm e}^{\frac{t}{2A}}(-\triangle)^{-\frac{1}{2}}\widetilde{g}\|_{L^{2}L^{2}}^{2} \right).
	\end{equation*}
	The proof is complete.
\end{proof}

Similarly, we decompose the velocity $u_{2, 0}$ and $u_{3, 0}$ in the same way of (\ref{def:check{f_0}}), satisfying
\begin{equation}\label{u23:decompose}
	\begin{aligned}
		u_{2,0}(t,y,z)=\overline{u}_{2,0}(t)+\widetilde{u}_{2,0}(t,y,z),\\
		u_{3,0}(t,y,z)=\overline{u}_{3,0}(t)+\widetilde{u}_{3,0}(t,y,z).
	\end{aligned}
\end{equation}
In this way, combining with Lemma \ref{lem:heat estimate}, we can prove a typical heat dissipation estimates for $\widetilde{u}_{2,0}(t,y,z)$ and $\widetilde{u}_{3,0}(t,y,z)$.
Furthermore, for $j\in\{2,3\},$ $\overline{u}_{j,0}(t)$ is 
a constant satisfying
\begin{equation*}
	\begin{aligned}
		\overline{u}_{j,0}(t)\equiv\frac{1}{|\mathbb{T}|^2}
		\int_{|\mathbb{T}|^2}(u_{j,\rm in})_0dydz.
	\end{aligned}
\end{equation*} 
\begin{lemma}\label{lem:u20 u30}
	Under the assumptions of Lemma \ref{lemma_u23_1}, it holds that
	\begin{equation*}
		\begin{aligned}
			\|{\rm e}^{\frac{t}{2A}}\widetilde{u}_{2,0}\|_{L^{\infty}L^{2}}^{2}
			+\|{\rm e}^{\frac{t}{2A}}\widetilde{u}_{3,0}\|_{L^{\infty}L^{2}}^{2}
			+\frac{1}{A}\left(\|{\rm e}^{\frac{t}{2A}}\nabla\widetilde{u}_{2,0}\|_{L^{2}L^{2}}^{2}
			+\|{\rm e}^{\frac{t}{2A}}\nabla\widetilde{u}_{3,0}\|_{L^{2}L^{2}}^{2} \right)\leq C\epsilon^{2}.
		\end{aligned}
	\end{equation*}
\end{lemma}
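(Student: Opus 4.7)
The plan is to identify that each $\widetilde{u}_{j,0}$ ($j\in\{2,3\}$) satisfies a forced 2D heat equation on $\mathbb{T}^2$, apply Lemma \ref{lem:heat estimate}, and control the resulting forcing in the $(-\triangle)^{-1/2}$ norm by Riesz-type multipliers. First I would observe that $\triangle P^{N_1}_0=0$ (both $-2A\partial_x u_2$ and $\partial_x n$ have vanishing $x$-mean), so the $P^{N_1}_0$ contribution drops out, while $\overline{u}_{j,0}(t)$ is a conserved constant by the argument at the start of the proof of Lemma \ref{lemma_u23_1}. Consequently $\widetilde{u}_{j,0}$ solves
\[
\partial_t\widetilde{u}_{j,0}-\frac{1}{A}\triangle\widetilde{u}_{j,0}=g_j,\qquad g_j=-\frac{1}{A}(u\cdot\nabla u_j)_0-\frac{1}{A}\partial_j P^{N_2}_0,
\]
and $g_j$ is automatically mean-zero, so Lemma \ref{lem:heat estimate} reduces the task to estimating $A\|e^{t/(2A)}(-\triangle)^{-1/2}g_j\|_{L^2L^2}^2$.

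Next I would rewrite the forcing in divergence form using $\nabla\cdot u=0$ and the vanishing of $x$-averages of $\partial_x$-derivatives, namely
\[
(u\cdot\nabla u_j)_0=\partial_y(u_2 u_j)_0+\partial_z(u_3 u_j)_0,\qquad \triangle P^{N_2}_0=-\sum_{i,k\in\{2,3\}}\partial_i\partial_k(u_k u_i)_0.
\]
Since $(-\triangle)^{-1/2}\partial_y$, $(-\triangle)^{-1/2}\partial_z$, and $(-\triangle)^{-1/2}\partial_j\partial_i\partial_k\triangle^{-1}$ are Fourier multipliers of magnitude at most $1$ on mean-zero functions on $\mathbb{T}^2$, I obtain
\[
\|(-\triangle)^{-1/2}g_j\|_{L^2}\le\frac{C}{A}\sum_{k,l\in\{2,3\}}\|\widetilde{(u_k u_l)_0}\|_{L^2}.
\]
Expanding $u_{k,0}u_{l,0}=\overline{u}_{k,0}\overline{u}_{l,0}+\overline{u}_{k,0}\widetilde{u}_{l,0}+\widetilde{u}_{k,0}\overline{u}_{l,0}+\widetilde{u}_{k,0}\widetilde{u}_{l,0}$, the pure-constant piece is killed by the tilde, and only the fluctuation-containing products remain.

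For those pieces I would invoke $|\overline{u}_{k,0}|+\|\widetilde{u}_{k,0}\|_{L^\infty L^\infty}\le C\epsilon$ (from Lemma \ref{lemma_u23_1} and \eqref{u20 u30 infty}) together with the Poincar\'e inequality on mean-zero functions over $\mathbb{T}^2$ to get
\[
\|e^{t/(2A)}\widetilde{u_{k,0}u_{l,0}}\|_{L^2L^2}\le C\epsilon\,\|e^{t/(2A)}\widetilde{u}_0\|_{L^2L^2}\le C\epsilon\,\|e^{t/(2A)}\nabla\widetilde{u}_0\|_{L^2L^2},
\]
where $\widetilde{u}_0=(\widetilde{u}_{2,0},\widetilde{u}_{3,0})$. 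For the non-zero-$x$-mode contribution, choosing $A$ large enough that $e^{t/A}\le e^{2aA^{-1/3}t}$, the first inequality of Lemma \ref{lemma_neq1} yields $\|e^{t/(2A)}(u_{k,\neq}u_{l,\neq})_0\|_{L^2L^2}^2\le CA^{2/3}E_2^4$.

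Inserting both estimates into the bound from Lemma \ref{lem:heat estimate} and summing over $j\in\{2,3\}$ produces
\[
\sum_{j=2}^3\Bigl(\|e^{t/(2A)}\widetilde{u}_{j,0}\|_{L^\infty L^2}^2+\frac{1}{A}\|e^{t/(2A)}\nabla\widetilde{u}_{j,0}\|_{L^2L^2}^2\Bigr)\le C\epsilon^2+\frac{C\epsilon^2}{A}\sum_{j=2}^3\|e^{t/(2A)}\nabla\widetilde{u}_{j,0}\|_{L^2L^2}^2+CA^{-1/3}E_2^4,
\]
from which the claim follows by absorbing the middle term for $\epsilon$ small and choosing $A$ large enough that $CA^{-1/3}E_2^4\le C\epsilon^2$. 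The main obstacle will be the linear-in-$\widetilde{u}$ forcing $\overline{u}_{k,0}\widetilde{u}_{l,0}$: it carries no intrinsic decay beyond the weight $e^{t/(2A)}$ already built into our norms, so a na\"\i ve H\"older split in time fails. The resolution is to transfer the weighted $L^2_tL^2_{yz}$ norm of the fluctuation into that of its gradient via the Poincar\'e inequality, and then to absorb the result into the dissipation term $\frac{1}{A}\|e^{t/(2A)}\nabla\widetilde{u}_{j,0}\|_{L^2L^2}^2$ on the left-hand side of Lemma \ref{lem:heat estimate}, which closes the estimate for sufficiently small $\epsilon$.
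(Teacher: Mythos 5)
Your proposal is correct and follows essentially the same route as the paper: apply Lemma \ref{lem:heat estimate}, use $\nabla\cdot u=0$ to put the forcing in divergence form so that the $(-\triangle)^{-1/2}$ gain neutralizes the outer derivative, observe that $\widetilde{P}^{N_1}_0=0$, expand $(u_ku_l)_0$ into $\overline{u}_{k,0}\widetilde{u}_{l,0}$, $\widetilde{u}_{k,0}\widetilde{u}_{l,0}$ and $(u_{k,\neq}u_{l,\neq})_0$ pieces, invoke Lemma \ref{lemma_u23_1} and Lemma \ref{lemma_neq1}, and close by absorption for small $\epsilon$ and large $A$. The only cosmetic deviation is in the H\"older split for the quadratic fluctuation term: you use $\|\widetilde{u}\|_{L^\infty L^\infty}\|\widetilde{u}\|_{L^2L^2}$ and push everything onto the gradient dissipation via Poincar\'e, whereas the paper uses $\|\widetilde{u}\|_{L^2L^\infty}\|\widetilde{u}\|_{L^\infty L^2}$ and absorbs that piece into the $L^\infty L^2$ energy instead; both close for small $\epsilon$.
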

\begin{proof}
	Applying Lemma \ref{lem:heat estimate} to $(\ref{u_zero_1})_{1,2},$ and noting that $\nabla\cdot u=0,$  we get
	\begin{equation}\label{check u20}
		\begin{aligned}
			&\|{\rm e}^{\frac{t}{2A}}\widetilde{u}_{2,0}\|_{L^{\infty}L^{2}}^{2}
			+\|{\rm e}^{\frac{t}{2A}}\widetilde{u}_{3,0}\|_{L^{\infty}L^{2}}^{2}
			+\frac{1}{A}\left(\|{\rm e}^{\frac{t}{2A}}\nabla \widetilde{u}_{2,0}\|_{L^{2}L^{2}}^{2}
			+\|{\rm e}^{\frac{t}{2A}}\nabla\widetilde{u}_{3,0}\|_{L^{2}L^{2}}^{2} \right)
			\\\leq&C\bigg(\|(u_{2,\rm in})_{0}\|_{L^{2}}^{2}+\|(u_{3,\rm in})_{0}\|_{L^{2}}^{2}
			+\frac{1}{A}\|{\rm e}^{\frac{t}{2A}}\widetilde{(u_2u_2)}_0\|_{L^{2}L^{2}}^{2}
			+\frac{1}{A}\|{\rm e}^{\frac{t}{2A}}\large{\widetilde{(u_{3}u_{2})}_{0}}\|_{L^{2}L^{2}}^{2}
			\\&+\frac{1}{A}\|{\rm e}^{\frac{t}{2A}}\widetilde{(u_{3}u_{3})}_{0}\|_{L^{2}L^{2}}^{2}
			+\frac{1}{A}\|{\rm e}^{\frac{t}{2A}}\widetilde{P}^{N_{1}}_{0}\|_{L^{2}L^{2}}^{2}
			+\frac{1}{A}\|{\rm e}^{\frac{t}{2A}}\widetilde{P}^{N_{2}}_{0}\|_{L^{2}L^{2}}^{2}\bigg)
			\\=:&C\left(\|(u_{2,\rm in})_{0}\|_{L^{2}}^{2}+\|(u_{3,\rm in})_{0}\|_{L^{2}}^{2}+I_{1}+\cdots+I_{5} \right).
		\end{aligned}
	\end{equation}
	For $I_{1},$ as $\widetilde{(u_{2}u_{2})}_{0}=2\overline{u}_{2,0}\widetilde{u}_{2,0}+\widetilde{u}_{2,0}\widetilde{u}_{2,0}+\widetilde{(u_{2,\neq}u_{2,\neq})}_{0},$ there holds
	\begin{equation}\label{estimate I1}
		\begin{aligned}
			I_{1}&\leq\frac{C}{A}\left(\|{\rm e}^{\frac{t}{2A}}\overline{u}_{2,0}\widetilde{u}_{2,0}\|_{L^{2}L^{2}}^{2}+\|{\rm e}^{\frac{t}{2A}}\widetilde{u}_{2,0}\widetilde{u}_{2,0}\|_{L^{2}L^{2}}^{2}+\|{\rm e}^{\frac{t}{2A}}\widetilde{(u_{2,\neq}u_{2,\neq})_{0}}\|_{L^{2}L^{2}}^{2} \right)\\&=:I_{11}+I_{12}+I_{13}.
		\end{aligned}
	\end{equation}
	Thanks to $\partial_{t}\overline{u}_{j,0}=0$ for $j\in\{2,3\},$ we find
	\begin{equation}\label{bar uj0}
		|\overline{u}_{j,0}|=|(\overline{u}_{j,\rm in})_{0}|\leq\frac{1}{|\mathbb{T}|}\|(u_{j,\rm in})_{0}\|_{L^{2}}.
	\end{equation}
	From this and Poincar$\acute{\rm e}$ inequality, $I_{11}$ can be controlled by
	\begin{equation*}
		I_{11}\leq\frac{C}{A}\|(u_{2,\rm in})_{0}\|_{L^{2}}^{2}\|{\rm e}^{\frac{t}{2A}}\nabla\widetilde{u}_{2,0}\|_{L^{2}L^{2}}^{2}.
	\end{equation*}
	For $I_{12},$ using Lemma \ref{lemma_u23_1} and Poincar$\acute{\rm e}$ inequality, one deduces
	\begin{equation*}
		\begin{aligned}
			I_{12}\leq\frac{C}{A}\|\widetilde{u}_{2,0}\|_{L^{2}L^{\infty}}^{2}\|{\rm e}^{\frac{t}{2A}}\widetilde{u}_{2,0}\|_{L^{\infty}L^{2}}^{2}\leq \frac{C}{A}\|\triangle u_{2,0}\|^2_{L^{2}L^{2}}\|{\rm e}^{\frac{t}{2A}}\widetilde{u}_{2,0}\|_{L^{\infty}L^{2}}^{2}\leq C\epsilon^{2}\|{\rm e}^{\frac{t}{2A}}\widetilde{u}_{2,0}\|_{L^{\infty}L^{2}}^{2}.
		\end{aligned}
	\end{equation*}
	When $A$ is sufficiently large 
	and using Lemma \ref{lemma_neq1}, we arrive
	\begin{equation*}
		I_{13}\leq\frac{C}{A}\|{\rm e}^{2aA^{-\frac13}t}|u_{\neq}|^{2}\|_{L^{2}L^{2}}^{2}\leq\frac{CE_{2}^{4}}{A^{\frac13}}\leq C\epsilon^{2}.
	\end{equation*}
	Collecting $I_{11}-I_{13},$ (\ref{estimate I1}) yields that
	\begin{equation}\label{estimate I1 end}
		\begin{aligned}
			I_{1}\leq \frac{C}{A}\|(u_{2,\rm in})_{0}\|_{L^{2}}^{2}\|{\rm e}^{\frac{t}{2A}}\nabla\widetilde{u}_{2,0}\|_{L^{2}L^{2}}^{2}+C\epsilon^{2}\|{\rm e}^{\frac{t}{2A}}\widetilde{u}_{2,0}\|_{L^{\infty}L^{2}}+C\epsilon^{2}.
		\end{aligned}
	\end{equation}
	
	The estimates of $I_{2}$ and $I_{3}$ are similar to $I_{1}.$ Note that $$\widetilde{(u_{3}u_{2})}_{0}=\overline{u}_{3,0}\widetilde{u}_{2,0}+\widetilde{u}_{3,0}\overline{u}_{2,0}+\widetilde{u}_{3,0}\widetilde{u}_{2,0}+\widetilde{(u_{3,\neq}u_{2,\neq})}_{0},$$
	$$ \widetilde{(u_{3}u_{3})}_{0}=2\overline{u}_{3,0}\widetilde{u}_{3,0}+\widetilde{u}_{3,0}\widetilde{u}_{3,0}+\widetilde{(u_{3,\neq}u_{3,\neq})}_{0}.$$
	Using (\ref{bar uj0}), Lemma \ref{lemma_neq1} and Poincar$\acute{\rm e}$ inequality, 
	we have
	\begin{equation}\label{estimate I2 end}
		\begin{aligned}
			I_{2}\leq&\frac{C}{A}\left(\|{\rm e}^{\frac{t}{2A}}\overline{u}_{3,0}\widetilde{u}_{2,0}\|_{L^{2}L^{2}}^{2}
			+\|{\rm e}^{\frac{t}{2A}}\widetilde{u}_{3,0}\overline{u}_{2,0}\|_{L^{2}L^{2}}^{2}
			+\|{\rm e}^{\frac{t}{2A}}\widetilde{u}_{3,0}\widetilde{u}_{2,0}\|_{L^{2}L^{2}}^{2}
			+\|{\rm e}^{\frac{t}{2A}}\widetilde{(u_{3,\neq}u_{2,\neq})}_{0}\|_{L^{2}L^{2}}^{2}\right)
			\\\leq&\frac{C}{A}\bigg(\|\overline{u}_{3,0}(t)\|_{L^{\infty}}^{2}
			\|{\rm e}^{\frac{t}{2A}}\widetilde{u}_{2,0}\|_{L^{2}L^{2}}^{2}
			+\|\overline{u}_{2,0}(t)\|_{L^{\infty}}^{2}\|{\rm e}^{\frac{t}{2A}}\widetilde{u}_{3,0}\|_{L^{2}L^{2}}^{2}
			\\&+\|\widetilde{u}_{2,0}\|_{L^{2}L^{\infty}}^{2}\|{\rm e}^{\frac{t}{2A}}\widetilde{u}_{3,0}\|_{L^{\infty}L^{2}}^{2}
			+\|{\rm e}^{2aA^{-\frac13}t}|u_{\neq}|^{2}\|_{L^{2}L^{2}}^{2}\bigg)
			\\\leq&\frac{C}{A}\left(\|(u_{2,\rm in})_{0}\|_{L^{2}}^{2}+\|(u_{3,\rm in})_{0}\|_{L^{2}}^{2} \right)
			\|{\rm e}^{\frac{t}{2A}}\nabla(\widetilde{u}_{2,0},\widetilde{u}_{3,0})\|_{L^{2}L^{2}}^{2}
			+C\epsilon^{2}\|{\rm e}^{\frac{t}{2A}}\widetilde{u}_{3,0}\|_{L^{\infty}L^{2}}^{2}+C\epsilon^{2}
		\end{aligned}
	\end{equation}
	and
	\begin{equation}\label{estimate I3 end}
		\begin{aligned}
			I_{3}\leq&\frac{C}{A}\left(\|{\rm e}^{\frac{t}{2A}}\overline{u}_{3,0}\widetilde{u}_{3,0}\|_{L^{2}L^{2}}^{2}
			+\|{\rm e}^{\frac{t}{2A}}\widetilde{u}_{3,0}\widetilde{u}_{3,0}\|_{L^{2}L^{2}}^{2}
			+\|{\rm e}^{\frac{t}{2A}}\widetilde{(u_{3,\neq}u_{3,\neq})}_{0}\|_{L^{2}L^{2}}^{2} \right)
			\\\leq&\frac{C}{A}\left(\|\overline{u}_{3,0}(t)\|_{L^{\infty}}^{2}
			\|{\rm e}^{\frac{t}{2A}}\widetilde{u}_{3,0}\|_{L^{2}L^{2}}^{2}
			+\|\widetilde{u}_{3,0}\|_{L^{2}L^{\infty}}^{2}\|{\rm e}^{\frac{t}{2A}}\widetilde{u}_{3,0}\|_{L^{\infty}L^{2}}^{2}
			+\|{\rm e}^{2aA^{-\frac13}t}|u_{\neq}|^{2}\|_{L^{2}L^{2}}^{2} \right)
			\\\leq&C\epsilon^{2}\|{\rm e}^{\frac{t}{2A}}\widetilde{u}_{3,0}\|_{L^{\infty}L^{2}}^{2}
			+\frac{C}{A}\|(u_{3,\rm in})_{0}\|_{L^{2}}^{2}\|{\rm e}^{\frac{t}{2A}}\nabla\widetilde{u}_{3,0}\|_{L^{2}L^{2}}^{2}+C\epsilon^{2}.
		\end{aligned}
	\end{equation}
	
	Taking the Fourier transform for $(\ref{pressure_1})_{1}$, the Fourier mode $\widehat{P}^{N_{1}}_{k_{1},k_{2},k_{3}}$  follows that
	\begin{equation*}
		\begin{aligned}
			-(k_1^2+k_2^2+k_3^2)\widehat{P}^{N_{1}}_{k_1,k_2,k_3}=
			{ik_1}(-2A\widehat{u}_{2,k_{1},k_{2},k_{3}}+\widehat{n}_{k_{1},k_{2},k_{3}}).
		\end{aligned}
	\end{equation*}
	For the Fourier mode $\widehat{P}^{N_{1}}_{k_1,k_2,k_3}$
	of  $\widetilde{P}_{0}^{N_{1}}$ with $k_1=0$ and $k_2^2+k_3^2>0,$
	we have 
	\begin{equation*}
		\begin{aligned}
			\widehat{P}^{N_{1}}_{k_1,k_2,k_3}|_{k_1=0,k_2^2+k_3^2>0}=\frac{-ik_1(-2A\widehat{u}_{2,k_1,k_2,k_3}+\widehat{n}_{k_1,k_2,k_3})}{k_1^2+k_2^2+k_3^2}
			\Big|_{k_1=0,k_2^2+k_3^2>0}=0.
		\end{aligned}
	\end{equation*}
This implies that 
	\begin{equation}\label{estimate I4 end}
		I_{4}=0.
	\end{equation}
	
	It follows from $(\ref{pressure_1})_{2}$ that
	\begin{equation*}
		\triangle P_{0}^{N_{2}}=-{\rm div}~(u\cdot\nabla u)_{0}=-\partial_{y}^{2}(u_{2}u_{2})_{0}-2\partial_{y}\partial_{z}(u_{2}u_{3})_{0}-\partial_{z}^{2}(u_{3}u_{3})_{0}.
	\end{equation*}
	Therefore, for $I_{5},$ one obtains
	\begin{equation}\label{estimate I5 end}
		\begin{aligned}
			I_{5}\leq\frac{C}{A}\left(\|{\rm e}^{\frac{t}{2A}}\widetilde{(u_{2}u_{2})}_{0}\|_{L^{2}L^{2}}^{2}
			+\|{\rm e}^{\frac{t}{2A}}\widetilde{(u_{2}u_{3})}_{0}\|_{L^{2}L^{2}}^{2}+\|{\rm e}^{\frac{t}{2A}}\widetilde{(u_{3}u_{3})}_{0}\|_{L^{2}L^{2}}^{2} \right)\leq C\left(I_{1}+I_{2}+I_{3} \right).
		\end{aligned}
	\end{equation}
	
	Combining (\ref{estimate I1 end}), (\ref{estimate I2 end}), (\ref{estimate I3 end}), (\ref{estimate I4 end}) and (\ref{estimate I5 end}), we get from (\ref{check u20}) that
	\begin{equation}\label{check u20u30}
		\begin{aligned}
			&\|{\rm e}^{\frac{t}{2A}}\widetilde{u}_{2,0}\|_{L^{\infty}L^{2}}^{2}
			+\|{\rm e}^{\frac{t}{2A}}\widetilde{u}_{3,0}\|_{L^{\infty}L^{2}}^{2}
			+\frac{1}{A}\left(\|{\rm e}^{\frac{t}{2A}}\nabla\widetilde{u}_{2,0}\|_{L^{2}L^{2}}^{2}
			+\|{\rm e}^{\frac{t}{2A}}\nabla\widetilde{u}_{3,0}\|_{L^{2}L^{2}}^{2} \right)
			\\\leq&C\left(\|(u_{2,\rm in})_{0}\|_{L^{2}}^{2}+\|(u_{3,\rm in})_{0}\|_{L^{2}}^{2} \right)
			+C\epsilon^{2}\left(\|{\rm e}^{\frac{t}{2A}}\widetilde{u}_{2,0}\|_{L^{\infty}L^{2}}^{2}
			+\|{\rm e}^{\frac{t}{2A}}\widetilde{u}_{3,0}\|_{L^{\infty}L^{2}}^{2} \right)\\&
			+\frac{C}{A}\left(\|(u_{2,\rm in})_{0}\|_{L^{2}}^{2}+\|(u_{3,\rm in})_{0}\|_{L^{2}}^{2}\right)
			\left(\|{\rm e}^{\frac{t}{2A}}\nabla\widetilde{u}_{2,0}\|_{L^{2}L^{2}}^{2}
			+\|{\rm e}^{\frac{t}{2A}}\nabla\widetilde{u}_{3,0}\|_{L^{2}L^{2}}^{2} \right)+C\epsilon^{2}
			\\\leq&C\epsilon^{2}\left(1+\|{\rm e}^{\frac{t}{2A}}\widetilde{u}_{2,0}\|_{L^{\infty}L^{2}}^{2}
			+\|{\rm e}^{\frac{t}{2A}}\widetilde{u}_{3,0}\|_{L^{\infty}L^{2}}^{2} \right)
			+\frac{C\epsilon^{2}}{A}\left(\|{\rm e}^{\frac{t}{2A}}\nabla\widetilde{u}_{2,0}\|_{L^{2}L^{2}}^{2}
			+\|{\rm e}^{\frac{t}{2A}}\nabla\widetilde{u}_{3,0}\|_{L^{2}L^{2}}^{2} \right),
		\end{aligned}
	\end{equation}
	where we use assumption (\ref{conditions:u20 u30}).
	
	Hence, when $\epsilon$ is small enough satisfying $\epsilon\leq \frac{1}{\sqrt{2C}} $, we obtain from (\ref{check u20u30}) that
	\begin{equation*}
		\begin{aligned}
			&\|{\rm e}^{\frac{t}{2A}}\widetilde{u}_{2,0}\|_{L^{\infty}L^{2}}^{2}+\|{\rm e}^{\frac{t}{2A}}\widetilde{u}_{3,0}\|_{L^{\infty}L^{2}}^{2}+\frac{1}{2A}
			\left(\|{\rm e}^{\frac{t}{2A}}\nabla\widetilde{u}_{2,0}\|_{L^{2}L^{2}}^{2}+\|{\rm e}^{\frac{t}{A}}\nabla\widetilde{u}_{3,0}\|_{L^{2}L^{2}}^{2} \right)\leq C\epsilon^{2}.
		\end{aligned}
	\end{equation*}
	
	The proof is complete.
	
\end{proof}

\subsection{Energy estimates for $u_{1,0}$}

\begin{lemma}\label{lemma:u1:1}
	Under the assumptions of Lemma \ref{lemma_u23_1}, it holds that
	\begin{equation*}
		\begin{aligned}
			\|\mathbf{U}_1\|_{L^{\infty}H^1}\leq C\left(\|(u_{1,\rm in})_0\|_{H^1}+\|n_0\|_{L^{\infty}L^2}+\epsilon\right).
		\end{aligned}
	\end{equation*}
\end{lemma}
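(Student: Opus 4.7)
The plan is to exploit the decomposition $\mathbf{U}_1 = \mathbf{G}_1 + \widetilde{\mathbf{B}}_1$ introduced in the discussion preceding \eqref{u1:decom1} and estimate each piece separately in $L^{\infty}H^1$. The two pieces satisfy genuinely different equations: $\mathbf{G}_1$ is driven only by the nonlinear interaction $(u_{\neq}\cdot\nabla u_{1,\neq})_0$, while $\widetilde{\mathbf{B}}_1$ is driven by the density zero-mode $\widetilde{n}_0/A$ and is automatically mean-zero in $(y,z)$.

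For $\mathbf{G}_1$, I would run a standard $H^1$ energy estimate on $\eqref{u_decom_1}_1$, testing against $\mathbf{G}_1$ and $-\triangle\mathbf{G}_1$. The transport terms $\frac{1}{A}(u_{2,0}\partial_y\mathbf{G}_1+u_{3,0}\partial_z\mathbf{G}_1)$ contribute only controllable quantities because $\|(u_{2,0},u_{3,0})\|_{L^{\infty}L^{\infty}}\leq C\epsilon$ by Lemma \ref{lemma_u23_1} combined with Sobolev embedding (as in \eqref{u20 u30 infty}); after a Cauchy–Schwarz and absorption, they generate only a factor $1+C\epsilon^{2}$. The nonlinear source $\frac{1}{A}(u_{\neq}\cdot\nabla u_{1,\neq})_0$ is handled by $\eqref{eq:non-neq0}_2$ from Lemma \ref{lemma_neq1}, which gives $\frac{1}{A}\|u_{\neq}\cdot\nabla u_{\neq}\|_{L^2L^2}^2\leq CA^{-1/3}E_2^{4}$; under the bootstrap hypothesis \eqref{assumption} this is bounded by $C\epsilon^{2}$ for $A$ large. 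The result is the estimate already announced in the text, namely $\|\mathbf{G}_1\|_{L^{\infty}H^1}^2+\frac{1}{A}\|\nabla\mathbf{G}_1\|_{L^2H^1}^2\leq C(\|(u_{1,\rm in})_0\|_{H^1}^2+\epsilon^{2})$.

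For $\widetilde{\mathbf{B}}_1$, the key observation is that it is mean-zero in $(y,z)$: the initial data is $0$, the source $\widetilde{n}_0$ is mean-zero by construction, and the transport $u_{2,0}\partial_y\widetilde{\mathbf{B}}_1+u_{3,0}\partial_z\widetilde{\mathbf{B}}_1$ preserves mean-zeroness because $\partial_yu_{2,0}+\partial_zu_{3,0}=0$. Hence Poincaré's inequality supplies both $\|\widetilde{\mathbf{B}}_1\|_{L^2}\leq C\|\nabla\widetilde{\mathbf{B}}_1\|_{L^2}$ and $\|\nabla\widetilde{\mathbf{B}}_1\|_{L^2}\leq C\|\triangle\widetilde{\mathbf{B}}_1\|_{L^2}$. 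Testing $\eqref{u1:decom1}_1$ against $\widetilde{\mathbf{B}}_1-\triangle\widetilde{\mathbf{B}}_1$, bounding the transport term as $\frac{C\epsilon}{A}\|\nabla\widetilde{\mathbf{B}}_1\|_{L^2}\|\triangle\widetilde{\mathbf{B}}_1\|_{L^2}$ and the density term as $\frac{1}{A}\|\widetilde{n}_0\|_{L^2}\|\triangle\widetilde{\mathbf{B}}_1\|_{L^2}\leq \frac{C}{A}\|n_0\|_{L^2}^{2}+\frac{1}{8A}\|\triangle\widetilde{\mathbf{B}}_1\|_{L^2}^{2}$, then absorbing into the dissipation, yields
\begin{equation*}
\frac{d}{dt}\|\widetilde{\mathbf{B}}_1\|_{H^1}^{2}+\frac{c}{A}\|\widetilde{\mathbf{B}}_1\|_{H^1}^{2}\leq \frac{C}{A}\|n_0\|_{L^2}^{2}.
\end{equation*}
Multiplying by ${\rm e}^{ct/A}$ and integrating with zero initial data gives the time-uniform bound $\|\widetilde{\mathbf{B}}_1\|_{L^{\infty}H^1}\leq C\|n_0\|_{L^{\infty}L^2}$. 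Summing this with the bound on $\mathbf{G}_1$ produces the claim.

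The main obstacle is precisely that the density source $\widetilde{n}_0/A$ is controlled only in $L^{\infty}L^2$, so a naive time integration of the energy identity would produce a bound growing linearly in $t$. The zero-mean property of $\widetilde{\mathbf{B}}_1$ is crucial: via Poincaré it produces the exponential damping factor ${\rm e}^{-ct/A}$ in the Gronwall inequality, which exactly converts the non-integrable source $\int_0^{t}\|n_0(s)\|_{L^2}^{2}ds$ into the finite quantity $\|n_0\|_{L^{\infty}L^2}^{2}$. This damping mechanism, specific to the mean-zero component, is what makes the $L^{\infty}H^1$ bound possible without assuming any integrability in time for $n_0$.
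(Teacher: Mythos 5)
Your proof is correct and takes essentially the same route as the paper: the same decomposition $\mathbf{U}_1=\mathbf{G}_1+\widetilde{\mathbf{B}}_1$, the same $H^1$ energy estimates using \eqref{u20 u30 infty} and $\eqref{eq:non-neq0}_2$, and the same crucial observation that Poincar\'e on the mean-zero piece $\widetilde{\mathbf{B}}_1$ produces the damping that turns the $L^\infty L^2$ bound on $n_0$ into a time-uniform bound. The only cosmetic difference is at the final step: you close the differential inequality by multiplying by ${\rm e}^{ct/A}$ and integrating (Gronwall), whereas the paper runs a barrier-type contradiction argument at the first time the bound would be violated; the two are equivalent for this inequality.
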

\begin{proof}
	Multiplying $\triangle\widetilde{\mathbf{B}}_{1}$ on both sides of $(\ref{u1:decom1})_1 ,$ and applying H\"{o}lder's inequality, the energy estimate shows that 
	\begin{equation}\label{n_01}
		\frac{d}{dt}\|\nabla\widetilde{\mathbf{B}}_{1}\|^2_{L^2}
		+\frac{1}{A}\|\triangle\widetilde{\mathbf{B}}_{1}\|^2_{L^2}\leq \frac{C}{A}\left(\|u_{2,0}\|_{L^{\infty}}^{2}+\|u_{3,0}\|_{L^{\infty}}^{2} \right)\|\nabla\widetilde{\mathbf{B}}_{1}\|_{L^{2}}^{2}+\frac{C}{A}\|\widetilde{n}_{0}\|_{L^{2}}^{2}.
	\end{equation}
	Using (\ref{u20 u30 infty}) and Poincar\'{e} inequality
	\begin{equation*}
		\|\nabla\widetilde{\mathbf{B}}_{1}\|^2_{L^2}\leq C\|\triangle\widetilde{\mathbf{B}}_{1}\|^2_{L^2},
	\end{equation*}
	when choosing $\epsilon$ is small enough,	we infer from (\ref{n_01}) that 
	\begin{equation}\label{n_02}
		\frac{d}{dt}\|\nabla\widetilde{\mathbf{B}}_{1}\|^2_{L^2}\leq -\frac{\|\nabla \widetilde{\mathbf{B}}_{1}\|^2_{L^2}}{2AC}+\frac{C\|\widetilde{n}_{0}\|_{L^2}^2}{A}
		\leq-\frac{\|\nabla \widetilde{\mathbf{B}}_{1}\|^2_{L^2}
			-C\|n_{0}\|_{L^2}^2}{2AC},
	\end{equation}	
	where we use $$\|\widetilde{n}_{0}\|_{L^2}=\|n_0-\overline{n}_{0}\|_{L^2}\leq C\|n_{0}\|_{L^2}.$$
	According to (\ref{n_02}), there holds 
	\begin{equation*}
		\|\nabla\widetilde{\mathbf{B}}_{1}\|^2_{L^{\infty}L^2}\leq C\|n_0\|^2_{L^{\infty}L^2}.
	\end{equation*}
	Otherwise, there must be  a time $t=t^*,$ such that 
	\begin{equation}\label{n_021}
		\left\{
		\begin{array}{lr}
			\|\nabla\widetilde{\mathbf{B}}_{1}\|^2_{L^2}|_{t=t^*}=C\|n_0\|^2_{L^{\infty}L^2},\\
			\frac{d}{dt}\|\nabla\widetilde{\mathbf{B}}_{1}\|^2_{L^2}\Big|_{t=t^*}>0.	
		\end{array}
		\right.
	\end{equation}
	Let $t=t^*,$ using $(\ref{n_021})_1$, we get from (\ref{n_02}) that 
	\begin{equation}\label{n_03}
		\begin{aligned}
			\frac{d}{dt}\|\nabla\widetilde{\mathbf{B}}_{1}\|^2_{L^2}\Big|_{t=t^*}\leq -\frac{\|\nabla \widetilde{\mathbf{B}}_{1}\|^2_{L^2}
				-C\|n_{0}\|_{L^2}^2}{2AC}\Big|_{t=t^*}
			=-\frac{\|n_0\|^2_{L^{\infty}L^2}-\|n_0\|_{L^2}^2|_{t=t^*}}{2A}\leq0.
		\end{aligned}
	\end{equation}	
	A contradiction has arisen between $(\ref{n_021})$ and (\ref{n_03}). This shows that 
	\begin{equation}\label{result_1}
		\|\nabla\widetilde{\mathbf{B}}_{1}\|^2_{L^{\infty}L^2}\leq C\|n_0\|^2_{L^{\infty}L^2}.
	\end{equation}
	By Poincar\'{e} inequality, there holds 
	\begin{equation*}
		\begin{aligned}
			\|\widetilde{\mathbf{B}}_{1}\|_{L^2}\leq C\|\nabla\widetilde{\mathbf{B}}_{1}\|_{L^2}.
		\end{aligned}
	\end{equation*}
	Then it follows from (\ref{result_1}) that 
	\begin{equation}\label{result_2}
		\begin{aligned}
			\|\widetilde{\mathbf{B}}_{1}\|_{L^{\infty}H^1}\leq C\|\nabla\widetilde{\mathbf{B}}_{1}\|_{L^{\infty}L^2}\leq  C\|n_0\|_{L^{\infty}L^2}.
		\end{aligned}
	\end{equation}
	
	Note that the estimate of $\mathbf{G}_{1}$ is similar to $(\ref{eq:u20u30})_{1,2},$ so we omit it. For $(\ref{u_decom_1})_1$, energy estimates show that 
	\begin{equation}\label{result_3}
		\|\mathbf{G}_1\|^2_{L^{\infty}H^1}+\frac{1}{A}\|\nabla \mathbf{G}_1\|_{L^2H^1}^2\leq C\|(u_{1,\rm in})_{0}\|_{H^{1}}^{2}
		+C\epsilon^2.
	\end{equation}
	Combining (\ref{result_2}) with (\ref{result_3}), we conclude that 
	\begin{equation*}
		\|\mathbf{U}_1\|_{L^{\infty}H^1}
		\leq C(\|\widetilde{\mathbf{B}}_{1}\|_{L^{\infty}H^1}
		+\|\mathbf{G}_1\|_{L^{\infty}H^1})
		\leq C\left(\|(u_{1,\rm in})_{0}\|_{H^{1}}+\|n_0\|_{L^{\infty}L^2}+\epsilon\right).
	\end{equation*}
	
	The proof is complete.
\end{proof}

\begin{lemma}\label{u1_hat2}
	Under the assumptions of Lemma \ref{lemma_u23_1}, there exists a positive constant $A_{2}$ independent of $A$ and $t$, such that if $A\geq A_{2}$, it holds that 
	\begin{equation*}
		\begin{aligned}
			\frac{\|\triangle\mathbf{U}_2\|_{L^{\infty}H^2}}{A}
			+\frac{\|\nabla\triangle\mathbf{U}_2\|_{L^{2}H^2}}{A^{\frac32}}
			+\|\partial_t\mathbf{U}_2\|_{L^{\infty}H^2}
			\leq C\epsilon.  
		\end{aligned}
	\end{equation*}
\end{lemma}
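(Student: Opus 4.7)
The plan is to work with the decomposition $\mathbf{U}_2=\widetilde{\mathbf{B}}_2+\overline{\mathbf{B}}_2+\overline{\mathbf{B}}_1$ from \eqref{u1:decom1}--\eqref{u1:decom2}. Since the two averages are spatially constant they drop out of $\triangle\mathbf{U}_2$ and $\nabla\triangle\mathbf{U}_2$, while their time derivatives $\partial_t\overline{\mathbf{B}}_1=M/(A|\mathbb{T}|^3)$ and $\partial_t\overline{\mathbf{B}}_2=-\overline{u}_{2,0}$ are pointwise bounded by $C\epsilon$ once $A$ is large enough (using $|\overline{u}_{2,0}|\leq|\mathbb{T}|^{-1}\|(u_{2,\mathrm{in}})_0\|_{L^{2}}$). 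The whole task therefore reduces to controlling $\widetilde{\mathbf{B}}_2$ up to four spatial derivatives, where it solves the zero-mean, two-dimensional heat-type problem
\begin{equation*}
\partial_t\widetilde{\mathbf{B}}_2-\frac{1}{A}\triangle\widetilde{\mathbf{B}}_2+\widetilde{u}_{2,0}=-\frac{1}{A}(u_{2,0}\partial_y+u_{3,0}\partial_z)\widetilde{\mathbf{B}}_2,\qquad \widetilde{\mathbf{B}}_2|_{t=0}=0.
\end{equation*}

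I would perform iterative $L^{2}$ energy estimates at derivative levels $k=0,1,2,3,4$ by pairing the equation with $(-\triangle)^{k}\widetilde{\mathbf{B}}_2$. Since $\nabla\cdot(u_{2,0},u_{3,0})=0$ the transport is antisymmetric at $k=0$; at higher $k$ the product/commutator terms carry at most two derivatives on the velocity, which are placed in $L^{\infty}$ via $H^{2}(\mathbb{T}^{2})\hookrightarrow L^{\infty}$ together with $\|u_{2,0}\|_{L^{\infty}H^{2}}+\|u_{3,0}\|_{L^{\infty}H^{2}}\leq C\epsilon$ from Lemma \ref{lemma_u23_1}, so after Young's inequality they are absorbed into $A^{-1}\|\nabla^{k+1}\widetilde{\mathbf{B}}_2\|_{L^{2}}^{2}$ modulo a factor $C\epsilon$. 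One integration by parts on the linear forcing yields
\begin{equation*}
|\langle\nabla^{k}\widetilde{u}_{2,0},\nabla^{k}\widetilde{\mathbf{B}}_2\rangle|\leq \tfrac{1}{4A}\|\nabla^{k+1}\widetilde{\mathbf{B}}_2\|_{L^{2}}^{2}+CA\|\nabla^{k-1}\widetilde{u}_{2,0}\|_{L^{2}}^{2},
\end{equation*}
after which Poincar\'e on the zero-mean $\widetilde{\mathbf{B}}_2$ and vanishing initial data produce
\begin{equation*}
\|\nabla^{k}\widetilde{\mathbf{B}}_2\|_{L^{\infty}L^{2}}^{2}+\tfrac{1}{A}\|\nabla^{k+1}\widetilde{\mathbf{B}}_2\|_{L^{2}L^{2}}^{2}\leq CA\|\nabla^{k-1}\widetilde{u}_{2,0}\|_{L^{2}L^{2}}^{2}.
\end{equation*}

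The required input $\|\nabla^{k-1}\widetilde{u}_{2,0}\|_{L^{2}L^{2}}\leq CA^{1/2}\epsilon$ is assembled as follows. The cases $k\leq 2$ come straight out of Lemma \ref{lem:u20 u30} once one integrates the decay factor $e^{t/(2A)}$ in time, which converts the $L^{\infty}L^{2}$ information into the desired $L^{2}L^{2}$ information with exactly an $A^{1/2}$ loss. The case $k=4$ is obtained from $\|\nabla\triangle u_{2,0}\|_{L^{2}L^{2}}\leq CA^{1/2}\epsilon$, which is part of $\|\triangle u_{2,0}\|_{Y_{0}}\leq C\epsilon$ in Lemma \ref{lemma_u23_1}. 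The intermediate level $k=3$ is bridged by the Fourier interpolation $\|\nabla^{2}f\|_{L^{2}}^{2}\leq\|\nabla f\|_{L^{2}}\|\nabla^{3}f\|_{L^{2}}$ combined with Cauchy--Schwarz in time. Substituting back yields $\|\nabla^{k}\widetilde{\mathbf{B}}_2\|_{L^{\infty}L^{2}}\leq CA\epsilon$ and $\|\nabla^{k+1}\widetilde{\mathbf{B}}_2\|_{L^{2}L^{2}}\leq CA^{3/2}\epsilon$ for every $k\leq 4$, which is precisely $A^{-1}\|\triangle\mathbf{U}_2\|_{L^{\infty}H^{2}}+A^{-3/2}\|\nabla\triangle\mathbf{U}_2\|_{L^{2}H^{2}}\leq C\epsilon$.

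Finally, reading the time derivative directly off
\begin{equation*}
\partial_t\mathbf{U}_2=\tfrac{1}{A}\triangle\widetilde{\mathbf{B}}_2-u_{2,0}+\tfrac{M}{A|\mathbb{T}|^{3}}-\tfrac{1}{A}(u_{2,0}\partial_y+u_{3,0}\partial_z)\widetilde{\mathbf{B}}_2,
\end{equation*}
each summand is $O(\epsilon)$ in $L^{\infty}H^{2}$: the first by the $k=4$ estimate above, the second by Lemma \ref{lemma_u23_1}, the third trivially for $A$ large, and the fourth by the $H^{2}$ algebra property of $\mathbb{T}^{2}$ together with $\|\widetilde{\mathbf{B}}_2\|_{L^{\infty}H^{3}}\leq CA\epsilon$. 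The principal technical obstacle will be the nonlinear commutator at $k=4$, since no $L^{\infty}_{t}H^{3}$ bound on $(u_{2,0},u_{3,0})$ is available; my plan is to integrate by parts once so that at most two derivatives ever fall on the velocity (whence $L^{\infty}$ by Sobolev), with the remaining derivatives distributed on $\widetilde{\mathbf{B}}_2$ and $\nabla\widetilde{\mathbf{B}}_2$ in a way compatible with absorption by $A^{-1}\|\nabla^{5}\widetilde{\mathbf{B}}_2\|_{L^{2}}^{2}$.
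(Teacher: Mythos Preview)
Your outline has a genuine gap at the regularity of $u_{3,0}$. You invoke ``$\|u_{2,0}\|_{L^{\infty}H^{2}}+\|u_{3,0}\|_{L^{\infty}H^{2}}\leq C\epsilon$ from Lemma~\ref{lemma_u23_1}'', but that lemma only delivers $\|u_{3,0}\|_{L^{\infty}H^{1}}\leq C\epsilon$; the $H^{2}$ control comes with the time weight $\min\{(A^{-2/3}+A^{-1}t)^{1/2},1\}$ (see $(\ref{eq:u20u30})_{4}$), consistent with the fact that only $\|(u_{3,\mathrm{in}})_{0}\|_{H^{1}}$ is assumed small in \eqref{conditions:u20 u30}. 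Your proposed fix at $k=4$ --- ``integrate by parts once so that at most two derivatives ever fall on the velocity'' --- does not work: one integration by parts on $\langle\nabla^{4}(u_{3,0}\partial_{z}\widetilde{\mathbf{B}}_{2}),\nabla^{4}\widetilde{\mathbf{B}}_{2}\rangle$ still leaves a term with $\nabla^{3}u_{3,0}$, and a second IBP would require $\nabla^{6}\widetilde{\mathbf{B}}_{2}$. The divergence relation $\partial_{z}u_{3,0}=-\partial_{y}u_{2,0}$ converts only those third derivatives carrying at least one $\partial_{z}$; the pure $\partial_{y}^{3}u_{3,0}$ survives and is controlled in $L^{2}_{t}L^{2}$ only with the time weight.

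The paper closes this in a genuinely different way. It pairs the time-weighted bound on $\nabla\triangle u_{3,0}$ with the observation that $\widetilde{\mathbf{B}}_{2}(0)=0$ forces $\|\widetilde{\mathbf{B}}_{2}(t)\|_{H^{2}}\leq t\,\|\partial_{t}\widetilde{\mathbf{B}}_{2}\|_{L^{\infty}H^{2}}$, whence $\|\widetilde{\mathbf{B}}_{2}\|_{H^{3}}^{2}\leq (A^{-1}t)\,A\,\|\partial_{t}\widetilde{\mathbf{B}}_{2}\|_{L^{\infty}H^{2}}\|\widetilde{\mathbf{B}}_{2}\|_{H^{4}}$ (see \eqref{B2 t}); the factor $(A^{-1}t)$ exactly cancels the inverse time weight on $\nabla\triangle u_{3,0}$. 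This produces the coupled estimate $A^{-2}\|\triangle^{2}\widetilde{\mathbf{B}}_{2}\|_{L^{\infty}L^{2}}^{2}\leq C\epsilon^{2}+C\epsilon^{2}\|\partial_{t}\widetilde{\mathbf{B}}_{2}\|_{L^{\infty}H^{2}}^{2}$, which is then closed by reading $\|\partial_{t}\widetilde{\mathbf{B}}_{2}\|_{L^{\infty}H^{2}}$ back off the equation in terms of $A^{-1}\|\triangle^{2}\widetilde{\mathbf{B}}_{2}\|_{L^{\infty}L^{2}}$ plus $\epsilon$ (see \eqref{estimate B2 1}--\eqref{B2't end}). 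The missing ingredients in your plan are precisely this time-weight/short-time-smallness pairing and the resulting bootstrap between the spatial and temporal norms of $\widetilde{\mathbf{B}}_{2}$.
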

\begin{proof}
	{\bf Estimate of $\widetilde{\mathbf{B}}_{2}(t,y,z).$}	For $(\ref{u1:decom2})_1$, the $H^4$ energy estimate shows that 
	\begin{equation}\label{u1:temp}
		\begin{aligned}
			\frac{\|\triangle^2\widetilde{\mathbf{B}}_{2}\|^2_{L^{\infty}L^2}}{A^{2}}
			+\frac{\|\nabla\triangle^2\widetilde{\mathbf{B}}_{2}\|^2_{L^{2}L^2}}{A^{3}}\leq\frac{C}{A}\|\nabla\triangle\widetilde{u}_{2,0}\|_{L^{2}L^{2}}^{2}+\frac{C}{A^{3}}\|\nabla\triangle\big(u_{2,0}\partial_{y}\widetilde{\mathbf{B}}_{2}+u_{3,0}\partial_{z}\widetilde{\mathbf{B}}_{2} \big)\|_{L^{2}L^{2}}^{2}.
		\end{aligned}
	\end{equation}
	Using Lemma \ref{sob_inf_1}, Lemma \ref{lemma_u23_1} and Poincar\'{e} inequality, one obtains
	\begin{equation}\label{estimate 1}
		\begin{aligned}
			\frac{\|\nabla\triangle(u_{2,0}\partial_{y}\widetilde{\mathbf{B}}_{2})\|_{L^{2}L^{2}}^{2}}{A^{3}}	\leq&\frac{C}{A^{3}}\left(\|\nabla u_{2,0}\|_{L^{2}H^{2}}^{2}\|\partial_{y}\widetilde{\mathbf{B}}_{2}\|_{L^{\infty}H^{2}}^{2}+\|u_{2,0}\|_{L^{\infty}H^{2}}^{2}\|\nabla\partial_{y}\widetilde{\mathbf{B}}_{2}\|_{L^{2}H^{2}}^{2} \right)\\\leq&C\epsilon^{2}\left(\frac{\|\triangle^{2}\widetilde{\mathbf{B}}_{2}\|_{L^{\infty}L^{2}}^{2}}{A^{2}}+\frac{\|\nabla\triangle^{2}\widetilde{\mathbf{B}}_{2}\|_{L^{2}L^{2}}^{2}}{A^{3}} \right),
		\end{aligned}
	\end{equation}
	\begin{equation}\label{estimate 2}
		\begin{aligned}
			\frac{\|u_{3,0}\nabla\triangle\partial_{z}\widetilde{\mathbf{B}}_{2}+\nabla u_{3,0}\triangle\partial_{z}\widetilde{\mathbf{B}}_{2}\|_{L^{2}L^{2}}^{2}}{A^{3}}\leq\frac{C}{A^{3}}\|u_{3,0}\|_{L^{\infty}H^{1}}^{2}\|\partial_{z}\widetilde{\mathbf{B}}_{2}\|_{L^{2}H^{4}}^{2}\leq C\epsilon^{2}\frac{\|\nabla\triangle^{2}\widetilde{\mathbf{B}}_{2}\|_{L^{2}L^{2}}^{2}}{A^{3}},
		\end{aligned}
	\end{equation}
	and
	\begin{equation}\label{estimate 3}
		\frac{\|\triangle u_{3,0}\nabla\partial_{z}\widetilde{\mathbf{B}}_{2}\|_{L^{2}L^{2}}^{2}}{A^{3}}\leq\frac{C}{A^{3}}\|\triangle u_{3,0}\|_{L^{2}L^{2}}^{2}\|\widetilde{\mathbf{B}}_{2}\|_{L^{\infty}H^{4}}^{2}\leq C\epsilon^{2}\frac{\|\triangle^{2}\widetilde{\mathbf{B}}_{2}\|_{L^{\infty}L^{2}}^{2}}{A^{2}}.
	\end{equation}
	Due to $\|\widetilde{\mathbf{B}}_{2}\|_{H^{3}}^{2}\leq\|\widetilde{\mathbf{B}}_{2}\|_{H^{2}}\|\widetilde{\mathbf{B}}_{2}\|_{H^{4}},$ and 
	\begin{equation*}
		\|\widetilde{\mathbf{B}}_{2}\|_{H^{2}}\leq\int_{0}^{t}\|\partial_{s}\widetilde{\mathbf{B}}_{2}(s)\|_{H^{2}}ds\leq t\|\partial_{t}\widetilde{\mathbf{B}}_{2}\|_{L^{\infty}H^{2}},
	\end{equation*}
	there holds
	\begin{equation}\label{B2 t}
		\frac{\|\widetilde{\mathbf{B}}_{2}\|_{H^{3}}^{2}}{A^{-1}t}\leq\frac{\|\widetilde{\mathbf{B}}\|_{H^{2}}\|\widetilde{\mathbf{B}}\|_{H^{4}}}{A^{-1}t}\leq A\|\partial_{t}\widetilde{\mathbf{B}}_{2}\|_{L^{\infty}H^{2}}\|\widetilde{\mathbf{B}}_{2}\|_{H^{4}}.
	\end{equation}
	From this, along with $\|\nabla\triangle u_{3,0}\partial_{z}\widetilde{\mathbf{B}}_{2}\|_{L^{2}}^{2}\leq C\|\nabla\triangle u_{3,0}\|_{L^{2}}^{2}\|\partial_{z}\widetilde{\mathbf{B}}_{2}\|_{H^{2}}^{2},$ we get
	\begin{equation*}
		\begin{aligned}
			\|\nabla\triangle u_{3,0}\partial_{z}\widetilde{\mathbf{B}}_{2}\|_{L^{2}}^{2}\leq&C\|\min\{(A^{-\frac23}+A^{-1}t)^{\frac12}, 1\}\nabla\triangle u_{3,0}\|_{L^{2}}^{2}\left(\frac{\|\widetilde{\mathbf{B}}_{2}\|_{H^{3}}^{2}}{A^{-1}t}+\|\widetilde{\mathbf{B}}_{2}\|_{H^{4}}^{2} \right)\\\leq&C\|\min\{(A^{-\frac23}+A^{-1}t)^{\frac12}, 1\}\nabla\triangle u_{3,0}\|_{L^{2}}^{2}\left(A^{2}\|\partial_{t}\widetilde{\mathbf{B}}_{2}\|_{L^{\infty}H^{2}}^{2}+\|\widetilde{\mathbf{B}}_{2}\|_{H^{4}}^{2} \right),
		\end{aligned}
	\end{equation*}
	which implies that
	\begin{equation}\label{estimate 4}
		\frac{\|\nabla\triangle u_{3,0}\partial_{z}\widetilde{\mathbf{B}}_{2}\|_{L^{2}L^{2}}^{2}}{A^{3}}\leq C\epsilon^{2}\left(\|\partial_{t}\widetilde{\mathbf{B}}_{2}\|_{L^{\infty}H^{2}}^{2}+\frac{\|\triangle^{2}\widetilde{\mathbf{B}}_{2}\|_{L^{\infty}L^{2}}^{2}}{A^{2}} \right),
	\end{equation}
	where we use Lemma \ref{lemma_u23_1} and Poincar$\acute{\rm e}$ inequality.
	
	Combining (\ref{estimate 1}), (\ref{estimate 2}), (\ref{estimate 3}) and (\ref{estimate 4}), we get from (\ref{u1:temp}) that
	\begin{equation}\label{estimate B2}
		\begin{aligned}
			&\frac{\|\triangle^{2}\widetilde{\mathbf{B}}_{2}\|_{L^{\infty}L^{2}}^{2}}{A^{2}}+\frac{\|\nabla\triangle^{2}\widetilde{\mathbf{B}}_{2}\|_{L^{2}L^{2}}^{2}}{A^{3}}\\\leq&C\|\triangle u_{2,0}\|_{Y_{0}}^{2}+C\epsilon^{2}\|\partial_{t}\widetilde{\mathbf{B}}_{2}\|_{L^{\infty}H^{2}}^{2}+C\epsilon^{2}\left(\frac{\|\triangle^{2}\widetilde{\mathbf{B}}_{2}\|_{L^{\infty}L^{2}}^{2}}{A^{2}}+\frac{\|\nabla\triangle^{2}\widetilde{\mathbf{B}}_{2}\|_{L^{2}L^{2}}^{2}}{A^{3}} \right).
		\end{aligned}
	\end{equation}
	When $\epsilon$ is small enough satisfying $C\epsilon^{2}\leq\frac12,$ using Lemma \ref{lemma_u23_1}, (\ref{estimate B2}) yields  
	\begin{equation}\label{estimate B2 1}
		\begin{aligned}
			\frac{\|\triangle^{2}\widetilde{\mathbf{B}}_{2}\|_{L^{\infty}L^{2}}^{2}}{A^{2}}+\frac{\|\nabla\triangle^{2}\widetilde{\mathbf{B}}_{2}\|_{L^{2}L^{2}}^{2}}{A^{3}}\leq C\epsilon^{2}+C\epsilon^{2}\|\partial_{t}\widetilde{\mathbf{B}}_{2}\|_{L^{\infty}H^{2}}^{2}.
		\end{aligned}
	\end{equation}
	From $(\ref{u1:decom2})_{1}$ and (\ref{u20 u30 infty}), direct calculations indicate that
	\begin{equation}\label{B2't}
		\begin{aligned}
			\|\partial_{t}\widetilde{\mathbf{B}}_{2}\|_{L^{\infty}L^{2}}\leq&\frac{\|\triangle\widetilde{\mathbf{B}}_{2}\|_{L^{\infty}L^{2}}}{A}+\|\widetilde{u}_{2,0}\|_{L^{\infty}L^{2}}+\frac{1}{A}\left(\|u_{2,0}\|_{L^{\infty}L^{\infty}}+\|u_{3,0}\|_{L^{\infty}L^{\infty}} \right)\|\nabla\widetilde{\mathbf{B}}_{2}\|_{L^{\infty}L^{2}}\\\leq&\frac{\|\triangle\widetilde{\mathbf{B}}_{2}\|_{L^{\infty}L^{2}}}{A}+\|\widetilde{u}_{2,0}\|_{L^{\infty}L^{2}}+C\epsilon\frac{\|\nabla\widetilde{\mathbf{B}}_{2}\|_{L^{\infty}L^{2}}}{A}.
		\end{aligned}
	\end{equation}
	Taking $\triangle$ for $(\ref{u1:decom2})_{1},$ there holds
	\begin{equation*}
		\begin{aligned}
			\partial_{t}\triangle\widetilde{\mathbf{B}}_{2}-\frac{1}{A}\triangle^{2}\widetilde{\mathbf{B}}_{2}=-\frac{1}{A}\triangle\left(u_{2,0}\partial_{y}\widetilde{\mathbf{B}}_{2}+u_{3,0}\partial_{z}\widetilde{\mathbf{B}}_{2} \right)-\triangle\widetilde{u}_{2,0},
		\end{aligned}
	\end{equation*}
	which follows that
	\begin{equation}\label{B2't 1}
		\|\partial_{t}\triangle\widetilde{\mathbf{B}}_{2}\|_{L^{\infty}L^{2}}^{2}\leq\frac{\|\triangle^{2}\widetilde{\mathbf{B}}_{2}\|_{L^{\infty}L^{2}}^{2}}{A^{2}}+\frac{\|\triangle(u_{2,0}\partial_{y}\widetilde{\mathbf{B}}_{2}+u_{3,0}\partial_{z}\widetilde{\mathbf{B}}_{2})\|_{L^{\infty}L^{2}}^{2}}{A^{2}}+\|\triangle\widetilde{u}_{2,0}\|_{L^{\infty}L^{2}}^{2}.
	\end{equation}
	Using Lemma \ref{sob_inf_1}, Lemma \ref{lemma_u23_1} and Poincar$\acute{\rm e}$ inequality, we have
	\begin{equation}\label{11}
		\frac{\|\triangle(u_{2,0}\partial_{y}\widetilde{\mathbf{B}}_{2})\|_{L^{\infty}L^{2}}^{2}}{A^{2}}\leq \frac{C}{A^{2}}\|u_{2,0}\|_{L^{\infty}H^{2}}^{2}\|\partial_{y}\widetilde{\mathbf{B}}_{2}\|_{L^{\infty}H^{2}}^{2}\leq C\epsilon^{2}\frac{\|\triangle^{2}\widetilde{\mathbf{B}}_{2}\|_{L^{\infty}L^{2}}^{2}}{A^{2}}
	\end{equation}
	and 
	\begin{equation}\label{12}
		\begin{aligned}
			\frac{\|u_{3,0}\triangle\partial_{z}\widetilde{\mathbf{B}}_{2}+\nabla u_{3,0}\cdot\nabla\partial_{z}\widetilde{\mathbf{B}}_{2}\|_{L^{\infty}L^{2}}^{2}}{A^{2}}\leq \frac{C}{A^{2}}\|u_{3,0}\|_{L^{\infty}H^{1}}^{2}\|\widetilde{\mathbf{B}}_{2}\|_{L^{\infty}H^{4}}^{2}\leq C\epsilon^{2}\frac{\|\triangle^{2}\widetilde{\mathbf{B}}_{2}\|_{L^{\infty}L^{2}}^{2}}{A^{2}}.
		\end{aligned}
	\end{equation}
	Note that $\|\triangle u_{3,0}\partial_{z}\widetilde{\mathbf{B}}_{2}\|_{L^{2}}^{2}\leq C\|\triangle u_{3,0}\|_{L^{2}}^{2}\|\widetilde{\mathbf{B}}_{2}\|_{H^{3}}^{2},$  by (\ref{B2 t}), and we get
	\begin{equation*}
		\begin{aligned}
			\|\triangle u_{3,0}\partial_{z}\widetilde{\mathbf{B}}_{2}\|_{L^{2}}^{2}\leq& C\|\min\{(A^{-\frac23}+A^{-1}t)^{\frac12}, 1\}\triangle u_{3,0}\|_{L^{2}}^{2}\left(\frac{\|\widetilde{\mathbf{B}}_{2}\|_{H^{3}}^{2}}{A^{-1}t}+\|\triangle\partial_{z}\widetilde{\mathbf{B}}_{2}\|_{L^{2}}^{2} \right)\\\leq&C\|\min\{(A^{-\frac23}+A^{-1}t)^{\frac12}, 1\}\triangle u_{3,0}\|_{L^{2}}^{2}\left(A^{2}\|\partial_{t}\widetilde{\mathbf{B}}_{2}\|_{H^{2}}^{2}+\|\triangle^{2}\widetilde{\mathbf{B}}_{2}\|_{L^{2}}^{2} \right),
		\end{aligned}
	\end{equation*}
	which along with Lemma \ref{lemma_u23_1} implies that
	\begin{equation}\label{13}
		\frac{\|\triangle u_{3,0}\partial_{z}\widetilde{\mathbf{B}}_{2}\|_{L^{\infty}L^{2}}^{2}}{A^{2}}\leq C\epsilon^{2}\left(\|\partial_{t}\widetilde{\mathbf{B}}_{2}\|_{L^{\infty}H^{2}}^{2}+\frac{\|\triangle^{2}\widetilde{\mathbf{B}}_{2}\|_{L^{\infty}L^{2}}^{2}}{A^{2}} \right).
	\end{equation}
	Collecting (\ref{11}), (\ref{12}) and (\ref{13}),  we get from (\ref{B2't 1}) that
	\begin{equation*}
		\begin{aligned}
			\|\partial_{t}\triangle\widetilde{\mathbf{B}}_{2}\|_{L^{\infty}L^{2}}^{2}\leq C\frac{\|\triangle^{2}\widetilde{\mathbf{B}}_{2}\|_{L^{\infty}L^{2}}^{2}}{A^{2}}+C\epsilon^{2}\|\partial_{t}\widetilde{\mathbf{B}}_{2}\|_{L^{\infty}H^{2}}^{2}+\|\triangle\widetilde{u}_{2,0}\|_{L^{\infty}L^{2}}^{2}.
		\end{aligned}
	\end{equation*}
Combining it with (\ref{B2't}), Lemma \ref{lemma_u23_1} and Poincar$\acute{\rm e}$ inequality, one deduces
	\begin{equation}\label{B2 t end}
		\begin{aligned}
			\|\partial_{t}\widetilde{\mathbf{B}}_{2}\|_{L^{\infty}H^{2}}^{2}\leq C\frac{\|\triangle^{2}\widetilde{\mathbf{B}}_{2}\|_{L^{\infty}L^{2}}^{2}}{A^{2}}+C\epsilon^{2}\|\partial_{t}\widetilde{\mathbf{B}}_{2}\|_{L^{\infty}H^{2}}^{2}+C\epsilon^{2}.
		\end{aligned}
	\end{equation}
	Substituting (\ref{estimate B2 1}) into (\ref{B2 t end}), we have
	\begin{equation*}
		\|\partial_{t}\widetilde{\mathbf{B}}_{2}\|_{L^{\infty}H^{2}}^{2}\leq C\epsilon^{2}\|\partial_{t}\widetilde{\mathbf{B}}_{2}\|_{L^{\infty}H^{2}}^{2}+C\epsilon^{2},
	\end{equation*}
	which indicates that
	\begin{equation}\label{B2't end}
		\|\partial_{t}\widetilde{\mathbf{B}}_{2}\|_{L^{\infty}H^{2}}^{2}\leq C\epsilon^{2}
	\end{equation}
	provided with $\epsilon$ is sufficiently small satisfying $C\epsilon^{2}\leq\frac12.$ By (\ref{B2't end}), we infer from (\ref{estimate B2 1}) that
	\begin{equation}\label{estimate B2 end}
		\frac{\|\triangle^{2}\widetilde{\mathbf{B}}_{2}\|_{L^{\infty}L^{2}}^{2}}{A^{2}}+\frac{\|\nabla\triangle^{2}\widetilde{\mathbf{B}}_{2}\|_{L^{2}L^{2}}^{2}}{A^{3}}\leq C\epsilon^{2}.
	\end{equation}
	For $j\in\{2,3\}$, combining (\ref{B2't end}) with (\ref{estimate B2 end}), and  using Poincar$\acute{\rm e}$ inequality
	\begin{equation*}
		\|\nabla^{j}\widetilde{\mathbf{B}}_{2}\|_{L^{2}}^{2}\leq C\|\nabla^{j+1}\widetilde{\mathbf{B}}_{2}\|_{L^{2}}^{2},
	\end{equation*}
	one obtains
	\begin{equation}\label{estimate U2 1}
		\frac{\|\triangle\widetilde{\mathbf{B}}_{2}\|_{L^{\infty}H^{2}}}{A}+\frac{\|\nabla\triangle\widetilde{\mathbf{B}}_{2}\|_{L^{2}H^{2}}}{A^{\frac32}}+\|\partial_{t}\widetilde{\mathbf{B}}_{2}\|_{L^{\infty}H^{2}}\leq C\epsilon.
	\end{equation}

	{\bf Estimate of $\overline{B}_{2}(t).$}
	According to $\eqref{u_zero_1}_1,$ $\overline{u}_{2,0}$ satisfies
	$\partial_t\overline{u}_{2,0}=0,$ which implies that 
	\begin{equation*}
		\overline{u}_{2,0}(t)=(\overline{u}_{2,\rm in})_0.
	\end{equation*}
	Therefore, we rewrite $\eqref{u1:decom2}_{2}$ into
	\begin{equation*}
		\partial_t\overline{\mathbf{B}}_{2}(t)=-\overline{u}_{2,0}(t)
		=-(\overline{u}_{2,\rm in})_0.
	\end{equation*}
	By H\"{o}lder's inequality, there holds 
	\begin{equation}\label{u1:t1}
		|\partial_t\overline{\mathbf{B}}_{2}(t)|\leq C\|(u_{2,\rm in})_0\|_{H^2}.
	\end{equation}
	Furthermore, $\overline{\mathbf{B}}_{2}(t)$ only depends on $t$ and does not depend on any spatial variables,
	and  $\partial_t\overline{\mathbf{B}}_{2}$ is a constant.  Using assumption (\ref{conditions:u20 u30}),
	we infer from \eqref{u1:t1} that 
	\begin{equation}\label{u1:res:3}
		\frac{\|\triangle\overline{\mathbf{B}}_{2}\|_{L^{\infty}H^2}}{A}
		+\frac{\|\nabla\triangle\overline{\mathbf{B}}_{2}\|_{L^{2}H^2}}{A^{\frac32}}
		+\|\partial_t\overline{\mathbf{B}}_{2}\|_{L^{\infty}H^2}
		\leq C\|(u_{2,\rm in})_0\|_{H^2}\leq C\epsilon.	
	\end{equation}
	
	{\bf Estimate of $\overline{B}_{1}(t).$}
	Next, we rewrite $\eqref{u1:decom1}_{2}$ into
	\begin{equation*}
		\partial_t\overline{\mathbf{B}}_{1}(t)=\frac{M}{A|\mathbb{T}|}.
	\end{equation*}
	Moreover, $\overline{\mathbf{B}}_{1}(t)$ only depends on $t$ and does not depend on any spatial variables, and  $\partial_t\overline{\mathbf{B}}_{1}$ is a constant.  When $A\geq\max\{ \frac{M}{\epsilon}, A_{2,3}\}=:A_{2},$
	we obtain that 
	\begin{equation}\label{u1:res:4}
		\frac{\|\triangle\overline{\mathbf{B}}_{1}\|_{L^{\infty}H^2}}{A}
		+\frac{\|\nabla\triangle\overline{\mathbf{B}}_{1}\|_{L^{2}H^2}}{A^{\frac{3}{2}}}
		+\|\partial_t\overline{\mathbf{B}}_{1}\|_{L^{\infty}H^2}
		\leq \frac{CM}{A}\leq C\epsilon.	
	\end{equation}
	
	Combining (\ref{estimate U2 1}), \eqref{u1:res:3} and \eqref{u1:res:4}, we conclude that 
	\begin{equation*}
		\begin{aligned}
			\frac{\|\triangle\mathbf{U}_2\|_{L^{\infty}H^2}}{A}
			+\frac{\|\nabla\triangle\mathbf{U}_2\|_{L^{2}H^2}}{A^{\frac32}}
			+\|\partial_t\mathbf{U}_2\|_{L^{\infty}H^2}
			\leq C\epsilon.  
		\end{aligned}
	\end{equation*}
	
	The proof is complete.
\end{proof}

\begin{corollary}\label{cor:E1}
	Under the conditions of Theorem \ref{result0} and the assumptions \eqref{conditions:u20 u30} and \eqref{assumption}, according to Lemma \ref{lemma_u23_1} and Lemma \ref{u1_hat2}, when $A\geq A_{2},$ there holds
	\begin{equation*}
		E_{1}(t)\leq C\epsilon=:E_{1}.
	\end{equation*}
\end{corollary}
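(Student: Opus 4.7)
The plan is to observe that this corollary is essentially a bookkeeping exercise that collates the bounds already established in Lemma \ref{lemma_u23_1} and Lemma \ref{u1_hat2}. Recall that by construction
\begin{equation*}
E_{1}(t) = E_{1,1}(t) + E_{1,2}(t),
\end{equation*}
and each summand on the right-hand side of the definitions of $E_{1,1}$ and $E_{1,2}$ has already been bounded separately, so no new energy estimate needs to be carried out here.

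First I would verify that every term appearing in $E_{1,1}(t)$ is controlled by $C\epsilon$ via Lemma \ref{lemma_u23_1}: the first two bounds in \eqref{eq:u20u30} handle $\|u_{2,0}\|_{Y_0}+\|u_{3,0}\|_{Y_0}$ and $\|\nabla u_{2,0}\|_{Y_0}+\|\nabla u_{3,0}\|_{Y_0}$, the third bound handles $\|\triangle u_{2,0}\|_{Y_0}$, and the fourth bound handles the weighted $\triangle u_{3,0}$ term with the $\min\{(A^{-2/3}+A^{-1}t)^{1/2},1\}$ factor. This gives $E_{1,1}(t)\leq C\epsilon$ as soon as $A\geq A_{2,3}$.

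Next I would appeal directly to Lemma \ref{u1_hat2}, whose statement bounds exactly the quantity
\begin{equation*}
A^{-1}\|\triangle\mathbf{U}_2\|_{L^\infty H^2} + A^{-3/2}\|\nabla\triangle\mathbf{U}_2\|_{L^2 H^2} + \|\partial_t\mathbf{U}_2\|_{L^\infty H^2} \leq C\epsilon,
\end{equation*}
which is precisely $E_{1,2}(t)\leq C\epsilon$ when $A\geq A_{2}$. Since $A_{2}\geq A_{2,3}$ by construction, choosing $A\geq A_{2}$ makes both estimates simultaneously valid, and adding them yields $E_{1}(t)\leq C\epsilon$. Setting $E_{1}:=C\epsilon$ closes the statement.

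There is no genuine obstacle here: all nonlinear interactions, pressure terms, and heat-dissipation arguments that could have caused trouble have already been absorbed in the proofs of Lemmas \ref{lemma_u23_1} and \ref{u1_hat2}. The only mildly delicate point is bookkeeping the threshold $A_{2}$ so that it dominates every intermediate threshold $A_{2,1},A_{2,2},A_{2,3}$ used in those lemmas, and verifying that the smallness of $\epsilon$ required in each lemma (uniform in $A$) is consistent with the smallness hypothesis in the statement of Theorem \ref{result0}. Both are immediate from the way $A_{2}$ is defined at the end of the proof of Lemma \ref{u1_hat2}.
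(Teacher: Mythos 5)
Your proposal is correct and follows exactly the same route the paper takes: the corollary is a direct collation of Lemma \ref{lemma_u23_1}, which bounds every summand in $E_{1,1}$, and Lemma \ref{u1_hat2}, which bounds $E_{1,2}$ term by term, with the threshold check $A_2 = \max\{M/\epsilon, A_{2,3}\} \geq A_{2,3}$ ensuring both lemmas apply simultaneously. Nothing is missing.
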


\section{Estimates for the non-zero modes}\label{sec5}
\subsection{Energy estimates for $E_{2,1}(t)$}
\begin{lemma}\label{lem:E_21}
	Under the conditions of Theorem \ref{result0}, the assumptions \eqref{conditions:u20 u30} and \eqref{assumption}, there exists a positive constant $A_{3}$ independent of $t$ and $A$, such that if $A\geq A_{3},$ there holds
	\begin{equation*}
		E_{2,1}(t)\leq C\left(\|(\partial_{x}^{2}n_{\rm in})_{\neq}\|_{L^{2}}+1 \right).
	\end{equation*}
\end{lemma}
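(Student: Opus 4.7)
The plan is to derive an evolution equation for $\partial_x^2 n_{\neq}$ and then run a weighted energy estimate adapted to the $X_b$ norm. Applying $\partial_x^2$ to the cell equation $(\ref{ini11})_1$ and projecting onto non-zero modes gives
\begin{equation*}
	\partial_t \partial_x^2 n_{\neq}+y\partial_x \partial_x^2 n_{\neq}-\frac{1}{A}\triangle\partial_x^2 n_{\neq}
	=-\frac{1}{A}\partial_x^2(u\cdot\nabla n)_{\neq}-\frac{1}{A}\partial_x^2\bigl(\nabla\cdot(n\nabla c)\bigr)_{\neq}.
\end{equation*}
The left-hand side is exactly the passive scalar operator for which the Couette flow produces enhanced dissipation at rate $A^{-1/3}$. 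Multiplying by ${\rm e}^{2bA^{-1/3}t}\partial_x^2 n_{\neq}$, integrating by parts, and also testing with $-{\rm e}^{2bA^{-1/3}t}(\triangle^{-1}\partial_x^2)\partial_x^2 n_{\neq}$ to capture the $\|{\rm e}^{bA^{-1/3}t}\nabla\triangle^{-1}\partial_x(\cdot)\|_{L^2L^2}$ piece (this is the standard Wei--Zhang quasi-linear ingredient referenced in the abstract), I would recover all four components of $\|\cdot\|_{X_b}$ on the left, picking up the usual absorbing term $A^{-1/3}\|{\rm e}^{bA^{-1/3}t}\partial_x^2 n_{\neq}\|_{L^2L^2}^2$ from the dissipation since $b<2a$.

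For the nonlinear right-hand side I would decompose $u=u_0+u_{\neq}$, $n=n_0+n_{\neq}$, $c=c_0+c_{\neq}$ and split the transport and chemotaxis terms accordingly. The transport contribution $\frac{1}{A}\partial_x^2(u\cdot\nabla n)_{\neq}$ is handled by distributing two $\partial_x$'s among the factors, then using Lemma \ref{lemma_u} together with the zero-mode bounds from Corollary \ref{cor:E1} and Lemma \ref{lemma_u23_1}; the non-zero/non-zero piece is treated by Lemma \ref{sob_inf_2}-type Sobolev embeddings and the $E_2$ assumption. The chemotaxis term is rewritten via $-\triangle c=n-\bar{n}$ as $\partial_x^2\bigl(\nabla n\cdot\nabla c\bigr)_{\neq}-\partial_x^2\bigl(n(n-\bar{n})\bigr)_{\neq}$, and each product is estimated by placing the rougher factor in $L^\infty_{t,x,y,z}$ via $E_3$ (recall $\|n\|_{L^\infty L^\infty}\leq 2E_3$), which lets $\nabla c_{\neq}$ be recovered in $L^2$ with one derivative to spare from the Poisson relation.

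After these estimates, every right-hand side term carries either a $\frac{1}{A}$ with bounded energy, or an absorbable $\frac{1}{A}\|{\rm e}^{bA^{-1/3}t}\nabla\partial_x^2 n_{\neq}\|_{L^2L^2}^2$ prefactor (coming from integration by parts that transfers one derivative onto $\partial_x^2 n_{\neq}$), or a prefactor of the form $\epsilon$ times a $\|\cdot\|_{X_b}$ norm, which can be absorbed into the left-hand side provided $A\geq A_3$ is chosen large enough in terms of $E_1,E_2,E_3,E_4,E_5$ and $\epsilon$ is small. The initial data contribution yields the $\|(\partial_x^2 n_{\rm in})_{\neq}\|_{L^2}$ term, while the bounded bootstrap energies contribute the additive constant $1$ (after absorbing numerical constants into $C$).

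The main obstacle will be the chemotaxis piece $\partial_x^2\bigl(n_0(n-\bar n)\bigr)_{\neq}$ and $\partial_x^2(\nabla n_0\cdot\nabla c_{\neq})$: here neither factor enjoys enhanced dissipation, and only the $\frac{1}{A}$ prefactor is available. The cure is to use $\|n_0\|_{L^\infty}\leq 2 E_3$ together with the fact that $n_{\neq}$ and $c_{\neq}$ do decay like ${\rm e}^{-aA^{-1/3}t}$, which after pairing with the ${\rm e}^{bA^{-1/3}t}$ weight (since $b<2a$) produces an integrable-in-time factor ${\rm e}^{-(2a-b)A^{-1/3}t}$ and gives an $A^{1/3}$ gain exactly balancing the $\frac{1}{A}$ prefactor, yielding the uniform bound stated in the lemma.
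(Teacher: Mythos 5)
Your proposal keeps the left-hand side as the plain passive-scalar operator $\partial_t+y\partial_x-A^{-1}\triangle$ and pushes the entire transport term $\frac{1}{A}u\cdot\nabla n$ to the right. That step is where the argument breaks. The zero mode $u_{1,0}$ is decomposed in this paper as $u_{1,0}=\mathbf{U}_1+\mathbf{U}_2$, and the bad part $\mathbf{U}_2$ satisfies only $A^{-1}\|\triangle\mathbf{U}_2\|_{L^\infty H^2}+\|\partial_t\mathbf{U}_2\|_{L^\infty H^2}\leq C\epsilon$ (Lemma \ref{u1_hat2}); in particular $\|\mathbf{U}_2(t)\|_{H^2}$ can grow linearly in $t$ and $\|\triangle\mathbf{U}_2\|_{H^2}$ may be as large as $O(A\epsilon)$. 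If you leave $\frac{1}{A}\mathbf{U}_2\,\partial_x^3 n_{\neq}$ on the right-hand side and try to absorb it, the best you can produce is a factor of order $A^{-1}\cdot A^2\epsilon^2=A\epsilon^2$ multiplying $\|\partial_x^2 n_{\neq}\|_{X_b}^2$, which cannot be beaten by the $A^{-1/3}$ absorbing term no matter how small $\epsilon$ is. The paper avoids this by moving $\frac{\mathbf{U}_2}{A}\partial_x$ into the transport operator, i.e.\ working with $\mathcal{L}_V=\partial_t+(y+\mathbf{U}_2/A)\partial_x-A^{-1}\triangle$, and then invoking Proposition \ref{Lvf 0}, which is the genuine content of the Wei--Zhang quasi-linear method: one performs the coordinate change $(x,y,z)\mapsto(x,V(t,y,z),z)$ with $V=y+\mathbf{U}_2/A$, so the shifted transport becomes the exact Couette operator and the correction terms arising from the Laplacian carry the small factor $A^{-1}\|\triangle\mathbf{U}_2\|_{H^2}\leq c$. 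Your description of the ``quasi-linear ingredient'' as testing with $-{\rm e}^{2bA^{-1/3}t}\triangle^{-1}\partial_x^2\,(\cdot)$ is a different (and earlier) part of the toolkit --- that is the hypocoercivity-type dual test producing the $\|\nabla\triangle^{-1}\partial_x f\|_{L^2L^2}$ piece of $X_b$; it does not address the unbounded $\mathbf{U}_2$.

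A second, smaller gap: you say you would ``distribute two $\partial_x$'s among the factors'' in the transport nonlinearity, but the actual term is $\nabla\cdot\partial_x^2(un)_{\neq}$, whose $u_1$-component produces $\partial_x^3(u_{1,\neq}n_{\neq})_{\neq}$ with \emph{three} $x$-derivatives. The paper makes essential use of the exact split
\begin{equation*}
\partial_x^3(u_{1,\neq}n_{\neq})_{\neq}=(\partial_x^3u_{1,\neq}\,n_{\neq})_{\neq}+(u_{1,\neq}\,\partial_x^3 n_{\neq})_{\neq}+3\partial_x(\partial_xu_{1,\neq}\,\partial_xn_{\neq})_{\neq},
\end{equation*}
so that the last piece, written in divergence form, can be fed into Proposition \ref{Lvf 0} as an $\partial_x f_1$ source (cost $\|\nabla f_{1,\neq}\|_{L^2L^2}^2$, no $A^{1/3}$ penalty), while the first two are put in as $f_2$-type sources with the favourable $A^{-5/3}$ prefactor. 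Without this careful bookkeeping the $\partial_x^3 n_{\neq}$ factor would dominate and the estimate would not close. The rest of your outline --- splitting $n\nabla c$ by modes, using $\|n\|_{L^\infty L^\infty}\leq 2E_3$, the elliptic estimates for $\nabla c_0$ and $\nabla c_{\neq}$, the $b<2a$ time-integrability gain, and absorbing the $\epsilon$-weighted pieces for $A\geq A_3$ --- is consistent with what the paper does, but the two ingredients above are missing and they are load-bearing.
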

\begin{proof}
	According to $(\ref{ini11})_{1},$ the non-zero mode $\partial_{x}^{2}n_{\neq}$ satisfies
	\begin{equation}\label{n:neq1}
		\begin{aligned}
			\partial_{t}\partial_{x}^{2}n_{\neq}+y\partial_{x}^{3}n_{\neq}-\frac{1}{A}\triangle\partial_{x}^{2}n_{\neq}=&-\frac{1}{A}\nabla\cdot\partial_{x}^{2}(un)_{\neq}-\frac{1}{A}\nabla\cdot\partial_{x}^{2}(n\nabla c)_{\neq}.
		\end{aligned}
	\end{equation}
	Noting that for given functions $f$ and $g$,  we have 
	\begin{equation}\label{fg}
		(fg)_{\neq}=f_{0}g_{\neq}+f_{\neq}g_{0}+(f_{\neq}g_{\neq})_{\neq}.
	\end{equation}
	Therefore, by decomposing $u_{1,0}=\mathbf{U}_1+\mathbf{U}_2$, we rewrite (\ref{n:neq1}) into 
	\begin{equation}\label{n:neq2}
		\begin{aligned}
			\partial_{t}\partial_{x}^{2}n_{\neq}+\left(y+\frac{\mathbf{U}_2}{A}\right)\partial_{x}^{3}n_{\neq}-\frac{\triangle\partial_{x}^{2}n_{\neq}}{A}=&-\frac{1}{A}\nabla\cdot\partial_{x}^{2}(u_{\neq}n_{\neq})_{\neq}-\frac{1}{A}\nabla\cdot\partial_{x}^{2}(n\nabla c)_{\neq}\\
			&-\frac{1}{A}\nabla\cdot(\partial_{x}^{2}u_{\neq}n_{0})-\frac{1}{A}\nabla\cdot\partial_{x}^{2}(U_{0}n_{\neq}),
		\end{aligned}
	\end{equation}
	where $U_{0}=(\mathbf{U}_1,u_{2,0},u_{3,0}).$
	
	Moreover, a careful deformation of the nonlinear interaction term $\partial_x^3(u_{1,\neq}n_{\neq})_{\neq}$ plays a crucial role. 
	Specifically, it can be expanded as follows:
	\begin{equation*}
		\begin{aligned}
			\partial_x^3(u_{1,\neq}n_{\neq})_{\neq}=(\partial_x^3u_{1,\neq}n_{\neq})_{\neq}
			+(u_{1,\neq}\partial_x^3n_{\neq})_{\neq}+3\partial_x(\partial_xu_{1,\neq}\partial_xn_{\neq})_{\neq}.
		\end{aligned}
	\end{equation*}
	When $A>Cc^{-1},$
	it follows from (\ref{conditions:u20 u30}) and Lemma \ref{u1_hat2} that $\frac{\|\triangle\mathbf{U}_2\|_{L^{\infty}H^{2}}}{A}+\|\partial_{t}\mathbf{U}_2\|_{L^{\infty}L^{\infty}}<C\epsilon$. By applying Proposition \ref{Lvf 0}, we get
	\begin{equation*}\label{n1}
		\begin{aligned}
			\|\partial_{x}^{2}n_{\neq}\|_{X_{b}}^{2}\leq& C\Big(\|(\partial_{x}^{2}n_{\rm in})_{\neq}\|_{L^{2}}^{2}
			+\frac{1}{A}\|{\rm e}^{bA^{-\frac13}t}U_{0}\partial_{x}^{2}n_{\neq}\|_{L^{2}L^{2}}^{2}
			+\frac{1}{A^{\frac53}}\|{\rm e}^{bA^{-\frac13}t}n_0\partial_{x}^{3}u_{1,\neq}\|_{L^{2}L^{2}}^{2}\\
			&+\frac{1}{A}\|{\rm e}^{bA^{-\frac13}t}n_{0}\partial_{x}^{2}(u_{2}, u_{3})_{\neq}\|_{L^{2}L^{2}}^{2}
			+\frac{1}{A^{\frac53}}\|{\rm e}^{bA^{-\frac13}t}n_{\neq}\partial_{x}^{3}u_{1,\neq}\|_{L^{2}L^{2}}^{2}
			\\&+\frac{1}{A^{\frac53}}\|{\rm e}^{bA^{-\frac13}t}u_{1,\neq}\partial_{x}^{3}n_{\neq}\|_{L^{2}L^{2}}^{2}
			+\frac{1}{A}\|{\rm e}^{bA^{-\frac13}t}\partial_{x}u_{1,\neq}\partial_{x}n_{\neq}\|_{L^{2}L^{2}}^{2}
			\\&+\frac{1}{A}\|{\rm e}^{bA^{-\frac13}t}\partial_{x}^{2}\left((u_{2}, u_{3})_{\neq}n_{\neq} \right)\|_{L^{2}L^{2}}^{2}
			+\frac{1}{A}\|{\rm e}^{bA^{-\frac13}t}\partial_{x}^{2}(n\nabla c)_{\neq}\|_{L^{2}L^{2}}^{2}
			\Big)\\=:&C\left(\|(\partial_{x}^{2}n_{\rm in})_{\neq}\|^2_{L^{2}}+T_{1,1}+T_{1,2}+\cdots+T_{1,8} \right).
		\end{aligned}
	\end{equation*}
	
	{\bf Estimate of $T_{1,1}.$}
	By Lemma \ref{lem:GNS}, we have 
	\begin{equation}\label{GN n}
		\|\partial_{x}^{2}n_{\neq}\|_{L^{2}L^{4}}\leq C\|\partial_{x}^{2}n_{\neq}\|_{L^{2}L^{2}}^{\frac14}\|\nabla\partial_{x}^{2}n_{\neq}\|_{L^{2}L^{2}}^{\frac34}
	\end{equation}
	and 
	\begin{equation*}\label{GN n1}
		\begin{aligned}
			\|U_0\|_{L^{4}}\leq C\|U_0\|_{H^{1}}\leq C(\|\mathbf{U}_1\|_{H^{1}}+\|u_{2,0}\|_{H^{1}}+\|u_{3,0}\|_{H^{1}}).		
		\end{aligned}
	\end{equation*}
	Then using Lemma \ref{lemma_u23_1}, Lemma \ref{lemma:u1:1} and $\|n_0\|^2_{L^{\infty}L^2}\leq CME_3$, we obtain 
	\begin{equation*}
		\begin{aligned}
			\|{\rm e}^{bA^{-\frac13}t}U_{0}\partial_{x}^{2}n_{\neq}\|_{L^{2}L^{2}}^{2}
			\leq& \|U_{0}\|_{L^{\infty}L^4}^{2}\|{\rm e}^{bA^{-\frac13}t}\partial_{x}^{2}n_{\neq}\|_{L^{2}L^{4}}^{2}\\
			\leq& C\|U_{0}\|_{L^{\infty}H^{1}}^{2}\|{\rm e}^{bA^{-\frac13}t}\partial_{x}^{2}n_{\neq}\|_{L^{2}L^{2}}^{\frac12}
			\|{\rm e}^{bA^{-\frac13}t}\nabla\partial_{x}^{2}n_{\neq}\|_{L^{2}L^{2}}^{\frac32}
			\leq CA^{\frac{5}{6}}H_{2}^{2} E_2^2,
		\end{aligned}
	\end{equation*}
	where $H_2=\|(u_{1,\rm in})_{0}\|_{H^{1}}
	+E_3+M+1.$
	
	{\bf Estimates of $T_{1,2}$ and $T_{1,3}.$}
	Using $\|n_{0}\|_{L^{\infty}L^{\infty}}\leq \|n\|_{L^{\infty}L^{\infty}}\leq 2E_{3}$ 
	and $\partial_{x}u_{1,\neq}+\partial_{y}u_{2,\neq}+\partial_{z}u_{3,\neq}=0,$ we arrive at
	\begin{equation}\label{n0 nneq}
		\begin{aligned}
			&\|{\rm e}^{bA^{-\frac13}t}n_{0}\partial_{x}^{3}u_{1,\neq}\|_{L^{2}L^{2}}^{2}\leq CE_{3}^{2}\|{\rm e}^{bA^{-\frac13}t}\partial_{x}^{3}u_{1,\neq}\|_{L^{2}L^{2}}^{2}\leq CAE_3^2E_{4}^{2},\\&
			\|{\rm e}^{bA^{-\frac13}t}n_{0}\partial_{x}^{2}(u_{2}, u_{3})_{\neq}\|_{L^{2}L^{2}}^{2}\leq CE_{3}^{2}\|{\rm e}^{bA^{-\frac13}t}\partial_{x}^{2}(u_{2}, u_{3})_{\neq}\|_{L^{2}L^{2}}^{2}\leq CA^{\frac13}E_3^2E_{4}^{2}.
		\end{aligned}
	\end{equation}

	{\bf Estimate of $T_{1,4}.$}
	Since $\|n_{\neq}\|_{L^{\infty}L^{\infty}}\leq 2\|n\|_{L^{\infty}L^{\infty}}\leq 4E_{3}$, by an argument similar to  $(\ref{n0 nneq})_{1},$ one obtains
	\begin{equation*}\label{u1neq1}
		\begin{aligned}
			\|{\rm e}^{bA^{-\frac13}t}n_{\neq}\partial_{x}^{3}u_{1,\neq}\|_{L^{2}L^{2}}^{2}&\leq C\|n\|_{L^{\infty}L^{\infty}}\|{\rm e}^{bA^{-\frac13}t}\partial_{x}^{3}u_{1,\neq}\|_{L^{2}L^{2}}^{2}\\
			&\leq CAE_{3}^{2}\|\partial_{x}^{2}(u_{2},u_{3})_{\neq}\|_{X_{b}}^{2}\leq CAE_{3}^{2}E_4^2.				
		\end{aligned}
	\end{equation*}
	
	{\bf Estimate of $T_{1,5}.$}
	By $\eqref{ap:u11}_1,$ we have 
	\begin{equation*}
		\begin{aligned}
			\|{\rm e}^{bA^{-\frac13}t}u_{1,\neq}\partial_{x}^{3}n_{\neq}\|_{L^{2}L^{2}}^{2}
			\leq \|u_{1,\neq}\|_{L^{\infty}L^{\infty}}^2
			\|{\rm e}^{bA^{-\frac13}t}\partial_{x}^{3}n_{\neq}\|_{L^{2}L^{2}}^{2}
			\leq CA^{\frac43}E_{2}^{4}.				
		\end{aligned}
	\end{equation*}
	
	{\bf Estimate of $T_{1,6}.$}
	By Lemma \ref{sob_inf_2} and Lemma \ref{lemma_u},  one obtains that 
	\begin{equation*}
		\begin{aligned}
			&\quad\|{\rm e}^{bA^{-\frac13}t}\partial_x u_{1,\neq}\partial_x n_{\neq}\|_{L^{2}L^{2}}^{2}
			\leq \|{\rm e}^{aA^{-\frac13}t}\partial_xu_{1,\neq}\|_{L^{\infty}_{t,z}L^{2}_{x,y}}^2
			\|{\rm 	e}^{aA^{-\frac13}t}\partial_{x}n_{\neq}\|_{L^{\infty}_{x,y}L^{2}_{t,z}}^2
			\\ &\leq C\|{\rm e}^{aA^{-\frac13}t}\partial_x\partial_zu_{1,\neq}\|_{L^{\infty}L^{2}}^2
			\|{\rm 	e}^{aA^{-\frac13}t}\partial_{x}^2n_{\neq}\|_{L^{2}L^{2}}
			\|{\rm 	e}^{aA^{-\frac13}t}\partial_{x}^2\partial_yn_{\neq}\|_{L^{2}L^{2}}
			\leq CA^{\frac23}E_{2}^{4}.				
		\end{aligned}
	\end{equation*}
	
	{\bf Estimate of $T_{1,7}.$}
	According to \eqref{ap:u11}, there holds
	\begin{equation}\label{422_1}
		\begin{aligned}
			\|{\rm e}^{bA^{-\frac{1}{3}}t}(u_{2},u_{3})_{\neq} \partial_x^{2}n_{\neq}\|_{L^2L^2}^{2}
			\leq \|u_{\neq}\|_{L^{\infty}L^{\infty}}^{2} \|{\rm e}^{bA^{-\frac{1}{3}}t}\partial_x^{2} n_{\neq}\|_{L^{2}L^{2}}^{2}
			\leq CA^{\frac{2}{3}}E_2^{4}.
		\end{aligned}
	\end{equation}
	In addition, there also holds
	\begin{equation}\label{422_2}
		\begin{aligned}
			\|{\rm e}^{bA^{-\frac{1}{3}}t}\partial_x^{2}(u_{2},u_{3})_{\neq} n_{\neq}\|_{L^2L^2}^{2}
			\leq \| n_{\neq}\|_{L^{\infty}L^{\infty}}^{2}\|{\rm e}^{bA^{-\frac{1}{3}}t}\partial_x^{2}(u_{2},u_{3})_{\neq}\|_{L^2L^2}^{2} 
			\leq CA^{\frac{1}{3}}E_3^2E_4^{2}.
		\end{aligned}
	\end{equation}
	Using Lemma \ref{sob_inf_2}, we get
	\begin{equation*}
		\begin{aligned}
			\|\partial_{x}(u_{2},u_{3})_{\neq}\partial_{x}n_{\neq}\|_{L^{2}}^{2}\leq&\|\partial_{x}u_{\neq}\|_{L^{\infty}_{y,z}L^{2}_{x}}^{2}\|\partial_{x}n_{\neq}\|_{L^{\infty}_{x}L^{2}_{y,z}}^{2}\\\leq& C\|(\partial_{x}, \partial_z)\partial_{x}u_{\neq}\|_{L^{2}}\|(\partial_x,\partial_{z})\partial_{x}u_{\neq}\|_{H^{1}}\|\partial_{x}^{2}n_{\neq}\|_{L^{2}}^{2},
		\end{aligned}
	\end{equation*}
	which along with  Lemma \ref{lemma_u} shows that
	\begin{equation}\label{422_3}
		\begin{aligned}
			\|{\rm e}^{bA^{-\frac13}t}\partial_{x}u_{\neq}\partial_{x}n_{\neq}\|_{L^{2}L^{2}}^{2}
			\leq CA^{\frac23}E_{2}^{4}.
		\end{aligned}
	\end{equation}
	It follows from (\ref{422_1}), (\ref{422_2}) and (\ref{422_3}) that
	\begin{equation*}
		\begin{aligned}
			\|{\rm e}^{bA^{-\frac{1}{3}}t}\partial_x^{2}(u_{\neq} n_{\neq})_{\neq}\|_{L^2L^2}^2
			\leq CA^{\frac{2}{3}}(E_2^4+E_3^2E_4^2).
		\end{aligned}
	\end{equation*}
	
	{\bf Estimate of $T_{1,8}.$}
	Due to (\ref{fg}), there holds
	\begin{equation}\label{425_1}
		\begin{aligned}
			&\|{\rm e}^{bA^{-\frac{1}{3}}t}\partial_x^{2}(n\nabla c)_{\neq}\|_{L^2L^2}^{2}\\\leq &C\left(
			\|{\rm e}^{bA^{-\frac{1}{3}}t}n_0\partial_x^{2}\nabla c_{\neq}\|_{L^2L^2}^{2}
			+\|{\rm e}^{bA^{-\frac{1}{3}}t}\partial_x^{2}n_{\neq}\nabla c_{0}\|_{L^2L^2}^{2}
			+\|{\rm e}^{bA^{-\frac{1}{3}}t}\partial_x^{2}(n_{\neq} \nabla c_{\neq})_{\neq}\|_{L^2L^2}^{2}\right).
		\end{aligned}
	\end{equation}
	Applying Lemma \ref{lem:ellip_0}, Lemma \ref{lem:ellip_2} and $\|n_{0}\|_{L^{\infty}L^{\infty}}\leq \|n\|_{L^{\infty}L^{\infty}},$ we get
	\begin{equation*}
		\begin{aligned}
			\|{\rm e}^{bA^{-\frac{1}{3}}t}n_0\partial_x^{2}\nabla c_{\neq}\|_{L^2L^2}^{2}
			&\leq \|n_0\|_{L^{\infty}L^{\infty}}^{2}\|{\rm e}^{bA^{-\frac{1}{3}}t}\partial_x^{2}\nabla c_{\neq}\|_{L^2L^2}^{2}\\
			&\leq CE_3^{2}\|{\rm e}^{bA^{-\frac{1}{3}}t}\partial_x^{2}n_{\neq}\|_{L^2L^2}^{2} \leq CA^{\frac{1}{3}}E_2^{2}E_3^{2}
		\end{aligned}
	\end{equation*}
	and
	\begin{equation*}
		\begin{aligned}
			\|{\rm e}^{bA^{-\frac{1}{3}}t}\partial_x^{2}n_{\neq}\nabla c_0\|_{L^2L^2}^{2}
			\leq& \|\nabla c_0\|_{L^{\infty}L^{\infty}}^{2}\|{\rm e}^{bA^{-\frac{1}{3}}t}\partial_x^{2}n_{\neq}\|_{L^2L^2}^{2}\leq CA^{\frac13}\|n_{0}-\bar{n}\|_{L^{\infty}L^{3}}^{2}\|\partial_{x}^{2}n_{\neq}\|_{X_{b}}^{2}\\\leq& CA^{\frac13}\|n_{0}\|_{L^{\infty}L^{\infty}}^{\frac43}\|n_{0}\|_{L^{\infty}L^{1}}^{\frac23}\|\partial_{x}^{2}n_{\neq}\|_{X_{b}}^{2}\leq CA^{\frac{1}{3}}E_2^{2}(E_3^{2}+M^{2}).
		\end{aligned}
	\end{equation*}
	Using  (\ref{GN n}),  Lemma \ref{sob_inf_2} and Lemma \ref{lem:ellip_2}, one obtains that 
	\begin{equation}
		\begin{aligned}
			&	\|{\rm e}^{bA^{-\frac{1}{3}}t}\partial_x^{2}(n_{\neq} \nabla c_{\neq})_{\neq}\|_{L^2L^2}^{2}
			\\\leq& C\|\nabla c_{\neq}\|_{L^{\infty}L^{4}}^{2}\|{\rm e}^{bA^{-\frac13}t}\partial_{x}^{2} n_{\neq}\|_{L^{2}L^{4}}^{2}
			+C\|n\|_{L^{\infty}L^{\infty}}^{2}\|{\rm e}^{bA^{-\frac{1}{3}}t}\partial_x^{2}\nabla c_{\neq}\|_{L^2L^2}^{2}
			\\&+C\|\partial_{x}n_{\neq}\|_{L^{\infty}_{t,x}L^{2}_{y,z}}^{2}\|{\rm e}^{bA^{-\frac{1}{3}}t}\partial_{x}\nabla c_{\neq}\|_{L^{2}_{t,x}L^{\infty}_{y,z}}^{2}\\\leq&C\|n_{\neq}\|_{L^{\infty}L^{2}}^{2}\|{\rm e}^{bA^{-\frac13}t}\partial_{x}^{2}n_{\neq}\|_{L^{2}L^{2}}^{\frac12}\|{\rm e}^{bA^{-\frac13}t}\nabla\partial_{x}^{2}n_{\neq}\|_{L^{2}L^{2}}^{\frac32}
			+C\|n\|_{L^{\infty}L^{\infty}}^{2}\|{\rm e}^{bA^{-\frac13}t}\partial_{x}^{2}n_{\neq}\|_{L^{2}L^{2}}^{2}
			\\&+C\|\partial_{x}^{2}n_{\neq}\|_{L^{\infty}L^{2}}^{2}\|{\rm e}^{bA^{-\frac{1}{3}}t}\partial_{x}^{2}n_{\neq}\|_{L^{2}L^{2}}\|{\rm e}^{bA^{-\frac{1}{3}}t}\nabla\partial_{x}^{2}n_{\neq}\|_{L^{2}L^{2}}\\\leq&  CA^{\frac56}E_2^2(E_2^{2}+E_{3}^{2}),
			\nonumber
		\end{aligned}
	\end{equation}
	where we use $\|n_{\neq}\|_{L^{\infty}L^{\infty}}\leq 2\|n\|_{L^{\infty}L^{\infty}}.$
	Thus, we infer from  (\ref{425_1}) that 
	\begin{equation*}\label{425_2}
		\begin{aligned}
			&\|{\rm e}^{bA^{-\frac{1}{3}}t}\partial_x^{2}(n\nabla c)_{\neq}\|_{L^2L^2}^{2}
			\leq CA^{\frac{5}{6}}(E_2^{2}+E_{3}^{2}+M^{2})E_2^2.
		\end{aligned}
	\end{equation*}
	In conclusion, we obtain that 
	\begin{equation*}\label{partial_x_end}
		\begin{aligned}
			\|\partial_x^{2}n_{\neq}\|_{X_b}^{2} 
			\leq
			C\left(\|(\partial_x^{2}n_{\rm in})_{\neq}\|_{L^2}^{2}+\frac{E_2^4+E_3^4+E_{4}^{4}+M^4+H_{2}^{4}}{A^{\frac{1}{6}}}\right).		
		\end{aligned}
	\end{equation*}
	When $A\geq\max\{A_{2}, \left(E_{2}^{4}+E_{3}^{4}+M^{4}+H_{2}^{4}+E_{4}^{4}\right)^{6} \}=:A_{3},$ one deduces 
	\begin{equation*}
		E_{2,1}(t)=\|\partial_{x}^{2}n_{\neq}\|_{X_{b}}\leq C\left(\|(\partial_{x}^{2}n_{\rm in})_{\neq}\|_{L^{2}}+1 \right).
	\end{equation*}
	
	The proof is complete.
\end{proof}

\subsection{Energy estimates for $E_{2,2}(t)$}
\begin{lemma}\label{lem:E2(t)}
	Under the conditions of Theorem \ref{result0} and the assumptions \eqref{conditions:u20 u30} and \eqref{assumption}, there exists a positive constant $A_{4}$ independent of $t$ and $A,$ such that if $A\geq A_{4},$ there holds
	\begin{equation*}
		E_{2,2}(t)\leq C\left(\|(u_{\rm in})_{\neq}\|_{H^2}+E_4+1\right).
	\end{equation*}
\end{lemma}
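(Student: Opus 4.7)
The plan is to apply the Wei--Zhang type quasi-linear energy estimate (Proposition \ref{Lvf 0} in the spirit of the $E_{2,1}$ argument) to each of the four quantities $\triangle u_{2,\neq}$, $\partial_x\omega_{2,\neq}$, $\partial_y\omega_{2,\neq}$, $\partial_z\omega_{2,\neq}$, and then to bound the resulting forcing in terms of $\|(u_{\rm in})_{\neq}\|_{H^2}$, $E_1,E_2,E_3,E_4,E_5$ using the nonlinear interaction lemmas of Section 3 together with Lemma \ref{lem:E_21}. The equations for the non-zero modes of $\triangle u_2$ and $\omega_2$ coming from \eqref{ini11} read schematically
\begin{equation*}
\partial_t\triangle u_{2,\neq}+y\partial_x\triangle u_{2,\neq}-\tfrac1A\triangle^2 u_{2,\neq}=-\tfrac1A\partial_x\partial_y n_{\neq}-\tfrac1A(\partial_x^2+\partial_z^2)(u\cdot\nabla u_2)_{\neq}+\tfrac1A\partial_y\bigl[\partial_x(u\cdot\nabla u_1)_{\neq}+\partial_z(u\cdot\nabla u_3)_{\neq}\bigr],
\end{equation*}
\begin{equation*}
\partial_t\omega_{2,\neq}+y\partial_x\omega_{2,\neq}-\tfrac1A\triangle\omega_{2,\neq}+\partial_z u_{2,\neq}=\tfrac1A\partial_z n_{\neq}-\tfrac1A\partial_z(u\cdot\nabla u_1)_{\neq}+\tfrac1A\partial_x(u\cdot\nabla u_3)_{\neq}.
\end{equation*}
Following the same device as in Lemma \ref{lem:E_21}, I split $u_{1,0}=\mathbf{U}_1+\mathbf{U}_2$ and move $\mathbf{U}_2\partial_x$ from the nonlinear side to the transport part, turning the drift into the quasi-linear velocity $(y+\mathbf{U}_2/A)\partial_x$; by Lemma \ref{u1_hat2} the perturbation $\mathbf{U}_2/A$ satisfies the smallness requirement of Proposition \ref{Lvf 0} once $A$ is large, so the operator $\mathcal{L}_V$ enjoys the same $X_a$ estimate as $\mathcal{L}$.

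Applying Proposition \ref{Lvf 0} to each of the four equations gives an $X_a$ bound of the form $\|(u_{\rm in})_{\neq}\|_{H^2}^2+A^{-1}\cdot(\text{nonlinear forcing in }L^2L^2)^2$, where the $L^2L^2$ norms are weighted by $e^{aA^{-1/3}t}$. I would handle the forcing terms according to their type. The density forcing $\tfrac1A\partial_x\partial_y n_{\neq}$ and $\tfrac1A\partial_z n_{\neq}$ are treated using the crucial fact $k_1\ne 0$ for $n_{\neq}$, which lets me write $\partial_z=\partial_x^{-1}\partial_x\partial_z$ and control the result by $E_{2,1}$ via Lemma \ref{lem:E_21}. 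The purely non-zero nonlinear terms $\nabla(u_{\neq}\cdot\nabla u_{j,\neq})$, $\partial_x(u_{\neq}\cdot\nabla u_{\neq})$, and $\partial_z(u_{\neq}\cdot\nabla u_{j,\neq})$ are controlled directly by Lemma \ref{lemma_neq1}, producing powers of $A$ of order $A^{1/2+2\alpha/3}$ or $A^{7/6+\alpha/3}$ (the latter requiring $E_{5,1}$), which upon division by $A$ leave strictly negative power of $A$. The interactions with the good part $\mathbf{U}_1$ are treated by Lemma \ref{lem_uneq2} (for terms where $\mathbf{U}_1$ multiplies $\partial_x$-derivatives of $u_{\neq}$) and by Lemma \ref{lemma_neq2} (for terms involving $\partial_j \mathbf{U}_1$, which arise from $\partial_y$ or $\partial_z$ applied to $(u_{\neq}\cdot\nabla u_{1,\neq})_0$-type interactions after integration by parts inside the vorticity equation). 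The interactions with the bad part $\mathbf{U}_2$ are handled by Lemma \ref{lemma_neq22}, and those with $(u_{2,0},u_{3,0})$ by Lemma \ref{lemma_neq3}.

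The three pieces of $E_{2,2}$ are then closed separately. For $\|\triangle u_{2,\neq}\|_{X_a}$ and $\|\partial_x\omega_{2,\neq}\|_{X_a}$, the nonlinear $A$-budget is $A^{2/3}$ (coming from Lemmas \ref{lemma_u}--\ref{lemma_neq3}), and dividing by $A$ yields $A^{-1/3}$, so for $A$ large enough the nonlinear contribution is absorbed. For the weaker quantities $A^{-1/3}\|\partial_y\omega_{2,\neq}\|_{X_a}$ and $A^{-1/3}\|\partial_z\omega_{2,\neq}\|_{X_a}$, the $A^{-1/3}$ prefactor together with the $A^{-1}$ from the Duhamel integration provides budget up to $A^{5/3}$, which is exactly the sharp cap reflected in Lemmas \ref{lemma_neq2} and \ref{lemma_neq1}. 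The key coupling $\partial_z u_{2,\neq}$ appearing linearly in the $\omega_2$ equation is treated as in the previous papers by combining it with the $\triangle u_{2,\neq}$ estimate: the term is bounded by $\|\triangle u_{2,\neq}\|_{X_a}$, which is already part of $E_{2,2}$ and can be absorbed on the right-hand side up to a factor that vanishes for large $A$.

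The main obstacle will be the bookkeeping of the $A$-powers in the $(\partial_y,\partial_z)\omega_{2,\neq}$ estimates, specifically the term $\nabla(u_{\neq}\cdot\nabla u_{3,\neq})$ coming from the $\omega_2$ equation through $\partial_z(u\cdot\nabla u_1)_{\neq}$ and $\partial_x(u\cdot\nabla u_3)_{\neq}$ after an extra derivative. Lemma \ref{lemma_neq1} yields a bound of order $A^{7/6+\alpha/3}E_2^2E_5^2+A^{4/3}E_2^4$, and only the presence of the auxiliary energy $E_{5,1}$ keeps this strictly below $A^{5/3}$ when $\alpha\in(1/2,3/4)$. Once all contributions are summed and the smallness from $A^{-\delta}$ (with some $\delta>0$ depending on $\alpha$) is exploited, taking $A\ge A_4$ with $A_4$ depending on $\|n_{\rm in}\|_{H^2}$, $M$, $E_2,E_3,E_4,E_5$, and $\|(u_{\rm in})_{\neq}\|_{H^2}$, closes the estimate and gives $E_{2,2}(t)\le C(\|(u_{\rm in})_{\neq}\|_{H^2}+E_4+1)$, as claimed.
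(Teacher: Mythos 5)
Your overall framework — splitting $u_{1,0}=\mathbf{U}_1+\mathbf{U}_2$, passing to the quasi-linear operator $\mathcal{L}_V$ with drift $(y+\mathbf{U}_2/A)\partial_x$, and bounding all the nonlinear forcing via Lemmas \ref{lemma_neq1}--\ref{lemma_neq3} — matches the paper's approach, and your bookkeeping of the $A$-powers for the forcing terms and for $A^{-1/3}\|(\partial_y,\partial_z)\omega_{2,\neq}\|_{X_a}$ is essentially correct. However, your treatment of the lift-up coupling $\partial_z u_{2,\neq}$ in the $\omega_{2,\neq}$ equation contains a genuine gap.

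You propose to apply Proposition \ref{Lvf 0} to each of the four quantities separately, and then, for $\partial_x\omega_{2,\neq}$, to move $\partial_x\partial_z u_{2,\neq}$ to the right-hand side, bound it by $\|\triangle u_{2,\neq}\|_{X_a}$, and absorb it ``up to a factor that vanishes for large $A$.'' There is no such factor: $\partial_z u_{2,\neq}$ enters the $\omega_2$ equation with an $O(1)$ coefficient, not an $O(1/A)$ one. If you feed $\partial_x\partial_z u_{2,\neq}$ into Proposition \ref{Lvf 0} as the $f_2$ forcing, the estimate returns $A^{1/3}\|{\rm e}^{aA^{-1/3}t}\partial_x\partial_z u_{2,\neq}\|_{L^2L^2}^2$, and even using the fact that $\|{\rm e}^{aA^{-1/3}t}\partial_x\partial_z u_{2,\neq}\|_{L^2L^2}^2$ is controlled \emph{without $A$-loss} by the $\|\nabla\triangle^{-1}\partial_x\cdot\|_{L^2L^2}$ piece of $\|\triangle u_{2,\neq}\|_{X_a}^2$, you are still stuck with a remaining factor $A^{1/3}$ that blows up as $A\to\infty$. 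Writing $\partial_x\partial_z u_{2,\neq}=\partial_x(\partial_z u_{2,\neq})$ and using the $\partial_x f_1$ slot does not help either, because $\nabla\partial_z u_{2,\neq}$ also contains components (such as $\partial_y\partial_z u_{2,\neq}$) whose $L^2L^2$ norm only embeds into $\|\triangle u_{2,\neq}\|_{X_a}^2$ at the cost of $A^{1/3}$. So a standalone estimate for $\partial_x\omega_{2,\neq}$ cannot close, regardless of the order in which you run the four estimates.

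The paper avoids this by never treating $\partial_z u_{2,\neq}$ as a forcing term: it invokes the \emph{coupled} space-time estimate Proposition \ref{timespace1}, which is formulated precisely for the system $\mathcal{L}_V h_1=\nabla\cdot g_1$, $\mathcal{L}_V h_2+\partial_x\partial_z\triangle^{-1}h_1=\nabla\cdot g_2$ with $h_1=\triangle u_{2,\neq}$, $h_2=\partial_x\omega_{2,\neq}$. There the lift-up term $\partial_x\partial_z\triangle^{-1}h_1$ is part of the linearized operator, and the inviscid-damping component of the $X_a$-norm controls it with no $A^{1/3}$ loss. You would need to recognize this coupled structure (or rederive an equivalent coupled estimate) for the pair $(\triangle u_{2,\neq},\partial_x\omega_{2,\neq})$; after that, your use of Proposition \ref{Lvf 0} for $(\partial_y,\partial_z)\omega_{2,\neq}$, where the coupling $\partial_j\partial_z u_{2,\neq}$ is harmless because the target norm carries the extra $A^{-1/3}$ weight and $\|\triangle u_{2,\neq}\|_{X_a}$ is already known, is correct and matches the paper.
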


\begin{proof}
	
	It follows from $(\ref{ini11})$ that
	\begin{equation*}
		\left\{
		\begin{array}{lr}
			\partial_{t}\omega_2+y\partial_{x}\omega_2-\frac{1}{A}\triangle\omega_2+\partial_{z}u_{2}=-\frac{1}{A}\partial_{z}(u\cdot\nabla u_{1})+\frac{1}{A}\partial_{x}(u\cdot\nabla u_{3})+\frac{1}{A}\partial_{z}n, \\
			\partial_{t}\triangle u_{2}+y\partial_{x}\triangle u_{2}-\frac{1}{A}\triangle^2 u_{2}=-\frac{1}{A}\partial_{y}\partial_{x}n-\frac{1}{A}(\partial_{x}^{2}+\partial_{z}^{2})(u\cdot\nabla u_{2})\\
			\qquad\qquad\qquad\qquad\qquad\qquad\qquad+\frac{1}{A}\partial_{y}\left[\partial_{x}(u\cdot\nabla u_{1})+\partial_{z}(u\cdot\nabla u_{3}) \right].
		\end{array}
		\right.
	\end{equation*}
	For convenience, we denote 
	\begin{equation}\label{ope1}
		\begin{aligned}
			&\mathcal{L}=\partial_t+y\partial_x-\frac{1}{A}\triangle,\\
			&\mathcal{L}_V=\partial_t+(y+\frac{\mathbf{U}_2}{A})\partial_x-\frac{1}{A}\triangle.
		\end{aligned}
	\end{equation}
	As previously mentioned, we have decomposed the non-zero mode $u_{1,0}$
	into two parts $u_{1,0}=\mathbf{U}_1+\mathbf{U}_2$. 
	According to the nonlinear interaction, we will reformulate the coupled system $\{\partial_x\omega_{2,\neq},\triangle u_{2,\neq}\}$ and $(\partial_y,\partial_z)\omega_{2,\neq}$.
	
	Using $u_{1,0}=\mathbf{U}_1+\mathbf{U}_2$ and \eqref{fg}, the velocity $\omega_{2,\neq}$ satisfies
	\begin{equation*}
		\begin{aligned}
			\mathcal{L}_V\omega_{2,\neq}+\partial_{z}u_{2,\neq}
			=&\frac{\partial_x\left[(u_{\neq}\cdot\nabla u_{3,\neq})_{\neq}
				+{U}_0\cdot\nabla u_{3,\neq}
				+u_{\neq}\cdot\nabla u_{3,0}\right]}{A}+\frac{\partial_{z}n_{\neq}}{A}
			-\frac{\partial_{z}\mathbf{U}_2\partial_xu_{1,\neq}}{A}\\
			&-\frac{\partial_z\left[(u_{\neq}\cdot\nabla u_{1,\neq})_{\neq}
				+{U}_0\cdot\nabla u_{1,\neq}
				+u_{\neq}\cdot\nabla (\mathbf{U}_1+\mathbf{U}_2)\right]}{A},\\
		\end{aligned}
	\end{equation*}
	where $U_{0}=(\mathbf{U}_1,u_{2,0},u_{3,0}).$
	Precise calculations show that 
	\begin{equation}\label{u2_1}
		\begin{aligned}
			(\partial_x^2+\partial_z^2)(\mathbf{U}_2\partial_xu_{2,\neq})
			=\mathbf{U}_2\partial_x(\partial_x^2+\partial_z^2)u_{2,\neq}+\partial_z(\partial_z\mathbf{U}_2\partial_xu_{2,\neq})
			+\partial_z\mathbf{U}_2\partial_x\partial_zu_{2,\neq}.
		\end{aligned}
	\end{equation}
	Due to ${\rm div}~u_{\neq}=0,$ we have 
	\begin{equation}\label{u2_2}
		\begin{aligned}
			&\quad\partial_y\left[\partial_x(\mathbf{U}_2\partial_xu_{1,\neq})
			+\partial_z(\mathbf{U}_2\partial_xu_{3,\neq})\right]
			=\partial_y\left(-\mathbf{U}_2\partial_x\partial_yu_{2,\neq}+\partial_z\mathbf{U}_2\partial_xu_{3,\neq}\right)\\
			&=-\mathbf{U}_2\partial_x\partial_y^2u_{2,\neq}-\partial_y\mathbf{U}_2\partial_x\partial_yu_{2,\neq}
			+\partial_y(\partial_z\mathbf{U}_2\partial_xu_{3,\neq}).
		\end{aligned}
	\end{equation}
	Using \eqref{u2_1} and \eqref{u2_2}, $\triangle u_{2,\neq}$  satisfies 
	\begin{equation*}
		\begin{aligned}
			\mathcal{L}_V\triangle u_{2,\neq}=&-\frac{\partial_x\partial_yn_{\neq}}{A}-\frac{(\partial_x^2+\partial_z^2)
				\left[(u_{\neq}\cdot\nabla u_{2,\neq})_{\neq}+{U}_0\cdot\nabla u_{2,\neq}
				+u_{\neq}\cdot\nabla u_{2,0}\right]}{A}\\
			&+\frac{\partial_y\partial_x\left[(u_{\neq}\cdot\nabla u_{1,\neq})_{\neq}
				+{U}_0\cdot\nabla u_{1,\neq}
				+u_{\neq}\cdot\nabla (\mathbf{U}_1+\mathbf{U}_2)\right]}{A}
			-\frac{\partial_j\mathbf{U}_2\partial_x\partial_ju_{2,\neq}}{A}\\
			&+\frac{\partial_y\partial_z\left[(u_{\neq}\cdot\nabla u_{3,\neq})_{\neq}
				+{U}_0\cdot\nabla u_{3,\neq}+u_{\neq}\cdot\nabla u_{3,0}\right]}{A}
			+\frac{\partial_y(\partial_z\mathbf{U}_2\partial_xu_{3,\neq})
				-\partial_z(\partial_z\mathbf{U}_2\partial_xu_{2,\neq})}{A}.
		\end{aligned}
	\end{equation*}
	Therefore, one obtains that 	
	\begin{equation}\label{ini4}
		\left\{
		\begin{array}{lr}
			\mathcal{L}_V\omega_{2,\neq}+\partial_{z}u_{2,\neq}
			=\frac{\partial_x\left[(u_{\neq}\cdot\nabla u_{3,\neq})_{\neq}
				+{U}_0\cdot\nabla u_{3,\neq}
				+u_{\neq}\cdot\nabla u_{3,0}\right]}{A}+\frac{\partial_{z}n_{\neq}}{A}
			-\frac{\partial_{z}\mathbf{U}_2\partial_xu_{1,\neq}}{A}\\
			\qquad\qquad\qquad\qquad-\frac{\partial_z\left[(u_{\neq}\cdot\nabla u_{1,\neq})_{\neq}
				+{U}_0\cdot\nabla u_{1,\neq}
				+u_{\neq}\cdot\nabla (\mathbf{U}_1+\mathbf{U}_2)\right]}{A},\\
			\mathcal{L}_V\triangle u_{2,\neq}=-\frac{\partial_x\partial_yn_{\neq}}{A}-\frac{(\partial_x^2+\partial_z^2)
				\left[(u_{\neq}\cdot\nabla u_{2,\neq})_{\neq}+{U}_0\cdot\nabla u_{2,\neq}
				+u_{\neq}\cdot\nabla u_{2,0}\right]}{A}\\
			\qquad\qquad\quad+\frac{\partial_y\partial_x\left[(u_{\neq}\cdot\nabla u_{1,\neq})_{\neq}
				+{U}_0\cdot\nabla u_{1,\neq}
				+u_{\neq}\cdot\nabla (\mathbf{U}_1+\mathbf{U}_2)\right]}{A}
			-\frac{\partial_j\mathbf{U}_2\partial_x\partial_ju_{2,\neq}}{A}\\
			\qquad\qquad\quad+\frac{\partial_y\partial_z\left[(u_{\neq}\cdot\nabla u_{3,\neq})_{\neq}
				+{U}_0\cdot\nabla u_{3,\neq}+u_{\neq}\cdot\nabla u_{3,0}\right]}{A}
			+\frac{\partial_y(\partial_z\mathbf{U}_2\partial_xu_{3,\neq})-\partial_z(\partial_z\mathbf{U}_2\partial_xu_{2,\neq})}{A}.
		\end{array}
		\right.
	\end{equation}
	Taking $\partial_x$ for $\eqref{ini4}_1$ and applying Proposition \ref{timespace1}, we derive   
	\begin{equation}\label{omega:1}
		\begin{aligned}
			&\quad\|\partial_x\omega_{2,\neq}\|_{X_a}^2+
			\|\triangle u_{2,\neq}\|_{X_a}^2
			\leq C\Big(	\|(u_{\rm in})_{\neq}\|_{H^2}^2+\frac{\|\partial_xn_{\neq}\|_{X_b}^2}{A^{\frac23}}
			+\frac{\|{\rm e}^{aA^{-\frac{1}{3}}t}\partial_{z}\mathbf{U}_2\partial_x(u_{2,\neq},u_{3,\neq})\|_{L^2L^2}^2}{A}\Big)\\
			&+CA^{-1}\Big(\|{\rm e}^{aA^{-\frac{1}{3}}t}\partial_x\left[(u_{\neq}\cdot\nabla u_{1,\neq})_{\neq}
			+{U}_0\cdot\nabla u_{1,\neq}
			+u_{\neq}\cdot\nabla (\mathbf{U}_1+\mathbf{U}_2)\right]\|_{L^2L^2}^2\\
			&+\|{\rm e}^{aA^{-\frac{1}{3}}t}(\partial_x,\partial_z)
			\left[(u_{\neq}\cdot\nabla u_{2,\neq})_{\neq}
			+{U}_0\cdot\nabla u_{2,\neq}+u_{\neq}\cdot\nabla u_{2,0}\right]\|_{L^2L^2}^2\\
			&+\|{\rm e}^{aA^{-\frac{1}{3}}t}(\partial_x,\partial_z)
			\left[(u_{\neq}\cdot\nabla u_{3,\neq})_{\neq}
			+{U}_0\cdot\nabla u_{3,\neq}+u_{\neq}\cdot\nabla u_{3,0}\right]\|_{L^2L^2}^2\Big)\\
			&+CA^{-\frac53}\Big(\|{\rm e}^{aA^{-\frac{1}{3}}t}\partial_{z}\mathbf{U}_2\partial_x^2u_{1,\neq}\|_{L^2L^2}^2
			+\|{\rm e}^{aA^{-\frac{1}{3}}t}\partial_{j}\mathbf{U}_2\partial_x\partial_ju_{2,\neq}\|_{L^2L^2}^2\Big)\\
			&=: C\left(\|(u_{\rm in})_{\neq}\|_{H^2}^2+T_{2,1}+T_{2,2}+\cdots+T_{2,7}\right).
		\end{aligned}
	\end{equation} 
	
	{\bf Estimates of $T_{2,2},$ $T_{2,6}$ and $T_{2,7}.$}
	By Lemma \ref{lemma_neq22}, Lemma \ref{u1_hat2} and Young's inequality, we have 
	\begin{equation*}
		\begin{aligned}
			T_{2,2}+T_{2,6}+T_{2,7}\leq C\frac{E_2^2+E_4^2}{A^{\frac14}}+C\epsilon^2(E_2^2(t)+E_4^2(t)).
		\end{aligned}
	\end{equation*}

	{\bf Estimate of $T_{2,3}.$}
	Using $\eqref{eq:non-neq0}_3$, $\eqref{lemma_neq2_2}_{3,5}$, $\eqref{lemneq2_2}_1,$
	Lemma \ref{lemma_neq3}, Lemma \ref{lemma_u23_1}, Lemma \ref{lemma:u1:1}, Lemma \ref{lem:n0 L2} and  Lemma \ref{u1_hat2},
	there holds
	\begin{equation*}
		T_{2,3}\leq C\frac{E_2^4+E_4^4+H_1^4+1}{A^{\frac{5}{6}-\alpha}}
		+C\epsilon^2(E_2^2(t)+E_4^2(t)).
	\end{equation*}
	
	{\bf Estimate of  $T_{2,4}.$}
	Using $\eqref{eq:non-neq0}_4$, $\eqref{lemma_neq2_2}_{1,5}$, 
	Lemma \ref{lemma_neq3},
	Lemma \ref{lemma:u1:1} and Lemma \ref{lem:n0 L2},
	we get
	\begin{equation*}
		T_{2,4}\leq C\frac{E_2^4+H_1^4+1}{A^{\frac12-\frac{2}{3}\alpha}}
		+C\epsilon^2 E_2^2(t).
	\end{equation*}
	
	{\bf Estimate of  $T_{2,5}.$}
	Using $\eqref{eq:non-neq0}_{3,5}$, $\eqref{lemma_neq2_2}_{2,5}$, 
	Lemma \ref{lemma_neq3},
	Lemma \ref{lemma:u1:1} and Lemma \ref{lem:n0 L2},
	we arrive
	\begin{equation*}
		T_{2,5}\leq C\frac{E_2^4+H_1^4+1}{A^{\frac12-\frac{2}{3}\alpha}}
		+C\epsilon^2 E_2^2(t).
	\end{equation*}
	Therefore, we infer from \eqref{omega:1} that 
	\begin{equation}\label{w_result1}
		\begin{aligned}
			\|\partial_x\omega_{2,\neq}\|_{X_a}^2+
			\|\triangle u_{2,\neq}\|_{X_a}^2
			\leq C\Big(	\|(u_{\rm in})_{\neq}\|_{H^2}^2+\frac{E_2^4+E_4^4+H_1^4+1}{A^{\frac12-\frac{2}{3}\alpha}}
			+\epsilon^2 (E_2^2(t)+E_4^2(t)) \Big).
		\end{aligned}
	\end{equation} 	
	
	For $j\in\{2,3\},$ taking $\partial_j$ to $\eqref{ini4}_1,$ one gets  
	\begin{equation*}
		\begin{aligned}
			\mathcal{L}_V\partial_j\omega_{2,\neq}+\partial_j\partial_{z}u_{2,\neq}
			=&\frac{\partial_j\partial_{z}n_{\neq}}{A}
			-\frac{\partial_j(\partial_{z}\mathbf{U}_2\partial_xu_{1,\neq})}{A}
			-\left(\partial_jy+\frac{\partial_j\mathbf{U}_2}{A}\right)
			\partial_x\omega_{2,\neq}\\
			&+\frac{\partial_j\partial_x\left[(u_{\neq}\cdot\nabla u_{3,\neq})_{\neq}
				+{U}_0\cdot\nabla u_{3,\neq}
				+u_{\neq}\cdot\nabla u_{3,0}\right]}{A}\\
			&-\frac{\partial_j\partial_z\left[(u_{\neq}\cdot\nabla u_{1,\neq})_{\neq}
				+{U}_0\cdot\nabla u_{1,\neq}
				+u_{\neq}\cdot\nabla (\mathbf{U}_1+\mathbf{U}_2)\right]}{A}.
		\end{aligned}
	\end{equation*}
	Applying Proposition \ref{Lvf 0} to it, 
	due to $$\Big\|\partial_jy+\frac{\partial_j\mathbf{U}_2}{A}\Big\|_{L^{\infty}L^{\infty}}\leq C,$$
	we obtain that    
	\begin{equation}\label{omega:2}
		\begin{aligned}
			\frac{\|\partial_j\omega_{2,\neq}\|_{X_a}^2}{A^{\frac23}}\leq& C
			\left(\frac{\|(u_{\rm in})_{\neq}\|_{H^2}^2}{A^{\frac23}}+
			\|\triangle u_{2,\neq}\|_{X_a}^2
			+\|\partial_x\omega_{2,\neq}\|_{X_a}^2
			+\frac{\|\partial_xn_{\neq}\|_{X_b}^2}{A^{\frac23}}+T_{3,1}+T_{3,2}\right)\\
			&+CA^{-\frac53}\|{\rm e}^{aA^{-\frac{1}{3}}t}\partial_{z}\mathbf{U}_2\partial_xu_{1,\neq}\|_{L^2L^2}^2,
		\end{aligned}
	\end{equation}
	where 
	\begin{equation*}
		\begin{aligned}
			&T_{3,1}=A^{-\frac{5}{3}}
			\|{\rm e}^{aA^{-\frac{1}{3}}t}\partial_x
			\left[(u_{\neq}\cdot\nabla u_{3,\neq})_{\neq}
			+{U}_0\cdot\nabla u_{3,\neq}+u_{\neq}\cdot\nabla u_{3,0}\right]\|_{L^2L^2}^2,\\
			&T_{3,2}=A^{-\frac{5}{3}}\|{\rm e}^{aA^{-\frac{1}{3}}t}\partial_z\left[(u_{\neq}\cdot\nabla u_{1,\neq})_{\neq}
			+{U}_0\cdot\nabla u_{1,\neq}
			+u_{\neq}\cdot\nabla (\mathbf{U}_1+\mathbf{U}_2)\right]\|_{L^2L^2}^2.
		\end{aligned}
	\end{equation*}
	It is obvious that 
	$$T_{3,1}\leq T_{2,5}\leq C\frac{E_2^4+H_1^4+1}{A^{\frac12-\frac{2}{3}\alpha}}
	+C\epsilon^2 E_2^2(t)$$
	Using $\eqref{eq:non-neq0}_6$, $\eqref{lemma_neq2_2}_{5}$, $\eqref{lem:uneq3}$, 
	Lemma \ref{lemma_neq3},
	Lemma \ref{lemma:u1:1} and Lemma \ref{lem:n0 L2},
	we get
	\begin{equation*}
		T_{3,2}\leq C\frac{E_2^4+E_4^4+H_1^4+1}{A^{\frac12-\frac{2}{3}\alpha}}
		+C\epsilon^2 E_2^2(t).
	\end{equation*}
	By using \eqref{w_result1}, it learns from \eqref{omega:2} that 
	\begin{equation}\label{w_result2}
		\begin{aligned}
			\frac{\|(\partial_y,\partial_z)\omega_{2,\neq}\|_{X_a}^2}{A^{\frac23}}
			\leq C\left(\|(u_{\rm in})_{\neq}\|_{H^2}^2+\frac{E_2^4+E_4^4+H_1^4+1}{A^{\frac12-\frac{2}{3}\alpha}}
			+\epsilon^2 E_2^2(t)+\epsilon^2 E_4^2(t)\right).
		\end{aligned}
	\end{equation}
	When $\epsilon$ is small satisfying $C\epsilon^2\leq \frac12$ and 
	$$ A\geq \left(E_{2}^{4}+E_{4}^{4}+H_{1}^{4}+1 \right)^{\frac{6}{3-4\alpha}}=:A_{4}, $$
	we infer from
	\eqref{w_result1} and \eqref{w_result2} that 
	\begin{equation*}
		E_{2,2}^2(t)\leq C\left(\|(u_{\rm in})_{\neq}\|_{H^2}^2
		+ E_4^2(t)+1\right).
	\end{equation*}
	
\end{proof}

\section{Estimates for $L^{2}$-norm of the density}\label{sec4}
First, we derive a lower bound for $ n $ that decreases exponentially.
\begin{lemma}
	For all $ t\in[0,T], $ there holds
	\begin{equation}\label{n0 bound}
		\left\|\frac{1}{n_{0}(t)}\right\|_{L^{\infty}(\mathbb{T}^{2})}\leq\left\|\frac{1}{n(t)}\right\|_{L^{\infty}(\mathbb{T}^{3})}\leq\delta^{-1}{\rm e}^{\frac{\overline{n}}{A}t},
	\end{equation}
	where $\delta>0$ is a constant.
\end{lemma}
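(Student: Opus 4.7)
My plan is to prove the two inequalities separately. The left-hand inequality is elementary: since $n_{0}(t,y,z)=\frac{1}{|\mathbb{T}|}\int_{\mathbb{T}} n(t,x,y,z)\,dx\geq \min_{x}n(t,x,y,z)\geq 1/\|1/n(t)\|_{L^{\infty}(\mathbb{T}^3)}$ at each $(y,z)$, taking reciprocals and then $\sup_{y,z}$ gives $\|1/n_{0}(t)\|_{L^{\infty}(\mathbb{T}^2)}\leq \|1/n(t)\|_{L^{\infty}(\mathbb{T}^3)}$. This uses only the positivity of $n(t)$, which itself follows from a maximum-principle argument for the linear parabolic equation $(\ref{ini11})_1$ rewritten with drift $(y+u_1/A,u_2/A,u_3/A)+\nabla c/A$ and zeroth-order term $n-\bar n$ absorbed appropriately; this lets us take $\delta:=\min_{\mathbb{T}^3} n_{\rm in}>0$.

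For the right-hand inequality, I set $v:=1/n$ and derive its evolution equation. Using the identities $\partial_t n=-v^{-2}\partial_t v$, $\nabla n=-v^{-2}\nabla v$, $\triangle n=2v^{-3}|\nabla v|^{2}-v^{-2}\triangle v$, and noting that $(\ref{ini11})_{2}$ gives $\triangle c=-n+\bar n$, equation $(\ref{ini11})_{1}$ may be rewritten as
\begin{equation*}
\partial_{t}n+y\partial_{x}n+\frac{u\cdot\nabla n}{A}-\frac{\triangle n}{A}+\frac{\nabla n\cdot\nabla c}{A}=\frac{n(n-\bar n)}{A}.
\end{equation*}
Substituting the expressions in terms of $v$ and multiplying through by $-v^{2}$ produces
\begin{equation*}
\partial_{t}v+y\partial_{x}v+\frac{u\cdot\nabla v}{A}-\frac{\triangle v}{A}+\frac{\nabla v\cdot\nabla c}{A}+\frac{2|\nabla v|^{2}}{Av}=\frac{\bar n\, v-1}{A}.
\end{equation*}

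Now I invoke the parabolic maximum principle on the compact periodic domain $\mathbb{T}^{3}$. Let $V(t):=\|v(t)\|_{L^{\infty}(\mathbb{T}^3)}$. At a point $(t,x_{*})$ where $v(t,\cdot)$ attains its maximum, all first derivatives of $v$ vanish (so the transport, advection and chemotactic drift terms, as well as the quadratic nonlinearity $2|\nabla v|^{2}/(Av)$, all vanish) and $\triangle v\leq 0$. Hence
\begin{equation*}
\frac{d^{+}}{dt}V(t)\leq \frac{\bar n\,V(t)-1}{A}\leq \frac{\bar n}{A}V(t),
\end{equation*}
and Gr\"{o}nwall's inequality together with $V(0)=\|1/n_{\rm in}\|_{L^{\infty}}=1/\delta$ yields $V(t)\leq \delta^{-1}{\rm e}^{\bar n t/A}$, which is exactly the right-hand bound.

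The only delicate point is the usual issue of differentiating $V(t)=\|v(t)\|_{L^{\infty}}$: because $v$ is smooth on $\mathbb{T}^{3}$ for $t\in(0,T]$ (by the local well-posedness in Remark~\ref{local well-posedness} combined with strict positivity of $n$), the envelope argument can be made rigorous either by Hamilton's trick or by considering $V_{\epsilon}(t)=\|v\|_{L^{p}}$, differentiating, and sending $p\to\infty$. I do not expect substantive obstacles beyond this standard technicality; the nonlinear term $2|\nabla v|^{2}/(Av)$ has the favorable sign and drops out at the maximum point, while the seemingly dangerous chemotactic drift $\nabla c/A$ enters only as a first-order transport coefficient and is harmless for the $L^{\infty}$ bound of $v$.
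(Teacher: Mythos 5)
Your proposal is correct and takes essentially the same approach as the paper: the paper applies the maximum principle directly to $n$ at its spatial minimum, observing that at a minimum point the transport and drift terms vanish, $\triangle n\geq 0$, and $-n\triangle c=n(n-\bar n)\geq -\bar n\,n$, which gives $\frac{d}{dt}n_{\min}\geq -\frac{\bar n}{A}n_{\min}$ and hence $n_{\min}(t)\geq\delta\,{\rm e}^{-\bar n t/A}$; your change of variables $v=1/n$ followed by the maximum principle at the maximum of $v$ is a mathematically equivalent reformulation (the extra quadratic term $2|\nabla v|^2/(Av)$ vanishes at the max and $n^{2}\geq 0$ becomes $-1/A\leq 0$). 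Your explicit treatment of the left inequality by the averaging argument and your remark on the differentiability of $t\mapsto\|v(t)\|_{L^\infty}$ are correct and fill in details the paper leaves implicit.
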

\begin{proof}
	Substituting the minimum point $(x_{\min}(t), y_{\min}(t), z_{\min}(t)) $ into $(\ref{ini11})_{1}$ and using $ (\ref{ini11})_{2} $, we obtain
	\begin{equation*}
		\begin{aligned}
			&(\partial_{t}n)(x_{\min},y_{\min},z_{\min})\\=&\frac{1}{A}(\triangle n)(x_{\min},y_{\min},z_{\min})-y(\partial_{x}n)(x_{\min},y_{\min},z_{\min})-\frac{1}{A}(u\cdot\nabla n)(x_{\min},y_{\min},z_{\min})\\&-\frac{1}{A}(\nabla n\cdot\nabla c)(x_{\min},y_{\min},z_{\min})-\frac{1}{A}(n\triangle c)(x_{\min},y_{\min},z_{\min})\\\geq&-\frac{1}{A}(n\triangle c)(x_{\min},y_{\min},z_{\min})
			\geq-\frac{1}{A}\overline{n}n(x_{\min},y_{\min},z_{\min}),
		\end{aligned}
	\end{equation*}
	which follows that
	\begin{equation}\label{n min}
		\frac{d}{dt}n_{\min}(t)\geq-\frac{1}{A}\overline{n}n_{\min}(t).
	\end{equation}
	Due to $n_{\rm in}>0$ in $(x,y,z)\in\mathbb{T}^3,$ there must be a $\delta>0,$ such that 
	$n_{\rm in}>\delta>0.$
	Therefore,	we get by (\ref{n min}) that
	\begin{equation*}
		n_{\min}(t)\geq n_{\rm in}{\rm e}^{\int_{0}^{t}-\frac{1}{A}\overline{n}dt}\geq \delta {\rm e}^{-\frac{\overline{n}}{A}t}.
	\end{equation*}
	
	The proof is complete.
\end{proof}

Motivated by \cite{Bedro2}, we next consider the following 2D free energy of $ n_{0} $ on $ \mathbb{T}^{2}: $
\begin{equation*}\label{L}
	\mathcal{L}[n_{0}]=\int_{\mathbb{T}^{2}}\left[n_{0}\log n_{0}-\frac12(n_{0}-\overline{n})c_{0} \right]dydz.
\end{equation*}
\begin{lemma}\label{lem: L}
	Under the conditions of Theorem \ref{result0} and the assumptions \eqref{conditions:u20 u30} and \eqref{assumption}, there exists a positive constant $A_{5}$ independent of $t$ and $A$, such that if $ A\geq A_{4},$ there holds
	\begin{equation}\label{result L}
		\mathcal{L}[n_{0}(t)]\leq \mathcal{L}[(n_{\rm in})_{0}]+C,\quad{\rm for}\quad t\in[0,T].
	\end{equation}
\end{lemma}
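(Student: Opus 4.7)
The plan is to follow the classical free-energy approach of Bedrossian-He \cite{Bedro2}: differentiate $\mathcal{L}[n_0]$ in time along \eqref{eq:n000}, extract the standard non-positive 2D Keller-Segel dissipation, and show that the residual interactions with the velocity $u_0$ and the nonzero modes $u_\neq,n_\neq$ contribute at most a bounded time integral on $[0,T]$.

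First I would compute $\frac{d}{dt}\mathcal{L}[n_0]$. Using $\triangle c_0=-(n_0-\overline{n})$ and integration by parts, the two symmetric terms $-\tfrac12\int\partial_t n_0\,c_0\,dydz$ and $-\tfrac12\int(n_0-\overline{n})\partial_t c_0\,dydz$ coincide, which gives
\begin{equation*}
\frac{d}{dt}\mathcal{L}[n_0]=\int_{\mathbb{T}^2}(\log n_0+1-c_0)\,\partial_t n_0\,dydz.
\end{equation*}
Substituting \eqref{eq:n000} and integrating by parts on the purely 2D Keller-Segel pair $\tfrac{1}{A}\triangle n_0-\tfrac{1}{A}\nabla\cdot(n_0\nabla c_0)$ then produces the classical non-positive dissipation
\begin{equation*}
-\frac{1}{A}\int_{\mathbb{T}^2}n_0\bigl|\nabla\log n_0-\nabla c_0\bigr|^2\,dydz.
\end{equation*}

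It then remains to bound the three perturbative contributions. For the velocity transport term $-\tfrac{1}{A}\int(\log n_0+1-c_0)(u_0\cdot\nabla n_0)\,dydz$, the 2D divergence-free identity $\partial_y u_{2,0}+\partial_z u_{3,0}=0$ combined with the cancellations $\int u_0\cdot\nabla(n_0\log n_0-n_0)\,dydz=\int u_0\cdot\nabla n_0\,dydz=0$ reduces it to $-\tfrac{1}{A}\int n_0\,u_0\cdot\nabla c_0\,dydz$, which I would control by $\|n_0\|_{L^\infty}\|u_0\|_{L^2}\|\nabla c_0\|_{L^2}$ using $E_3$, Lemma \ref{lemma_u23_1}, and the elliptic identity $\triangle c_0=-(n_0-\overline{n})$. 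For the nonzero-mode pieces $-\tfrac{1}{A}\int(\log n_0+1-c_0)(u_\neq\cdot\nabla n_\neq)_0\,dydz$ and $-\tfrac{1}{A}\int(\log n_0+1-c_0)\nabla\cdot(n_\neq\nabla c_\neq)_0\,dydz$, I would transfer the $(y,z)$-divergence onto $\nabla(\log n_0-c_0)$ and then apply H\"older together with the enhanced-dissipation controls $E_{2,1}$ and $E_{2,2}$ from Lemmas \ref{lem:E_21}--\ref{lem:E2(t)}. Since the nonzero modes decay as $e^{-aA^{-1/3}t}$ in $X_a$ and $X_b$, bilinears such as $u_\neq n_\neq$ carry a factor $e^{-2aA^{-1/3}t}$ whose time integral is $O(A^{1/3})$; combined with the $A^{-1}$ prefactor this yields an $O(A^{-2/3})$ net contribution, uniform in $T$.

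The main obstacle will be the absence of a uniform-in-time lower bound on $n_0$: the estimate \eqref{n0 bound} only gives $n_0\ge\delta e^{-\overline{n}t/A}$, so $|\log n_0|$ can grow linearly in $t/A$ and is not immediately compatible with the $L^\infty$ pairings above. This has to be absorbed either against the exponential decay $e^{-2aA^{-1/3}t}$ of nonzero-mode products (requiring $A^{-1/3}\gg\overline{n}/A$, which holds for $A$ sufficiently large) or by rewriting $\nabla\log n_0=(\nabla\log n_0-\nabla c_0)+\nabla c_0$ so that the large logarithmic factor gets paired against the Keller-Segel dissipation. The subcritical mass $m<8\pi$ from \eqref{condition: M} does not enter directly into this monotonicity computation, but is what will ultimately make $\mathcal{L}[n_0]$ a coercive quantity via the logarithmic Hardy-Littlewood-Sobolev inequality in the downstream $L^2$ bound for $n_0$.
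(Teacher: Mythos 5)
Your overall framework is correct: differentiate $\mathcal{L}[n_0]$, extract the nonnegative 2D Keller--Segel dissipation $\tfrac{1}{A}\int_{\mathbb{T}^2}n_0|\nabla\log n_0-\nabla c_0|^2$, and show the residual interaction terms integrate to an $O(1)$ quantity. Your treatment of the nonzero-mode contributions $J_2$ is essentially the paper's argument once you note that the two devices you list as alternatives must in fact be used together: first Young's inequality to peel off $\nabla\log n_0-\nabla c_0$ against the dissipation (which produces a $1/n_0$ weight on the remainder), then the lower bound \eqref{n0 bound} combined with the enhanced-dissipation decay $e^{-2aA^{-1/3}t}$ of $n_\neq,u_\neq$, with the exponential decay rate $aA^{-1/3}$ dominating the exponential growth rate $\overline{n}/A$ for $A$ large.

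The genuine gap is in your treatment of the velocity transport term $J_1=-\tfrac{1}{A}\int_{\mathbb{T}^2}n_0\,u_0\cdot\nabla c_0\,dydz$. You propose to bound $|J_1|$ pointwise in time by $\tfrac{C}{A}\|n_0\|_{L^\infty}\|u_0\|_{L^2}\|\nabla c_0\|_{L^2}$, and all three norms are only controlled in $L^\infty_t$. This yields $|J_1(t)|\leq C/A$ uniformly but not a time-integrable bound: $\int_0^T|J_1|\,dt\leq CT/A$ grows without bound as $T\to\infty$, so the lemma as stated (a $T$-uniform constant) does not follow. The paper's way out is precisely the observation advertised in the abstract: decompose $u_{j,0}=\overline{u}_{j,0}(t)+\widetilde{u}_{j,0}(t,y,z)$. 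The spatial-average piece $\overline{u}_{j,0}$, being a constant in $(y,z)$, gives an identically vanishing contribution because $\int_{\mathbb{T}^2}n_0\,\partial_j c_0\,dydz=-\int_{\mathbb{T}^2}\triangle c_0\,\partial_j c_0\,dydz+\overline{n}\int_{\mathbb{T}^2}\partial_j c_0\,dydz=0$. The fluctuating piece $\widetilde{u}_{j,0}$ enjoys the genuine heat-dissipation decay $\|\widetilde{u}_{j,0}(t)\|_{L^2}\leq C\epsilon\,e^{-t/(2A)}$ established in Lemma \ref{lem:u20 u30}, so that $\tfrac{1}{A}\int_0^t\|\widetilde{u}_{j,0}\|_{L^2}\,ds\leq C\epsilon$ uniformly in $t$, producing the bound $\mathcal{L}[n_0]-\mathcal{L}[(n_{\rm in})_0]\leq C\epsilon m^{1/3}E_3^{5/3}+O(A^{-2/3})$. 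Without the $\overline{u}_0/\widetilde{u}_0$ splitting and Lemma \ref{lem:u20 u30}, the argument fails.
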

\begin{proof}
	Using (\ref{ini11}), direct calculation shows that
	\begin{equation}\label{dt L}
		\begin{aligned}
			\frac{d}{dt}\mathcal{L}[n_{0}]=&-\frac{1}{A}\int_{\mathbb{T}^{2}}n_{0}|\nabla\log n_{0}-\nabla c_{0}|^{2}dydz-\frac{1}{A}\int_{\mathbb{T}^{2}}n_{0}u_{0}\cdot\nabla c_{0}dydz\\&+\frac{1}{A}\int_{\mathbb{T}^{2}}\left[(n_{\neq}\nabla c_{\neq})_{0}+(n_{\neq}u_{\neq})_{0} \right]\cdot\left(\nabla\log n_{0}-\nabla c_{0} \right) dydz\\=:&-\frac{1}{A}\int_{\mathbb{T}^{2}}n_{0}|\nabla\log n_{0}-\nabla c_{0}|^{2}dydz+J_{1}+J_{2}.
		\end{aligned}
	\end{equation}
	For $j\in\{2,3\}$, by (\ref{u23:decompose}), we decompose $u_{j,0}(t,y,z)=\overline{u}_{j,0}(t)+\widetilde{u}_{j,0}(t,y,z)$, therefore
	\begin{equation}\label{eq:I1}
		\begin{aligned}
			J_{1}=&-\frac{1}{A}\int_{\mathbb{T}^{2}}n_{0}u_{j,0}\partial_{j}c_{0}dydz\\=&-\frac{1}{A}\int_{\mathbb{T}^{2}}n_{0}\overline{u}_{j,0}\partial_{j}c_{0}dydz-\frac{1}{A}\int_{\mathbb{T}^{2}}n_{0}\widetilde{u}_{j,0}\partial_{j}c_{0}dydz=:J_{11}+J_{12}.
		\end{aligned}
	\end{equation}
	For $J_{11},$ using $(\ref{ini11})_{2}$ and integration by parts, we have
	\begin{equation*}
		\begin{aligned}
			J_{11}=\frac{1}{A}\overline{u}_{j,0}(t)\int_{\mathbb{T}^{2}}\left(\partial_{y}^{2}+\partial_{z}^{2} \right)c_{0}\partial_{j}c_{0}dydz-\frac{1}{A}\overline{u}_{j,0}(t)\overline{n}(t)\int_{\mathbb{T}^{2}}\partial_{j}c_{0}dydz=0.
		\end{aligned}
	\end{equation*}
	For $J_{12},$ due to $\|n_{0}\|_{L^{3}}\leq\|n_{0}\|_{L^{1}}^{\frac13}\|n_{0}\|_{L^{\infty}}^{\frac23}$, by Lemma \ref{lem:ellip_0} and H$\ddot{\rm o}$lder's inequality, there holds
	\begin{equation*}
		\begin{aligned}
			J_{12}\leq&\frac{|\mathbb{T}|}{A}\|n_{0}\|_{L^{\infty}}\|\nabla c_{0}\|_{L^{\infty}}\|\widetilde{u}_{j,0}\|_{L^{2}}\leq\frac{C}{A}\|n_{0}\|_{L^{\infty}}\|n_{0}-\overline{n}\|_{L^{3}}\|\widetilde{u}_{j,0}\|_{L^{2}}\\\leq&\frac{C}{A}\|n_{0}\|_{L^{\infty}}\|n_{0}\|_{L^{1}}^{\frac13}\|n_{0}\|_{L^{\infty}}^{\frac23}\|\widetilde{u}_{j,0}\|_{L^{2}}\leq\frac{C}{A}m^{\frac13}\|n\|_{L^{\infty}}^{\frac53}\|\widetilde{u}_{j,0}\|_{L^{2}}.
		\end{aligned}
	\end{equation*}
	Combining  $J_{11}$ and $J_{12}$ via (\ref{eq:I1}), we obtain that
	\begin{equation*}
		\begin{aligned}
			J_{1}\leq \frac{C}{A}m^{\frac13}\|n\|_{L^{\infty}}^{\frac53}\|\widetilde{u}_{j,0}\|_{L^{2}}.
		\end{aligned}
	\end{equation*}
	For $J_{2}$, using Lemma \ref{lem:ellip_2}, (\ref{n0 bound}), H\"{o}lder's and Young's inequalities, one obtains
	\begin{equation*}
		\begin{aligned}
			J_{2}\leq&\frac{1}{2A}\int_{\mathbb{T}^{2}}\frac{|(n_{\neq}\nabla c_{\neq})_{0}+(n_{\neq}u_{\neq})_{0}|^{2}}{n_{0}}dydz+\frac{1}{2A}\int_{\mathbb{T}^{2}}n_{0}\left|\nabla\log n_{0}-\nabla c_{0} \right|^{2}dydz\\\leq&\frac{1}{2A}\left\|\frac{1}{n_{0}}\right\|_{L^{\infty}}\|n_{\neq}\|_{L^{\infty}}^{2}\left(\|\nabla c_{\neq}\|_{L^{2}}^{2}+\|u_{\neq}\|_{
				L^{2}}^{2} \right)+\frac{1}{2A}\int_{\mathbb{T}^{2}}n_{0}\left|\nabla\log n_{0}-\nabla c_{0} \right|^{2}dydz\\\leq&\frac{1}{2A\delta}{\rm e}^{\frac{\overline{n}}{A}t}\|n_{\neq}\|_{L^{\infty}}^{2}\left(\|n_{\neq}\|_{L^{2}}^{2}+\|u_{\neq}\|_{L^{2}}^{2} \right)+\frac{1}{2A}\int_{\mathbb{T}^{2}}n_{0}\left|\nabla\log n_{0}-\nabla c_{0} \right|^{2}dydz.
		\end{aligned}
	\end{equation*}
	
	Combining $J_{1}$ and $J_{2},$ we get by (\ref{dt L}) that
	\begin{equation*}
		\begin{aligned}
			\frac{d}{dt}\mathcal{L}[n_{0}]\leq&-\frac{1}{2A}\int_{\mathbb{T}^{2}}n_{0}\left|\nabla\log n_{0}-\nabla c_{0} \right|^{2}dydz+\frac{C}{A}m^{\frac13}\|n\|_{L^{\infty}}^{\frac53}\|\widetilde{u}_{j,0}\|_{L^{2}}\\&+\frac{1}{2A\delta}{\rm e}^{\frac{\overline{n}}{A}t}\|n_{\neq}\|_{L^{\infty}}^{2}\left(\|n_{\neq}\|_{L^{2}}^{2}+\|u_{\neq}\|_{L^{2}}^{2} \right),
		\end{aligned}
	\end{equation*}
	which follows that
	\begin{equation}\label{L11}
		\begin{aligned}
			&\mathcal{L}[n_{0}]-\mathcal{L}[(n_{\rm in})_{0}]\\\leq&-\frac{1}{2A}\int_{0}^{t}\int_{\mathbb{T}^{2}}n_{0}\left|\nabla\log n_{0}-\nabla c_{0} \right|^{2}dydzds+\frac{C}{A}m^{\frac13}\|n\|_{L^{\infty}L^{\infty}}^{\frac53}\int_0^t\|\widetilde{u}_{j,0}\|_{L^{2}}ds
			\\&+\frac{1}{2A\delta}\|n_{\neq}\|_{L^{\infty}L^{\infty}}^{2}\int_{0}^{t}
			{\rm e}^{\frac{\overline{n}}{A}s}\left(\|n_{\neq}\|_{L^{2}}^{2}+\|u_{\neq}\|_{L^{2}}^{2} \right)ds.
		\end{aligned}
	\end{equation}
	Using Lemma \ref{lem:u20 u30}, we have
	\begin{equation}\label{check uj0}
		\begin{aligned}
			\frac{\int_0^t\|\widetilde{u}_{j,0}\|_{L^{2}}ds}{A}\leq\frac{C\epsilon\int_{0}^{t}{\rm e}^{-\frac{s}{2A}}ds}{A}\leq C\epsilon.
		\end{aligned}
	\end{equation}
	Moreover, by Lemma \ref{lem: poincare}, Lemma \ref{lemma_u} and assumptions (\ref{assumption}), one deduces
	\begin{equation*}
		{\rm e}^{2aA^{-\frac13}t}\|u_{\neq}\|_{L^{2}}^{2}\leq C{\rm e}^{2aA^{-\frac13}t}\left(\|\partial_{x}\omega_{2,\neq}\|_{L^{2}}^{2}+\|\triangle u_{2,\neq}\|_{L^{2}}^{2} \right)
		\leq CE_{2}^{2},
	\end{equation*}  and ${\rm e}^{2aA^{-\frac13}t}\|n_{\neq}\|_{L^{2}}^{2}\leq C\|\partial_{x}^{2}n_{\neq}\|_{X_{b}}^{2}\leq CE_{2}^{2}.$ They imply that
	\begin{equation*}
		\|u_{\neq}\|_{L^{2}}^{2}\leq CE_{2}^{2}{\rm e}^{-2aA^{-\frac13}t},\quad \|n_{\neq}\|_{L^{2}}^{2}\leq CE_{2}^{2}{\rm e}^{-2aA^{-\frac13}t}.
	\end{equation*}
	Therefore, when $A$ is sufficiently large satisfying $A\geq \left(\frac{\overline{n}}{a} \right)^{\frac32}$, we get
	\begin{equation*}
		\begin{aligned}
			\int_{0}^{t}{\rm e}^{\frac{\overline{n}}{A}s}\left(\|n_{\neq}\|_{L^{2}}^{2}+\|u_{\neq}\|_{L^{2}}^{2} \right)ds
			\leq CE_{2}^{2}\int_{0}^{\infty}{\rm e}^{\frac{\overline{n}}{A}s-2{a}{A^{-\frac13}}s}ds
			\leq CE_{2}^{2}\int_{0}^{\infty}{\rm e}^{-{a}{A^{-\frac13}}s}ds=\frac{CE_{2}^{2}A^{\frac13}}{a}.
		\end{aligned}
	\end{equation*}
	Combining it with (\ref{assumption}) and (\ref{check uj0}), (\ref{L11}) yields
	\begin{equation*}
		\begin{aligned}
			\mathcal{L}[n_{0}]-\mathcal{L}[(n_{\rm in})_{0}]\leq&-\frac{1}{2A}\int_{0}^{t}\int_{\mathbb{T}^{2}}n_{0}|\nabla\log n_{0}-\nabla c_{0}|^{2}dydzds+C\epsilon m^{\frac13}E_{3}^{\frac53}+\frac{CE_{2}^{2}E_{3}^{2}}{A^{\frac23}a\delta}.
		\end{aligned}
	\end{equation*}
	Hence, when
	$$A\geq \max\left\{A_{2}, A_{3}, A_{4}, \left(\frac{E_{2}^{2}E_{3}^{2}}{a\delta} \right)^{\frac32}, \left(\frac{\overline{n}}{a} \right)^{\frac32}, \left(\frac{1}{4a} \right)^{\frac32} \right\}=:A_{5},$$ as long as $\epsilon$ is enough small satisfying
	\begin{equation}\label{eq:epsilon}
		\begin{aligned}
			\epsilon m^{\frac13}E_{3}^{\frac53}\leq C,
		\end{aligned}
	\end{equation} 
 we obtain that
	\begin{equation*}
		\mathcal{L}[n_{0}]-\mathcal{L}[(n_{\rm in})_{0}]\leq C.
	\end{equation*}
\end{proof}

Next, as in \cite{Bedro2} or \cite{he24-2}, we use (\ref{result L}) and (\ref{ineq:hls}) to get a bound on $\|n_{0}\log^{+}n_{0}\|_{L^{1}}$.
\begin{lemma}
	Under the assumptions of Lemma \ref{lem: L} and
	$ m=\|(n_{\rm in})_{0}\|_{L^{1}}<8\pi,$ there exists a constant $ C_{L\log L}(n_{\rm in}) $ such that
	\begin{equation}\label{eq:n0logn0}
		\int_{\mathbb{T}^{2}}n_{0}\log^{+}n_{0}dydz\leq C_{L\log L}(n_{\rm in}).
	\end{equation}
	\begin{proof}
		Let $ Y=(y,z)\in\mathbb{T}^{2} $ be fixed. Define the cut-off function $ \varphi(\tau)\in C^{\infty} $ such that
		\begin{equation}\label{varphi}
			\begin{aligned}
				{\rm supp} (\varphi)&=B(Y, 1/4), 
				\\ \varphi(\tau)&=1,\quad\forall \tau\in B(Y, 1/8), 
				\\ {\rm supp} \left(\nabla\varphi(\tau) \right)&\subset\overline{B}(Y, 1/4)\backslash B(Y, 1/8). 
			\end{aligned}
		\end{equation}
		By periodically extending $ n_{0}(\tau) $ and $ c_{0}(\tau) $ to $ \mathbb{R}^{2},$ we can rewrite the equation $ -\triangle c_{0}=n_{0}-\overline{n} $ that holds on $ \mathbb{T}^{2} $ as the following equation that holds on $ \mathbb{R}^{2}: $
		\begin{equation}\label{rewrire eq:c0}
			\begin{aligned}
				-\triangle_{\tau}\left(\varphi(\tau)c_{0}(\tau) \right)=&-2\nabla_{\tau}\varphi(\tau)\cdot\nabla_{\tau}c_{0}(\tau)-c_{0}(\tau)\triangle_{\tau}\varphi(\tau)-\varphi(\tau)\triangle_{\tau}c_{0}(\tau)\\=&-2\nabla_{\tau}\varphi(\tau)\cdot\nabla_{\tau}c_{0}(\tau)-c_{0}(\tau)\triangle_{\tau}\varphi(\tau)+\left(n_{0}(\tau)-\overline{n} \right)\varphi(\tau).
			\end{aligned}
		\end{equation}
		Since $ \varphi(Y)=1 $ as $ Y\in B(Y, 1/8) $ and $ {\rm supp} (\varphi)=B(Y, 1/4), $ using (\ref{rewrire eq:c0}) and the fundamental solution of the Laplacian on $ \mathbb{R}^{2}, $ we get
		\begin{equation}\label{eq:c0(y)}
			\begin{aligned}
				&c_{0}(Y)=c_{0}(Y)\varphi(Y)\\=&-\frac{1}{2\pi}\int_{\mathbb{R}^{2}}\log(|Y-\tau|)\left[(n_{0}(\tau)-\overline{n})\varphi(\tau)-2\nabla_{\tau}\varphi(\tau)\cdot\nabla_{\tau}c_{0}(\tau)-c_{0}(\tau)\triangle_{\tau}\varphi(\tau) \right]d\tau\\=&-\frac{1}{2\pi}\int_{|Y-\tau|\leq\frac14}\log(|Y-\tau|)(n_{0}(\tau)-\overline{n})\varphi(\tau)d\tau-\frac{1}{\pi}\int_{|Y-\tau|\leq\frac14}\nabla_{\tau}\cdot\left[\log(|Y-\tau|)\nabla_{\tau}\varphi(\tau) \right]c_{0}(\tau)d\tau\\&+\frac{1}{2\pi}\int_{|Y-\tau|\leq\frac14}\log(|Y-\tau|)\triangle_{\tau}\varphi(\tau)c_{0}(\tau)d\tau.
			\end{aligned}
		\end{equation}
		Due to the support of $ \varphi, $ we can identify the above with an analogous integral on $ \mathbb{T}^{2} $ with $ |Y-\tau| $ replaced by $ d(Y,\tau). $ Multiplying (\ref{eq:c0(y)}) by $ -\frac12(n_{0}(Y)-\overline{n}) $ and integrating with respect to $ Y $ over $ \mathbb{T}^{2}, $ we have
		\begin{equation}\label{eq:n0c0}
			\begin{aligned}
				&-\frac12\int_{\mathbb{T}^{2}}\left(n_{0}(Y)-\overline{n} \right)c_{0}(Y)dY\\=&\frac{1}{4\pi}\int\int_{\mathbb{T}^{2}\times\mathbb{T}^{2}, d(Y,\tau)\leq\frac14}\log d(Y,\tau)\left(n_{0}(Y)-\overline{n} \right)\left(n_{0}(\tau)-\overline{n} \right)\varphi(\tau)d\tau dY\\&+\frac{1}{2\pi}\int\int_{\mathbb{T}^{2}\times\mathbb{T}^{2}, \frac18\leq d(Y,\tau)\leq\frac14}\left(n_{0}(Y)-\overline{n} \right)\nabla_{\tau}\cdot\left(\log d(Y,\tau)\nabla_{\tau}\varphi(\tau) \right)c_{0}(\tau)d\tau dY
				\\&-\frac{1}{4\pi}\int\int_{\mathbb{T}^{2}\times\mathbb{T}^{2}, \frac18\leq d(Y,\tau)\leq\frac14}\left(n_{0}(Y)-\overline{n} \right)\log d(Y,\tau)\triangle_{\tau}\varphi(\tau)c_{0}(\tau)d\tau dY.
			\end{aligned}
		\end{equation}
		By using $(\ref{varphi})_{2},$ one deduces that
		\begin{align*}
			&\frac{1}{4\pi}\int\int_{d(Y,\tau)\leq\frac14}\log d(Y,\tau)\left(n_{0}(Y)-\overline{n} \right)\left(n_{0}(\tau)-\overline{n} \right)\varphi(\tau)d\tau dY\\=&\frac{1}{4\pi}\int\int_{\mathbb{T}^{2}\times\mathbb{T}^{2}}\log d(Y,\tau)n_{0}(Y)n_{0}(\tau)d\tau dY-\frac{1}{4\pi}\int\int_{d(Y,\tau)>\frac18}\log d(Y,\tau)n_{0}(Y)n_{0}(\tau)d\tau dY\\&-\frac{1}{2\pi}\overline{n}\int\int_{d(Y,\tau)\leq\frac18}\log d(Y,\tau)n_{0}(Y)d\tau dY+\frac{1}{4\pi}(\overline{n})^{2}\int\int_{d(Y,\tau)\leq\frac18}\log d(Y,\tau)d\tau dY\\&+\frac{1}{4\pi}\int\int_{\frac18\leq d(Y,\tau)\leq\frac14}\log d(Y,\tau)\left(n_{0}(Y)-\overline{n} \right)\left(n_{0}(\tau)-\overline{n}\right)\varphi(\tau)d\tau dY.
		\end{align*}
		Combining it with (\ref{eq:n0c0}), we have 
		\begin{align*}
			&-\frac12\int_{\mathbb{T}^{2}}\left(n_{0}(Y)-\overline{n} \right)c_{0}(Y)dY\\=&\frac{1}{4\pi}\int\int_{\mathbb{T}^{2}\times\mathbb{T}^{2}}\log d(Y,\tau)n_{0}(Y)n_{0}(\tau)d\tau dY-\frac{1}{4\pi}\int\int_{d(Y,\tau)>\frac18}\log d(Y,\tau)n_{0}(Y)n_{0}(\tau)d\tau dY\\&-\frac{1}{2\pi}\overline{n}\int\int_{d(Y,\tau)\leq\frac18}\log d(Y,\tau)n_{0}(Y)d\tau dY+\frac{1}{4\pi}(\overline{n})^{2}\int\int_{d(Y,\tau)\leq\frac18}\log d(Y,\tau)d\tau dY\\&+\frac{1}{4\pi}\int\int_{\frac18\leq d(Y,\tau)\leq\frac14}\log d(Y,\tau)\left(n_{0}(Y)-\overline{n} \right)\left(n_{0}(\tau)-\overline{n}\right)\varphi(\tau)d\tau dY\\&+\frac{1}{2\pi}\int\int_{\frac18\leq d(Y,\tau)\leq\frac14}\left(n_{0}(Y)-\overline{n} \right)\nabla_{\tau}\cdot\left(\log d(Y,\tau)\nabla_{\tau}\varphi(\tau) \right)c_{0}(\tau)d\tau dY\\&-\frac{1}{4\pi}\int\int_{\frac18\leq d(Y,\tau)\leq\frac14}\left(n_{0}(Y)-\overline{n} \right)\log d(Y,\tau)\triangle_{\tau}\varphi(\tau)c_{0}(\tau)d\tau dY=:I_{1,1}+I_{1,2}+\cdots+I_{1,7}.
		\end{align*}
		First of all, direct calculations show that
		\begin{equation*}
			\begin{aligned}
				I_{1,2}+I_{1,3}+I_{1,4}+I_{1,5}\geq -Cm^{2}.
			\end{aligned}
		\end{equation*}		
		Moreover, in the region $ \frac18\leq|Y-\tau|\leq\frac14, $ note that
		\begin{equation*}
			\begin{aligned}
				&\left|\nabla_{\tau}\cdot\left[\log(|Y-\tau|)\nabla_{\tau}\varphi(\tau) \right]\right|
				\leq8|\nabla_{\tau}\varphi(\tau)|+\log8|\triangle_{\tau}\varphi(\tau)|\leq C,\\
				&|\log(|Y-\tau|)\triangle_{\tau}\varphi(\tau)|\leq \log8|\triangle_{\tau}\varphi(\tau)|\leq C, 
			\end{aligned}
		\end{equation*}
		we have
		\begin{equation*}
			\begin{aligned}
				|I_{1,6}|+|I_{1,7}|\leq C\int_{\mathbb{T}^{2}}n_{0}(Y)dY\int_{\mathbb{T}^{2}}c_{0}(\tau)d\tau+C\overline{n}\int_{\mathbb{T}^{2}}c_{0}(\tau)d\tau\leq Cm\|c_{0}\|_{L^{1}(\mathbb{T}^{2})}.
			\end{aligned}
		\end{equation*}
		Denoting $ K(\tau)=-\frac{1}{2\pi}\log|\tau| $ to be the fundamental solution of the Laplacian on $ \mathbb{T}^{2}, $ as $ -\triangle c_{0}=n_{0}-\overline{n}, $ by Young's inequality, we obtain
		\begin{equation*}
			\|c_{0}\|_{L^{1}(\mathbb{T}^{2})}=\|K\ast(n_{0}-\overline{n})\|_{L^{1}(\mathbb{T}^{2})}\leq\|K\|_{L^{1}(\mathbb{T}^{2})}\|n_{0}-\overline{n}\|_{L^{1}(\mathbb{T}^{2})}\leq Cm.
		\end{equation*}
		This follows that
		\begin{equation*}
			I_{1,6}+I_{1,7}\geq -Cm^{2}.
		\end{equation*}

		Combining the estimates of $ I_{1,2}-I_{1,7}, $ we conclude that
		\begin{equation*}
			\begin{aligned}
				-\frac12\int_{\mathbb{T}^{2}}(n_{0}-\overline{n})c_{0}dY\geq\frac{1}{4\pi}\int\int_{\mathbb{T}^{2}\times\mathbb{T}^{2}}
				\log d(Y,\tau)n_{0}(Y)n_{0}(\tau)d\tau dY-Cm^{2}.
			\end{aligned}
		\end{equation*}
		From which along with (\ref{result L}), we arrive at
		\begin{align*}
			&\mathcal{L}[(n_{\rm in})_{0}]\geq\int_{\mathbb{T}^{2}}n_{0}\log n_{0}dY-\int_{\mathbb{T}^{2}}\frac12(n_{0}-\overline{n})c_{0}dY-C\\\geq&\int_{\mathbb{T}^{2}}n_{0}\log n_{0}dY+\frac{1}{4\pi}\int_{\mathbb{T}^{2}\times\mathbb{T}^{2}}\log d(Y,\tau)n_{0}(Y)n_{0}(\tau)d\tau dY-Cm^{2}-C\\=&\left(1-\frac{m}{8\pi} \right)\int_{\mathbb{T}^{2}}n_{0}\log n_{0}dY\\&+\frac{m}{8\pi}\left(\int_{\mathbb{T}^{2}}n_{0}\log n_{0}dY+\frac{2}{m}\int\int_{\mathbb{T}^{2}\times\mathbb{T}^{2}}\log d(Y,\tau)n_{0}(Y)n_{0}(\tau)d\tau dY \right)-Cm^{2}-C.
		\end{align*}
		Applying (\ref{ineq:hls}) to it, one obtains
		\begin{equation*}
			\mathcal{L}[(n_{\rm in})_{0}]\geq\left(1-\frac{m}{8\pi}\right)\int_{\mathbb{T}^{2}}n_{0}\log n_{0}dY-C(m)-Cm^{2},
		\end{equation*}
		which implies that
		\begin{equation*}
			\int_{\mathbb{T}^{2}}n_{0}\log n_{0}dY\leq\frac{\mathcal{L}[(n_{\rm in})_{0}]+C(m)+Cm^{2}}{1-\frac{m}{8\pi}}\leq C_{L\log L}(n_{\rm in})<\infty
		\end{equation*}
		under the condition of  $ m<8\pi. $ Due to 
		\begin{equation*}
			\log^{-}n_{0}=-\min\{0,\log n_{0} \}=\left\{
			\begin{array}{lr}
				-\log n_{0},~&0<n_{0}<1,
				\\0,~&n_{0}\geq 1,
			\end{array}
			\right.
		\end{equation*}
		there holds
		\begin{equation*}
			\begin{aligned}
				\int_{\mathbb{T}^{2}}n_{0}\log^{+}n_{0}dY=&\left\{
				\begin{array}{lr}
					0,~&0<n_{0}<1,\\\int_{\mathbb{T}^{2}}n_{0}\log n_{0}dY,~&n_{0}\geq 1.
				\end{array}
				\right.
			\end{aligned}
		\end{equation*}
		This shows that
		\begin{equation*}
			\int_{\mathbb{T}^{2}}n_{0}\log^{+}n_{0}dY\leq C_{L\log L}(n_{\rm in})<\infty.
		\end{equation*}

		The proof is complete.
	\end{proof}
\end{lemma}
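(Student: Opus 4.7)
The plan is to follow the by-now-standard route connecting free-energy bounds to $L\log L$ control for subcritical 2D Keller--Segel type problems. From Lemma~\ref{lem: L} I have $\mathcal{L}[n_0(t)]\leq \mathcal{L}[(n_{\rm in})_0]+C$, and the chemoattractant term $-\tfrac{1}{2}\int_{\mathbb{T}^2}(n_0-\overline{n})c_0\,dY$ is to be rewritten as a logarithmic double integral in $n_0$, after which the logarithmic Hardy--Littlewood--Sobolev inequality \eqref{ineq:hls} gives a one-sided bound on $\int n_0\log n_0$, which is equivalent (up to bounded corrections) to a bound on $\int n_0\log^+ n_0$.

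The first technical step is inverting $-\triangle c_0=n_0-\overline{n}$ on $\mathbb{T}^2$. The Green's function on the torus is not globally $-\frac{1}{2\pi}\log|Y-\tau|$, so I would localize: for each fixed $Y$, introduce a smooth cutoff $\varphi$ with $\varphi\equiv 1$ on $B(Y,1/8)$ and supported in $B(Y,1/4)$, extend periodically, and write the equation for $\varphi c_0$ on $\mathbb{R}^2$. Convolving against the Euclidean fundamental solution $-\frac{1}{2\pi}\log|\cdot|$ expresses $c_0(Y)$ as a principal logarithmic convolution against $(n_0-\overline{n})\varphi$ plus commutator terms of the form $\nabla\varphi\cdot\nabla c_0$ and $c_0\triangle\varphi$. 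Multiplying by $-\tfrac{1}{2}(n_0(Y)-\overline{n})$ and integrating in $Y$ produces the principal term $\tfrac{1}{4\pi}\iint \log d(Y,\tau)\, n_0(Y) n_0(\tau)\,dY d\tau$ plus remainders supported on the annulus $\{1/8\leq d(Y,\tau)\leq 1/4\}$, where $\log d$ is bounded. Those remainders are controlled by $Cm^2+Cm\|c_0\|_{L^1}$, and $\|c_0\|_{L^1}\leq \|K\|_{L^1}\|n_0-\overline{n}\|_{L^1}\leq Cm$ by Young's inequality, so all of them are $O(m^2)$.

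Combining this with the bound on $\mathcal{L}$, I arrive at
\begin{equation*}
\mathcal{L}[(n_{\rm in})_0]+C \;\geq\; \int_{\mathbb{T}^2} n_0\log n_0 \,dY + \tfrac{1}{4\pi}\iint \log d(Y,\tau)\,n_0(Y)n_0(\tau)\,dYd\tau - Cm^2.
\end{equation*}
The decisive step is to split the right-hand side as
\begin{equation*}
\Bigl(1-\tfrac{m}{8\pi}\Bigr)\int n_0\log n_0 \;+\; \tfrac{m}{8\pi}\Bigl(\int n_0\log n_0 + \tfrac{2}{m}\!\iint \log d(Y,\tau)\,n_0(Y)n_0(\tau)\Bigr)-Cm^2,
\end{equation*}
so that the bracketed term is precisely the quantity controlled from below by the logarithmic HLS inequality \eqref{ineq:hls} by $-C(m)$. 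The hypothesis $m<8\pi$ makes the leading prefactor strictly positive, allowing me to solve for $\int n_0\log n_0\leq C_{L\log L}(n_{\rm in})$. The bound on $\int n_0\log^+ n_0$ is then immediate since $n_0\log^- n_0 = n_0(-\log n_0)\mathbf{1}_{\{n_0<1\}}$ is uniformly bounded (with total mass $m$).

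The main obstacle I anticipate is the localization on the torus: one must keep track of every remainder term produced by the cutoff and absorb them into $Cm^2$ using only the conserved quantities $m=\|n_0\|_{L^1}$ and the crude $L^1$ bound on $c_0$, without using any higher norm of $n_0$ or $c_0$. Everything else (the HLS application and the sign of $1-m/(8\pi)$) is algebraic once the localization is carried through, and it is exactly this threshold $m<8\pi$ that, via $m=M/|\mathbb{T}|=M/(2\pi)$, corresponds to the sharp critical mass $M<16\pi^2$ in Theorem~\ref{result0}.
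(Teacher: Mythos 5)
Your proposal is correct and follows essentially the same route as the paper: localize the Green's function with a cutoff $\varphi$ supported in $B(Y,1/4)$ and equal to $1$ on $B(Y,1/8)$, convert $-\tfrac12\int(n_0-\overline{n})c_0$ into the principal logarithmic double integral plus annulus remainders of size $O(m^2)$ (using $\|c_0\|_{L^1}\le Cm$ by Young), then split off the coefficient $(1-\tfrac{m}{8\pi})$ and invoke the logarithmic HLS inequality \eqref{ineq:hls}. Your handling of the final $\log^+$ step (bounding $n_0\log^- n_0$ pointwise by $e^{-1}$) is in fact slightly cleaner than the paper's, but the substance of the argument is the same.
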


The following lemma gives a uniform in time $ L^{2} $ bound of $ n_{0}. $
\begin{lemma}\label{lem:n0 L2}
	Under the assumptions of Lemma \ref{lem: L}, there holds
	\begin{equation}\label{eq:n0 L2}
		\begin{aligned}
					\|n_{0}\|_{L^{2}}\leq C\left(\|(n_{\rm in})_{0}\|_{L^{2}}+m+1 \right)=:H_{1}.
		\end{aligned}
	\end{equation}
\end{lemma}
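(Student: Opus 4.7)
The starting point is the evolution equation for the zero mode \eqref{eq:n000}. Testing against $n_{0}$ in $L^{2}(\mathbb{T}^{2})$ and using $\nabla\cdot u_{0}=0$ to kill $\langle u_{0}\cdot\nabla n_{0},n_{0}\rangle$, together with the elliptic relation $(\partial_{y}^{2}+\partial_{z}^{2})c_{0}=\bar n-n_{0}$ to turn the Keller--Segel drift into a cubic term via
\[
-\int n_{0}\,\nabla\cdot(n_{0}\nabla c_{0})\,dydz=\tfrac{1}{2}\int n_{0}^{3}\,dydz-\tfrac{\bar n}{2}\|n_{0}\|_{L^{2}}^{2},
\]
the basic energy identity reads
\[
\tfrac{1}{2}\tfrac{d}{dt}\|n_{0}\|_{L^{2}}^{2}+\tfrac{1}{A}\|\nabla n_{0}\|_{L^{2}}^{2}=\tfrac{1}{2A}\int n_{0}^{3}\,dydz-\tfrac{\bar n}{2A}\|n_{0}\|_{L^{2}}^{2}+\mathcal R,
\]
where $\mathcal R$ collects the non-zero-mode interactions $-\frac{1}{A}\langle n_{0},\nabla\cdot(n_{\neq}\nabla c_{\neq})_{0}\rangle$ and $-\frac{1}{A}\langle n_{0},(u_{\neq}\cdot\nabla n_{\neq})_{0}\rangle$.

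The main obstacle is the supercritical-looking cubic self-interaction $\frac{1}{2A}\int n_{0}^{3}dydz$. To handle it, I would invoke the two-dimensional logarithmic Gagliardo--Nirenberg / logarithmic HLS-type inequality used by Bedrossian--He and He, which says that for any $\eta>0$ there is a constant $C_{\eta}=C(\eta,\|n_{0}\|_{L^{1}})$ with
\[
\int_{\mathbb{T}^{2}}n_{0}^{3}\,dydz\leq \eta\,\|\nabla n_{0}\|_{L^{2}}^{2}\,\bigl(1+\|n_{0}\log^{+}n_{0}\|_{L^{1}}\bigr)+C_{\eta}\bigl(1+\|n_{0}\|_{L^{1}}\bigr).
\]
Because the hypothesis $M<16\pi^{2}$ in Theorem \ref{result0} is equivalent to $m<8\pi$, the $L\log L$ estimate \eqref{eq:n0logn0} yields $\|n_{0}\log^{+}n_{0}\|_{L^{1}}\leq C_{L\log L}(n_{\rm in})$ uniformly in time, so the bracket is an absolute constant. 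Choosing $\eta$ small enough (independent of $A$ and $t$), the cubic term is absorbed into a fraction of $\frac{1}{A}\|\nabla n_{0}\|_{L^{2}}^{2}$ on the left-hand side, leaving only $\frac{C}{A}$ from $C_{\eta}$.

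For the remainder $\mathcal R$, I would integrate by parts (shifting a derivative off $n_{0}$ when needed) and use H\"older together with the already established non-zero-mode estimates: $\|n_{\neq}\|_{L^{2}}$ and $\|\nabla c_{\neq}\|_{L^{2}}$, $\|u_{\neq}\|_{L^{2}}$ are all controlled by $E_{2}{\rm e}^{-aA^{-1/3}t}$ via Lemma \ref{lemma_u}, Lemma \ref{lem:E_21} and the enhanced-dissipation norm $X_{a}$. Hence $|\mathcal R|$ is bounded by $\frac{1}{2A}\|\nabla n_{0}\|_{L^{2}}^{2}+\frac{C}{A}E_{2}^{4}{\rm e}^{-2aA^{-1/3}t}\|n_{0}\|_{L^{2}}^{2}+\frac{C}{A}E_{2}^{4}{\rm e}^{-2aA^{-1/3}t}$, where the first piece is absorbed on the left and the last two are integrable in time (their integrals are $O(1)$ independent of $A$ for $A$ large thanks to the exponential factor).

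Combining everything produces a differential inequality of Gr\"onwall type
\[
\tfrac{d}{dt}\|n_{0}\|_{L^{2}}^{2}\leq \tfrac{C}{A}\bigl({\rm e}^{-2aA^{-1/3}t}E_{2}^{4}+1\bigr)\bigl(1+\|n_{0}\|_{L^{2}}^{2}\bigr),
\]
whose right-hand side has finite time integral independent of $t$ (because $\int_{0}^{\infty}{\rm e}^{-2aA^{-1/3}s}ds=O(A^{1/3})$ is offset by the prefactor $\frac{1}{A}$, and the constant-in-time piece $\frac{C}{A}\cdot t$ is handled by observing we can actually extract an additional $\frac{1}{A}\|n_{0}\|_{L^{2}}^{2}$ dissipation from $-\frac{\bar n}{2A}\|n_{0}\|_{L^{2}}^{2}$ together with Poincar\'e on the non-average part, or simply by running Gr\"onwall on any finite $[0,T]$ and showing the constant does not depend on $T$). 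The resulting bound gives $\|n_{0}\|_{L^{\infty}L^{2}}\leq C(\|(n_{\rm in})_{0}\|_{L^{2}}+m+1)$, as claimed. The most delicate point in the entire argument, and the reason the threshold $16\pi^{2}$ is sharp here, is the subcriticality of the cubic term in the presence of the $L\log L$ bound, which fails precisely at $m=8\pi$.
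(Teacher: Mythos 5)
Your proposal is correct in spirit but follows a genuinely different technical route from the paper. The paper tests the $n_{0}$ equation against the truncated function $(n_{0}-Q)_{+}$ rather than against $n_{0}$ itself: the point is that $\|(n_{0}-Q)_{+}\|_{L^{1}}\leq C_{L\log L}/\log Q$ is already small by the $L\log L$ bound, so the cubic term $\|(n_{0}-Q)_{+}\|_{L^{3}}^{3}$ is subcritical directly via the standard Gagliardo--Nirenberg estimate with $L^{1}$ coefficient, with no need to invoke an abstract ``logarithmic GN'' inequality. The paper then converts the Gagliardo--Nirenberg estimate into a \emph{lower} bound $-\|\nabla(n_{0}-Q)_{+}\|_{L^{2}}^{2}\leq -\|(n_{0}-Q)_{+}\|_{L^{2}}^{4}/(Cm^{2})$, producing a Riccati-type ODE for $\|(n_{0}-Q)_{+}\|_{L^{2}}^{2}-G(t)$ (where $G(t)$ is a bounded cumulative forcing from the non-zero modes) that is integrated by comparison, and finally recombines $n_{0}=(n_{0}-Q)_{+}+\min\{n_{0},Q\}$. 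Your approach instead invokes the Jäger--Luckhaus-type logarithmic interpolation inequality as a black box; this inequality is standard and is itself proved by exactly the cutoff device the paper uses, so the two routes are equivalent in substance, yours being more modular and the paper's more self-contained. The one place your sketch is not fully tight is the Grönwall step: the constant-in-time forcing $\frac{C_{\eta}}{A}$ makes a naive Grönwall bound grow linearly in $t$, and your ``or simply run Grönwall on $[0,T]$'' alternative does not by itself yield $T$-independence. You do supply the correct fix (the $-\frac{\bar n}{2A}\|n_{0}\|_{L^{2}}^{2}$ term together with the Poincaré spectral gap on the leftover dissipation gives genuine exponential relaxation of $\widetilde n_{0}$), but you should promote that from a parenthetical aside to the actual argument; the paper's quartic Riccati mechanism serves the same role and is why it does not need the spectral gap at all. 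Finally, your closing remark slightly misattributes the sharpness: the absorption of the cubic term is subcritical for \emph{any} $m$ once the $L\log L$ bound is in hand; the threshold $m<8\pi$ (equivalently $M<16\pi^{2}$) enters one lemma earlier, through the logarithmic Hardy--Littlewood--Sobolev inequality used to close the free-energy estimate and obtain \eqref{eq:n0logn0}.
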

\begin{proof}
	Let $ Q>\max\{1, \overline{n}\} $ be a constant, to be chosen later. Noting that $ (n_{0}-Q)_{+}=\max\{0, n_{0}-Q \}, $ and using (\ref{eq:n0logn0}), we have
	\begin{equation}\label{n0-Q}
		\begin{aligned}
			&\int_{\mathbb{T}^{2}}(n_{0}-Q)_{+}dydz=\int_{n_{0}> Q}(n_{0}-Q)dydz\leq\int_{n_{0}>Q}n_{0}dydz\\=&\int_{n_{0}>Q}\frac{1}{\log^{+}n_{0}}n_{0}\log^{+}n_{0}dydz\leq\frac{1}{\log Q}\int_{n_{0}>Q}n_{0}\log^{+}n_{0}dydz\leq\frac{C_{L\log L}}{\log Q}.
		\end{aligned}
	\end{equation}
	Recall that $ n_{0} $ satisfies
	\begin{equation}\label{eq:n0}
		\begin{aligned}
			\partial_{t}n_{0}=&\frac{1}{A}\triangle n_{0}-\frac{1}{A}\nabla\cdot(n_{0}\nabla c_{0})-\frac{1}{A}\nabla\cdot(n_{\neq}\nabla c_{\neq})_{0}-\frac{1}{A}(u_{0}\cdot\nabla n_{0})-\frac{1}{A}(u_{\neq}\cdot\nabla n_{\neq})_{0}.
		\end{aligned}
	\end{equation}
	As $ (n_{0}-Q)_{+}(n_{0}-Q)_{-}=0,$ multiplying (\ref{eq:n0}) by $ (n_{0}-Q)_{+}$ and integrating with respect to $ (y,z) $ over $ \mathbb{T}^{2}, $ we get 
	\begin{equation}\label{I1-I5}
		\begin{aligned}
			\frac12\frac{d}{dt}\|(n_{0}-Q)_{+}\|_{L^2(\mathbb{T}^{2})}^2&=\frac{1}{A}\Big(\int_{\mathbb{T}^{2}}(n_{0}-Q)_{+}\triangle n_{0}dydz-\int_{\mathbb{T}^{2}}(n_{0}-Q)_{+}\nabla\cdot(n_{0}\nabla c_{0})dydz\\&-\int_{\mathbb{T}^{2}}(n_{0}-Q)_{+}\nabla\cdot(n_{\neq}\nabla c_{\neq})_{0}dydz-\int_{\mathbb{T}^{2}}(n_{0}-Q)_{+}(u_{0}\cdot\nabla n_{0})dydz\\&-\int_{\mathbb{T}^{2}}(n_{0}-Q)_{+}(u_{\neq}\cdot\nabla n_{\neq})_{0}dydz\Big)=:J_{1}+\cdots+J_{5}.
		\end{aligned}
	\end{equation}
	For $ J_{1}, $ integration by parts shows that
	\begin{equation*}
		J_{1}=-\frac{1}{A}\int_{\mathbb{T}^{2}}|\nabla((n_{0}-Q)_{+})|^{2}dydz.
	\end{equation*}
	For $ J_{2}, $ using $ (\ref{ini11}) $ and integration by parts, we have
	\begin{equation*}
		\begin{aligned}
			J_{2}=&\frac{1}{2A}\int_{\mathbb{T}^{2}}\left((n_{0}-Q)_{+}\right)^{2}(n_{0}-\overline{n})dydz+\frac{Q}{A}\int_{\mathbb{T}^{2}}(n_{0}-Q)_{+}(n_{0}-\overline{n})dydz
			\\=&\frac{1}{2A}\int_{\mathbb{T}^{2}}\left((n_{0}-Q)_{+} \right)^{3}dydz+\frac{3Q-\overline{n}}{2A}\int_{\mathbb{T}^{2}}\left((n_{0}-Q)_{+}\right)^{2}dydz+\frac{Q^{2}-Q\overline{n}}{A}\int_{\mathbb{T}^{2}}(n_{0}-Q)_{+}dydz\\\leq&\frac{1}{2A}\int_{\mathbb{T}^{2}}\left((n_{0}-Q)_{+}\right)^{3}dydz+\frac{3Q}{2A}\int_{\mathbb{T}^{2}}\left((n_{0}-Q)_{+} \right)^{2}dydz+\frac{Q^{2}m}{A}.
		\end{aligned}
	\end{equation*}
	For $ J_{3}$ and $J_5,$ by integration by parts, one deduces
	\begin{equation*}
		J_{3}+J_5\leq \frac{1}{16A}\int_{\mathbb{T}^{2}}|\nabla\left((n_{0}-Q)_{+} \right)|^{2}dydz+\frac{C(\|(n_{\neq}\nabla c_{\neq})_{0}\|_{L^{2}}^{2}+\|(n_{\neq}u_{\neq})_{0}\|_{L^{2}}^{2})}{A}.
	\end{equation*}
	For $ J_{4}, $ using $ \nabla\cdot u_{0}=0$, for $j\in\{2,3\},$ we have
	\begin{equation*}
		\begin{aligned}
			J_{4}=&-\frac{1}{A}\int_{\mathbb{T}^{2}}(n_{0}-Q)_{+}u_{j,0}\partial_{j}n_{0}dydz=-\frac{1}{A}\int_{\mathbb{T}^{2}}(n_{0}-Q)_{+}u_{j,0}\partial_{j}(n_{0}-Q)_{+}dydz\\=&-\frac{1}{2A}\int_{\mathbb{T}^{2}}u_{j,0}\partial_{j}((n_{0}-Q)_{+})^{2}dydz=0.
		\end{aligned}
	\end{equation*}
	Collecting $ J_{1}-J_{5}, $ we get by (\ref{I1-I5}) that
	\begin{equation}\label{I1-I5 1}
		\begin{aligned}
			\frac12\frac{d}{dt}\|(n_{0}-Q)_{+}\|_{L^{2}}^{2}\leq&-\frac{7\|\nabla(n_{0}-Q)_{+}\|_{L^{2}}^{2}}{8A}
			+\frac{\|(n_{0}-Q)_{+}\|_{L^{3}}^{3}}{2A}+\frac{3Q\|(n_{0}-Q)_{+}\|_{L^{2}}^{2}}{2A}\\&+\frac{ Q^{2}m}{A}+\frac{C}{A}\left(\|(n_{\neq}\nabla c_{\neq})_{0}\|_{L^{2}}^{2}+\|(u_{\neq}n_{\neq})_{0}\|_{L^{2}}^{2} \right).
		\end{aligned}
	\end{equation}
	Using Lemma \ref{lem:GNS} and (\ref{n0-Q}), one obtains
	\begin{equation*}
		\begin{aligned}
			\|(n_{0}-Q)_{+}\|_{L^{3}(\mathbb{T}^{2})}^{3}\leq& C\|(n_{0}-Q)_{+}\|_{L^{1}(\mathbb{T}^{2})}\|\nabla\left((n_{0}-Q)_{+}\right)\|_{L^{2}(\mathbb{T}^{2})}^{2}\\\leq&\frac{C_{L\log L}}{\log Q}\|\nabla\left( (n_{0}-Q)_{+}\right)\|_{L^{2}(\mathbb{T}^{2})}^{2}.
		\end{aligned}
	\end{equation*}
Thus, we choose $ Q $ depending only on $ C_{L\log L} $ such that
	\begin{equation}\label{n0-Q L3}
		\begin{aligned}
			-\frac{7}{8A}\|\nabla\left((n_{0}-Q)_{+} \right)\|_{L^{2}(\mathbb{T}^{2})}^{2}+\frac{1}{2A}\|(n_{0}-Q)_{+}\|_{L^{3}(\mathbb{T}^{2})}^{3}\leq -\frac{1}{2A}\|\nabla\left((n_{0}-Q)_{+}\right)\|_{L^{2}(\mathbb{T}^{2})}^{2}.
		\end{aligned}
	\end{equation}
	Similarly, it follows from Lemma \ref{lem:GNS} and (\ref{n0-Q}) that
	\begin{equation}\label{GN}
		\begin{aligned}
			-\|\nabla\left((n_{0}-Q)_{+} \right)\|_{L^{2}(\mathbb{T}^{2})}^{2}\leq-\frac{\|(n_{0}-Q)_{+}\|_{L^{2}(\mathbb{T}^{2})}^{4}}{C\|(n_{0}-Q)_{+}\|_{L^{1}(\mathbb{T}^{2})}^{2}}\leq-\frac{1}{Cm^{2}}\|(n_{0}-Q)_{+}\|_{L^{2}(\mathbb{T}^{2})}^{4}.
		\end{aligned}
	\end{equation}
	Substituting (\ref{n0-Q L3}) and (\ref{GN}) into (\ref{I1-I5 1}), we get
	\begin{equation}\label{n0-Q 2}
		\begin{aligned}
			\frac{d}{dt}\|(n_{0}-Q)_{+}\|_{L^{2}(\mathbb{T}^{2})}^{2}\leq&-\frac{1}{ACm^{2}}\|(n_{0}-Q)_{+}\|_{L^{2}(\mathbb{T}^{2})}^{4}+\frac{3Q}{A}\|(n_{0}-Q)_{+}\|_{L^{2}(\mathbb{T}^{2})}^{2}\\&+\frac{2Q^{2}m}{A}+\frac{C}{A}\left(\|(n_{\neq}\nabla c_{\neq})_{0}\|_{L^{2}(\mathbb{T}^{2})}^{2}+\|(u_{\neq}n_{\neq})_{0}\|_{L^{2}(\mathbb{T}^{2})}^{2} \right).
		\end{aligned}
	\end{equation}
	We denote $G(t)$ by 
	\begin{equation*}
		G(t):=\frac{C}{A}\int_{0}^{t}\left(\|(n_{\neq}\nabla c_{\neq})_{0}\|_{L^{2}(\mathbb{T}^{2})}^{2}+\|(u_{\neq}n_{\neq})_{0}\|_{L^{2}(\mathbb{T}^{2})}^{2} \right)ds,\quad{\rm for}\quad t\geq 0.
	\end{equation*}
	Then using Lemma \ref{lem: poincare}, Lemma \ref{lem:ellip_2}, Lemma \ref{lemma_u} and assumption (\ref{assumption}), direct calculations indicate that
	\begin{equation}\label{G(t) bound}
		\begin{aligned}
			G(t)\leq&\frac{C}{A}\left(\|n_{\neq}\|_{L^{\infty}L^{\infty}}^{2}\|\nabla c_{\neq}\|_{L^{2}L^{2}}^{2}+\|n_{\neq}\|_{L^{\infty}L^{\infty}}^{2}\|u_{\neq}\|_{L^{2}L^{2}}^{2} \right)\\\leq&\frac{C}{A^{\frac23}}\|n\|_{L^{\infty}L^{\infty}}^{2}\left(\|\partial_{x}n_{\neq}\|_{X_{a}}^{2}+\|\partial_{x}\omega_{2,\neq}\|_{X_{a}}^{2}+\|\triangle u_{2,\neq}\|_{X_{a}}^{2} \right)\leq\frac{CE_{2}^{2}E_{3}^{2}}{A^{\frac23}}\leq C
		\end{aligned}
	\end{equation}
	provided with $A\geq A_{5}.$ Moreover, using Young's inequality, we rewrite $(\ref{n0-Q 2})$ as
	\begin{equation*}\label{n0-Q 3}
		\begin{aligned}
			\frac{d}{dt}\left(\|(n_{0}-Q)_{+}\|_{L^{2}}^{2}-G(t) \right)
			\leq&-\frac{\left[\|(n_{0}-Q)_{+}\|_{L^{2}}^{2}-G(t)-\left(9C^{2}m^{4}Q^{2}+4Cm^{3}Q^{2} \right)^{\frac12} \right]}{2ACm^{2}}\\&\times\left[\|(n_{0}-Q)_{+}\|_{L^{2}}^{2}+\left(9C^{2}m^{4}Q^{2}+4Cm^{3}Q^{2} \right)^{\frac12}\right],
		\end{aligned}
	\end{equation*}
	which implies that
	\begin{equation*}
		\|(n_{0}-Q)_{+}\|_{L^{2}}^{2}-G(t)\leq \|(n_{\rm in})_{0}\|_{L^{2}}^{2}+2\left(9C^{2}m^{4}Q^{2}+4Cm^{3}Q^{2} \right)^{\frac12}.
	\end{equation*}
	Combining it with (\ref{G(t) bound}), one deduces
	\begin{equation}\label{end:n0-Q}
		\|(n_{0}-Q)_{+}\|_{L^{2}}\leq C\left(\|(n_{\rm in})_{0}\|_{L^{2}}+m+1 \right).
	\end{equation}
	By decomposing $n_{0}=(n_{0}-Q)_{+}+\min\{n_{0}, Q\}$ and using (\ref{end:n0-Q}), we get
	\begin{equation*}
		\begin{aligned}
			\|n_{0}\|_{L^{2}}\leq&\|(n_{0}-Q)_{+}\|_{L^{2}}+\|\min\{n_{0}, Q \}\|_{L^{2}}\\\leq&\|(n_{0}-Q)_{+}\|_{L^{2}}+Q^{\frac12}m^{\frac12}\leq C\left(\|(n_{\rm in})_{0}\|_{L^{2}}+m+1 \right),
		\end{aligned}
	\end{equation*}
	which gives the result.
	
	The proof is complete.
\end{proof}

\section{Estimates for $L^{\infty}$-norm of the density $E_3(t)$: Proof of Proposition \ref{prop:E3}}\label{sec6}
\begin{proof}[Proof of Proposition \ref{prop:E3}]
	For $ p=2^{j} $ with $ j\geq 1, $ multiplying $ (\ref{ini11})_{1} $ by $2pn^{2p-1}$, and integrating by parts the resulting equation over $\mathbb{T}^{3}$, one deduces
	\begin{equation}
		\begin{aligned}
			&\frac{d}{dt}\|n^p\|^2_{L^2}+\frac{2(2p-1)}{Ap}\|\nabla n^{p}\|_{L^2}^2
			=\frac{2(2p-1)}{A}\int_{\mathbb{T}^{3}}n^{p}\nabla c\cdot\nabla n^{p}dxdydz \\
			\leq&\frac{2(2p-1)}{A}\|n^p\nabla c\|_{L^2}\|\nabla n^p\|_{L^2} 
			\leq\frac{2p-1}{Ap}\|\nabla n^p\|_{L^2}^2
			+\frac{(2p-1)p}{A}\|n^p\nabla c\|^2_{L^2}.\nonumber
		\end{aligned}
	\end{equation}
	Using H\"{o}lder's inequality and Gagliardo-Nirenberg inequality, we get
	\begin{equation}
		\begin{aligned}
			\|n^p\nabla c\|_{L^2}^2
			\leq \|n^p\|_{L^4}^2\|\nabla c\|_{L^4}^2
			\leq C\|n^p\|_{L^2}^{\frac{1}{2}}\|\nabla n^p\|_{L^2}^\frac{3}{2}\|\nabla c\|_{L^4}^2+C\|n^{p}\|_{L^{2}}^{2}\|\nabla c\|_{L^{4}}^{2}, \nonumber
		\end{aligned}
	\end{equation}
	which follows that
	\begin{equation}
		\begin{aligned}
			&\frac{d}{dt}\|n^p\|^2_{L^2}+\frac{2(2p-1)}{Ap}\|\nabla n^{p}\|_{L^2}^2 \\
			\leq&\frac{2p-1}{Ap}\|\nabla n^p\|_{L^2}^2
			+\frac{C(2p-1)p}{A}\|n^p\|_{L^2}^{\frac{1}{2}}\|\nabla n^p\|_{L^2}^\frac{3}{2}\|\nabla c\|_{L^4}^2+\frac{C(2p-1)p}{A}\|n^{p}\|_{L^{2}}^{2}\|\nabla c\|_{L^{4}}^{2} \\
			\leq&\frac{5(2p-1)}{4Ap}\|\nabla n^p\|_{L^2}^2
			+\frac{C(2p-1)p^7}{A}\|n^p\|_{L^2}^2\|\nabla c\|_{L^4}^8+\frac{C(2p-1)p}{A}\|n^{p}\|_{L^{2}}^{2}\|\nabla c\|_{L^{4}}^{2}.\nonumber
		\end{aligned}
	\end{equation}
	Consequently, there holds
	\begin{equation}
		\begin{aligned}
			\frac{d}{dt}\|n^p\|^2_{L^2}+\frac{1}{2A}\|\nabla n^{p}\|_{L^2}^2\leq
			\frac{Cp^8}{A}\|n^p\|_{L^2}^2\left(1+\|\nabla c\|_{L^4}^8 \right).
			\label{58}
		\end{aligned}
	\end{equation}
	Using Gagliardo-Nirenberg inequality 
	\begin{equation}
		\begin{aligned}
			\|n^p\|_{L^2}
			\leq C\left(\|n^p\|_{L^1}^{\frac{2}{5}}\|\nabla n^p\|_{L^2}^{\frac{3}{5}}+\|n^{p}\|_{L^{1}} \right),
			\nonumber
		\end{aligned}
	\end{equation}
	we infer from (\ref{58}) that
	\begin{equation}
		\begin{aligned}
			\frac{d}{dt}\|n^p\|^2_{L^2}
			\leq-\frac{\|n^p\|_{L^2}^{\frac{10}{3}}}{2AC\|n^p\|_{L^1}^{\frac{4}{3}}}
			+\frac{Cp^8}{A}\|n^p\|_{L^2}^2\left(1+\|\nabla c\|_{L^{\infty}L^4}^8 \right).
			\nonumber
		\end{aligned}
	\end{equation}
	Applying Lemma \ref{lem:ellip_0}, Lemma \ref{lem:ellip_2}, Lemma \ref{lem:n0 L2} and the assumption (\ref{conditions:u20 u30}), there holds
	\begin{equation*}
		\begin{aligned}
			\|\nabla c\|_{L^{\infty}L^4}\leq& \|\nabla c_{\neq}\|_{L^{\infty}L^4}+\|\nabla c_0\|_{L^{\infty}L^4}
			\\\leq& C\left(\|\partial_x^2n_{\neq}\|_{L^{\infty}L^{2}}+\|n_{0}\|_{L^{\infty}L^{2}} \right) \leq C\left(E_{2}+H_{1} \right).
		\end{aligned}
	\end{equation*}
	Therefore
	\begin{equation*}\label{np}
		\begin{aligned}
			\frac{d}{dt}\|n^p\|^2_{L^2}
			\leq-\frac{\|n^p\|^{\frac{10}{3}}_{L^2}}{2CA\|n^p\|_{L^1}^{\frac{4}{3}}}
			+\frac{Cp^8}{A}\|n^p\|^2_{L^2}(1+E_{2}^8+H_{1}^8),
		\end{aligned}
	\end{equation*}
	which indicates that 
	\begin{equation}
		\begin{aligned}
			\sup_{t\geq 0}\|n^p\|_{L^2}^2\leq \max\left\{8C^3(1+E_{2}^{8}+H_{1}^{8})^{\frac32}p^{12}\sup_{t\geq0}\|n^p\|^2_{L^{1}}, 2\|n_{\rm in}^p\|_{L^2}^2\right\}\label{n1_1}.
		\end{aligned}
	\end{equation}

	Next, the Moser-Alikakos iteration is used to determine $E_{3}$. Recall $ p=2^{j} $ with $ j\geq 1, $
	and we rewrite (\ref{n1_1}) into
	\begin{equation}\label{n1_2}
		\begin{aligned}
			\sup_{t\geq 0}\int_{\mathbb{T}^{3}}|n(t)|^{2^{j+1}}dxdydz 
			\leq \max\Big\{C_{1}p^{12}\Big(\sup_{t\geq0}\int_{\mathbb{T}^{3}}|n(t)|^{2^{j}}dxdydz\Big)^2, 2\int_{\mathbb{T}^{3}}|n_{\rm in}|^{2^{j+1}}dxdydz\Big\},
		\end{aligned}
	\end{equation}
	where $C_1=8C^3(1+E_{2}^{8}+H_{1}^{8})^{\frac32}.$
	From Lemma \ref{lem:n0 L2}, we note that
	\begin{equation*}
		\|n_{0}\|_{L^{\infty}L^{2}}\leq H_{1}.
	\end{equation*}
	Hence
	$$\sup_{t\geq0}\|n(t)\|_{L^2}\leq|\mathbb{T}| \|n_{0}\|_{L^\infty L^2}+\|n_{\neq}\|_{L^\infty L^2}\leq |\mathbb{T}|H_1+E_2.$$
	By interpolation inequality, for $0<\theta<1$, we have
	$$\|n_{\rm in}\|_{L^{2^j}}\leq\|n_{\rm in}\|^{\theta}_{L^2}
	\|n_{\rm in}\|^{1-\theta}_{L^\infty}
	\leq\|n_{\rm in}\|_{L^2}+\|n_{\rm in}\|_{L^\infty}\leq|\mathbb{T}|H_1+E_2+\|n_{\rm in}\|_{L^\infty}$$
	for $j\geq1.$ This yields that
	$$2\int_{\mathbb{T}^{3}}|n_{\rm in}|^{2^{j+1}}dxdydz
	\leq2\left(|\mathbb{T}|H_1+E_2+\|n_{\rm in}\|_{L^\infty}\right)^{2^{j+1}}\leq K^{2^{j+1}},$$
	where $K=2(|\mathbb{T}|H_1+E_2+\|n_{\rm in}\|_{L^\infty}).$
	
	Now, we rewrite (\ref{n1_2}) as
	\begin{equation}
		\begin{aligned}
			\sup_{t\geq0}\int_{\mathbb{T}^{3}}|n(t)|^{2^{j+1}}dxdydz\leq \max\left\{C_1	4096^{j}\left(\sup_{t\geq0}\int_{\mathbb{T}^{3}}|n(t)|^{2^{j}}dxdydz\right)^2, K^{2^{j+1}} \right\}.\nonumber
		\end{aligned}
	\end{equation}
	For $j=k$, we get
	\begin{equation}
		\begin{aligned}
			\sup_{t\geq0}\int_{\mathbb{T}^{3}}|n(t)|^{2^{k+1}}dxdydz\leq C_1^{a_k}	4096^{b_k}K^{2^{k+1}},\nonumber
		\end{aligned}
	\end{equation}
	where $a_k=1+2a_{k-1}$ and $b_k=k+2b_{k-1}$.
	
	Generally, one can obtain the following formulas
	$$a_k=2^k-1,\ {\rm and}\ \ b_k=2^{k+1}-k-2.$$
	Therefore, one deduces
	\begin{equation}
		\begin{aligned}
			\sup_{t\geq0}\left(\int_{\mathbb{T}^{3}}|n(t)|^{2^{k+1}}dxdydz\right)^{\frac{1}{2^{k+1}}}\leq C_1^{\frac{2^k-1}{2^{k+1}}}	4096^{\frac{2^{k+1}-k-2}{2^{k+1}}}K.\nonumber
		\end{aligned}
	\end{equation}
	Letting $k\rightarrow\infty$, there holds
	\begin{equation}\label{eq:E3}
		\begin{aligned}
			\sup_{t\geq0}\|n(t)\|_{L^\infty}\leq C(1+E_{2}^{8}+H_{1}^{8})^{\frac{3}{4}}(|\mathbb{T}|H_1+E_2+||n_{\rm in}||_{L^\infty})=:E_3.
		\end{aligned}
	\end{equation}
\end{proof}

\section{Energy estimates for $E_{4}(t)$: Proof of Proposition \ref{prop:E5}}\label{sec 8}
To estimate $\|\partial^2_{x}u_{2,\neq}\|_{X_{b}}$ and 
$\|\partial_{x}^2u_{3,\neq}\|_{X_{b}},$ it is important to 
introduce the new quantity $W$ defined by 
$$W=u_{2,\neq}+\kappa u_{3,\neq},$$
where 
\begin{equation}\label{V kappa}
	V=y+\frac{\mathbf{U}_{2}}{A},\quad{\rm and}~~ \kappa=\frac{\partial_zV}{\partial_yV}.
\end{equation}
The similar quality was first proposed by Wei-Zhang in \cite{wei2} and further applied in \cite{Chen1} and \cite{CWW2025}.

For $j\in\{2,3\},$ there holds
$(u\cdot\nabla u_{j})_{\neq}=u_0\cdot\nabla u_{j,\neq}
+u_{\neq}\cdot\nabla u_{j,0}+(u_{\neq}\cdot\nabla u_{j,\neq})_{\neq}.$
Then we infer from (\ref{ini1}) that   
\begin{equation}\label{ini_5}
	\left\{
	\begin{array}{lr}
		\mathcal{L}_Vu_{2,\neq}
		+\frac{\mathbf{U}_{1}\partial_xu_{2,\neq}}{A}
		+\frac{g_{2,1}+g_{2,2}+{G_{2,3}}}{A}
		+\frac{\partial_y(P^{N_1}_{\neq}+P^{N_2}_{\neq})}{A}=0,\\
		\mathcal{L}_Vu_{3,\neq}
		+\frac{\mathbf{U}_{1}\partial_xu_{3,\neq}}{A}
		+\frac{g_{3,1}+g_{3,2}+{G_{3,3}}}{A}
		+\frac{\partial_z(P^{N_1}_{\neq}+P^{N_2}_{\neq})}{A}=0,
	\end{array}
	\right.
\end{equation}
where $\mathcal{L}_V$ can be found in \eqref{ope1} and 
\begin{equation}\label{g G}
	\begin{aligned}
		g_{j,1}={u_{2,0}\partial_yu_{j,\neq}+u_{3,0}\partial_zu_{j,\neq}},
		\quad g_{j,2}={u_{\neq}\cdot\nabla u_{j,0}},
		\quad G_{j,3}={(u_{\neq}\cdot\nabla u_{j,\neq})_{\neq}}.
	\end{aligned}
\end{equation}
Due to ${\rm div}~u=0,$ we have
\begin{equation*}
	\begin{aligned}
		{\rm div}~(u\cdot\nabla u)_{\neq}
		&	=\partial_x(u\cdot\nabla u_1)_{\neq}
		+\partial_y(u\cdot\nabla u_2)_{\neq}
		+\partial_z(u\cdot\nabla u_3)_{\neq}\\
		&={\rm div}~(u_{\neq}\cdot\nabla u_{\neq})_{\neq}+
		2(\partial_yu_{1,0}\partial_xu_{2,\neq}+
		\partial_zu_{1,0}\partial_xu_{3,\neq})+2\partial_yg_{2,2}
		+2\partial_zg_{3,2},
	\end{aligned}
\end{equation*}
which along with $\partial_yV=1+\frac{\partial_y\mathbf{U}_{2}}{A}$ implies that 
\begin{equation*}
	\begin{aligned}
		\frac{P^{N_1}_{\neq}+P^{N_2}_{\neq}}{A}
		=&-2\triangle^{-1}\left(\partial_xu_{2,\neq}+
		\frac{{\rm div}~(u\cdot\nabla u)_{\neq}}{2A}-\frac{\partial_{x}n_{\neq}}{2A}\right)\\
		=&-2\triangle^{-1}\Bigg(\left(1+\frac{\partial_y\mathbf{U}_{2}}{A}\right)
		\partial_xu_{2,\neq}
		+\frac{\partial_z\mathbf{U}_{2}}{A}
		\partial_xu_{3,\neq}+\frac{{\rm div}~(u_{\neq}\cdot\nabla u_{\neq})_{\neq}}{2A}\\
		&
		+\frac{\partial_y\mathbf{U}_{1}\partial_xu_{2,\neq}+
			\partial_z\mathbf{U}_{1}\partial_xu_{3,\neq}}{A}
		+\frac{\partial_yg_{2,2}+\partial_zg_{3,2}}{A}-\frac{\partial_{x}n_{\neq}}{2A}\Bigg)\\
		=&-2\triangle^{-1}\left(\partial_yV\partial_xW
		+\frac{\partial_yg_{2,2}+\partial_zg_{3,2}}{A}
		+\frac{P_{1,1}+P_{1,2}+P_{1,3}}{A}\right),
	\end{aligned}
\end{equation*}
where
\begin{equation}\label{P11 12 13}
	P_{1,1}=\frac{{\rm div}~(u_{\neq}\cdot\nabla u_{\neq})_{\neq}}{2},
	\quad P_{1,2}=\partial_y\mathbf{U}_{1}\partial_xu_{2,\neq}+
	\partial_z\mathbf{U}_{1}\partial_xu_{3,\neq}\quad P_{1,3}=-\frac{\partial_{x}n_{\neq}}{2}.
\end{equation}
Using the above decomposition, we rewrite (\ref{ini_5}) into 
\begin{equation}\label{ini_6}
	\left\{
	\begin{array}{lr}
		&\mathcal{L}_Vu_{2,\neq}
		-2\partial_y\triangle^{-1}(\partial_yV\partial_xW)
		+\frac{\mathbf{U}_{1}\partial_xu_{2,\neq}}{A}
		+\frac{g_{2,1}+g_{2,2}+{G_{2,3}}}{A}
		=\frac{2\partial_y\triangle^{-1}
			(\partial_yg_{2,2}+\partial_zg_{3,2})}{A}
		\\
		&\qquad+\frac{2\partial_y\triangle^{-1}(P_{1,1}+P_{1,2}+P_{1,3})}{A},\\
		&\mathcal{L}_Vu_{3,\neq}-2\partial_z\triangle^{-1}(\partial_yV\partial_xW)
		+\frac{\mathbf{U}_{1}\partial_xu_{3,\neq}}{A}
		+\frac{g_{3,1}+g_{3,2}+{G_{3,3}}}{A}
		=\frac{2\partial_z\triangle^{-1}
			(\partial_yg_{2,2}+\partial_zg_{3,2})}{A}\\
		&\qquad+\frac{2\partial_z\triangle^{-1}(P_{1,1}+P_{1,2}+P_{1,3})}{A}.
	\end{array}
	\right.
\end{equation}
Therefore $W=u_{2,\neq}+\kappa u_{3,\neq}$ satisfies 
\begin{equation}\label{eq:Lv}
	\begin{aligned}
		\widetilde{\mathcal{L}_{V}}W+\frac{\mathbf{U}_{1}\partial_x W}{A}
		+\frac{G^{(1)}+G^{(2)}}{A}=\left(\partial_t\kappa-\frac{\triangle\kappa}{A}\right)u_{3,\neq}-\frac{2\nabla\kappa\cdot\nabla u_{3,\neq}}{A},
	\end{aligned}
\end{equation}
where
\begin{equation*}\label{LV (1)}
	\widetilde{\mathcal{L}_{V}}W=\mathcal{L}_VW
	-2(\partial_y+\kappa\partial_z)\triangle^{-1}(\partial_yV\partial_xW)
\end{equation*}
and
\begin{equation*}\label{G11}
	\begin{aligned}
		&G^{(1)}=G_{2,3}+\kappa G_{3,3}-2(\partial_y+\kappa\partial_z)
		\triangle^{-1}(P_{1,1}+P_{1,2}+P_{1,3}),\\
		&G^{(2)}=g_{2,1}+g_{2,2}+\kappa(g_{3,1}+g_{3,2})
		-2(\partial_y+\kappa\partial_z)
		\triangle^{-1}(\partial_yg_{2,2}+\partial_zg_{3,2}).
	\end{aligned}
\end{equation*} 
In addition, $\triangle W$ satisfies
\begin{equation*}
	\mathcal{L}_{V}\triangle W=\triangle\left(-\frac{2\nabla\kappa\cdot\nabla u_{3,\neq}}{A} \right)+{\rm good}~{\rm terms}.
\end{equation*}
To remove the singular term $\triangle\left(-\frac{2\nabla\kappa\cdot\nabla u_{3,\neq}}{A} \right)$,
motivated by the quasi-linear method in \cite{wei2}, we introduce the following decomposition
\begin{equation}\label{kappa u3}
	\nabla\kappa\cdot\nabla u_{3,\neq}=\rho_{1}\nabla V\cdot\nabla u_{3,\neq}+\rho_{2}(\partial_{z}-\kappa\partial_{y})u_{3,\neq},
\end{equation}
where
\begin{equation}\label{def rho}
	\rho_{1}=\frac{\partial_{y}\kappa+\kappa\partial_{z}\kappa}{\partial_{y}V(1+\kappa^{2})},\quad \rho_{2}=\frac{\partial_{z}\kappa-\kappa\partial_{y}\kappa}{1+\kappa^{2}}.
\end{equation}
As $(\partial_{z}-\kappa\partial_{y})$ has a good commutative relation with $\mathcal{L}_{V},$ it is a good derivative. Thus the second term in (\ref{kappa u3}) is good. To handle the first term in (\ref{kappa u3}) and obtain a sharp threshold of velocity, we need to make a further decomposition for $W$ as follows
\begin{equation*}
	W=W^{(1)}+\frac{1}{A}W^{(2)},
\end{equation*}
where $W^{(1)}$ and $W^{(2)}$ solve
\begin{equation*}
	\left\{
	\begin{array}{lr}
		\mathcal{L}_{V}\triangle W^{(1)}={\rm good}~{\rm terms},
		\\
		\mathcal{L}_{V}W^{(2)}=-\rho_{1}\nabla V\cdot\nabla u_{3,\neq},\\
		W^{(1)}(0)=W(0),~W^{(2)}(0)=0.
	\end{array}
	\right.
\end{equation*}
Furthermore, an additional quantity $W^{(3)}$ is also needed satisfying
\begin{equation*}
	\mathcal{L}_{V}W^{(3)}=-\nabla V\cdot\nabla u_{3,\neq},\quad W^{(3)}(0)=0.
\end{equation*}
We denote $$\triangle u_{3,\neq}=\left(\triangle u_{3,\neq}-2\partial_{x}W^{(3)} \right)+2\partial_{x}W^{(3)},$$
which satisfies
\begin{equation}\label{u3''-W}
	\begin{aligned}
		\mathcal{L}_{V}\left(\triangle u_{3,\neq}-2\partial_{x}W^{(3)} \right)=&2\partial_{z}\left(\partial_{y}V\partial_{x}W \right)-\frac{\triangle(\mathbf{U}_{1}\partial_{x}u_{3,\neq})}{A}-\frac{\triangle(g_{3,1}+g_{3,2}+G_{3,3})}{A}\\&+\frac{2\partial_{z}\left(\partial_{y}g_{2,2}+\partial_{z}g_{3,2} \right)}{A}+\frac{2\partial_{z}\left(P_{1,1}+P_{1,2}+P_{1,3} \right)}{A}-\triangle V\partial_{x}u_{3,\neq}
	\end{aligned}
\end{equation}
and
\begin{equation}\label{Lv W3}
	\mathcal{L}_{V}W^{(3)}=-\nabla V\cdot\nabla u_{3,\neq}.
\end{equation}
In this way, by space-time estimates, we will prove that
\begin{equation*}
	E_{4}^{2}(t)\leq E_{5,1}^{2}(t)+E_{5,2}^{2}(t)\leq C\left(\|u_{\rm in}\|_{H^{2}}^{2}+1 \right),
\end{equation*}
where $E_{5,1}^{2}(t)$ and $E_{5,2}^{2}(t)$ are the auxiliary norms defined by
\begin{equation*}
	\begin{aligned}
		&	E_{5,1}(t)=A^{-\frac23}\|\triangle u_{3,\neq}\|_{X_{b}},\\
		&E_{5,2}(t)=\sum_{j=2}^{3}\left(\|\partial_{x}^{2}u_{j,\neq}\|_{X_{b}}+\|\partial_{x}(\partial_{z}-\kappa\partial_{y})u_{j,\neq}\|_{X_{b}} \right)+\|\partial_{x}\nabla W\|_{X_{b}}.
	\end{aligned}
\end{equation*}

\begin{lemma}\label{lem:kappa}
	Under the result of Lemma \ref{u1_hat2}, it holds that
	$$ \|\kappa\|_{H^{1}}\leq CA^{-1}\|\triangle\mathbf{U}_{2}\|_{L^{2}}\leq C\epsilon, \quad \|\kappa\|_{H^{3}}\leq CA^{-1}\|\triangle \mathbf{U}_{2}\|_{H^{2}}\leq C\epsilon,$$
	$$ \|\partial_{t}\kappa\|_{H^{1}}\leq CA^{-1}\|\partial_{t}\mathbf{U}_{2}\|_{H^{2}}\leq CA^{-1}\epsilon,\quad \|\partial_{t}\rho_{1}\|_{L^{2}}\leq CA^{-1}\|\partial_{t}\mathbf{U}_{2}\|_{H^{2}}\leq CA^{-1}\epsilon, $$
	$$ \|\rho_{1}\|_{H^{2}}+\|\rho_{2}\|_{H^{2}}\leq CA^{-1}\|\triangle\mathbf{U}_{2}\|_{H^{2}}\leq C\epsilon, $$
	where the definitions of $\kappa, \rho_{1}$ and $\rho_{2}$ are given by \eqref{V kappa} and \eqref{def rho}.
\end{lemma}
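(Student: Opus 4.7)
The plan is to treat all quantities as smooth functions of the pair $(\partial_y \mathbf{U}_2/A,\partial_z \mathbf{U}_2/A)$, exploit the smallness from Lemma \ref{u1_hat2} to keep all denominators bounded away from zero, and then repeatedly apply product/chain rules on the torus $\mathbb{T}^2_{y,z}$. Throughout, since $\overline{\mathbf{B}}_1(t)+\overline{\mathbf{B}}_2(t)$ is independent of $(y,z)$, every spatial derivative of $\mathbf{U}_2$ equals the corresponding derivative of $\widetilde{\mathbf{B}}_2$, and Poincar\'e on the non-average part yields $\|\nabla^j\mathbf{U}_2\|_{L^2}\leq C\|\triangle \mathbf{U}_2\|_{H^{j-2}}$ for $j\geq 1$.

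First I would show that $\partial_y V$ stays uniformly close to $1$. From Lemma \ref{u1_hat2} we have $A^{-1}\|\triangle \mathbf{U}_2\|_{L^\infty H^2}\leq C\epsilon$, so the 2D Sobolev embedding $H^2(\mathbb{T}^2)\hookrightarrow L^\infty$ gives $\|\nabla \mathbf{U}_2/A\|_{L^\infty}\leq C\epsilon$. For $\epsilon$ small enough this forces $1/2\leq \partial_y V\leq 3/2$ and $1\leq 1+\kappa^2\leq 2$. By the smooth-composition/Moser calculus on $\mathbb{T}^2$, the functions $(\partial_y V)^{-1}$ and $(1+\kappa^2)^{-1}$ then lie in $H^3$ with norms controlled by $1+C\epsilon$.

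Next, writing $\kappa=(\partial_z\mathbf{U}_2/A)\cdot(\partial_y V)^{-1}$, I would split the $H^1$ estimate into a main linear term and a quadratic remainder: expanding $(\partial_y V)^{-1}=1+O(\partial_y\mathbf{U}_2/A)$ gives
\[
\|\kappa\|_{H^1}\leq CA^{-1}\|\partial_z\mathbf{U}_2\|_{H^1}+C\|\nabla\mathbf{U}_2/A\|_{L^\infty}\,\|\nabla\mathbf{U}_2/A\|_{H^1}\cdot(1+\cdots),
\]
and Poincar\'e plus the smallness $\|\triangle \mathbf{U}_2\|_{H^2}/A\leq C\epsilon$ let me absorb the remainder to obtain $\|\kappa\|_{H^1}\leq CA^{-1}\|\triangle \mathbf{U}_2\|_{L^2}$. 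For $\|\kappa\|_{H^3}$ I would use that $H^2(\mathbb{T}^2)$ is an algebra and $H^3\cdot H^3\hookrightarrow H^3$, together with $\|\partial_z\mathbf{U}_2\|_{H^3}\leq C\|\triangle\mathbf{U}_2\|_{H^2}$, to conclude $\|\kappa\|_{H^3}\leq CA^{-1}\|\triangle\mathbf{U}_2\|_{H^2}\leq C\epsilon$. The estimate for $\|\partial_t\kappa\|_{H^1}$ follows the same pattern after differentiating the quotient in time: $\partial_t\kappa=\partial_t(\partial_z\mathbf{U}_2/A)(\partial_y V)^{-1}-(\partial_z\mathbf{U}_2/A)(\partial_y V)^{-2}\partial_t(\partial_y V)$, where $\partial_t V=\partial_t\mathbf{U}_2/A$ and the $L^\infty$ bounds on $\kappa,(\partial_yV)^{-1}$ (available in $H^2$) give $\|\partial_t\kappa\|_{H^1}\leq CA^{-1}\|\partial_t\mathbf{U}_2\|_{H^2}$.

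Finally, $\rho_1$ and $\rho_2$ are rational functions of $\kappa$ and $\nabla\kappa$ with denominator $\partial_y V(1+\kappa^2)$, respectively $(1+\kappa^2)$, both bounded below. Using the $H^2$-algebra property together with the $H^3$-control of $\kappa$ just obtained, I bound numerators in $H^2$ by $C\|\kappa\|_{H^3}\leq CA^{-1}\|\triangle\mathbf{U}_2\|_{H^2}$, which yields the stated estimate for $\|\rho_1\|_{H^2}+\|\rho_2\|_{H^2}$. The time-derivative bound $\|\partial_t\rho_1\|_{L^2}$ then follows by differentiating in $t$, noting that each factor other than $\partial_t\kappa$ (or $\partial_t\nabla\kappa$) is bounded in $L^\infty$ via Sobolev embedding, so that $L^2$ losses occur only on a single time-derivative factor, controlled by $\|\partial_t\kappa\|_{H^1}\leq CA^{-1}\|\partial_t\mathbf{U}_2\|_{H^2}$.

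The main technical point—and the only place where care is genuinely needed—is the very first inequality $\|\kappa\|_{H^1}\leq CA^{-1}\|\triangle\mathbf{U}_2\|_{L^2}$, whose right-hand side involves only the $L^2$-norm of $\triangle\mathbf{U}_2$; the quadratic and higher-order remainders from expanding $(\partial_y V)^{-1}$ must be absorbed via the separate smallness $\|\triangle\mathbf{U}_2\|_{H^2}/A\leq C\epsilon$ rather than charged to the right-hand side. All other estimates are relatively soft consequences of Moser/composition calculus once this is arranged.
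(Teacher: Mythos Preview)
The paper states this lemma without proof, presumably because the estimates are routine consequences of Moser-type product/composition rules once Lemma~\ref{u1_hat2} is in hand (and because the setup mirrors \cite{wei2}, from which several nearby results are quoted). Your sketch is correct and is exactly the natural argument: bound $\partial_yV$ away from zero via $H^2(\mathbb{T}^2)\hookrightarrow L^\infty$ and $A^{-1}\|\triangle\mathbf{U}_2\|_{H^2}\leq C\epsilon$, then use Poincar\'e on the non-average part $\widetilde{\mathbf{B}}_2$ together with the algebra property of $H^s(\mathbb{T}^2)$ for $s>1$ to propagate the bounds through the quotients and products defining $\kappa,\rho_1,\rho_2$. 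Your remark that the first inequality $\|\kappa\|_{H^1}\leq CA^{-1}\|\triangle\mathbf{U}_2\|_{L^2}$ requires the quadratic remainder to be absorbed via the \emph{separate} smallness $A^{-1}\|\triangle\mathbf{U}_2\|_{H^2}\leq C\epsilon$ (rather than charged to the right-hand side) is exactly the point, and is consistent with how the paper later uses this estimate, e.g.\ in \eqref{t kappa} where only the $L^2$-level bound on $\triangle\mathbf{U}_2$ enters.
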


The following lemma gives the estimates of $ W^{(2)} $ and $ W^{(3)} $ associated with good derivatives.
\begin{lemma}\label{lem W}
	Under the conditions of Theorem \ref{result0} and the assumptions \eqref{assumption}, it holds that
	\begin{itemize}
		\item[(i)] \quad$\|\partial_{x}^{2}W^{(3)}\|_{X_{b}}^{2}+\|\partial_{x}(\partial_{z}-\kappa\partial_{y})W^{(3)}\|_{X_{b}}^{2}\leq CA^{\frac43}E_{5,2}^{2}(t),$
		\item[(ii)] \quad
		$\|\partial_{x}^{2}W^{(2)}\|_{X_{b}}^{2}\leq C\epsilon^{2}A^{\frac43}E_{5,2}^{2}(t), \quad \|\partial_{x}\nabla W^{(2)}\|_{X_{b}}^{2}\leq C\epsilon^{2}A^{2}E_{5,2}^{2}(t),$
		\item[(iii)] \quad
		$\|\partial_{x}(W^{(2)}-\rho_{1}W^{(3)})\|_{X_{b}}^{2}\leq CA^{\frac23}\epsilon^{2}E_{5,2}^{2}(t).$
	\end{itemize}
\end{lemma}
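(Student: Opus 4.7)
The plan is to apply the linear space-time estimate (Proposition \ref{Lvf 0}) to the equations for $W^{(3)}$ and $W^{(2)}$ in turn, and then exploit a key cancellation to prove part (iii). Throughout, I will use Lemma \ref{lem:kappa} to treat $\kappa,\rho_1,\rho_2$ as small quantities of size $O(\epsilon)$, together with Lemma \ref{u1_hat2} to control the perturbation $\mathbf{U}_2/A$ in the transport operator $\mathcal{L}_V$.

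For part (i), I would differentiate the transport equation \eqref{Lv W3} for $W^{(3)}$ by $\partial_x^2$ and apply Proposition \ref{Lvf 0}. The right-hand side is $-\partial_x^2(\nabla V\cdot\nabla u_{3,\neq})$; since $\nabla V = (0,1,0) + A^{-1}\nabla\mathbf{U}_2$, this produces $\partial_x^2\partial_y u_{3,\neq}$ plus nonlinear interactions with $\nabla\mathbf{U}_2$ already estimated in Lemma \ref{lemma_neq22}. Using $\partial_yu_{3,\neq}=-\partial_xu_{1,\neq}-\partial_zu_{3,\neq}$ (which costs a factor $A$) and the Lemma \ref{lemma_u} bounds on $(\partial_x,\partial_z)\partial_xu_{3,\neq}$ against $E_{5,2}$, the good derivative $\partial_x(\partial_z-\kappa\partial_y)W^{(3)}$ is handled similarly after observing that $(\partial_z-\kappa\partial_y)$ commutes with $\mathcal{L}_V$ up to manageable commutator terms generated by $\partial_t\kappa,\nabla\kappa,\triangle\kappa$, all of which are small by Lemma \ref{lem:kappa}. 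This yields the bound by $CA^{\frac{4}{3}}E_{5,2}^2(t)$.

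For part (ii), the analysis is parallel, starting from $\mathcal{L}_V W^{(2)}=-\rho_1\nabla V\cdot\nabla u_{3,\neq}$. After applying $\partial_x^2$ (resp.\ $\partial_x\nabla$) and invoking Proposition \ref{Lvf 0}, every term on the right carries an extra factor of $\rho_1$ or its derivatives, hence an extra factor of $\epsilon$ by Lemma \ref{lem:kappa}. The loss of one additional spatial derivative in $\|\partial_x\nabla W^{(2)}\|_{X_b}$ (compared to $\|\partial_x^2 W^{(2)}\|_{X_b}$) is responsible for the jump from $A^{4/3}$ to $A^2$, coming from controlling $\|\partial_x\nabla\nabla u_{3,\neq}\|$ via Lemma \ref{lemma_u} (one has to pay one more $A^{1/3}$ relative to $E_{5,2}$).

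Part (iii) is the heart of the lemma and where the cancellation matters. The key observation is
\begin{equation*}
\mathcal{L}_V\bigl(W^{(2)}-\rho_1 W^{(3)}\bigr)=-\bigl[\mathcal{L}_V,\rho_1\bigr]W^{(3)}=-(\partial_t\rho_1)W^{(3)}+\frac{1}{A}(\triangle\rho_1)W^{(3)}+\frac{2}{A}\nabla\rho_1\cdot\nabla W^{(3)},
\end{equation*}
since $\rho_1$ depends only on $(t,y,z)$ and the forcings cancel exactly. The proof then reduces to applying $\partial_x$ and Proposition \ref{Lvf 0}, and using Lemma \ref{lem:kappa} (which gives $\|\partial_t\rho_1\|_{L^2}\lesssim A^{-1}\epsilon$, $\|\nabla\rho_1\|_{H^1}\lesssim\epsilon$) together with part (i) to estimate the resulting terms. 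The extra smallness in $\rho_1$, the factor $A^{-1}$ in $\partial_t\rho_1$, and part (i)'s bound $\|\partial_x\nabla W^{(3)}\|_{X_b}\lesssim A^{\cdots}E_{5,2}$ combine to give the stated $A^{2/3}\epsilon^2 E_{5,2}^2(t)$. The main obstacle I anticipate is bookkeeping: one must distribute the degrees of $A$ carefully across the various Sobolev embeddings in Lemma \ref{lemma_u} and the enhanced-dissipation norms of $X_a,X_b$ so that the borderline $A^{2/3}$ in (iii) is actually achieved rather than overshot, which is precisely why the detour through the auxiliary quantity $W^{(3)}$ (rather than directly working with $W^{(2)}$) is indispensable.
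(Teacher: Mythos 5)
Your high-level roadmap is the same as the paper's (differentiate, apply the space-time estimate, and exploit the commutator cancellation $\mathcal{L}_V(W^{(2)}-\rho_1W^{(3)})=-[\mathcal{L}_V,\rho_1]W^{(3)}$ in (iii), which you compute correctly). However there are three concrete problems.

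First, in part (i) your incompressibility identity is wrong: divergence-free gives $\partial_y u_{2,\neq}=-\partial_x u_{1,\neq}-\partial_z u_{3,\neq}$, not $\partial_y u_{3,\neq}=-\partial_x u_{1,\neq}-\partial_z u_{3,\neq}$, so the manipulation you propose does not apply to the source $\nabla V\cdot\nabla u_{3,\neq}$, whose leading term is precisely $\partial_y u_{3,\neq}$. The paper's route is in fact simpler and needs no incompressibility at all: since $\|\nabla V\|_{L^\infty}\leq C$ and $\|(\partial_z-\kappa\partial_y)\nabla V\|_{L^\infty}\leq C$, one just writes $\|{\rm e}^{bA^{-\frac13}t}\nabla\partial_x^2 u_{3,\neq}\|_{L^2L^2}^2\leq A\|\partial_x^2u_{3,\neq}\|_{X_b}^2\leq AE_{5,2}^2$ directly from the $1/A$-weighted gradient term built into the $X_b$ norm, and multiplies by the $A^{1/3}$ prefactor from Proposition \ref{Lvf}. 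Similarly for the good-derivative piece.

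Second, in part (ii) the degree gap between $\|\partial_x^2W^{(2)}\|_{X_b}^2$ and $\|\partial_x\nabla W^{(2)}\|_{X_b}^2$ is $A^{2/3}$, not $A^{1/3}$, and it does not come from Lemma \ref{lemma_u}. After writing $\mathcal{L}_V\partial_x\partial_j W^{(2)}=-\partial_j(\rho_1\nabla V\cdot\nabla\partial_x u_{3,\neq})-\partial_jV\,\partial_x^2W^{(2)}$, Proposition \ref{Lvf 0} treats the first source as a divergence (paying $A$), and bounding its $L^2L^2$ norm by $\epsilon\|{\rm e}^{bA^{-\frac13}t}\nabla\partial_x^2u_{3,\neq}\|_{L^2L^2}$ costs another $A$ against $\|\partial_x^2u_{3,\neq}\|_{X_b}$, giving $\epsilon^2A^2E_{5,2}^2$; the $\partial_jV\,\partial_x^2W^{(2)}$ term is lower order via (ii)'s first bound.

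Third, and most importantly, in part (iii) the commutator cancellation alone is not enough to reach the borderline exponent $A^{2/3}\epsilon^2$. You have only $\|\partial_t\rho_1\|_{L^2}\lesssim A^{-1}\epsilon$ from Lemma \ref{lem:kappa} (not $H^1$), so a naive Hölder bound of $(\partial_t\rho_1)\partial_x W^{(3)}$ in $L^2$ requires putting $\partial_xW^{(3)}$ in $L^\infty_{y,z}L^2_x$, which costs derivatives not available in $E_{5,2}$. The paper's essential trick, which you did not identify, is to rewrite the source in divergence form: setting $\triangle Q=(\partial_t\rho_1+A^{-1}\triangle\rho_1)\partial_xW^{(3)}$, the equation becomes $\mathcal{L}_V\partial_x(W^{(2)}-\rho_1W^{(3)})=-\triangle Q+2A^{-1}\nabla\cdot(\partial_xW^{(3)}\nabla\rho_1)$, so Proposition \ref{Lvf 0} is applied with a $\nabla\cdot$-type source (paying $A$), and $\|\nabla Q\|_{L^2}=\|\nabla\triangle^{-1}\triangle Q\|_{L^2}$ is then bounded via the second estimate of Lemma \ref{sob_14}, which for $\partial_xf_1=0$ gives $\|\nabla\triangle^{-1}(f_1f_2)\|_{L^2}\lesssim\|f_1\|_{L^2}(\|f_2\|_{L^2}+\|(\partial_z-\kappa\partial_y)f_2\|_{L^2})$. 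This anisotropic product estimate, requiring only $\|\partial_t\rho_1\|_{L^2}$, is precisely what makes the $A^{2/3}\epsilon^2$ bound achievable; without it, your "bookkeeping" does not close.
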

\begin{proof}
	{\textbf{Estimate (i).}}
	Applying Proposition \ref{Lvf} to (\ref{Lv W3}), we obtain
	\begin{equation}\label{W23}
		\begin{aligned}
			&\|\partial_{x}^{2}W^{(3)}\|_{X_{b}}^{2}+\|\partial_{x}(\partial_{z}-\kappa\partial_{y})W^{(3)}\|_{X_{b}}^{2}
			\leq CA^{\frac13}\big(\|{\rm e}^{bA^{-\frac13}t}\partial_{x}^{2}(\nabla V\cdot\nabla u_{3,\neq})\|_{L^{2}L^{2}}^{2}\\
			&+\|{\rm e}^{bA^{-\frac13}t}\partial_{x}(\partial_{z}-\kappa\partial_{y})
			(\nabla V\cdot\nabla u_{3,\neq})\|_{L^{2}L^{2}}^{2} \big)=:CA^{\frac13}\left(I_{1}+I_{2} \right).
		\end{aligned}
	\end{equation}
	Recalling that $ V=y+\frac{\mathbf{U}_{2}(t,y,z)}{A},$ by Lemma \ref{u1_hat2} and Lemma \ref{lem:kappa},
	we have
	\begin{align}
		&\|\nabla V\|_{L^{\infty}}\leq 1+A^{-1}\|\nabla\mathbf{U}_{2}\|_{L^{\infty}} \leq C\left(1+A^{-1}\|\triangle\mathbf{U}_{2}\|_{H^{2}} \right)\leq C,\label{V1' infty}\\
		&\|(\partial_{z}-\kappa\partial_{y})\nabla V\|_{L^{\infty}}\leq CA^{-1}\left(1+\|\kappa\|_{H^{3}} \right)\|\triangle\mathbf{U}_{2}\|_{H^{2}}\leq C.\label{V1'' infty}
	\end{align}
	Then for $ I_{1}, $ using (\ref{V1' infty}), we have
	\begin{equation*}
		\begin{aligned}
			I_{1}\leq \|\nabla V\|_{L^{\infty}L^{\infty}}^2\|{\rm e}^{bA^{-\frac13}t}\nabla\partial_{x}^{2}u_{3,\neq}\|_{L^{2}L^{2}}^{2}
			\leq CA\|\partial_{x}^{2}u_{3,\neq}\|_{X_{b}}^{2}\leq CAE_{5,2}^{2}(t).
		\end{aligned}
	\end{equation*}
	For $ I_{2}, $  using (\ref{V1' infty}) and (\ref{V1'' infty}), we deduce that
	\begin{equation*}
		\begin{aligned}
			&\|\partial_{x}(\partial_{z}-\kappa\partial_{y})\left(\nabla V\cdot\nabla u_{3,\neq} \right)\|_{L^{2}}
			\leq C\left(\|\nabla\partial_{x}(\partial_{z}-\kappa\partial_{y})u_{3,\neq}\|_{L^{2}}+\|\nabla\partial_{x}^{2}u_{3,\neq}\|_{L^{2}} \right),
		\end{aligned}
	\end{equation*}
	which implies 
	\begin{equation*}
		\begin{aligned}
			I_{2}\leq&CA\left(\|\partial_{x}(\partial_{z}-\kappa\partial_{y})u_{3,\neq}\|_{X_{b}}^{2}+\|\partial_{x}^{2}u_{3,\neq}\|_{X_{b}}^{2} \right)\leq CAE_{5,2}^{2}(t).
		\end{aligned}
	\end{equation*}
	Combining the estimates of $ I_{1} $ and $ I_{2}, $ (\ref{W23}) gives the result of (i).

	{\textbf{Estimate (ii).}} Notice that
	\begin{equation*}
		\mathcal{L}_{V}\partial_{x}^{2}W^{(2)}=\partial_{x}^{2}\mathcal{L}_{V}W^{(2)}=-\rho_{1}\nabla V\cdot\nabla\partial_{x}^{2}u_{3,\neq}
	\end{equation*}	
	and $ \partial_{x}^{2}W^{(2)}(0)=0. $ Then
	applying Proposition \ref{Lvf 0}, there holds
	\begin{equation*}
		\|\partial_{x}^{2}W^{(2)}\|_{X_{b}}^{2}\leq CA^{\frac13}\|{\rm e}^{bA^{-\frac13}t}\rho_{1}\nabla V\cdot\nabla\partial_{x}^{2}u_{3,\neq}\|_{L^{2}L^{2}}^{2}.
	\end{equation*}
	Using Lemma \ref{lem:kappa} and (\ref{V1' infty}), we obtain
	\begin{equation*}
		\|\rho_{1}\nabla V\cdot\nabla\partial_{x}^{2}u_{3,\neq}\|_{L^{2}}\leq C\|\rho_{1}\|_{H^{2}}\|\nabla V\|_{L^{\infty}}\|\nabla\partial_{x}^{2}u_{3,\neq}\|_{L^{2}}\leq C\epsilon\|\nabla\partial_{x}^{2}u_{3,\neq}\|_{L^{2}}.
	\end{equation*}
	This indicates that
	\begin{equation}\label{ineq:Wxx}
		\begin{aligned}
			\|\partial_{x}^{2}W^{(2)}\|_{X_{b}}^{2}\leq&C\epsilon^{2}A^{\frac13}\|{\rm e}^{bA^{-\frac13}t}\nabla\partial_{x}^{2}u_{3,\neq}\|_{L^{2}L^{2}}^{2}\leq C\epsilon^{2}A^{\frac43}\|\partial_{x}^{2}u_{3,\neq}\|_{X_{b}}^{2}\leq C\epsilon^{2}A^{\frac43}E_{5,2}^{2}(t).
		\end{aligned}
	\end{equation}
	
	For $ j\in\{1,2,3\}, $ we get
	\begin{equation*}
		\begin{aligned}
			\mathcal{L}_{V}\partial_{x}\partial_{j}W^{(2)}=&\partial_{x}\partial_{j}\mathcal{L}_{V}W^{(2)}-\partial_{j}V\partial_{x}^{2}W^{(2)}=-\partial_{j}\left(\rho_{1}\nabla V\cdot\nabla\partial_{x}u_{3,\neq} \right)-\partial_{j}V\partial_{x}^{2}W^{(2)}
		\end{aligned}
	\end{equation*}
	satisfying $ \partial_{x}\partial_{j}W^{(2)}(0)=0. $ Applying Proposition \ref{Lvf 0} to it, one deduces
	\begin{equation}\label{tilde Q}
		\begin{aligned}
			\|\partial_{x}\partial_{j}W^{(2)}\|_{X_{b}}^{2}\leq CA^{\frac13}\|{\rm e}^{bA^{-\frac13}t}\partial_{j}V\partial_{x}^{2}W^{(2)}\|_{L^{2}L^{2}}^{2}
			+CA\|{\rm e}^{bA^{-\frac13}t}\rho_{1}\nabla V\cdot\nabla\partial_{x}u_{3,\neq}\|_{L^{2}L^{2}}^{2}.
		\end{aligned}
	\end{equation}
	Due to (\ref{V1' infty}) and Lemma \ref{lem:kappa}, there holds
	\begin{equation*}
		\begin{aligned}
			&\|\partial_{j}V\partial_{x}^{2}W^{(2)}\|_{L^{2}}\leq \|\partial_{j}V\|_{L^{\infty}}\|\partial_{x}^{2}W^{(2)}\|_{L^{2}}\leq C\|\partial_{x}^{2}W^{(2)}\|_{L^{2}},\\
			&\|\rho_{1}\nabla V\cdot\nabla\partial_{x}u_{3,\neq}\|_{L^{2}}\leq C\|\rho_{1}\|_{H^{2}}\|\nabla V\|_{L^{\infty}}\|\nabla\partial_{x}u_{3,\neq}\|_{L^{2}}\leq C\epsilon\|\nabla\partial_{x}^{2}u_{3,\neq}\|_{L^{2}},
		\end{aligned}
	\end{equation*}
	which implies that
	\begin{equation*}
		\begin{aligned}
			&\|{\rm e}^{bA^{-\frac13}t}\partial_{j}V\partial_{x}^{2}W^{(2)}\|_{L^{2}L^{2}}^{2}\leq C\|{\rm e}^{bA^{-\frac13}t}\partial_{x}^{2}W^{(2)}\|_{L^{2}L^{2}}^{2}\leq CA^{\frac13}\|\partial_{x}^{2}W^{(2)}\|_{X_{b}}^{2},\\
			&\|{\rm e}^{bA^{-\frac13}t}\rho_{1}\nabla V\cdot\nabla\partial_{x}u_{3,\neq}\|_{L^{2}L^{2}}^{2}\leq C\epsilon^{2}\|{\rm e}^{bA^{-\frac13}t}\nabla\partial_{x}^{2}u_{3,\neq}\|_{L^{2}L^{2}}^{2}\leq C\epsilon^{2}A\|\partial_{x}^{2}u_{3,\neq}\|_{X_{b}}^{2}.
		\end{aligned}
	\end{equation*}
	Substituting the above estimations into (\ref{tilde Q}) and using (\ref{ineq:Wxx}), we arrive at
	\begin{equation*}
		\|\partial_{x}\nabla W^{(2)}\|_{X_{b}}^{2}\leq CA^{\frac23}\|\partial_{x}^{2}W^{(2)}\|_{X_{b}}^{2}+C\epsilon^{2}A^{2}\|\partial_{x}^{2}u_{3,\neq}\|_{X_{b}}^{2}\leq C\epsilon^{2}A^{2}E_{5,2}^{2}(t).
	\end{equation*}

	{\textbf{Estimate (iii).}}
	Due to 
	\begin{equation*}
		\begin{aligned}
			\mathcal{L}_{V}(\rho_{1}f)-\rho_{1}\mathcal{L}_{V}f=&(\partial_{t}\rho_{1}-A^{-1}\triangle\rho_{1})f-2A^{-1}\nabla\rho_{1}\cdot\nabla f\\=&(\partial_{t}\rho_{1}+A^{-1}\triangle\rho_{1})f-2A^{-1}\nabla\cdot(f\nabla\rho_{1})
		\end{aligned}
	\end{equation*}
	and $ \mathcal{L}_{V}W^{(2)}=\rho_{1}\mathcal{L}_{V}W^{(3)}, $ there holds
	\begin{equation*}
		\begin{aligned}
			\mathcal{L}_{V}\partial_{x}\left(W^{(2)}-\rho_{1}W^{(3)} \right)=&\partial_{x}\mathcal{L}_{V}\left(W^{(2)}-\rho_{1}W^{(3)} \right)=\partial_{x}\left(\rho_{1}\mathcal{L}_{V}W^{(3)}-\mathcal{L}_{V}(\rho_{1}W^{(3)}) \right)\\=&-\partial_{x}\left[(\partial_{t}\rho_{1}+A^{-1}\triangle\rho_{1}) W^{(3)}-2A^{-1}\nabla\cdot(W^{(3)}\nabla\rho_{1})\right]\\=&-\triangle Q+2A^{-1}\nabla\cdot\left(\partial_{x}W^{(3)}\nabla\rho_{1} \right),
		\end{aligned}
	\end{equation*}	
	where $ \triangle Q=\left(\partial_{t}\rho_{1}+A^{-1}\triangle\rho_{1} \right)\partial_{x}W^{(3)}. $
	
	Applying {Proposition \ref{Lvf 0}}, we get
	\begin{equation}\label{Lv Q}
		\begin{aligned}
			&\|\partial_{x}(W^{(2)}-\rho_{1}W^{(3)} )\|_{X_{b}}^{2}\leq C\left(A\|{\rm e}^{bA^{-\frac13}t}\nabla Q\|_{L^{2}L^{2}}^{2}+A^{-1}\|{\rm e}^{bA^{-\frac13}t}\partial_{x}W^{(3)}\nabla\rho_{1}\|_{L^{2}L^{2}}^{2} \right).
		\end{aligned}
	\end{equation}
	It follows from Lemma \ref{lem:kappa} that $ \|\nabla\rho_{1}\|_{H^{1}}\leq\|\rho_{1}\|_{H^{2}}\leq C\epsilon $ and
	\begin{equation*}
		\begin{aligned}
			\|\partial_{t}\rho_{1}+A^{-1}\triangle\rho_{1}\|_{L^{2}}\leq\|\partial_{t}\rho_{1}\|_{L^{2}}+A^{-1}\|\rho_{1}\|_{H^{2}}\leq CA^{-1}\epsilon.
		\end{aligned}
	\end{equation*}
	Combining them with Lemma \ref{sob_14}, one obtains
	\begin{equation}\label{a}
		\begin{aligned}
			\|\nabla Q\|_{L^{2}}=&\|\nabla\triangle^{-1}\triangle Q\|_{L^{2}}=\|\nabla\triangle^{-1}[\left(\partial_{t}\rho_{1}+A^{-1}\triangle\rho_{1} \right)\partial_{x}W^{(3)} ]\|_{L^{2}}\\\leq&C\|\partial_{t}\rho_{1}+A^{-1}\triangle\rho_{1}\|_{L^{2}}\left(\|\partial_{x}W^{(3)}\|_{L^{2}}+\|\partial_{x}\left(\partial_{z}-\kappa\partial_{y} \right)W^{(3)}\|_{L^{2}}\right)\\\leq&CA^{-1}\epsilon\left(\|\partial_{x}^{2}W^{(3)}\|_{L^{2}}+\|\partial_{x}(\partial_{z}-\kappa\partial_{y})W^{(3)}\|_{L^{2}}\right)
		\end{aligned}
	\end{equation}
	and
	\begin{equation}\label{b}
		\begin{aligned}
			\|\partial_{x}W^{(3)}\nabla\rho_{1}\|_{L^{2}}\leq&C\|\nabla\rho_{1}\|_{H^{1}}\left(\|\partial_{x}W^{(3)}\|_{L^{2}}+\|\partial_{x}(\partial_{z}-\kappa\partial_{y}) W^{(3)}\|_{L^{2}}\right)\\\leq&C\epsilon\left(\|\partial_{x}^{2}W^{(3)}\|_{L^{2}}+\|\partial_{x}(\partial_{z}-\kappa\partial_{y})W^{(3)}\|_{L^{2}} \right).
		\end{aligned}
	\end{equation}
	Substituting (\ref{a}) and (\ref{b}) into (\ref{Lv Q}), and using the result of (i), we have
	\begin{equation*}
		\begin{aligned}
			\|\partial_{x}(W^{(2)}-\rho_{1}W^{(3)})\|_{X_{b}}^{2}\leq&CA^{-1}\epsilon^{2}\left(\|{\rm e}^{bA^{-\frac13}t}\partial_{x}^{2}W^{(3)}\|_{L^{2}L^{2}}^{2}
			+\|{\rm e}^{bA^{-\frac13}t}\partial_{x}(\partial_{z}-\kappa\partial_{y})W^{(3)}\|_{L^{2}L^{2}}^{2} \right)\\\leq&CA^{-1}\epsilon^{2}A^{\frac13}\left(\|\partial_{x}^{2}W^{(3)}\|_{X_{b}}^{2}+\|\partial_{x}(\partial_{z}-\kappa\partial_{y})W^{(3)}\|_{X_{b}}^{2} \right)\leq CA^{\frac23}\epsilon^{2}E_{5,2}^{2}(t).
		\end{aligned}
	\end{equation*}
\end{proof}

\begin{lemma}\label{lem:U1}
	Under the assumptions of Theorem \ref{result0} and  \eqref{assumption}, it holds that
	\begin{equation}\label{ineq:U1}
		\begin{aligned}
			&\|{\rm e}^{bA^{-\frac13}t}\nabla(\mathbf{U}_{1}\partial_{x}W )\|_{L^{2}L^{2}}^{2}\leq C\left(\|(u_{1,\rm in})_{0}\|_{H^{1}}^{2}+H_{1}^{2}\right)\left(A^{\frac12}\|\triangle W^{(1)}\|_{X_{b}}^{2}+\epsilon^{2}A^{\frac23}E_{5,2}^{2}(t) \right), \\
			&\|{\rm e}^{bA^{-\frac13}t}\nabla(\mathbf{U}_{1}\partial_{x}u_{3,\neq})\|_{L^{2}L^{2}}^{2}\leq C\left(\|(u_{1,\rm in})_{0}\|_{H^{1}}^{2}+H_{1}^{2}\right)AE_{5,2}^{2}(t),\\
			&\|{\rm e}^{bA^{-\frac13}t}\mathbf{U}_{1}\partial_{x}^{2}(u_{2,\neq}, u_{3,\neq})\|_{L^{2}L^{2}}^{2}\leq C\left(\|(u_{1,\rm in})_{0}\|_{H^{1}}^{2}+H_{1}^{2}\right)A^{\frac13(2\alpha+1)}E_{5,2}^{2}(t),\\
			&\|{\rm e}^{bA^{-\frac13}t}P_{1,2}\|_{L^{2}L^{2}}^{2}\leq C\left(\|(u_{1,\rm in})_{0}\|_{H^{1}}^{2}+H_{1}^{2}\right)A^{\frac23}E_{5,2}^{2}(t),
		\end{aligned}
	\end{equation}
	where $\alpha\in(\frac12,\frac34)$ and the definition of $H_{1}$ is the same as in Lemma \ref{lem:n0 L2}.
\end{lemma}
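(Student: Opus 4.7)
The plan is to mimic the strategy of Lemma \ref{lem_uneq2}: treat $\mathbf{U}_{1}(t,y,z)$ using the two-dimensional anisotropic Sobolev embedding of Lemma \ref{sob_inf_1}, treat the non-zero modes by Lemma \ref{sob_inf_2} together with the divergence-free relations of Lemma \ref{lemma_u}, and then absorb the resulting $\|\mathbf{U}_{1}\|_{L^{\infty}H^{1}}$ via Lemma \ref{lemma:u1:1} and Lemma \ref{lem:n0 L2}, which give
\begin{equation*}
\|\mathbf{U}_{1}\|_{L^{\infty}H^{1}}^{2}\leq C\bigl(\|(u_{1,\rm in})_{0}\|_{H^{1}}^{2}+H_{1}^{2}+\epsilon^{2}\bigr).
\end{equation*}
This universal factor produces the prefactor $\|(u_{1,\rm in})_{0}\|_{H^{1}}^{2}+H_{1}^{2}$ appearing on the right-hand side of every estimate in (\ref{ineq:U1}); the residual $\epsilon^{2}$ can be absorbed into the same factor.

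For $(\ref{ineq:U1})_{3}$, I would split
$\|\mathbf{U}_{1}\partial_{x}^{2}u_{j,\neq}\|_{L^{2}}\leq\|\mathbf{U}_{1}\|_{L^{\infty}_{z}L^{2}_{y}}\|\partial_{x}^{2}u_{j,\neq}\|_{L^{\infty}_{y}L^{2}_{x,z}}$, apply Lemma \ref{sob_inf_1} to the first factor and the interpolation $\|\partial_{x}^{2}u_{j,\neq}\|_{L^{\infty}_{y}L^{2}_{x,z}}\leq C\|\partial_{x}^{2}u_{j,\neq}\|_{L^{2}}^{1/2}\|\partial_{x}^{2}\partial_{y}u_{j,\neq}\|_{L^{2}}^{1/2}$ from Lemma \ref{sob_inf_2}, which (using Lemma \ref{lemma_u} to bound $\|\partial_{x}^{2}\partial_{y}u_{j,\neq}\|_{L^{2}}$ by $\|\partial_{x}\partial_{y}\omega_{2,\neq}\|_{L^{2}}+\|\partial_{y}\triangle u_{2,\neq}\|_{L^{2}}$) and the sharp interpolation with exponent $\alpha\in(\tfrac12,\tfrac34)$ yields the $A^{\frac{2\alpha+1}{3}}$ factor. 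For $(\ref{ineq:U1})_{4}$ the argument is shorter: write $\|P_{1,2}\|_{L^{2}}\leq \|\nabla\mathbf{U}_{1}\|_{L^{2}}\|\partial_{x}(u_{2,\neq},u_{3,\neq})\|_{L^{\infty}_{y,z}L^{2}_{x}}$ and apply Lemma \ref{sob_inf_2} together with Lemma \ref{lemma_u} to get a factor controlled by $E_{5,2}\cdot A^{1/3}$ after integration against ${\rm e}^{bA^{-\frac13}t}$, giving the claimed $A^{2/3}$.

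For $(\ref{ineq:U1})_{2}$, I would expand $\nabla(\mathbf{U}_{1}\partial_{x}u_{3,\neq})$ into $\nabla\mathbf{U}_{1}\partial_{x}u_{3,\neq}+\mathbf{U}_{1}\nabla\partial_{x}u_{3,\neq}$. The first piece is handled like $P_{1,2}$ above but with the extra derivative on $u_{3,\neq}$ using $\|\partial_{x}\nabla u_{3,\neq}\|_{L^{\infty}_{y}L^{2}_{x,z}}\lesssim \|\partial_{x}^{2}u_{3,\neq}\|_{L^{2}}^{1/2}\|\partial_{x}^{2}\nabla u_{3,\neq}\|_{L^{2}}^{1/2}$ via Lemmas \ref{sob_inf_2} and \ref{lemma_u}; the second piece uses $\|\mathbf{U}_{1}\|_{L^{\infty}_{z}L^{2}_{y}}\lesssim\|\mathbf{U}_{1}\|_{H^{1}}$ times the analogous anisotropic bound on $\nabla\partial_{x}u_{3,\neq}$. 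Both produce the factor $A\,E_{5,2}^{2}$ after squaring and integrating in time against the exponential weight.

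The main obstacle is $(\ref{ineq:U1})_{1}$, where $W$ must be decomposed as $W=W^{(1)}+A^{-1}W^{(2)}$. For the $W^{(1)}$-contribution, the factor $\nabla\partial_{x}W^{(1)}$ has no direct $X_{b}$-control, but $\triangle W^{(1)}$ does, so I would again split into $\nabla\mathbf{U}_{1}\partial_{x}W^{(1)}$ (for which $\|\partial_{x}W^{(1)}\|_{L^{\infty}_{y}L^{2}_{x,z}}$ is bounded by Lemma \ref{sob_inf_2} using $\partial_{x}\partial_{y}W^{(1)}$ and thus $\triangle W^{(1)}$) and $\mathbf{U}_{1}\nabla\partial_{x}W^{(1)}$ (which is directly controlled by $\|\triangle W^{(1)}\|_{L^{2}}$ through interpolation); this accounts for the $A^{1/2}\|\triangle W^{(1)}\|_{X_{b}}^{2}$ term. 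For the $A^{-1}W^{(2)}$-contribution, Lemma \ref{lem W}(ii) supplies $\|\partial_{x}^{2}W^{(2)}\|_{X_{b}}^{2}\leq C\epsilon^{2}A^{4/3}E_{5,2}^{2}$ and $\|\partial_{x}\nabla W^{(2)}\|_{X_{b}}^{2}\leq C\epsilon^{2}A^{2}E_{5,2}^{2}$, so after multiplication by $A^{-2}$ and the prefactor $\|\mathbf{U}_{1}\|_{L^{\infty}H^{1}}^{2}$ one gets exactly the $\epsilon^{2}A^{2/3}E_{5,2}^{2}(t)$ term. Bookkeeping the powers of $A$ across these two decompositions, and making sure the loss produced by the $\nabla\partial_{x}W^{(2)}$ term is compensated by the factor $A^{-2}$ from the decomposition, is the delicate arithmetic of the argument.
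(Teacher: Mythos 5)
Your overall scheme—anisotropic Sobolev on $\mathbf{U}_{1}$, control of $\|\mathbf{U}_{1}\|_{L^{\infty}H^{1}}$ via Lemma \ref{lemma:u1:1} and Lemma \ref{lem:n0 L2}, and the split $W=W^{(1)}+A^{-1}W^{(2)}$ fed into Lemma \ref{lem W}(ii)—matches the paper for $(\ref{ineq:U1})_{1}$ and is essentially correct for $(\ref{ineq:U1})_{3}$ as well (where, incidentally, the detour through Lemma \ref{lemma_u} is both unnecessary and problematic: it routes the bound through $\omega_{2,\neq}$, i.e.\ through $E_{2}$ rather than $E_{5,2}$; you should just observe $\|\partial_{y}\partial_{x}^{2}u_{j,\neq}\|_{L^{2}}\leq\|\nabla\partial_{x}^{2}u_{j,\neq}\|_{L^{2}}$, which is directly in $X_{b}$).

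However, there is a genuine gap in your treatment of $(\ref{ineq:U1})_{2}$ and $(\ref{ineq:U1})_{4}$. The quantity $E_{5,2}$ only controls $\partial_{x}^{2}u_{j,\neq}$, $\partial_{x}(\partial_{z}-\kappa\partial_{y})u_{j,\neq}$ and $\partial_{x}\nabla W$ in $X_{b}$, and the $X_{b}$ norm supplies at most one extra $\nabla$. Bounding $\|\partial_{x}u_{j,\neq}\|_{L^{\infty}_{y,z}L^{2}_{x}}$ or $\|\nabla\partial_{x}u_{3,\neq}\|_{L^{\infty}_{z}L^{2}_{x,y}}$ via the plain anisotropic embedding $\eqref{sob_result_2}_{3}$ or $\eqref{sob_result_2}_{7}$ forces terms of the form $\|\partial_{z}\nabla\partial_{x}u_{j,\neq}\|_{L^{2}}$ or $\|\partial_{y}\partial_{x}\nabla u_{3,\neq}\|_{L^{2}}$; after the $x$-Poincar\'e inequality these become two full derivatives of $\partial_{x}^{2}u_{j,\neq}$ (e.g.\ $\|\partial_{z}\nabla\partial_{x}^{2}u_{j,\neq}\|$), and the $X_{b}$-control of $\partial_{x}^{2}u_{j,\neq}$ simply does not reach that far. (Indeed, in 2D, $H^{1}\not\hookrightarrow L^{\infty}$, so you cannot avoid a second derivative.) Relatedly, your explicit inequality $\|\partial_{x}\nabla u_{3,\neq}\|_{L^{\infty}_{y}L^{2}_{x,z}}\lesssim\|\partial_{x}^{2}u_{3,\neq}\|_{L^{2}}^{1/2}\|\partial_{x}^{2}\nabla u_{3,\neq}\|_{L^{2}}^{1/2}$ is not what Lemma \ref{sob_inf_2} gives—passage to $L^{\infty}_{y}$ requires a $\partial_{y}$ on the function, not an extra $\partial_{x}$. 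The paper's way out, which you never invoke, is Lemma \ref{weak kappa}: after the coordinate transform adapted to $V$, the embedding $\|g\|_{L^{\infty}_{y,z}L^{2}_{x}}\leq C\|(\partial_{x},\partial_{z}-\kappa\partial_{y})g\|_{L^{2}}^{1/2}\|\nabla(\partial_{x},\partial_{z}-\kappa\partial_{y})g\|_{L^{2}}^{1/2}$ is expressed entirely through the derivatives $\partial_{x}$ and $\partial_{z}-\kappa\partial_{y}$, which—together with the commutator $(\partial_{z}-\kappa\partial_{y})\nabla=\nabla(\partial_{z}-\kappa\partial_{y})+\nabla\kappa\,\partial_{y}$ and Lemma \ref{lem:kappa}—are exactly the quantities in $E_{5,2}$. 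Without this good-derivative machinery, $(\ref{ineq:U1})_{2}$ and $(\ref{ineq:U1})_{4}$ do not close.
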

\begin{proof}
	{\bf Estimate $(\ref{ineq:U1})_{1}.$} Recalling that $W=W^{(1)}+\frac{1}{A}W^{(2)},$  we have
	\begin{equation*}\label{U1 W}
		\|{\rm e}^{bA^{-\frac13}t}\nabla(\mathbf{U}_{1}\partial_{x}W)\|_{L^{2}L^{2}}^{2}\leq \|{\rm e}^{bA^{-\frac13}t}\nabla(\mathbf{U}_{1}\partial_{x}W^{(1)})\|_{L^{2}L^{2}}^{2}+\frac{1}{A^{2}}
		\|{\rm e}^{bA^{-\frac13}t}\nabla(\mathbf{U}_{1}\partial_{x}W^{(2)})\|_{L^{2}L^{2}}^{2}.
	\end{equation*}
	Using Lemma \ref{lemma:u1:1}, Lemma \ref{lem:n0 L2}, Lemma \ref{sob_inf_1}, Lemma \ref{sob_inf_2} 
	and
	$$\|{\rm e}^{bA^{-\frac13}t}\partial_{x}\nabla f\|_{L^{2}L^{2}}=\|{\rm e}^{bA^{-\frac13}t}\nabla\triangle^{-1}\partial_{x}(\triangle f)\|_{L^{2}L^{2}}\leq \|\triangle f\|_{X_{b}},$$
	one obtains
	\begin{equation}\label{U1 W 1}
		\begin{aligned}
			&\|{\rm e}^{bA^{-\frac13}t}\nabla(\mathbf{U}_{1}\partial_{x}W^{(1)})\|_{L^{2}L^{2}}^{2}
			\\\leq&\|\mathbf{U}_{1}\|_{L^{\infty}_{t,z}L^{2}_{y}}^{2}\|{\rm e}^{bA^{-\frac13}t}\nabla\partial_{x}W^{(1)}\|_{L^{2}_{t,x,z}L^{\infty}_{y}}^{2}
			+\|\nabla\mathbf{U}_{1}\|_{L^{\infty}_{t}L^{2}_{y,z}}^{2}\|{\rm e}^{bA^{-\frac13}t}\partial_{x}W^{(1)}\|_{L^{2}_{t,x}L^{\infty}_{y,z}}^{2}
			\\\leq&C\|\mathbf{U}_{1}\|_{L^{\infty}H^{1}}^{2}\big(\|{\rm e}^{bA^{-\frac13}t}\partial_{x}\nabla W^{(1)}\|_{L^{2}L^{2}}
			\|{\rm e}^{bA^{-\frac13}t}\partial_{x}\triangle W^{(1)}\|_{L^{2}L^{2}}\\&+\|{\rm e}^{bA^{-\frac13}t}\partial_{x}\nabla W^{(1)}\|_{L^{2}L^{2}}^{\frac32-\alpha}\|{\rm e}^{bA^{-\frac13}t}\partial_{x}\triangle W^{(1)}\|_{L^{2}L^{2}}^{\alpha-\frac12} \big)
			\\\leq&C\left(\|(u_{1,\rm in})_{0}\|_{H^{1}}^{2}+H_{1}^{2} \right)A^{\frac12}\|\triangle W^{(1)}\|_{X_{b}}^{2},
		\end{aligned}
	\end{equation}
	where $\alpha\in (\frac12,\frac34).$
	Similarly, by Lemma \ref{lemma:u1:1}, Lemma \ref{lem:n0 L2}, Lemma \ref{sob_inf_1}, Lemma \ref{sob_inf_2} and (ii) of Lemma \ref{lem W}, there holds
	\begin{equation}\label{U1 W 2}
		\begin{aligned}
			&\|{\rm e}^{bA^{-\frac13}t}\nabla(\mathbf{U}_{1}\partial_{x}W^{(2)})\|_{L^{2}L^{2}}^{2}\\\leq& C\|\mathbf{U}_{1}\|_{L^{\infty}H^{1}}^{2}\big(
			\|{\rm e}^{bA^{-\frac13}t}\partial_{x}\nabla W^{(2)}\|_{L^{2}L^{2}}\|{\rm e}^{bA^{-\frac13}t}\partial_{x}\triangle W^{(2)}\|_{L^{2}L^{2}}\\&+\|{\rm e}^{bA^{-\frac13}t}\partial_{x}\nabla W^{(2)}\|_{L^{2}L^{2}}^{3-2\alpha}\|{\rm e}^{bA^{-\frac13}t}\partial_{x}\triangle W^{(2)}\|_{L^{2}L^{2}}^{2\alpha-1}\big)\\\leq&C(\|(u_{1,\rm in})_{0}\|_{H^{1}}^{2}+H_{1}^{2})A^{\frac23}\|\partial_{x}\nabla W^{(2)}\|_{X_{b}}^{2}\leq C(\|(u_{1,\rm in})_{0}\|_{H^{1}}^{2}+H_{1}^{2})\epsilon^{2}A^{\frac83}E_{5,2}^{2}(t).
		\end{aligned}
	\end{equation}
	
	Collecting (\ref{U1 W 1}) and (\ref{U1 W 2}), we get the first result.

	{\bf Estimate $(\ref{ineq:U1})_{2}$.} 
	By Lemma \ref{lemma:u1:1}, Lemma \ref{lem:n0 L2}, Lemma \ref{sob_inf_1} and Lemma \ref{weak kappa}, we arrive
	\begin{equation*}
		\begin{aligned}
			&\|{\rm e}^{bA^{-\frac13}t}\nabla(\mathbf{U}_{1}\partial_{x}u_{3,\neq})\|_{L^{2}L^{2}}^{2}\\\leq&\|\nabla\mathbf{U}_{1}\|_{L^{\infty}L^{2}}^{2}\|{\rm e}^{bA^{-\frac13}t}\partial_{x}u_{3,\neq}\|_{L^{2}_{t,x}L^{\infty}_{y,z}}^{2}+\|\mathbf{U}_{1}\|_{L^{\infty}_{t,y}L^{2}_{z}}^{2}
			\|{\rm e}^{bA^{-\frac13}t}\nabla\partial_{x}u_{3,\neq}\|_{L^{2}_{t,x,y}L^{\infty}_{z}}^{2}\\\leq&C\|\mathbf{U}_{1}\|_{L^{\infty}H^{1}}^{2}\left(\|{\rm e}^{bA^{-\frac12}t}\nabla(\partial_{x}, \partial_{z}-\kappa\partial_{y})\partial_{x}u_{3,\neq}\|_{L^{2}L^{2}}^{2}+\|{\rm e}^{bA^{-\frac13}t}(\partial_{x}, \partial_{z}-\kappa\partial_{y})\nabla\partial_{x}u_{3,\neq}\|_{L^{2}L^{2}}^{2} \right)\\\leq&C\|\mathbf{U}_{1}\|_{L^{\infty}H^{1}}^{2}\left(
			\|{\rm e}^{bA^{-\frac13}t}\nabla(\partial_{x}, \partial_{z}-\kappa\partial_{y})\partial_{x}u_{3,\neq}\|_{L^{2}L^{2}}^{2}+\|\kappa\|_{H^{3}}^{2}\|{\rm e}^{bA^{-\frac13}t}\partial_{x}\partial_{y}u_{3,\neq}\|_{L^{2}L^{2}}^{2}
			\right)\\\leq&C\left(\|(u_{1,\rm in})_{0}\|_{H^{1}}^{2}+H_{1}^{2}\right)AE_{5,2}^{2}(t),
		\end{aligned}
	\end{equation*}
	where we use Lemma \ref{lem:kappa} and $(\partial_{z}-\kappa\partial_{y})\nabla\partial_{x}u_{3,\neq}=\nabla(\partial_{z}-\kappa\partial_{y})\partial_{x}u_{3,\neq}+\nabla\kappa\partial_{y}\partial_{x}u_{3,\neq}.$
	
	{\bf Estimate $(\ref{ineq:U1})_{3}.$} Due to Lemma \ref{lemma:u1:1}, Lemma \ref{lem:n0 L2}, Lemma \ref{sob_inf_1} and Lemma \ref{sob_inf_2}, for $j\in\{2,3\}$ and $\alpha\in(\frac12,\frac34),$ there holds
	\begin{equation*}
		\begin{aligned}
			\|{\rm e}^{bA^{-\frac13}t}\mathbf{U}_{1}\partial_{x}^{2}u_{j,\neq}\|_{L^{2}L^{2}}^{2}\leq&\|\mathbf{U}_{1}\|_{L^{\infty}_{t}L^{\infty}_{y}L^{2}_{z}}^{2}\|{\rm e}^{bA^{-\frac13}t}\partial_{x}^{2}u_{j,\neq}\|_{L^{2}_{t}L^{\infty}_{z}L^{2}_{x,y}}^{2}\\\leq&C\|\mathbf{U}_{1}\|_{L^{\infty}H^{1}}^{2}\|{\rm e}^{bA^{-\frac13}t}\partial_{x}^{2}u_{j,\neq}\|_{L^{2}L^{2}}^{2-2\alpha}\|{\rm e}^{bA^{-\frac13}t}\nabla\partial_{x}^{2}u_{j,\neq}\|_{L^{2}L^{2}}^{2\alpha}\\\leq& C\left(\|(u_{1,\rm in})_{0}\|_{H^{1}}^{2}+H_{1}^{2}\right)A^{\frac13(2\alpha+1)}E_{5,2}^{2}(t).
		\end{aligned}
	\end{equation*}

	{\bf Estimate $(\ref{ineq:U1})_{4}.$} According to $P_{1,2}=\partial_{y}\mathbf{U}_{1}\partial_{x}u_{2,\neq}+\partial_{z}\mathbf{U}_{1}\partial_{x}u_{3,\neq}$ in (\ref{P11 12 13}), using Lemma \ref{lemma:u1:1}, Lemma \ref{lem:n0 L2} and Lemma \ref{weak kappa}, for $j\in\{2,3\},$ we get
	\begin{equation*}\label{b P12}
		\begin{aligned}
			&\|{\rm e}^{bA^{-\frac13}t}P_{1,2}\|_{L^{2}L^{2}}^{2}\leq\|{\rm e}^{bA^{-\frac13}t}\partial_{j}\mathbf{U}_{1}\partial_{x}u_{j,\neq}\|_{L^{2}L^{2}}^{2}\leq\|\partial_{j}\mathbf{U}_{1}\|_{L^{\infty}_{t}L^{2}_{y,z}}^{2}\|{\rm e}^{bA^{-\frac13}t}\partial_{x}u_{j,\neq}\|_{L^{2}_{t}L^{\infty}_{y,z}L^{2}_{x}}^{2}\\\leq& C\|\mathbf{U}_{1}\|_{L^{\infty}H^{1}}^{2}\|{\rm e}^{bA^{-\frac13}t}(\partial_{x}, \partial_{z}-\kappa\partial_{y})\partial_{x}u_{j,\neq}\|_{L^{2}L^{2}}\|{\rm e}^{bA^{-\frac13}t}\nabla(\partial_{x}, \partial_{z}-\kappa\partial_{y})\partial_{x}u_{j,\neq}\|_{L^{2}L^{2}}\\\leq&C\left(\|(u_{1,\rm in})_{0}\|_{H^{1}}^{2}+H_{1}^{2} \right)A^{\frac23}E_{5,2}^{2}(t).
		\end{aligned}
	\end{equation*}
	
\end{proof}

\begin{lemma}\label{lem:g}
	Under the assumptions of Theorem \ref{result0} and \eqref{assumption}, it holds that
	\begin{equation}\label{estimate:g}
		\begin{aligned}
			&\|{\rm e}^{bA^{-\frac13}t}\nabla g_{3,1}\|_{L^{2}L^{2}}^{2}\leq CA^{\frac53}\epsilon^{2}\left(E_{5,1}^2(t)+E_{5,2}^{2}(t) \right),
			\\
			&\|{\rm e}^{bA^{-\frac13}t}\nabla(g_{2,1}+\kappa g_{3,1})\|_{L^{2}L^{2}}^{2}+\|{\rm e}^{bA^{-\frac13}t}\partial_{x}g_{3,1}\|_{L^{2}L^{2}}^{2}\\&+\|{\rm e}^{bA^{-\frac13}t}\nabla g_{2,2}\|_{L^{2}L^{2}}^{2}+\|{\rm e}^{bA^{-\frac13}t}\nabla g_{3,2}\|_{L^{2}L^{2}}^{2}\leq C\epsilon^{2}AE_{5,2}^{2}(t).
		\end{aligned}
	\end{equation}
\end{lemma}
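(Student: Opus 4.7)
The five estimates share a common architecture: every $g_{j,k}$ pairs a zero-mode velocity (which is of size $O(\epsilon)$ by Lemma \ref{lemma_u23_1}) with a non-zero mode velocity (which is controlled in $X_b$ through the auxiliary energies $E_{5,1}$ and $E_{5,2}$). The plan is to distribute each outer derivative by Leibniz and then apply anisotropic Sobolev inequalities in the style of Lemma \ref{sob_inf_1}, Lemma \ref{sob_inf_2} and the nonlinear interaction estimates of Section 3 (notably Lemma \ref{lemma_neq3}). The scalar prefactor $\epsilon^2$ arises entirely from the zero-mode factors; the differences among the five $A$-powers are dictated by which derivatives fall on the non-zero modes and which quantities in $E_{5,1}, E_{5,2}$ can absorb them.

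For the loose bound on $\|e^{bA^{-1/3}t}\nabla g_{3,1}\|_{L^2L^2}^2$ I would write $\nabla g_{3,1} = (\nabla u_{j,0})\partial_j u_{3,\neq} + u_{j,0}\,\partial_j\nabla u_{3,\neq}$ for $j\in\{2,3\}$. The second piece is controlled by $\|u_{j,0}\|_{L^\infty}^2\|\triangle u_{3,\neq}\|_{L^2L^2}^2 \leq C\epsilon^2\,A^{1/3}\|\triangle u_{3,\neq}\|_{X_b}^2 \leq C\epsilon^2 A^{5/3}E_{5,1}^2$, which explains the presence of $E_{5,1}$ and the $A^{5/3}$ scaling. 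The first piece is treated via embeddings analogous to those in Lemma \ref{lemma_neq3} together with the $H^2$-bound on $(u_{2,0},u_{3,0})$, producing a $C\epsilon^2 A^{5/3}E_{5,2}^2$ contribution.

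For the sharper bounds in \eqref{estimate:g}$_2$ the decisive move is the algebraic identity
\begin{equation*}
g_{2,1} + \kappa\,g_{3,1} = u_{2,0}\,\partial_y W + u_{3,0}\,\partial_z W - \bigl(u_{2,0}\,\partial_y\kappa + u_{3,0}\,\partial_z\kappa\bigr)u_{3,\neq},
\end{equation*}
which replaces $\nabla u_{3,\neq}$ by $\nabla W$, so that the term $\|\partial_x\nabla W\|_{X_b}$ (already contained in $E_{5,2}$) can be invoked directly without routing through $\triangle u_{3,\neq}$. The remainder involving $\nabla\kappa$ is absorbed by $\|\kappa\|_{H^3}\leq C\epsilon$ from Lemma \ref{lem:kappa} and standard Sobolev embeddings on $u_{3,\neq}$. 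For $\partial_x g_{3,1}$ the factor $\partial_x u_{j,0}=0$ kills the bad term and leaves only $u_{j,0}\partial_x\partial_j u_{3,\neq}$; the component $\partial_x\partial_z u_{3,\neq}$ lies in $X_b$ through $\|\partial_x^2 u_{3,\neq}\|_{X_b}$ and the divergence-free relation $\partial_x^2 u_{1,\neq}=-\partial_x(\partial_y u_{2,\neq}+\partial_z u_{3,\neq})$, while $\partial_x\partial_y u_{3,\neq}$ is recovered from $\partial_x(\partial_z-\kappa\partial_y)u_{3,\neq}\in E_{5,2}$ up to an error linear in $\kappa$ (hence of order $\epsilon$). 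For $\nabla g_{2,2}$ and $\nabla g_{3,2}$, where $g_{j,2}=u_{\neq}\cdot\nabla u_{j,0}$, Leibniz produces a piece with $\nabla^2 u_{j,0}$ (controlled by $\|u_{2,0}\|_{L^\infty H^2}+\|u_{3,0}\|_{L^\infty H^1}\leq C\epsilon$) against $u_{\neq}$, and a piece with $\nabla u_{j,0}$ against $\nabla u_{\neq}$; both fit into the template of Lemma \ref{lemma_neq3} once one replaces $X_a$ by $X_b$ and uses the velocity embeddings $\|\partial_x u_{\neq}\|_{L^\infty_{y,z}L^2_x}^2\lesssim \|\partial_x^2 u_{\neq}\|_{X_b}\cdot\|\partial_x\nabla u_{\neq}\|_{X_b}$.

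I expect the main obstacle to be the identity-driven estimate of $\nabla(g_{2,1}+\kappa g_{3,1})$: obtaining the clean $AE_{5,2}^2$ bound (as opposed to the weaker $A^{5/3}$ scaling one gets for each summand separately) requires the exact cancellation provided by the $W$-substitution, and keeping track of the $\partial_j\kappa$ remainders without losing powers of $A$. The secondary delicate point is controlling $\partial_x\partial_y u_{3,\neq}$ in $X_b$ within the estimate of $\|\partial_x g_{3,1}\|$, where one must genuinely use the good derivative $\partial_z-\kappa\partial_y$ and the smallness of $\kappa$ rather than the weaker $X_a$-bounds from $E_{2,2}$.
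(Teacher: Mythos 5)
Your strategy is broadly aligned with the paper on $(\ref{estimate:g})_1$ and on the identity underlying $\nabla(g_{2,1}+\kappa g_{3,1})$: the paper uses exactly the $W$-substitution
$g_{2,1}+\kappa g_{3,1}=(u_{2,0}\partial_y+u_{3,0}\partial_z)W-u_{3,\neq}(u_{2,0}\partial_y+u_{3,0}\partial_z)\kappa$
that you identify, and then controls $\|{\rm e}^{bA^{-1/3}t}\nabla W\|_{Y_0}^2\leq\|\partial_x\nabla W\|_{X_b}^2$ and $\|u_{3,\neq}\nabla\kappa\|_{Y_0}$ via Lemma \ref{lem:kappa}, giving exactly your advertised $C\epsilon^2AE_{5,2}^2$.

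There is, however, a genuine gap in your treatment of $\nabla g_{2,2}$ and $\nabla g_{3,2}$. You propose to fit them ``into the template of Lemma \ref{lemma_neq3}'' using embeddings such as $\|\partial_x u_{\neq}\|_{L^{\infty}_{y,z}L^2_x}^2\lesssim\|\partial_x^2u_{\neq}\|_{X_b}\|\partial_x\nabla u_{\neq}\|_{X_b}$. But $\|\partial_x\nabla u_{j,\neq}\|_{X_b}$ contains the bad derivative $\partial_x\partial_y u_{j,\neq}$, which is \emph{not} controlled by $E_{5,2}$; and Lemma \ref{lemma_neq3} produces bounds in terms of $\|\partial_x\omega_{2,\neq}\|_{X_a}$ and $\|\triangle u_{2,\neq}\|_{X_a}$, i.e.\ in $E_2$, not $E_{5,2}$. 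The whole point of $E_{5,2}$ is that it records only $\partial_x^2 u_{j,\neq}$, $\partial_x(\partial_z-\kappa\partial_y)u_{j,\neq}$, and $\partial_x\nabla W$; a closed estimate in $E_{5,2}$ therefore has to avoid free $\partial_y$-derivatives of $u_{j,\neq}$. The paper achieves this with Lemma \ref{sob_14}, which bounds products $\|f_1f_2\|_{L^2}$, $\|\nabla(f_1f_2)\|_{L^2}$ by $\|f_1\|_{H^1}$ times the $L^2$ (resp.\ $H^1$) norms of $f_2$ and $(\partial_z-\kappa\partial_y)f_2$ only — no $\partial_y f_2$. That yields directly
$\|\nabla g_{2,2}\|_{L^2}\leq C\|\nabla u_{2,0}\|_{H^1}\bigl(\|\nabla\partial_x^2u_{j,\neq}\|_{L^2}+\|\nabla\partial_x(\partial_z-\kappa\partial_y)u_{j,\neq}\|_{L^2}\bigr)$
after Poincar\'e in $x$, hence $C\epsilon^2AE_{5,2}^2$. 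For $g_{3,2}$ the paper further rewrites $g_{3,2}=W\partial_yu_{3,0}+u_{3,\neq}(\partial_z-\kappa\partial_y)u_{3,0}$, estimates the first piece via $\partial_zu_{3,0}=-\partial_yu_{2,0}$ and the $\partial_x\nabla W$ norm, and for the second piece again applies Lemma \ref{sob_14} together with the time-weighted bound \eqref{kappa u30'} (which needs the $\min\{(A^{-2/3}+A^{-1}t)^{1/2},1\}$-weighted $\triangle u_{3,0}$ control and $\|\kappa\|_{H^1}\leq C\epsilon\min\{A^{-1}t,1\}$). None of these ingredients appears in your sketch, and without them the estimate does not close in $E_{5,2}$.

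A secondary point: your perceived difficulty with $\|\partial_x g_{3,1}\|$ — that one ``must genuinely use the good derivative'' and worry about $\partial_x\partial_y u_{3,\neq}$ — is not actually present. Since $\partial_xg_{3,1}=(u_{2,0}\partial_y+u_{3,0}\partial_z)\partial_xu_{3,\neq}$ and $(u_{2,0},u_{3,0})\in L^\infty$, one simply bounds $\|{\rm e}^{bA^{-1/3}t}\nabla\partial_xu_{3,\neq}\|_{L^2L^2}^2\leq C\|{\rm e}^{bA^{-1/3}t}\nabla\partial_x^2u_{3,\neq}\|_{L^2L^2}^2\leq CA\|\partial_x^2u_{3,\neq}\|_{X_b}^2$ by Poincar\'e in $x$; the full gradient $\nabla\partial_x^2u_{3,\neq}$ — including the $\partial_y$ component — is already part of the dissipative piece of the $X_b$ norm, so the good-derivative machinery is unnecessary here.
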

\begin{proof}
	{\bf Estimate $(\ref{estimate:g})_{1}.$}
	By Lemma \ref{lemma_u23_1}, Lemma \ref{sob_inf_1} and $\partial_{z}u_{3,0}=-\partial_{y}u_{2,0},$ we get
	\begin{equation}\label{u20 u30 2 infty}
		\|\nabla(u_{2,0}, u_{3,0})\|_{L^{2}L^{\infty}}^{2}\leq CA\epsilon^{2},
	\end{equation}
	which along with (\ref{u20 u30 infty}) and 
	$$\|\nabla u_{3,\neq}\|_{L^{2}}^{2}\leq\|u_{3,\neq}\|_{L^{2}}\|\triangle u_{3,\neq}\|_{L^{2}}\leq 
	A^{\frac23}\|\partial_{x}^{2}u_{3,\neq}\|_{L^{2}}^{2}+A^{-\frac23}\|\triangle u_{3,\neq}\|_{L^{2}}^{2}$$
shows that
	\begin{equation*}\label{h31}
		\begin{aligned}
			&\|{\rm e}^{bA^{-\frac13}t}\nabla g_{3,1}\|_{L^{2}L^{2}}^{2}
			=\|{\rm e}^{bA^{-\frac13}t}\nabla\left( u_{2,0}\partial_{y}u_{3,\neq}+u_{3,0}\partial_{z}u_{3,\neq} \right)\|_{L^{2}L^{2}}^{2}
			\\\leq&\|\nabla (u_{2,0},u_{3,0})\|_{L^{2}L^{\infty}}^{2}\|{\rm e}^{bA^{-\frac13}t}\nabla u_{3,\neq}\|_{L^{\infty}L^{2}}^{2}+\|(u_{2,0},u_{3,0})\|_{L^{\infty}L^{\infty}}^{2}\|{\rm e}^{bA^{-\frac13}t}\triangle u_{3,\neq}\|_{L^{2}L^{2}}^{2}\\\leq&CA\epsilon^{2}\|\nabla u_{3,\neq}\|_{X_{b}}^{2}\leq CA^{\frac53}\epsilon^{2}\big(\|\partial_{x}^{2}u_{3,\neq}\|_{X_{b}}^{2}+A^{-\frac43}\|\triangle u_{3,\neq}\|_{X_{b}}^{2} \big)\leq CA^{\frac53}\epsilon^{2}\left(E_{5,2}^{2}(t)+ E_{5,1}^{2}(t)\right).
		\end{aligned}
	\end{equation*}

	{\bf Estimate $(\ref{estimate:g})_{2}.$}
	As
	$
	g_{2,1}+\kappa g_{3,1}=\left(u_{2,0}\partial_{y}+u_{3,0}\partial_{z} \right)W-u_{3,\neq}(u_{2,0}\partial_{y}+u_{3,0}\partial_{z})\kappa,
	$
	by (\ref{u20 u30 infty}) and (\ref{u20 u30 2 infty}), we get
	\begin{equation*}
		\|{\rm e}^{bA^{-\frac13}t}\nabla(g_{2,1}+\kappa g_{3,1})\|_{L^{2}L^{2}}^{2}\leq CA\epsilon^{2}\left(\|{\rm e}^{bA^{-\frac13}t}\nabla W\|_{Y_{0}}^{2}+\|{\rm e}^{bA^{-\frac13}t}u_{3,\neq}\nabla\kappa\|_{Y_{0}}^{2} \right).
	\end{equation*}
	Direct calculations show that
	\begin{equation*}
		\|{\rm e}^{bA^{-\frac13}t}\nabla W\|_{Y_{0}}^{2}\leq\|{\rm e}^{bA^{-\frac13}t}\partial_{x}\nabla W\|_{Y_{0}}^{2}\leq \|\partial_{x}\nabla W\|_{X_{b}}^{2}\leq E_{5,2}^{2}(t).
	\end{equation*}
	On the other hand, using Lemma \ref{lem:kappa}, one obtains
	\begin{equation*}
		\begin{aligned}
			&\|u_{3,\neq}\nabla\kappa\|_{L^{2}}\leq\|u_{3,\neq}\|_{L^{2}}\|\nabla\kappa\|_{L^{\infty}}\leq C\|\partial_{x}^{2}u_{3,\neq}\|_{L^{2}},\\&\|\nabla(u_{3,\neq}\nabla\kappa)\|_{L^{2}}\leq\|u_{3,\neq}\nabla\kappa\|_{H^{1}}\leq C\|u_{3,\neq}\|_{H^{1}}\|\nabla\kappa\|_{H^{2}}\leq C\|\nabla\partial_{x}^{2}u_{3,\neq}\|_{L^{2}},
		\end{aligned}
	\end{equation*}
	which indicates that
	\begin{equation*}
		\begin{aligned}
			\|{\rm e}^{bA^{-\frac13}t}u_{3,\neq}\nabla\kappa\|_{Y_{0}}^{2}\leq&C\left(\|{\rm e}^{bA^{-\frac13}t}\partial_{x}^{2}u_{3,\neq}\|_{L^{\infty}L^{2}}^{2}+\frac{1}{A}\|{\rm e}^{bA^{-\frac13}t}\nabla\partial_{x}^{2}u_{3,\neq}\|_{L^{2}L^{2}}^{2} \right)\\\leq& C\|\partial_{x}^{2}u_{3,\neq}\|_{X_{b}}^{2}\leq CE_{5,2}^{2}(t).
		\end{aligned}
	\end{equation*}
	This gives
	\begin{equation}\label{g21 g31}
		\|{\rm e}^{bA^{-\frac13}t}\nabla(g_{2,1}+\kappa g_{3,1})\|_{L^{2}L^{2}}^{2}\leq C\epsilon^{2}AE_{5,2}^{2}(t).
	\end{equation}

	Recall that $\partial_{x}g_{3,1}=(u_{2,0}\partial_{y}+u_{3,0}\partial_{z})\partial_{x}u_{3,\neq}.$ By (\ref{u20 u30 infty}), we have
	\begin{equation}\label{x g31}
		\begin{aligned}
			\|{\rm e}^{bA^{-\frac13}t}\partial_{x}g_{3,1}\|_{L^{2}L^{2}}^{2}\leq\|(u_{2,0},u_{3,0})\|_{L^{\infty}L^{\infty}}^{2}\|{\rm e}^{bA^{-\frac13}t}\nabla\partial_{x}u_{3,\neq}\|_{L^{2}L^{2}}^{2}\leq C\epsilon^{2}AE_{5,2}^{2}(t).
		\end{aligned}
	\end{equation}
	
	For $ g_{2,2}=u_{\neq}\cdot\nabla u_{2,0} $ and $ j\in\{2,3\}, $ it follows from Lemma \ref{sob_14} and  Lemma \ref{lemma_u23_1} that
	\begin{equation*}
		\begin{aligned}
			\|\nabla g_{2,2}\|_{L^{2}}=&\|\nabla\left(u_{j,\neq}\partial_{j}u_{2,0} \right)\|_{L^{2}}\leq C\|\partial_{j}u_{2,0}\|_{H^{1}}\left(\|u_{j,\neq}\|_{H^{1}}+\|(\partial_{z}-\kappa\partial_{y}) u_{j,\neq}\|_{H^{1}}\right)\\\leq&C\|\nabla u_{2,0}\|_{H^{1}}\left(\|\nabla\partial_{x}^{2}u_{j,\neq}\|_{L^{2}}+\|\nabla\partial_{x}(\partial_{z}-\kappa\partial_{y})u_{j,\neq}\|_{L^{2}} \right)
		\end{aligned}
	\end{equation*}	
	and
	\begin{equation}\label{h22}
		\begin{aligned}
			\|{\rm e}^{bA^{-\frac13}t}\nabla g_{2,2}\|_{L^{2}L^{2}}^{2}\leq&C\epsilon^{2}\left(\|{\rm e}^{bA^{-\frac13}t}\nabla\partial_{x}^{2}u_{j,\neq}\|_{L^{2}L^{2}}^{2}+\|{\rm e}^{bA^{-\frac13}t}\nabla\partial_{x}(\partial_{z}-\kappa\partial_{y})u_{j,\neq}\|_{L^{2}L^{2}}^{2} \right)\\\leq&C\epsilon^{2}A\left(\|\partial_{x}^{2}u_{j,\neq}\|_{X_{b}}^{2}+\|\partial_{x}(\partial_{z}-\kappa\partial_{y})u_{j,\neq}\|_{X_{b}}^{2} \right)\leq C\epsilon^{2}AE_{5,2}^{2}(t).
		\end{aligned}
	\end{equation}
	
	For $ g_{3,2}=u_{\neq}\cdot\nabla u_{3,0}, $ we rewrite it into
	\begin{equation*}
		g_{3,2}=\left(u_{2,\neq}\partial_{y}+u_{3,\neq}\partial_{z} \right)u_{3,0}=W\partial_{y}u_{3,0}+u_{3,\neq}\left(\partial_{z}u_{3,0}-\kappa\partial_{y}u_{3,0} \right),
	\end{equation*}
	which implies that
	\begin{equation}\label{nabla h32}
		\begin{aligned}
			\|{\rm e}^{bA^{-\frac13}t}\nabla g_{3,2}\|_{L^{2}L^{2}}^{2}\leq&\|{\rm e}^{bA^{-\frac13}t}\nabla(W\partial_{y}u_{3,0})\|_{L^{2}L^{2}}^{2}+\|{\rm e}^{bA^{-\frac13}t}\nabla\left(u_{3,\neq}(\partial_{z}u_{3,0}-\kappa\partial_{y}u_{3,0}) \right)\|_{L^{2}L^{2}}^{2}.
		\end{aligned}
	\end{equation}
	By Lemma \ref{sob_inf_1}, Lemma \ref{sob_inf_2}, Lemma \ref{lemma_u23_1} and $\partial_{z}u_{3,0}=-\partial_{y}u_{2,0}$, we get
	\begin{equation}\label{W u30}
		\begin{aligned}
			&\|{\rm e}^{bA^{-\frac13}t}\nabla(W\partial_{y}u_{3,0})\|_{L^{2}L^{2}}^{2}
			\\\leq&\|{\rm e}^{bA^{-\frac13}t}\nabla W\|_{L^{\infty}L^{2}}^{2}\|\partial_{y}u_{3,0}\|_{L^{2}L^{\infty}}^{2}
			+\|{\rm e}^{bA^{-\frac13}t}W\|_{L^{\infty}_{t,y}L^{2}_{x,z}}^{2}\|\nabla\partial_{y}u_{3,0}\|_{L^{2}_{t,y}L^{\infty}_{z}}^{2}
			\\\leq&C\left(\|u_{3,0}\|_{L^{2}H^{2}}^{2}+\|u_{2,0}\|_{L^{2}H^{3}}^{2} \right)
			\|{\rm e}^{bA^{-\frac13}t}\nabla\partial_{x}W\|_{L^{\infty}L^{2}}^{2}\leq C\epsilon^{2}AE_{5,2}^{2}(t).
		\end{aligned}
	\end{equation}
	As $\mathbf{U}_{2}(0)=0,$ using Lemma \ref{u1_hat2}, there holds
	\begin{equation*}
		\|\triangle\mathbf{U}_{2}\|_{L^{2}}\leq	\|\mathbf{U}_{2}(t)\|_{H^{2}}\leq\int_{0}^{t}\|\partial_{t}\mathbf{U}_{2}(s)\|_{H^{2}}ds\leq C\epsilon t.
	\end{equation*}
	On the other hand, $\|\triangle \mathbf{U}_{2}(t)\|_{H^{2}}\leq CA\epsilon.$ Therefore
	\begin{equation*}
		\|\triangle\mathbf{U}_{2}\|_{L^{2}}\leq C\epsilon A\min\{A^{-1}t, 1\},
	\end{equation*}
	which implies that
	\begin{equation}\label{t kappa}
		\|\kappa\|_{H^{1}}\leq CA^{-1}\|\triangle \mathbf{U}_{2}\|_{L^{2}}\leq C\epsilon\min\{A^{-1}t, 1\}.
	\end{equation}
	From this, along with Lemma \ref{lemma_u23_1} and Lemma \ref{lem:kappa}, one obtains
	\begin{equation}\label{kappa u30'}
		\begin{aligned}
			\|\kappa\nabla u_{3,0}\|_{H^{1}}\leq& C\|\kappa\|_{H^{2}}\left(\|\nabla u_{3,0}\|_{L^{2}}+\|\triangle u_{3,0}\|_{L^{2}} \right)\\\leq&C\|\kappa\|_{H^{1}}^{\frac12}\|\kappa\|_{H^{3}}^{\frac12}\left(\|\nabla u_{3,0}\|_{L^{2}}+\|\triangle u_{3,0}\|_{L^{2}} \right)\\\leq&C\epsilon\min\{A^{-1}t, 1 \}^{\frac12}\left(\|\nabla u_{3,0}\|_{L^{2}}+\|\triangle u_{3,0}\|_{L^{2}} \right)\\\leq&C\epsilon\left(\|\nabla u_{3,0}\|_{L^{2}}+\|\min\left(A^{-\frac23}+A^{-1}t ,1 \right)^{\frac12}\triangle u_{3,0}\|_{L^{2}} \right)\leq C\epsilon^{2}.
		\end{aligned}
	\end{equation}
	Due to Lemma \ref{lemma_u23_1}, Lemma \ref{sob_14}, (\ref{kappa u30'}) and $\partial_{z}u_{3,0}=-\partial_{y}u_{2,0}$, we arrive
	\begin{equation}\label{u3 u30}
		\begin{aligned}
			&\|{\rm e}^{bA^{-\frac13}t}\nabla\left(u_{3,\neq}(\partial_{z}u_{3,0}-\kappa\partial_{y}u_{3,0}) \right)\|_{L^{2}L^{2}}^{2}\\\leq&C\left(\|\partial_{z}u_{3,0}\|_{L^{\infty}H^{1}}^{2}+\|\kappa\nabla u_{3,0}\|_{L^{\infty}H^{1}}^{2} \right)\left(\|{\rm e}^{bA^{-\frac13}t}u_{3,\neq}\|_{L^{2}H^{1}}^{2}+\|{\rm e}^{bA^{-\frac13}t}(\partial_{z}-\kappa\partial_{y})u_{3,\neq}\|_{L^{2}H^{1}}^{2} \right)\\\leq&C\epsilon^{2}\left(\|{\rm e}^{bA^{-\frac13}t}\nabla\partial_{x}^{2}u_{3,\neq}\|_{L^{2}L^{2}}^{2}+\|{\rm e}^{bA^{-\frac13}t}\nabla(\partial_{z}-\kappa\partial_{y})u_{3,\neq}\|_{L^{2}L^{2}}^{2} \right)\leq C\epsilon^{2}AE_{5,2}^{2}(t).
		\end{aligned}
	\end{equation}
	Combining (\ref{W u30}) with (\ref{u3 u30}), we get from (\ref{nabla h32}) that
	\begin{equation}\label{932}
		\|{\rm e}^{bA^{-\frac13}t}\nabla g_{3,2}\|_{L^{2}L^{2}}^{2}\leq C\epsilon^{2}AE_{5,2}^{2}(t).
	\end{equation}
	
	Then $(\ref{estimate:g})_{2}$ follows from (\ref{g21 g31}), (\ref{x g31}), (\ref{h22}) and  (\ref{932}).
	
\end{proof}

\begin{lemma}\label{lem:G1 G2}
	Under the assumptions of Theorem \ref{result0} and \eqref{assumption}, it holds that
	\begin{equation*}
		\begin{aligned}
			\|{\rm e}^{bA^{-\frac13}t}\nabla G^{(1)}\|_{L^{2}L^{2}}^{2}\leq& CA^{\frac16+\alpha}E_{2}^{4}+CA^{\frac12+\frac23\alpha}E_{2}^{4}+CA^{\frac{\alpha}{3}+\frac12}E_{2}^{2}E_{5}^{2}\\\qquad\qquad&+C\left(\|(u_{1,\rm in})_{0}\|_{H^{1}}^{2}+H_{1}^{2}\right)A^{\frac23}E_{5,2}^{2}(t)+CA^{\frac13}E_{2}^{2},\\
			\|{\rm e}^{bA^{-\frac13}t}\nabla G^{(2)}\|_{L^{2}L^{2}}^{2}\leq& C\epsilon^{2}AE_{5,2}^{2}(t),
		\end{aligned}
	\end{equation*}
	where $\alpha\in(\frac12,\frac34).$
\end{lemma}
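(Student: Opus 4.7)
The plan is to decompose $G^{(1)}$ and $G^{(2)}$ according to their definitions and bound each piece by the previously established interaction estimates (Lemma \ref{lemma_neq1}, Lemma \ref{lem:U1}, and Lemma \ref{lem:g}), combined with the $L^{2}$-boundedness of the Riesz-type operator $\nabla\partial_j\triangle^{-1}$ and the smallness of $\kappa$ from Lemma \ref{lem:kappa}. A key preliminary observation that I will use repeatedly is that since $\kappa=\kappa(t,y,z)$ is smooth with $\|\kappa\|_{H^{2}}\leq C\epsilon$, the operator $\nabla(\partial_{y}+\kappa\partial_{z})\triangle^{-1}$ is bounded on $L^{2}$ modulo an $O(\epsilon)$ error, via the product-rule split $\nabla(\kappa\,\partial_{z}\triangle^{-1}f)=\nabla\kappa\cdot\partial_{z}\triangle^{-1}f+\kappa\,\nabla\partial_{z}\triangle^{-1}f$ and Sobolev embedding $H^{2}\hookrightarrow L^{\infty}$.

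For $G^{(1)}$, I will treat the three summands $G_{2,3}$, $\kappa G_{3,3}$ and $-2(\partial_{y}+\kappa\partial_{z})\triangle^{-1}(P_{1,1}+P_{1,2}+P_{1,3})$ separately. The term $\nabla G_{2,3}=\nabla(u_{\neq}\cdot\nabla u_{2,\neq})_{\neq}$ is controlled directly by $(\ref{eq:non-neq0})_{4}$, producing the $CA^{\frac12+\frac23\alpha}E_{2}^{4}$ contribution. For $\nabla(\kappa G_{3,3})$, the product rule gives $\nabla\kappa\cdot G_{3,3}+\kappa\,\nabla G_{3,3}$; the second piece uses $\|\kappa\|_{L^{\infty}}\leq C\epsilon$ together with $(\ref{eq:non-neq0})_{7}$, which yields the $CA^{\frac{\alpha}{3}+\frac12}E_{2}^{2}E_{5}^{2}$ term after absorbing $\epsilon^{2}A$, while the cross term is estimated with the help of $\|\nabla\kappa\|_{H^{1}}\leq C\epsilon$ and $(\ref{eq:non-neq0})_{2}$.

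The pressure block requires a little more care. Using $\nabla\cdot u_{\neq}=0$ I rewrite $P_{1,1}=\tfrac{1}{2}\partial_{k}\partial_{j}(u_{j,\neq}u_{k,\neq})$, so that $\nabla(\partial_{y}+\kappa\partial_{z})\triangle^{-1}P_{1,1}$ is essentially a second-order Riesz multiplier applied to $\partial_{x}(u_{\neq}u_{\neq})$; invoking $(\ref{eq:non-neq0})_{3}$ then produces the $CA^{\frac16+\alpha}E_{2}^{4}$ contribution. For $P_{1,2}$ the $L^{2}$-boundedness of $\nabla(\partial_{y}+\kappa\partial_{z})\triangle^{-1}$ reduces the estimate to $\|{\rm e}^{bA^{-1/3}t}P_{1,2}\|_{L^{2}L^{2}}^{2}$, which is precisely $(\ref{ineq:U1})_{4}$. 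For $P_{1,3}=-\tfrac{1}{2}\partial_{x}n_{\neq}$, since the underlying operator now has a $\partial_{x}$ inside, Poincar\'e in $x$ gives
\begin{equation*}
\|{\rm e}^{bA^{-\frac13}t}\nabla(\partial_{y}+\kappa\partial_{z})\triangle^{-1}\partial_{x}n_{\neq}\|_{L^{2}L^{2}}^{2}\leq C\|{\rm e}^{bA^{-\frac13}t}n_{\neq}\|_{L^{2}L^{2}}^{2}\leq C\|{\rm e}^{bA^{-\frac13}t}\partial_{x}^{2}n_{\neq}\|_{L^{2}L^{2}}^{2}\leq CA^{\frac13}E_{2,1}^{2},
\end{equation*}
exactly the $CA^{\frac13}E_{2}^{2}$ remainder in the stated bound.

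For $G^{(2)}$ the strategy is parallel but simpler because every summand involves zero-mode velocity factors, so all estimates are already contained in Lemma \ref{lem:g}. Applying $(\ref{estimate:g})_{2}$ to $\nabla(g_{2,1}+\kappa g_{3,1})$ and to $\nabla g_{j,2}$ for $j=2,3$ gives a contribution of $C\epsilon^{2}AE_{5,2}^{2}(t)$. The remaining pressure piece $-2(\partial_{y}+\kappa\partial_{z})\triangle^{-1}(\partial_{y}g_{2,2}+\partial_{z}g_{3,2})$ is again absorbed by the same Riesz-type boundedness, reducing the estimate to $\|{\rm e}^{bA^{-1/3}t}\nabla g_{j,2}\|_{L^{2}L^{2}}$, which is already provided by Lemma \ref{lem:g}. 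The only genuine difficulty I foresee is the commutator structure of $\nabla(\kappa\partial_{z})\triangle^{-1}$: since $\kappa$ is not a constant, the standard Mikhlin/Riesz bound does not apply literally, and one must split $\nabla(\kappa\partial_{z}\triangle^{-1}f)$ into the two pieces mentioned above and verify that $\|\kappa\|_{H^{2}}\leq C\epsilon$ is enough to close the estimate without losing the $A^{-1/3}$ enhanced dissipation factors that sit inside $E_{2}$ and $E_{5,2}$.
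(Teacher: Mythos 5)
Your decomposition of $G^{(1)}$ and $G^{(2)}$ into the pieces $G_{2,3}$, $\kappa G_{3,3}$, the pressure block, and the zero-mode block is the same one the paper uses, and most pieces are handled the same way (Lemma \ref{lemma_neq1} for $G_{2,3}$, $(\ref{ineq:U1})_{4}$ for $P_{1,2}$, Poincar\'e in $x$ for $P_{1,3}$, Lemma \ref{lem:g} for $G^{(2)}$). However, there is a genuine gap in your treatment of $\kappa\,\nabla G_{3,3}$: you argue that $\|\kappa\|_{L^\infty}\leq C\epsilon$ combined with $(\ref{eq:non-neq0})_{7}$ gives the stated bound ``after absorbing $\epsilon^{2}A$,'' but this bookkeeping does not work. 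Applying $\|\kappa\|_{L^\infty}\leq C\epsilon$ directly to $(\ref{eq:non-neq0})_{7}$ yields $C\epsilon^{2}A^{\frac76+\frac{\alpha}{3}}E_{2}^{2}E_{5}^{2}$, whereas the stated bound is $CA^{\frac12+\frac{\alpha}{3}}E_{2}^{2}E_{5}^{2}$; the difference is a factor of $\epsilon^{2}A^{2/3}$, and since $\epsilon$ is fixed (independent of $A$), this factor is unbounded as $A\to\infty$. This matters: in Lemma \ref{E5 E6} the quantity $A^{-1}\|{\rm e}^{bA^{-\frac13}t}\nabla G^{(1)}\|_{L^{2}L^{2}}^{2}$ must stay bounded, and a contribution of size $\epsilon^{2}A^{\frac16+\frac{\alpha}{3}}E_{2}^{2}E_{5}^{2}$ would blow up, so the energy estimate cannot close with your bound.

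The missing idea is that $\kappa$ vanishes at $t=0$ (since $\mathbf{U}_{2}(0)=0$), which gives the time-dependent smallness $\|\kappa\|_{H^{1}}\leq C\epsilon\min\{A^{-1}t,1\}$ in (\ref{t kappa}); interpolating $\|\kappa\|_{H^{2}}\leq\|\kappa\|_{H^{1}}^{1/2}\|\kappa\|_{H^{3}}^{1/2}\leq C\epsilon^{1/2}(A^{-1}t)^{1/2}$ and then converting the polynomial time factor into an exponential via $(A^{-1/3}t)^{1/2}\leq C{\rm e}^{(2a-b)A^{-1/3}t}$ (permissible because $0<a<b<2a$) produces an extra explicit $A^{-2/3}$ gain, which is exactly what brings $A^{\frac76+\frac{\alpha}{3}}$ down to $A^{\frac12+\frac{\alpha}{3}}$. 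A flat $L^{\infty}$ bound on $\kappa$, however small, cannot produce an $A$-dependent improvement. Separately, your claim that the Riesz multiplier reduces the $P_{1,1}$ term to $(\ref{eq:non-neq0})_{3}$ alone is imprecise: the paper expands ${\rm div}\,(u_{\neq}\cdot\nabla u_{\neq})_{\neq}$ into the three coordinate pieces and needs $(\ref{eq:non-neq0})_{3}$, $(\ref{eq:non-neq0})_{4}$ and $(\ref{eq:non-neq0})_{5}$, which together give both the $A^{\frac16+\alpha}$ and the $A^{\frac12+\frac23\alpha}$ contributions appearing in the statement; this is a bookkeeping imprecision rather than a fatal error, but you should be explicit about which derivative lands on which velocity component.
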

\begin{proof}
	Due to (\ref{g G}) and Lemma \ref{lemma_neq1}, there holds
	\begin{equation*}
		\|{\rm e}^{bA^{-\frac13}t}\nabla G_{2,3}\|_{L^{2}L^{2}}^{2}\leq \|{\rm e}^{bA^{-\frac13}t}\nabla(u_{\neq}\cdot\nabla u_{2,\neq})_{\neq}\|_{L^{2}L^{2}}^{2}\leq CA^{\frac12+\frac23\alpha}E_{2}^{4}.
	\end{equation*}
	For $G_{3,3}=(u_{\neq}\cdot\nabla u_{3,\neq})_{\neq}$ in (\ref{g G}), using Lemma \ref{lemma_neq1} and Lemma \ref{lem:kappa}, we have
	\begin{equation}\label{kappa G33 1}
		\begin{aligned}
			\|{\rm e}^{bA^{-\frac13}t}\nabla\kappa G_{3,3}\|_{L^{2}L^{2}}^{2}\leq C\|\kappa\|_{L^{\infty}H^{3}}^{2}\|{\rm e}^{bA^{-\frac13}t}u_{\neq}\cdot\nabla u_{3,\neq}\|_{L^{2}L^{2}}^{2}\leq CA^{\frac23}E_{2}^{4}.
		\end{aligned}
	\end{equation}
	On the other hand, by Lemma \ref{lem:kappa} and (\ref{t kappa}), one deduces
	\begin{equation*}
		\begin{aligned}
			\|\kappa\nabla G_{3,3}\|_{L^{2}}\leq&C\|\kappa\|_{H^{2}}\|\nabla(u_{\neq}\cdot\nabla u_{3,\neq})\|_{L^{2}}\leq C\|\kappa\|_{H^{1}}^{\frac12}\|\kappa\|_{H^{3}}^{\frac12}\|\nabla(u_{\neq}\cdot\nabla u_{3,\neq})\|_{L^{2}}\\\leq& C (A^{-1}t)^{\frac12}\|\nabla(u_{\neq}\cdot\nabla u_{3,\neq})\|_{L^{2}},
		\end{aligned}
	\end{equation*}
	which along with Lemma \ref{lemma_neq1} and $(A^{-\frac13}t)^{\frac12}\leq C{\rm e}^{(2a-b)A^{-\frac13}t}$ implies that
	\begin{equation}\label{kappa G33 2}
		\begin{aligned}
			\|{\rm e}^{bA^{-\frac13}t}\kappa\nabla G_{3,3}\|_{L^{2}L^{2}}^{2}\leq&CA^{-\frac23}\|(A^{-\frac13}t)^{\frac12}{\rm e}^{bA^{-\frac13}t}\nabla(u_{\neq}\cdot\nabla u_{3,\neq})\|_{L^{2}L^{2}}^{2}\\\leq&CA^{-\frac23}\|{\rm e}^{2aA^{-\frac13}t}\nabla(u_{\neq}\cdot\nabla u_{3,\neq})\|_{L^{2}L^{2}}^{2}\leq CA^{\frac{\alpha}{3}+\frac12}E_{2}^{2}E_{5}^{2}+CA^{\frac23}E_{2}^{4}.
		\end{aligned}
	\end{equation}
	It follows from (\ref{kappa G33 1}) and (\ref{kappa G33 2}) that
	\begin{equation*}
		\begin{aligned}
			\|{\rm e}^{bA^{-\frac13}t}\nabla(\kappa G_{3,3})\|_{L^{2}L^{2}}^{2}\leq CA^{\frac23}E_{2}^{4}+CA^{\frac{\alpha}{3}+\frac12}E_{2}^{2}E_{5}^{2}.
		\end{aligned}
	\end{equation*}

	Using Lemma \ref{lemma_neq1}, we note that
	\begin{equation*}
		\begin{aligned}
			\|{\rm e}^{bA^{-\frac13}t}P_{1,1}\|_{L^{2}L^{2}}^{2}\leq&C\big(
			\|{\rm e}^{bA^{-\frac13}t}\partial_{x}(u_{\neq}\cdot\nabla u_{1,\neq})\|_{L^{2}L^{2}}^{2}+\|{\rm e}^{bA^{-\frac13}t}\partial_{y}(u_{\neq}\cdot\nabla u_{2,\neq})\|_{L^{2}L^{2}}^{2}\\&+\|{\rm e}^{bA^{-\frac13}t}\partial_{z}(u_{\neq}\cdot\nabla u_{3,\neq})\|_{L^{2}L^{2}}^{2}
			\big)\leq CA^{\frac16+\alpha}E_{2}^{4}+CA^{\frac12+\frac23\alpha}E_{2}^{4}.
		\end{aligned}
	\end{equation*}
	From this, along with (\ref{P11 12 13}), $(\ref{ineq:U1})_{4}$ and Lemma \ref{lem:kappa}, one obtains
	\begin{equation}\label{estimate P}
		\begin{aligned}
			&\|{\rm e}^{bA^{-\frac13}t}\nabla\left((\partial_{y}+\kappa\partial_{z})\triangle^{-1}(P_{1,1}+P_{1,2}+P_{1,3}) \right)\|_{L^{2}L^{2}}^{2}\\\leq&C\left(\|{\rm e}^{bA^{-\frac13}t}P_{1,1}\|_{L^{2}L^{2}}^{2}+\|{\rm e}^{bA^{-\frac13}t}P_{1,2}\|_{L^{2}L^{2}}^{2}+\|{\rm e}^{bA^{-\frac13}t}P_{1,3}\|_{L^{2}L^{2}}^{2} \right)\\\leq&CA^{\frac16+\alpha}E_{2}^{4}+CA^{\frac12+\frac23\alpha}E_{2}^{4}+C(\|(u_{1,\rm in})_{0}\|_{H^{1}}^{2}+H_{1}^{2})A^{\frac23}E_{5,2}^{2}(t)+CA^{\frac13}E_{2}^{2}.
		\end{aligned}
	\end{equation}
	Based on the above estimates, we conclude that
	\begin{equation}\label{estimate G1}
		\begin{aligned}
			\|{\rm e}^{bA^{-\frac13}t}\nabla G^{(1)}\|_{L^{2}L^{2}}^{2}\leq& CA^{\frac16+\alpha}E_{2}^{4}+CA^{\frac12+\frac23\alpha}E_{2}^{4}+CA^{\frac{\alpha}{3}+\frac12}E_{2}^{2}E_{5}^{2}\\&+C\left(\|(u_{1,\rm in})_{0}\|_{H^{1}}^{2}+H_{1}^{2}\right)A^{\frac23}E_{5,2}^{2}(t)+CA^{\frac13}E_{2}^{2}.
		\end{aligned}
	\end{equation}

	For $G^{(2)}=g_{2,1}+g_{2,2}+\kappa(g_{3,1}+g_{3,2})-2(\partial_{y}+\kappa\partial_{z})\triangle^{-1}(\partial_{y}g_{2,2}+\partial_{z}g_{3,2}),$ by Lemma \ref{lem:kappa} and Lemma \ref{lem:g}, there holds
	\begin{equation}\label{estimate G2}
		\begin{aligned}
			\|{\rm e}^{bA^{-\frac13}t}\nabla G^{(2)}\|_{L^{2}L^{2}}^{2}\leq&C\big(
			\|{\rm e}^{bA^{-\frac13}t}\nabla(g_{2,1}+\kappa g_{3,1})\|_{L^{2}L^{2}}^{2}+\|{\rm e}^{bA^{-\frac13}t}\nabla g_{2,2}\|_{L^{2}L^{2}}^{2}\\&+\|{\rm e}^{bA^{-\frac13}t}\nabla g_{3,2}\|_{L^{2}L^{2}}^{2}
			\big)\leq C\epsilon^{2}AE_{5,2}^{2}(t).
		\end{aligned}
	\end{equation}
	
	Combining (\ref{estimate G1}) with (\ref{estimate G2}), we finish the proof.
	
\end{proof}

\begin{lemma}\label{E5 E6}
	Under the assumptions of Theorem \ref{result0} and \eqref{assumption}, there exists a positive constant $A_{6}$ independent of $t$ and $A,$ such that if $A\geq A_{6},$ then there holds
	\begin{equation*}
		\begin{aligned}
			E_{4}(t)+E_{5,1}(t)\leq C\left(\|(u_{\rm in})_{\neq}\|_{H^{2}}+1 \right)=:E_{4}+E_{5}.
		\end{aligned}
	\end{equation*}
\end{lemma}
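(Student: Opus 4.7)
The plan is to close estimates on the auxiliary energy $E_{5,2}(t)$ first, and then use it together with a parallel bound for $E_{5,1}(t)$ to derive the claim of the lemma (noting that $E_4(t)$ is controlled term-by-term by $E_{5,2}(t)$). The skeleton of the argument follows the quasi-linear framework introduced in Section \ref{sec 8}, together with the nonlinear interaction lemmas already proved above.

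For the four building blocks of $E_{5,2}(t)$, namely $\|\partial_x^2 u_{j,\neq}\|_{X_b}$ and $\|\partial_x(\partial_z-\kappa\partial_y)u_{j,\neq}\|_{X_b}$ for $j=2,3$, and $\|\partial_x\nabla W\|_{X_b}$, I would apply the relevant $\mathcal{L}_V$ space-time estimates (Proposition \ref{timespace1} and Proposition \ref{Lvf 0}) to the equations \eqref{ini_6} and \eqref{eq:Lv}. The crucial point for the $W$-equation is the decomposition $W=W^{(1)}+A^{-1}W^{(2)}$: the singular forcing $\rho_1\nabla V\cdot\nabla u_{3,\neq}$ is absorbed into $W^{(2)}$, so the equation for $W^{(1)}$ only sees controllable sources, and Lemma \ref{lem W}(ii) converts the contribution of $A^{-1}\partial_x\nabla W^{(2)}$ into a term of size $\epsilon\,E_{5,2}(t)$. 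All the nonlinear sources arising on the right-hand sides are then estimated by Lemma \ref{lemma_neq1}, Lemma \ref{lem:U1}, Lemma \ref{lem:g}, and Lemma \ref{lem:G1 G2}, together with the lower order bounds from $E_{2,1}$, $E_{2,2}$, $H_1$, and the assumptions \eqref{assumption}.

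For $E_{5,1}(t)=A^{-2/3}\|\triangle u_{3,\neq}\|_{X_b}$, I would split $\triangle u_{3,\neq}=(\triangle u_{3,\neq}-2\partial_x W^{(3)})+2\partial_x W^{(3)}$. The second piece is directly controlled by Lemma \ref{lem W}(i), yielding $A^{-2/3}\|\partial_x W^{(3)}\|_{X_b}\leq CE_{5,2}(t)$. For the first piece, I would apply Proposition \ref{timespace1}/\ref{Lvf 0} to equation \eqref{u3''-W}; the source terms are $2\partial_z(\partial_y V\partial_x W)$, $\triangle V\partial_x u_{3,\neq}$, $A^{-1}\triangle(\mathbf{U}_1\partial_x u_{3,\neq})$, $A^{-1}\triangle(g_{3,1}+g_{3,2}+G_{3,3})$, and the pressure-type terms $A^{-1}\partial_z(P_{1,1}+P_{1,2}+P_{1,3})$. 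Each of these is controlled by the auxiliary lemmas of this section, with the weight $A^{-2/3}$ chosen exactly to balance the worst $A$-gain in the nonlinear interactions, in particular the lift-up-type term $\mathbf{U}_1\partial_x u_{3,\neq}$ handled via $(\ref{ineq:U1})_2$.

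Assembling all the bounds, I expect to obtain an inequality of the schematic form
\begin{equation*}
E_{5,2}^2(t)+E_{5,1}^2(t)\leq C\|(u_{\rm in})_{\neq}\|_{H^2}^2+C+C\epsilon^2\bigl(E_{5,1}^2(t)+E_{5,2}^2(t)\bigr)+CA^{-\delta}\Phi(E_1,E_2,E_3,E_4,E_5,H_1),
\end{equation*}
for some $\delta>0$ and some polynomial $\Phi$. Choosing $\epsilon$ small enough that $C\epsilon^2\le 1/2$ absorbs the quadratic right-hand-side contribution, and then taking $A\geq A_6$ large enough (depending on $\Phi$) absorbs the $A^{-\delta}$ piece, leaving $E_{5,2}^2(t)+E_{5,1}^2(t)\leq C(\|(u_{\rm in})_{\neq}\|_{H^2}^2+1)$. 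Since $E_4(t)\leq CE_{5,2}(t)$ directly from the definitions, this yields the stated bound with $E_4+E_5:=C(\|(u_{\rm in})_{\neq}\|_{H^2}+1)$.

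The main obstacle will be the bookkeeping of the $A$-exponents in the nonlinear interactions. The delicate points are: (i) the singular term $\rho_1\nabla V\cdot\nabla u_{3,\neq}$ in the $W$-equation, which is only manageable because of the $A^{-1}$ gain from Lemma \ref{lem W}(ii) combined with the quasi-linear smoothing; (ii) the cubic interaction $\nabla(u_{\neq}\cdot\nabla u_{3,\neq})$ appearing in $G^{(1)}$, which by Lemma \ref{lem:G1 G2} only barely fits inside an $A^{\frac{1}{6}+\alpha}$ (with $\alpha<3/4$) budget, so the factor $A^{-2/3}$ in the definition of $E_{5,1}$ and the auxiliary quantity $E_{5,2}$ must be carefully aligned; and (iii) the decomposition $u_{1,0}=\mathbf{U}_1+\mathbf{U}_2$ must be used so that $\mathbf{U}_2$ only appears through terms already absorbed by $\mathcal{L}_V$ (not on the right-hand side), while $\mathbf{U}_1$ contributions are bounded by Lemma \ref{lem:U1} using $\|\mathbf{U}_1\|_{L^\infty H^1}\leq C(\|(u_{1,\rm in})_0\|_{H^1}+H_1+\epsilon)$.
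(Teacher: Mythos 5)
Your plan matches the paper's proof structure closely: the $W^{(1)}/W^{(2)}/W^{(3)}$ decompositions absorbing the singular forcing $\rho_1\nabla V\cdot\nabla u_{3,\neq}$, the source estimates via Lemmas \ref{lem W}, \ref{lem:U1}, \ref{lem:g}, \ref{lem:G1 G2}, and \ref{lemma_neq1}, the observation $E_4\le E_{5,2}$, and the closing of a coupled inequality in $E_{5,1}$ and $E_{5,2}$ by choosing $\epsilon$ small and $A$ large. The one minor slip is that you cite Propositions \ref{timespace1} and \ref{Lvf 0} throughout, whereas the paper bounds $\|\partial_x^2 u_{j,\neq}\|_{X_b}+\|\partial_x(\partial_z-\kappa\partial_y)u_{j,\neq}\|_{X_b}$ and $\|\triangle W^{(1)}\|_{X_b}$ using the second-order space-time estimates of Propositions \ref{Lvf} and \ref{tilde Lv}; this is a labeling issue rather than a conceptual gap.
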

\begin{proof}
	{\textbf{Step I. Estimate $ \|\triangle u_{3,\neq}-2\partial_{x}W^{(3)}\|_{X_{b}}. $}}
	Applying {Proposition \ref{Lvf 0}} to (\ref{u3''-W}), we obtain
	\begin{equation}\label{u3 hatQ}
		\begin{aligned}
			&\|\triangle u_{3,\neq}-2\partial_{x}W^{(3)}\|_{X_{b}}^{2}\\\leq&C\big(\|(\triangle u_{3,{\rm in}})_{\neq}\|_{L^{2}}^{2}+A^{\frac13}\|{\rm e}^{bA^{-\frac13}t}\triangle V\partial_{x}u_{3,\neq}\|_{L^{2}L^{2}}^{2}\\&+A^{\frac13}\|{\rm e}^{bA^{-\frac13}t}\partial_{z}(\partial_{y}V\partial_{x}W)\|_{L^{2}L^{2}}^{2}+\frac{1}{A}\|{\rm e}^{bA^{-\frac13}t}\nabla(\mathbf{U}_{1}\partial_{x}u_{3,\neq})\|_{L^{2}L^{2}}^{2}\\&+\frac{1}{A}\|{\rm e}^{bA^{-\frac13}t}\nabla(g_{2,2}+g_{3,1}+g_{3,2}+G_{3,3})\|_{L^{2}L^{2}}^{2}+\frac{1}{A}\|{\rm e}^{bA^{-\frac13}t}(P_{1,1}+P_{1,2}+P_{1,3})\|_{L^{2}L^{2}}^{2} \big),
		\end{aligned}
	\end{equation}
	where we use $ \partial_{x}W^{(3)}(0)=0. $ Due to $ \|\triangle V\|_{L^{\infty}}\leq CA^{-1}\|\triangle\mathbf{U}_{2}\|_{H^{2}}\leq C, $ there holds
	\begin{equation}\label{c}
		\begin{aligned}
			\|{\rm e}^{bA^{-\frac13}t}\triangle V\partial_{x}u_{3,\neq}\|_{L^{2}L^{2}}^{2}\leq&\|\triangle V\|_{L^{\infty}L^{\infty}}^{2}\|{\rm e}^{bA^{-\frac13}t}\partial_{x}u_{3,\neq}\|_{L^{2}L^{2}}^{2}\\\leq&C\|{\rm e}^{bA^{-\frac13}t}\partial_{x}^{2}u_{3,\neq}\|_{L^{2}L^{2}}^{2}\leq CA^{\frac13}\|\partial_{x}^{2}u_{3,\neq}\|_{X_{b}}^{2}\leq CA^{\frac13}E_{5,2}^{2}(t).
		\end{aligned}
	\end{equation}
	Moreover, for $ \|\partial_{y}V\|_{H^3}\leq C\left(1+A^{-1}\|\triangle\mathbf{U}_{2}\|_{H^{2}} \right)\leq C, $ one deduces
	\begin{equation}\label{d}
		\begin{aligned}
			&\|\partial_{z}(\partial_{y}V\partial_{x}W)\|_{L^{2}}
			\leq\|\partial_{y}V\|_{H^3}\|\partial_{x}W\|_{H^{1}}\leq C\|\nabla \partial_{x}W\|_{L^{2}},\\
			&\|{\rm e}^{bA^{-\frac13}t}
			\partial_{z}(\partial_{y}V\partial_{x}W)\|_{L^{2}L^{2}}^{2}
			\leq C\|{\rm e}^{bA^{-\frac13}t}\nabla\partial_{x}W\|_{L^{2}L^{2}}^{2}
			\leq CA^{\frac13}E_{5,2}^{2}(t).
		\end{aligned}
	\end{equation}
	Recall that $G_{3,3}=(u_{\neq}\cdot\nabla u_{3,\neq})_{\neq}$ in (\ref{g G}), by Lemma \ref{lemma_neq1}, and we get
	\begin{equation}\label{d1}
		\|{\rm e}^{bA^{-\frac13}t}\nabla G_{3,3}\|_{L^{2}L^{2}}^{2}\leq C\left(A^{\frac{\alpha}{3}+\frac76}E_{2}^{2}E_{5}^{2}+A^{\frac43}E_{2}^{4} \right),
	\end{equation}
	where $\alpha\in(\frac12, \frac34).$ 
	
	Collecting (\ref{c})-(\ref{d1}), and using (\ref{estimate P}), Lemma \ref{lem:U1} and Lemma \ref{lem:g}, we get from (\ref{u3 hatQ}) that
	\begin{equation}\label{u3 neq''}
		\begin{aligned}
			&	\|\triangle u_{3,\neq}-2\partial_{x}W^{(3)}\|_{X_{b}}^{2}
			\leq C\left(\|(u_{\rm in})_{\neq}\|_{H^{2}}^{2}+A^{\frac23}E_{5,2}^{2}(t)+A^{\frac{\alpha}{3}+\frac16}E_{2}^{2}E_{5}^{2}
			+A^{\frac13}E_{2}^{4}+\epsilon^{2}A^{\frac23}E_{5,1}^{2}(t)+1 \right)
		\end{aligned}
	\end{equation}
	provided with $A\geq \max\{A_{5}, E_{2}^{\frac{24}{5-6\alpha}}, E_{2}^{\frac{24}{3-4\alpha}}, (\|(u_{1,\rm in})_{0}\|_{H^{1}}^{2}+H_{1}^{2})^{\frac32}\}=:A_{6,1}.$
	
	{\textbf{Step II. Estimate $ \|\triangle W^{(1)}\|_{X_{b}}. $}}
	By (\ref{eq:Lv}),  there holds
	\begin{equation*}
		\widetilde{\mathcal{L}_{V}}W+\frac{\mathbf{U}_{1}\partial_{x}W}{A}+\frac{G^{(1)}+G^{(2)}}{A}=\left(\partial_{t}\kappa-\frac{\triangle\kappa}{A} \right)u_{3,\neq}-\frac{2\nabla\kappa\cdot\nabla u_{3,\neq}}{A}.
	\end{equation*}	
	Using the following decomposition
	\begin{equation*}
		\nabla\kappa\cdot\nabla u_{3,\neq}=\rho_{1}\nabla V\cdot\nabla u_{3,\neq}+\rho_{2}(\partial_{z}-\kappa\partial_{y})u_{3,\neq}
	\end{equation*}
	and $ W=W^{(1)}+\frac{1}{A} W^{(2)}, $ we get
	\begin{equation}\label{tilde Lv Q}
		\begin{aligned}
			&\widetilde{\mathcal{L}_{V}}W^{(1)}+\frac{\mathbf{U}_{1}\partial_{x}W}{A}+\frac{G^{(1)}+G^{(2)}}{A}-\left(\partial_{t}\kappa-\frac{\triangle\kappa}{A} \right)u_{3,\neq}\\=&-\frac{2}{A}\left(\rho_{1}\nabla V\cdot\nabla u_{3,\neq}+\rho_{2}(\partial_{z}-\kappa\partial_{y})u_{3,\neq} \right)-\frac{1}{A}\widetilde{\mathcal{L}_{V}}W^{(2)}=-\frac{2}{A}\rho_{2}(\partial_{z}-\kappa\partial_{y})u_{3,\neq}-\frac{1}{A}J_{11},
		\end{aligned}
	\end{equation}
	where 
	\begin{equation}\label{define J11}
		J_{11}=\widetilde{\mathcal{L}_{V}}W^{(2)}+2\rho_{1}\nabla V\cdot\nabla u_{3,\neq}.
	\end{equation}
	Applying {Proposition \ref{tilde Lv}} to (\ref{tilde Lv Q}), one deduces
	\begin{equation}\label{tildeQ''}
		\begin{aligned}
			\|\triangle W^{(1)}\|_{X_{b}}^{2}\leq&C\big( \|\triangle W^{(1)}(0)\|_{L^{2}}^{2}+A^{-1}\|{\rm e}^{bA^{-\frac13}t}\nabla\left(\rho_{2}(\partial_{z}-\kappa\partial_{y})u_{3,\neq} \right)\|_{L^{2}L^{2}}^{2}\\&+A^{-1}\|{\rm e}^{bA^{-\frac13}t}\nabla J_{11}\|_{L^{2}L^{2}}^{2}+A\|{\rm e}^{bA^{-\frac13}t}\nabla((\partial_{t}\kappa-A^{-1}\triangle\kappa)u_{3,\neq})\|_{L^{2}L^{2}}^{2}\\&+A^{-1}\|{\rm e}^{bA^{-\frac13}t}\nabla(\mathbf{U}_{1}\partial_{x}W)\|_{L^{2}L^{2}}^{2}+A^{-1}\|{\rm e}^{bA^{-\frac13}t}\nabla(G^{(1)}+G^{(2)})\|_{L^{2}L^{2}}^{2}
			\big).
		\end{aligned}
	\end{equation}
	It follows from Lemma \ref{lem:kappa} that $ \|\rho_{2}\|_{H^{2}}\leq C\epsilon $ and
	\begin{equation}\label{kappa t}
		\|\partial_{t}\kappa-A^{-1}\triangle\kappa\|_{H^{1}}\leq \|\partial_{t}\kappa\|_{H^{1}}+CA^{-1}\|\kappa\|_{H^{3}}\leq CA^{-1}\epsilon.
	\end{equation}
	Hence we have
	\begin{equation}\label{2_11}
		\begin{aligned}
			&\|\nabla\left(\rho_{2}(\partial_{z}-\kappa\partial_{y})u_{3,\neq} \right)\|_{L^{2}}\leq C\|\rho_{2}\|_{H^{2}}\|\nabla(\partial_{z}-\kappa\partial_{y})u_{3,\neq}\|_{L^{2}}\leq C\epsilon\|\nabla\partial_{x}(\partial_{z}-\kappa\partial_{y})u_{3,\neq}\|_{L^{2}}.
		\end{aligned}
	\end{equation}
	Besides, using (\ref{kappa t}) and Lemma \ref{sob_14}, there holds
	\begin{equation}\label{2_12}
		\begin{aligned}
			&\|\nabla\left((\partial_{t}\kappa-A^{-1}\triangle\kappa)u_{3,\neq} \right)\|_{L^{2}}
			\leq CA^{-1}\epsilon\left(\|\nabla\partial_{x}^{2}u_{3,\neq}\|_{L^{2}}+\|\nabla\partial_{x}(\partial_{z}-\kappa\partial_{y})u_{3,\neq}\|_{L^{2}} \right).
		\end{aligned}
	\end{equation}
	For $ J_{11} $ in (\ref{define J11}), we rewrite it as follows:
	\begin{equation}\label{J11}
		\begin{aligned}
			J_{11}=&\mathcal{L}_{V}W^{(2)}-2(\partial_{y}+\kappa\partial_{z})\triangle^{-1}(\partial_{y}V\partial_{x}W^{(2)})+2\rho_{1}\nabla V\cdot\nabla u_{3,\neq}\\=&-2(\partial_{y}+\kappa\partial_{z})\triangle^{-1}(\partial_{y}V\partial_{x}W^{(2)})+\rho_{1}\nabla V\cdot\nabla u_{3,\neq}\\=&-(\partial_{y}+\kappa\partial_{z})\triangle^{-1}(\partial_{y}VJ_{12})-(\partial_{y}+\kappa\partial_{z})\triangle^{-1}(\partial_{y}V\rho_{1}\triangle u_{3,\neq})+\rho_{1}\nabla V\cdot\nabla u_{3,\neq},
		\end{aligned}
	\end{equation}
	where $ J_{12}=2\partial_{x}W^{(2)}-\rho_{1}\triangle u_{3,\neq}=2\partial_{x}(W^{(2)}-\rho_{1}W^{(3)})-\rho_{1}\left(\triangle u_{3,\neq}-2\partial_{x}W^{(3)} \right). $
	Using Lemma \ref{lem:kappa} and (\ref{V1' infty}), we arrive
	\begin{equation*}
		\begin{aligned}
			J_{12}\leq C\left(\|\partial_{x}(W^{(2)}-\rho_{1}W^{(3)})\|_{L^{2}}+\epsilon\|\triangle u_{3,\neq}-2\partial_{x}W^{(3)}\|_{L^{2}} \right)
		\end{aligned}
	\end{equation*}
	and
	\begin{equation}\label{J12 0}
		\begin{aligned}
			&	\|(\partial_{y}+\kappa\partial_{z})\triangle^{-1}(\partial_{y}VJ_{12})\|_{H^{1}}\leq C\left(\|\partial_{y}\triangle^{-1}(\partial_{y}VJ_{12})\|_{H^{1}}+\|\kappa\|_{H^{3}}\|\partial_{z}\triangle^{-1}(\partial_{y}VJ_{12})\|_{H^{1}} \right)\\&\leq C\|J_{12}\|_{L^{2}}\leq C\left(\|\partial_{x}(W^{(2)}-\rho_{1}W^{(3)})\|_{L^{2}}+\epsilon\|\triangle u_{3,\neq}-2\partial_{x}W^{(3)}\|_{L^{2}} \right).
		\end{aligned}
	\end{equation}	
	Thanks to $ \nabla V\cdot\nabla u_{3,\neq}=\partial_{y}V(\partial_{y}+\kappa\partial_{z})u_{3,\neq}, $ there holds
	\begin{equation*}
		\begin{aligned}
			&-(\partial_{y}+\kappa\partial_{z})\triangle^{-1}(\partial_{y}V\rho_{1}\triangle u_{3,\neq})+\rho_{1}\nabla V\cdot\nabla u_{3,\neq}\\=&-\left[(\partial_{y}+\kappa\partial_{z})\triangle^{-1}, \partial_{y}V\rho_{1} \right]\triangle u_{3,\neq}\\=&-\left[\partial_{y}\triangle^{-1}, \partial_{y}V\rho_{1} \right]\triangle u_{3,\neq}-\kappa\left[\partial_{z}\triangle^{-1},\partial_{y}V\rho_{1} \right]\triangle u_{3,\neq}.
		\end{aligned}
	\end{equation*}
	From this, along with (\ref{V1' infty}), Lemma \ref{lem:kappa} and Lemma \ref{sob_14}, we get
	\begin{equation}\label{J12 1}
		\begin{aligned}
			&\|-(\partial_{y}+\kappa\partial_{z})\triangle^{-1}\left(\partial_{y}V\rho_{1}\triangle u_{3,\neq} \right)+\rho_{1}\nabla V\cdot\nabla u_{3,\neq}\|_{H^{1}}\\\leq&\|\left[\partial_{y}\triangle^{-1},\partial_{y}V\rho_{1} \right]\triangle u_{3,\neq}\|_{H^{1}}+\|\kappa\|_{L^{\infty}}\|\left[\partial_{z}\triangle^{-1},\partial_{y}V\rho_{1} \right]\triangle u_{3,\neq}\|_{H^{1}}\\\leq&C\epsilon\left(\|\nabla\partial_{x}^{2}u_{3,\neq}\|_{L^{2}}+\|\partial_{x}(\partial_{z}-\kappa\partial_{y})\nabla u_{3,\neq}\|_{L^{2}} \right).
		\end{aligned}
	\end{equation}	
	Then we conclude from (\ref{J11}), (\ref{J12 0}) and (\ref{J12 1}) that
	\begin{equation}\label{J11 H1}
		\begin{aligned}
			\|\nabla J_{11}\|_{L^{2}}\leq& C\left(\|\partial_{x}(W^{(2)}-\rho_{1}W^{(3)})\|_{L^{2}}+\epsilon\|\triangle u_{3,\neq}-2\partial_{x}W^{(3)}\|_{L^{2}} \right)\\&+C\epsilon\left(\|\nabla\partial_{x}^{2}u_{3,\neq}\|_{L^{2}}+\|\nabla\partial_{x}(\partial_{z}-\kappa\partial_{y})u_{3,\neq}\|_{L^{2}} \right).
		\end{aligned}
	\end{equation}
	
	Collecting (\ref{u3 neq''}), (\ref{2_11}), (\ref{2_12}), (\ref{J11 H1})  and (iii) of {Lemma \ref{lem W}}, when $A\geq A_{6,1},$ one obtains
	\begin{equation*}
		\begin{aligned}
			&A^{-1}\|{\rm e}^{bA^{-\frac13}t}\nabla J_{11}\|_{L^{2}L^{2}}^{2}+A^{-1}\|{\rm e}^{bA^{-\frac13}t}\nabla\left(\rho_{2}(\partial_{z}-\kappa\partial_{y})u_{3,\neq} \right)\|_{L^{2}L^{2}}^{2}\\&+A\|{\rm e}^{bA^{-\frac13}t}\nabla\left((\partial_{t}\kappa-\nu\triangle\kappa)u_{3,\neq} \right)\|_{L^{2}L^{2}}^{2}\\\leq&CA^{-\frac23}\left(\|\partial_{x}\left(W^{(2)}-\rho_{1}W^{(3)} \right)\|_{X_{b}}^{2}+\epsilon^{2}\|\triangle u_{3,\neq}-2\partial_{x}W^{(3)}\|_{X_{b}}^{2} \right)\\&+C\epsilon^{2}\left(\|\partial_{x}^{2}u_{3,\neq}\|_{X_{b}}^{2}+\|\partial_{x}(\partial_{z}-\kappa\partial_{y})u_{3,\neq}\|_{X_{b}}^{2} \right)\\\leq&C\epsilon^{2}\left(\|(u_{\rm in})_{\neq}\|_{H^{2}}^{2}+E_{5,1}^{2}(t)+E_{5,2}^{2}(t)+1 \right),
		\end{aligned}
	\end{equation*}
	Combining above with Lemma \ref{lem:U1} and Lemma \ref{lem:G1 G2}, when $$A\geq C\max\{\epsilon^{-6}(\|(u_{1,\rm in})_{0}\|_{H^{1}}^{2}+H_{1}^{2})^{3}, (\epsilon^{-1}E_{2})^{\frac{12}{3-2\alpha}}, (E_2E_5)^{\frac{12}{3-2\alpha}}, A_{6,1}\}=:A_{6,2},$$ we get from (\ref{tildeQ''}) that
	\begin{equation*}
		\begin{aligned}
			\|\triangle W^{(1)}\|_{X_{b}}^{2}\leq&C\left(\|(u_{\rm in})_{\neq}\|_{H^{2}}^{2}+\epsilon^{2}E_{5,1}^{2}(t)+\epsilon^{2}E_{5,2}^{2}(t)+1 \right)+CA^{-\frac12}\left(\|(u_{1,\rm in})_{0}\|_{H^{1}}^{2}+H_{1}^{2}\right)\|\triangle W^{(1)}\|_{X_{b}}^{2}.
		\end{aligned}
	\end{equation*}
	This implies that
	\begin{equation}\label{bar Q''}
		\|\triangle W^{(1)}\|_{X_{b}}^{2}\leq C\left(\|(u_{\rm in})_{\neq}\|_{H^{2}}^{2}+\epsilon^{2}E_{5,1}^{2}(t)+\epsilon^{2}E_{5,2}^{2}(t)+1 \right).
	\end{equation}

	{\textbf{Step III. Estimate $  \|\partial_{x}^{2}u_{j,\neq}\|_{X_{b}}+\|\partial_{x}(\partial_{z}-\kappa\partial_{y})u_{j,\neq}\|_{X_{b}}. $}}
	As $W=W^{(1)}+\frac{1}{A}W^{(2)},$ for $j\in\{2,3\},$ we rewrite (\ref{ini_6}) into
	\begin{equation*}
		\begin{aligned}
			\mathcal{L}_{V}u_{j,\neq}=&2\partial_{j}\triangle^{-1}(\partial_{y}V\partial_{x}W^{(1)})+\frac{2}{A}\partial_{j}\triangle^{-1}(\partial_{y}V\partial_{x}W^{(2)})-\frac{\mathbf{U}_{1}\partial_{x}u_{j,\neq}}{A}-\frac{g_{j,1}+g_{j,2}+G_{j,3}}{A}\\&+\frac{2\partial_{j}\triangle^{-1}(\partial_{y}g_{2,2}+\partial_{z}g_{3,2})}{A}+\frac{2\partial_{j}\triangle^{-1}(P_{1,1}+P_{1,2}+P_{1,3})}{A}.
		\end{aligned}
	\end{equation*}
	Applying Proposition \ref{Lvf} to it, there holds
	\begin{equation}\label{Uj0}
		\begin{aligned}
			&\|\partial_{x}u_{j,\neq}\|_{X_{b}}^{2}+\|\partial_{x}(\partial_{z}-\kappa\partial_{y})u_{j,\neq}\|_{X_{b}}^{2}\\
			\leq&C\big(\|(u_{\rm in})_{\neq}\|_{H^{2}}^{2}+\|{\rm e}^{bA^{-\frac13}t}\partial_{j}(\partial_{y}V\partial_{x}W^{(1)})\|_{L^{2}L^{2}}^{2}+A^{-\frac53}
			\|{\rm e}^{bA^{-\frac13}t}\partial_{y}V\partial_{x}^{2}W^{(2)}\|_{L^{2}L^{2}}^{2}\\&+A^{-1}\|{\rm e}^{bA^{-\frac13}t}\mathbf{U}_{1}\partial_{x}^{2}u_{j,\neq}\|_{L^{2}L^{2}}^{2}+A^{-1}\|{\rm e}^{bA^{-\frac13}t}\partial_{x}(g_{j,1}+G_{j,3})\|_{L^{2}L^{2}}^{2}\\&+A^{-1}\|{\rm e}^{bA^{-\frac13}t}\nabla(g_{2,2}+g_{3,2})\|_{L^{2}L^{2}}^{2}+A^{-1}\|{\rm e}^{bA^{-\frac13}t}(P_{1,1}+P_{1,2}+P_{1,3})\|_{L^{2}L^{2}}^{2}
			\big),
		\end{aligned}
	\end{equation}
	where we use
	$\|\partial_{x}^{2}f\|_{L^{2}}+\|\partial_{x}(\partial_{z}-\kappa\partial_{y})f\|_{L^{2}}\leq C(1+\|\kappa\|_{L^{\infty}})\|\partial_{x}\nabla f\|_{L^{2}}\leq C\|\partial_{x}\nabla f\|_{L^{2}}.$
	
	By Lemma \ref{u1_hat2}, we arrive
	\begin{equation*}
		\begin{aligned}
			\|{\rm e}^{bA^{-\frac13}t}\partial_{j}(\partial_{y}V\partial_{x}W^{(1)})\|_{L^{2}L^{2}}^{2}\leq&C\left(\|\nabla V\|_{L^{\infty}L^{\infty}}^{2}+\|\nabla^{2}V\|_{L^{\infty}L^{\infty}}^{2} \right)\|{\rm e}^{bA^{-\frac13}t}\nabla\partial_{x}W^{(1)}\|_{L^{2}L^{2}}^{2}\\\leq&C\left(1+\frac{\|\triangle\mathbf{U}_{2}\|_{L^{\infty}H^{2}}^{2}}{A} \right)\|\triangle W^{(1)}\|_{X_{b}}^{2}\leq C\|\triangle W^{(1)}\|_{X_{b}}^{2}
		\end{aligned}
	\end{equation*}
	and
	\begin{equation*}
		A^{-\frac53}\|{\rm e}^{bA^{-\frac13}t}\partial_{y}V\partial_{x}^{2}W^{(2)}\|_{L^{2}L^{2}}^{2}\leq A^{-\frac53}\|\partial_{y}V\|_{L^{\infty}L^{\infty}}^{2}\|{\rm e}^{bA^{-\frac13}t}\partial_{x}^{2}W^{(2)}\|_{L^{2}L^{2}}^{2}\leq CA^{-\frac43}\|\partial_{x}^{2}W^{(2)}\|_{X_{b}}^{2}.
	\end{equation*}
	Due to Lemma \ref{lem:kappa} and Lemma \ref{lem:g}, one obtains
	\begin{equation*}
		\begin{aligned}
			\|{\rm e}^{bA^{-\frac13}t}\partial_{x}g_{j,1}\|_{L^{2}L^{2}}^{2}\leq C\left(\|{\rm e}^{bA^{-\frac13}t}\nabla(g_{2,1}+\kappa g_{3,1})\|_{L^{2}L^{2}}^{2}+\|{\rm e}^{bA^{-\frac13}t}\partial_{x}g_{3,1}\|_{L^{2}L^{2}}^{2} \right)\leq C\epsilon^{2}AE_{5,2}^{2}(t).
		\end{aligned}
	\end{equation*}
	Moreover, it follows from Lemma \ref{lemma_neq1} that
	\begin{equation*}
		\begin{aligned}
			\|{\rm e}^{bA^{-\frac13}t}\partial_{x}G_{j,3}\|_{L^{2}L^{2}}^{2}\leq C\|{\rm e}^{2aA^{-\frac13}t}\partial_{x}(u_{\neq}\cdot\nabla u_{\neq})\|_{L^{2}L^{2}}^{2}\leq CA^{\frac16+\alpha}E_{2}^{4}.
		\end{aligned}
	\end{equation*}
	By combining the above estimates and using Lemma \ref{lem:U1}, Lemma \ref{lem:g} and (\ref{estimate P}), 
	when $$A\geq C\max\{\epsilon^{\frac{-3}{1-\alpha}}\left(\|(u_{1,\rm in})_{0}\|_{H^{1}}^{2}+H_{1}^{2}\right)^{\frac{3}{2(1-\alpha)}}, A_{6,2} \}=:A_{6}.$$
	we get from (\ref{Uj0}) that
	\begin{equation}\label{result Uj}
		\begin{aligned}
			&\|\partial_{x}u_{j,\neq}\|_{X_{b}}^{2}+\|\partial_{x}(\partial_{z}-\kappa\partial_{y})u_{j,\neq}\|_{X_{b}}^{2}\\\leq&C\left(
			\|(u_{\rm in})_{\neq}\|_{H^{2}}^{2}+\|\triangle W^{(1)}\|_{X_{b}}^{2}+A^{-\frac43}\|\partial_{x}^{2}W^{(2)}\|_{X_{b}}^{2}+\epsilon^{2}E_{5,2}^{2}(t)+1
			\right).
		\end{aligned}
	\end{equation}

	{\textbf{Step IV. Estimate $ E_{5,1}^{2}(t)+E_{5,2}^{2}(t) $ and $ E_{4}^{2}(t). $}}
	For $ W=W^{(1)}+\frac{1}{A} W^{(2)}, $ there holds
	\begin{equation*}
		\begin{aligned}
			\|\partial_{x}\nabla W\|_{X_{b}}\leq& \frac{1}{A}\|\partial_{x}\nabla W^{(2)} \|_{X_{b}}
			+\|\partial_{x}\nabla W^{(1)}\|_{X_{b}}\leq \frac{1}{A}\|\partial_{x}\nabla W^{(2)}\|_{X_{b}}+\|\triangle W^{(1)}\|_{X_{b}},
		\end{aligned}
	\end{equation*}
	which along with (\ref{result Uj}) gives
	\begin{equation*}
		\begin{aligned}
			E_{5,2}^{2}(t)\leq&C\big(
			\|(u_{\rm in})_{\neq}\|_{H^{2}}^{2}+\|\triangle W^{(1)}\|_{X_{b}}^{2}+A^{-\frac43}\|\partial_{x}^{2}W^{(2)}\|_{X_{b}}^{2}+\epsilon^{2}E_{5,2}^{2}(t)+A^{-2}\|\partial_{x}\nabla W^{(2)}\|_{X_{b}}^{2}+1
			\big).
		\end{aligned}
	\end{equation*}
	By (\ref{bar Q''}) and (ii) of Lemma \ref{lem W}, the above inequality indicates that
	\begin{equation*}
		E_{5,2}^{2}(t)\leq C\left(\|(u_{\rm in})_{\neq}\|_{H^{2}}^{2}+\epsilon^{2}E_{5,1}^{2}(t)+1 \right)+C\epsilon^{2}E_{5,2}^{2}(t).
	\end{equation*}
	Taking $\epsilon$ small enough satisfying $C\epsilon^{2}\leq\frac12,$  we have
	\begin{equation}\label{E6 1}
		\begin{aligned}
			E_{5,2}^{2}(t)\leq C\left(\|(u_{\rm in})_{\neq}\|_{H^{2}}^{2}+\epsilon^{2}E_{5,1}^{2}(t)+1\right).
		\end{aligned}
	\end{equation}
	Using (\ref{u3 neq''}) and (i) of Lemma \ref{lem W}, one deduces
	\begin{equation}\label{U3''}
		\begin{aligned}
			\|\triangle u_{3,\neq}\|_{X_{b}}^{2}\leq&C\left(\|\triangle u_{3,\neq}-2\partial_{x}W^{(3)}\|_{X_{b}}^{2}+\|\partial_{x}W^{(3)}\|_{X_{b}}^{2} \right)
			\\\leq&C\left(\|(u_{\rm in})_{\neq}\|_{H^{2}}^{2}+A^{\frac43}E_{5,2}^{2}(t)+A^{\frac{\alpha}{3}+\frac16}E_{2}^{2}E_{5}^{2}
			+A^{\frac13}E_{2}^{4}+\epsilon^{2}A^{\frac23}E_{5,1}^{2}(t)+1 \right).
		\end{aligned}
	\end{equation}
	Combining (\ref{E6 1}) and (\ref{U3''}), when $A\geq A_{6},$ there holds
	\begin{equation*}
		E_{5,1}^{2}(t)+E_{5,2}^{2}(t)=A^{-\frac43}\|\triangle u_{3,\neq}\|_{X_{b}}^{2}+E_{5,2}^{2}(t)\leq C\left(\|(u_{\rm in})_{\neq}\|_{H^{2}}^{2}+1 \right)+C\epsilon^{2}E_{5,1}^{2}(t).
	\end{equation*}
	Choosing $\epsilon$ small enough satisfying $C\epsilon^{2}\leq\frac12$, we conclude that	
	\begin{equation*}
		E_{4}^{2}(t)\leq C(E_{5,1}^{2}(t)+E_{5,2}^{2}(t))\leq C\left(\|(u_{\rm in})_{\neq}\|_{H^{2}}^{2}+1 \right).
	\end{equation*}
	
	The proof is complete.
\end{proof}

\begin{corollary}\label{cor:E2}
	Under the assumptions of Theorem \ref{result0} and \eqref{assumption}, according to Lemma \ref{lem:E_21}, Lemma \ref{lem:E2(t)} and Lemma \ref{E5 E6}, there exists a positive constant $A\geq\max\{A_{3}, A_{4}, A_{6}\}=:A_{7},$ such that if $A\geq A_{7}$, there holds
	\begin{equation*}
		\begin{aligned}
			&E_{2}(t)\leq C\left(\|(\partial_{x}^{2}n_{\rm in})_{\neq}\|_{L^{2}}+\|(u_{\rm in})_{\neq}\|_{H^{2}}+1 \right)=:E_{2}.
		\end{aligned}
	\end{equation*}
\end{corollary}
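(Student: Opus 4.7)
The plan is to prove Corollary \ref{cor:E2} by simply chaining together the three preceding lemmas, since by construction the energy $E_2(t)$ was defined as the sum $E_{2,1}(t) + E_{2,2}(t)$ and each summand has already been bounded in a previous lemma. The only genuine content of the corollary is to verify that the bound on $E_{2,2}$ from Lemma \ref{lem:E2(t)}, which involves $E_4(t)$ on the right-hand side, can be made independent of $E_4(t)$ by invoking Lemma \ref{E5 E6}.

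First, I would require $A \geq A_7 := \max\{A_3, A_4, A_6\}$, so that the hypotheses of all three lemmas are simultaneously satisfied on $[0,T]$. Under the assumption \eqref{assumption}, Lemma \ref{lem:E_21} gives the bound
\begin{equation*}
E_{2,1}(t) = \|\partial_x^2 n_{\neq}\|_{X_b} \leq C\left(\|(\partial_x^2 n_{\rm in})_{\neq}\|_{L^2} + 1\right),
\end{equation*}
while Lemma \ref{lem:E2(t)} yields
\begin{equation*}
E_{2,2}(t) \leq C\left(\|(u_{\rm in})_{\neq}\|_{H^2} + E_4(t) + 1\right).
\end{equation*}
Next, I would invoke Lemma \ref{E5 E6} to replace the $E_4(t)$ factor by the data: when $A \geq A_6$ one has $E_4(t) \leq C(\|(u_{\rm in})_{\neq}\|_{H^2} + 1)$, independently of $t$.

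Adding the two bounds and absorbing $E_4(t)$ via the preceding step then yields
\begin{equation*}
E_2(t) = E_{2,1}(t) + E_{2,2}(t) \leq C\left(\|(\partial_x^2 n_{\rm in})_{\neq}\|_{L^2} + \|(u_{\rm in})_{\neq}\|_{H^2} + 1\right),
\end{equation*}
which is precisely the desired bound with the constant identified as the new definition of $E_2$. There is no genuine obstacle here; the work was already done in Lemmas \ref{lem:E_21}, \ref{lem:E2(t)} and \ref{E5 E6}. The only subtlety worth flagging is consistency with the bootstrap assumption \eqref{assumption}: the resulting $E_2$ is indeed a constant depending only on the initial data (and on the previously fixed $E_3$, $H_1$, etc.), so the bound $E_2(t) \leq E_2$ closes the non-zero-mode part of the bootstrap argument used in the proof of Theorem \ref{result0}.
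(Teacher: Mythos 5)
Your proposal is correct and matches the paper's own argument: the corollary is, by design, a direct aggregation of Lemma \ref{lem:E_21}, Lemma \ref{lem:E2(t)} and Lemma \ref{E5 E6}, with $A_7 = \max\{A_3, A_4, A_6\}$ guaranteeing that all three hypotheses are in force simultaneously. The one step of genuine content — replacing the $E_4$ term appearing in the bound for $E_{2,2}$ by $C(\|(u_{\rm in})_{\neq}\|_{H^2}+1)$ via Lemma \ref{E5 E6} — is exactly what you do, so the argument is complete.
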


\appendix
\section{Some useful estimates and lemmas in the proof}\label{sec7}
\subsection{Several useful lemmas}
We first prove an embedding inequality for non-zero modulus functions.
\begin{lemma}\label{lem: poincare}
	Let $f$ be a function such that $f_{\neq}\in H^{1}(\mathbb{T}^{3}),$ there holds
	\begin{equation*}
		\|f_{\neq}\|_{L^{2}(\mathbb{T}^{3})}\leq C\|\partial_{x}f_{\neq}\|_{L^{2}(\mathbb{T}^{3})}\leq C\|\nabla f_{\neq}\|_{L^{2}(\mathbb{T}^{3})}.
	\end{equation*}
\end{lemma}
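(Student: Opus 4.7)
The plan is to prove this inequality via Fourier analysis, exploiting precisely the structural property that distinguishes $f_{\neq}$ from a general function: the absence of any Fourier mode with $k_1 = 0$. The statement is essentially a Poincaré inequality in the $x$-direction, where the lower bound on $|k_1|$ is provided for free by the definition of the non-zero mode projector $P_{\neq}$.

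First, I would write out the Fourier series of $f_{\neq}$, following the conventions established at the start of the paper:
\begin{equation*}
f_{\neq}(t,x,y,z) = \sum_{k_1 \neq 0,\, k_2, k_3 \in \mathbb{Z}} \widehat{f}_{k_1,k_2,k_3}(t)\, e^{i(k_1 x + k_2 y + k_3 z)}.
\end{equation*}
Applying Plancherel's theorem on $\mathbb{T}^3$ yields
\begin{equation*}
\|f_{\neq}\|_{L^2(\mathbb{T}^3)}^2 = |\mathbb{T}|^3 \sum_{k_1 \neq 0,\, k_2, k_3} |\widehat{f}_{k_1,k_2,k_3}|^2,
\qquad \|\partial_x f_{\neq}\|_{L^2(\mathbb{T}^3)}^2 = |\mathbb{T}|^3 \sum_{k_1 \neq 0,\, k_2, k_3} k_1^2 \,|\widehat{f}_{k_1,k_2,k_3}|^2.
\end{equation*}
Since the summation ranges only over $k_1 \in \mathbb{Z} \setminus \{0\}$, we have $k_1^2 \geq 1$ on every mode that contributes, which immediately gives
\begin{equation*}
\|f_{\neq}\|_{L^2}^2 \leq \|\partial_x f_{\neq}\|_{L^2}^2,
\end{equation*}
establishing the first inequality with constant $C = 1$.

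The second inequality is trivial by Plancherel once more: since $|\nabla f_{\neq}|^2 = |\partial_x f_{\neq}|^2 + |\partial_y f_{\neq}|^2 + |\partial_z f_{\neq}|^2$ pointwise in Fourier space, we have $\|\partial_x f_{\neq}\|_{L^2} \leq \|\nabla f_{\neq}\|_{L^2}$. There is no genuine obstacle here; the only point requiring mild care is just noting explicitly that the projector $P_{\neq}$ removes exactly the modes for which the $x$-derivative would vanish, so the Poincaré-type bound becomes automatic. The resulting lemma is then used repeatedly elsewhere in the paper (e.g.\ in the control of $\|u_{\neq}\|_{L^2}$ by $\|\partial_x \omega_{2,\neq}\|_{L^2} + \|\triangle u_{2,\neq}\|_{L^2}$, and in the free-energy estimate for $n_0$ where $\|n_{\neq}\|_{L^2}$ is bounded in terms of $\|\partial_x^2 n_{\neq}\|_{L^2}$).
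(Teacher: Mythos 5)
Your proof is correct, and it is exactly the Fourier/Plancherel argument that the paper's omitted proof (``It follows from Poincar\'{e} inequality immediately'') alludes to: removing the $k_1=0$ modes forces $k_1^2\geq 1$, giving the Poincar\'{e} bound in the $x$-direction, and $\|\partial_x f_{\neq}\|_{L^2}\leq\|\nabla f_{\neq}\|_{L^2}$ is immediate.
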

\begin{proof}
	It follows from Poincar$\acute{\rm e}$ inequality immediately and we omit it.
\end{proof}
The following lemma can be used to estimate the $L^{\infty}$ norm for the zero mode.
\begin{lemma}[Lemma 3.1 in \cite{CWW2025}]\label{sob_inf_1}
	For a given function $f(x,y,z)$ and $f_0=\frac{1}{|\mathbb{T}|}\int_{\mathbb{T}}{f}(t,x,y,z)dx,$ we have
	\begin{equation}\label{sob_result_1}
		\begin{aligned}
			&\|f_0\|_{L^{\infty}}\leq C\left(\|\partial_yf_0\|^{\frac{1}{2}}_{L^2}\|f_0\|^{\frac{1}{2}}_{L^2}+\|\partial_z\nabla f_0\|^{\frac{1}{2}}_{L^2}
			\|\partial_zf_0\|^{\alpha-\frac{1}{2}}_{L^2}
			\|f_0\|^{1-\alpha}_{L^2}+\|f_{0}\|_{L^{2}}\right),\\
			&\|f_0\|_{L^{\infty}}
			\leq 
			C\left(\|\partial_yf_0\|^{\frac{1}{2}}_{L^2}\|f_0\|^{\frac{1}{2}}_{L^2}+\|\partial_z\nabla f_0\|^{\alpha-\frac{1}{2}}_{L^2}
			\|\partial_zf_0\|^{\frac{1}{2}}_{L^2}
			\|\partial_yf_0\|^{1-\alpha}_{L^2}+\|f_{0}\|_{L^{2}}\right),\\
			&\|f_{0}\|_{L^{\infty}_{z}L^2_y}\leq C\left(\|f_0\|_{L^2}+\|\partial_zf_0\|_{L^2}^{\alpha}\|f_0\|_{L^2}^{1-\alpha}\right),\\
			&\|f_{0}\|_{L^{\infty}_yL^{2}_{z}}
			\leq C(\|\partial_yf_0\|_{L^2}^{\frac{1}{2}}\|f_0\|_{L^2}^{\frac{1}{2}}+\|f_0\|_{L^2}),
		\end{aligned}
	\end{equation}	
	where $\alpha$ is a constant with $\alpha\in\left(\frac12,\frac34\right).$
\end{lemma}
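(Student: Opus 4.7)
Observe first that since $f_0(t,y,z)$ is the $x$-average of $f$, it is independent of $x$, so the four inequalities in \eqref{sob_result_1} reduce to two-dimensional embedding estimates on $\mathbb{T}^2_{y,z}$. My plan is to derive them all from the one-dimensional Sobolev/fractional-Sobolev embeddings on $\mathbb{T}$, applied in turn to the $y$- and $z$-directions; this accounts for the appearance of the parameter $\alpha\in(1/2,3/4)$, which comes from the critical embedding $H^{\alpha}(\mathbb{T})\hookrightarrow L^{\infty}(\mathbb{T})$.

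First I would prove inequality $(\ref{sob_result_1})_4$, which is the simplest building block. For fixed $z$ the classical 1D Sobolev inequality on $\mathbb{T}$ gives
\[
\|f_0(\cdot,z)\|^2_{L^{\infty}_y}\leq C\big(\|\partial_y f_0(\cdot,z)\|_{L^2_y}\|f_0(\cdot,z)\|_{L^2_y}+\|f_0(\cdot,z)\|^2_{L^2_y}\big).
\]
Taking $L^2$ in $z$ and applying Cauchy--Schwarz on the first term yields $(\ref{sob_result_1})_4$. Next, for $(\ref{sob_result_1})_3$ I would introduce the one-variable function $G(z):=\|f_0(\cdot,z)\|_{L^2_y}$ and apply the fractional 1D Sobolev embedding $H^{\alpha}(\mathbb{T})\hookrightarrow L^{\infty}(\mathbb{T})$ (valid for $\alpha>1/2$), which gives $\|G\|_{L^{\infty}_z}\leq C(\|G\|^{1-\alpha}_{L^2_z}\|G'\|^{\alpha}_{L^2_z}+\|G\|_{L^2_z})$. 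Using $\|G\|_{L^2_z}=\|f_0\|_{L^2}$ and the pointwise bound $|G'(z)|\leq\|\partial_z f_0(\cdot,z)\|_{L^2_y}$ (from differentiating $G^2$), I obtain $\|G'\|_{L^2_z}\leq\|\partial_z f_0\|_{L^2}$, which delivers $(\ref{sob_result_1})_3$.

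For $(\ref{sob_result_1})_1$ and $(\ref{sob_result_1})_2$ my plan is to chain these two one-dimensional embeddings. Concretely, for $(\ref{sob_result_1})_1$ I would first apply the 1D $L^{\infty}_y$-bound pointwise in $z$ to get
\[
\|f_0\|_{L^{\infty}}\leq C\big(\|\partial_y f_0\|^{1/2}_{L^{\infty}_z L^2_y}\|f_0\|^{1/2}_{L^{\infty}_z L^2_y}+\|f_0\|_{L^{\infty}_z L^2_y}\big),
\]
and then control each $L^{\infty}_z L^2_y$ factor by applying $(\ref{sob_result_1})_3$ to $f_0$ and to $\partial_y f_0$. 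Invoking the product-type interpolation $\|\partial_y f_0\|^{1-\alpha}_{L^2}\|\partial_z\partial_y f_0\|^{\alpha}_{L^2}\lesssim \|\partial_y f_0\|_{L^2}+\|\partial_z\nabla f_0\|^{\alpha}_{L^2}\|\partial_y f_0\|^{1-\alpha}_{L^2}$, followed by Young's inequality absorbing lower-order contributions into $\|f_0\|_{L^2}$ and $\|\partial_y f_0\|^{1/2}\|f_0\|^{1/2}$, collapses the various mixed terms into the three listed on the right-hand side of $(\ref{sob_result_1})_1$. For $(\ref{sob_result_1})_2$ I would reverse the order: first pass through $L^{\infty}_y L^2_z$ via the 1D Sobolev inequality in $y$ applied to the $L^2_z$-profile (a symmetric version of $(\ref{sob_result_1})_4$), then use $H^{\alpha}_z$-embedding pointwise to extract the $\|\partial_z f_0\|^{1/2}$ factor and the $\|\partial_z\nabla f_0\|^{\alpha-1/2}$ factor.

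The main obstacle is purely bookkeeping: after the two one-dimensional embeddings are composed, the expansion produces a handful of cross-terms (mixed products of $\|f_0\|_{L^2}$, $\|\partial_y f_0\|_{L^2}$, $\|\partial_z f_0\|_{L^2}$, $\|\partial_z\partial_y f_0\|_{L^2}$, $\|\partial_z^2 f_0\|_{L^2}$), and one must use Young's inequality together with the constraint $\alpha\in(1/2,3/4)$ to consolidate them into exactly the three terms appearing in $(\ref{sob_result_1})_1$ and $(\ref{sob_result_1})_2$. The restriction $\alpha<3/4$ is what guarantees the weight $\alpha-1/2<1/4$ on $\|\partial_z f_0\|_{L^2}$ can be absorbed in the interpolation without producing negative exponents. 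Since an identical statement is proved as Lemma~3.1 in \cite{CWW2025}, I would finally cite that reference for the detailed verification of these Young-inequality manipulations.
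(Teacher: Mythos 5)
Your estimates $(\ref{sob_result_1})_3$ and $(\ref{sob_result_1})_4$ are fine, but the chaining you propose for $(\ref{sob_result_1})_1$ and $(\ref{sob_result_1})_2$ does not close. After you bound $\|f_0\|_{L^\infty}$ by $\|\partial_y f_0\|^{1/2}_{L^{\infty}_z L^2_y}\|f_0\|^{1/2}_{L^{\infty}_z L^2_y}+\|f_0\|_{L^{\infty}_z L^2_y}$ and then apply $(\ref{sob_result_1})_3$ to each factor, the expansion of the product contains, among others, the cross term
\[
\|\partial_y f_0\|^{1/2}_{L^2}\,\|\partial_z f_0\|^{\alpha/2}_{L^2}\,\|f_0\|^{(1-\alpha)/2}_{L^2},
\]
and this quantity is \emph{not} controlled by the right-hand side of $(\ref{sob_result_1})_1$, no matter how you apply Young's inequality. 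To see this, take $f_0=1+\cos(Ny)+\epsilon\cos(Kz)$ with $\epsilon=\sqrt{2}\,e^{-M/2}$, $K=e^{3M/2}$, $N=e^{(3/2+2\alpha)M}$, and let $M\to\infty$. Then $\|f_0\|_{L^2}\sim 1$, $\|\partial_y f_0\|_{L^2}\sim N$, $\|\partial_z f_0\|_{L^2}\sim e^{M}$, $\|\partial_z\nabla f_0\|_{L^2}\sim e^{5M/2}$ (here $\partial_z\partial_y f_0\equiv 0$), and a direct computation gives that both target terms $\|\partial_y f_0\|^{1/2}\|f_0\|^{1/2}$ and $\|\partial_z\nabla f_0\|^{1/2}\|\partial_z f_0\|^{\alpha-1/2}\|f_0\|^{1-\alpha}$ scale like $e^{(3/4+\alpha)M}$, whereas the cross term above scales like $e^{(3/4+3\alpha/2)M}$. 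The ratio diverges like $e^{\alpha M/2}$, so the chained estimate is genuinely weaker than the claimed inequality; it furnishes a valid but useless upper bound and does not prove the lemma. No amount of convexity bookkeeping fixes this, because the exponent on $\|f_0\|_{L^2}$ in the cross term, namely $(1-\alpha)/2$, is strictly smaller than all three exponents $1/2$, $1-\alpha$, $1$ appearing on the right-hand side.

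The argument actually used in \cite{CWW2025} (and mirrored in this paper's proof of Lemma~\ref{sob_inf_2}) is a Fourier-series computation: expand $f_0(y,z)=\sum_{k_3}\widehat{f_0}_{k_3}(y)e^{ik_3 z}$, bound $\|f_0\|_{L^\infty}\le\sum_{k_3}\|\widehat{f_0}_{k_3}\|_{L^\infty_y}\lesssim\sum_{k_3}\bigl(\|\partial_y\widehat{f_0}_{k_3}\|^{1/2}_{L^2_y}\|\widehat{f_0}_{k_3}\|^{1/2}_{L^2_y}+\|\widehat{f_0}_{k_3}\|_{L^2_y}\bigr)$ by the 1D Gagliardo--Nirenberg inequality in $y$, split off the $k_3=0$ mode (which gives $T_1$ and $T_3$ directly), and rewrite the $k_3\neq0$ summand by distributing powers of $|k_3|$ to match the three target factors exactly, e.g.
\[
\|\widehat{f_0}_{k_3}\|^{1/2}\|\partial_y\widehat{f_0}_{k_3}\|^{1/2}
=\frac{\bigl(|k_3|\,\|\partial_y\widehat{f_0}_{k_3}\|\bigr)^{1/2}\bigl(|k_3|\,\|\widehat{f_0}_{k_3}\|\bigr)^{\alpha-1/2}\|\widehat{f_0}_{k_3}\|^{1-\alpha}}{|k_3|^{\alpha}},
\]
then apply generalized H\"older in $k_3$ with the weight $|k_3|^{-\alpha}$ in $\ell^2$, which is summable precisely because $\alpha>1/2$. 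This matches the stated right-hand side term by term with no cross terms. Also note your explanation of the role of $\alpha<3/4$ is off: that upper bound plays no role in this lemma (only $\alpha>1/2$ is used for summability); it enters elsewhere in the paper to control the powers of $A$ that come out of the time-space estimates.
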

The following lemma can be used to estimate the $L^{\infty}$ norm for the non-zero mode. We only prove $\eqref{sob_result_2}_{1,2}$, and the remaining results are similar to Lemma 3.2 in \cite{CWW2025}. The proof is omitted.		

\begin{lemma}\label{sob_inf_2}
	For a given function $g=g(x,y,z)$ and $\alpha\in(\frac{1}{2},\frac{3}{4})$, if $g_0=\frac{1}{|\mathbb{T}|}\int_{\mathbb{T}}{g}(t,x,y,z)dx=0,$ then we have
	\begin{equation}\label{sob_result_2}
		\begin{aligned}
			&\|g\|_{L^{\infty}}\leq 
			C\big(\|\partial_x\nabla g\|^{\frac{1}{2}}_{L^2}\|\partial_x\partial_z g\|^{1-\alpha}_{L^2}
			\|\partial_z^2g\|^{\alpha-\frac{1}{2}}_{L^2}+\|\partial_x\nabla g\|_{L^2}^{\frac12}\|\partial_x g\|_{L^2}^{\frac12}\big),\\
			&\|g\|_{L^{\infty}}\leq 
			C\big(\|\partial_z\nabla g\|^{\frac{1}{2}}_{L^2}\|\partial_x\partial_z g\|^{1-\alpha}_{L^2}
			\|\partial_x^2g\|^{\alpha-\frac{1}{2}}_{L^2}+\|\partial_x\nabla g\|_{L^2}^{\frac12}\|\partial_x g\|_{L^2}^{\frac12}\big),\\
			&\|g\|_{L^{\infty}_{y,z}L^2_{x}}\leq 
			C\min\big\{\|\partial_yg\|^{\frac{1}{2}}_{L^2}\|g\|^{\frac{1}{2}}_{L^2}
			+\|\partial_z\nabla g\|^{\frac{1}{2}}_{L^2}
			\|\partial_zg\|^{\alpha-\frac{1}{2}}_{L^2}
			\|g\|^{1-\alpha}_{L^2}+\|g\|_{L^{2}},\\
			&\qquad\qquad\qquad\qquad\qquad~~\|\partial_yg\|^{\frac{1}{2}}_{L^2}
			\|g\|^{\frac{1}{2}}_{L^2}
			+\|\partial_zg\|^{\frac{1}{2}}_{L^2}
			\|\partial_z\nabla g\|^{\alpha-\frac{1}{2}}_{L^2}
			\|\partial_yg\|^{1-\alpha}_{L^2}+\|g\|_{L^{2}}\big\}
			,\\
			&\|g\|_{L^{\infty}_{x,y}L^2_{z}}\leq C\big(
			\|\partial_xg\|^{\frac{1}{2}}_{L^2}
			\|\partial_x\partial_yg\|^{\alpha-\frac{1}{2}}_{L^2}
			\|\partial_yg\|^{1-\alpha}_{L^2}+\|\partial_xg\|_{L^2}\big),\\
			&\|g\|_{L^{\infty}_{x,z}L^2_{y}}\leq  C\big(\|\partial_xg\|^{\alpha}_{L^2}\|g\|^{1-\alpha}_{L^2}
			+\|\partial_x\partial_zg\|^{\alpha}_{L^2}\|g\|^{1-\alpha}_{L^2}\big),\\
			&\|g\|_{L^{\infty}_{x}L^2_{y,z}}\leq C\|\partial_xg\|_{L^2}^{\alpha}\|g\|_{L^2}^{1-\alpha},\\
			&\|g\|_{L^{\infty}_{z}L^2_{y,x}}\leq C(\|g\|_{L^2}+\|\partial_zg\|_{L^2}^{\alpha}\|g\|_{L^2}^{1-\alpha}),\\
			&\|g\|_{L^{\infty}_{y}L^2_{x,z}}\leq  C\big(\|\partial_yg\|_{L^2}^{\frac{1}{2}}\|g\|^{\frac{1}{2}}_{L^2}+\|g\|_{L^2}\big),\\
			&\|g\|_{L^{\infty}_{x,z}L^2_{y}}\leq  C\big(\|\partial_xg\|^{\alpha}_{L^2}\|g\|^{1-\alpha}_{L^2}
			+\|\partial_x\partial_zg\|^{\frac{1}{2}}_{L^2}\|\partial_xg\|^{\alpha-\frac{1}{2}}_{L^2}\|\partial_zg\|^{\alpha-\frac{1}{2}}_{L^2}\|g\|^{\frac{3}{2}-2\alpha}_{L^2}\big).
		\end{aligned}
	\end{equation}	
\end{lemma}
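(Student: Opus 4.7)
The plan is to establish the first two inequalities of $\eqref{sob_result_2}$ by combining a Poincar\'e bound in the $x$-variable (which uses $g_0=0$) with an anisotropic Sobolev embedding in the $(y,z)$-plane, closing with a Gagliardo--Nirenberg interpolation. This follows the same Fourier-analytic template as the remaining inequalities of Lemma~3.2 in \cite{CWW2025}, so only the new exponent distributions require attention.

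First I would note that $g_0=0$ forces $x\mapsto g(x,y,z)$ to have zero mean on $\mathbb{T}$ for every fixed $(y,z)$, and hence the 1D Sobolev--Poincar\'e inequality gives $\|g(\cdot,y,z)\|_{L^\infty_x}\leq C\|\partial_x g(\cdot,y,z)\|_{L^2_x}$. Taking $L^\infty$ in $(y,z)$ reduces the problem to bounding the 2D slice $F(y,z):=\|\partial_x g(\cdot,y,z)\|_{L^2_x}$ in $L^\infty(\mathbb{T}^2)$. I would then split $g$ into its $k_3=0$ part $g^{(0)}$ and its $k_3\neq 0$ part $g^{(1)}$. The planar Agmon inequality applied to $g^{(0)}$, viewed as a function of $(x,y)$ with zero $x$-mean, yields
$$\|g^{(0)}\|_{L^\infty_{x,y}}\leq C\|\partial_x g\|_{L^2}^{1/2}\|\partial_x\nabla g\|_{L^2}^{1/2},$$
which is precisely the second summand on the right-hand side of both $\eqref{sob_result_2}_1$ and $\eqref{sob_result_2}_2$.

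For $g^{(1)}$, an anisotropic Agmon estimate in the $(y,z)$-plane followed by a Gagliardo--Nirenberg interpolation would distribute the $(y,z)$-derivatives among $\|\partial_x\partial_z g\|_{L^2}$ and $\|\partial_z^2 g\|_{L^2}$ with exponents $1-\alpha$ and $\alpha-\tfrac12$, producing the triple product $\|\partial_x\nabla g\|^{1/2}_{L^2}\|\partial_x\partial_z g\|^{1-\alpha}_{L^2}\|\partial_z^2 g\|^{\alpha-1/2}_{L^2}$ required for $\eqref{sob_result_2}_1$. The second inequality $\eqref{sob_result_2}_2$ is obtained in exactly the same way after swapping the roles of $\partial_x$ and $\partial_z$ in the interpolation step, so the anchor factor $\|\partial_x\nabla g\|^{1/2}$ is replaced by $\|\partial_z\nabla g\|^{1/2}$ and the tail factor $\|\partial_z^2 g\|^{\alpha-1/2}$ is replaced by $\|\partial_x^2 g\|^{\alpha-1/2}$, while the symmetric middle factor $\|\partial_x\partial_z g\|^{1-\alpha}$ is unchanged.

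The main obstacle will be carrying out the Gagliardo--Nirenberg interpolation cleanly so that the exponents $\tfrac12$, $1-\alpha$, and $\alpha-\tfrac12$ emerge with no residual Sobolev loss; in particular, controlling the $k_2$-summation in the dual series that arises from the 2D Agmon step requires a careful balancing between $\|\partial_x\nabla g\|$ (which supplies the $k_2^2$-weight) and the two $k_3$-weighted factors. The constraint $\alpha\in(\tfrac12,\tfrac34)$ is precisely what this balancing demands: $\alpha>\tfrac12$ is needed so that the exponent $\alpha-\tfrac12$ on the highest second-order derivative is strictly positive, while $\alpha<\tfrac34$ keeps the companion exponent $1-\alpha$ above $\tfrac14$, ensuring the interpolation remains in the range where the dual series over $(k_1,k_2,k_3)$ (with $k_1,k_3\neq 0$) converges without introducing higher-order norms not appearing in the statement.
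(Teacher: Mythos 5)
Your proposal follows a genuinely different route from the paper: the paper's proof expands $g$ as a Fourier series in $(x,z)$, applies the one-dimensional Agmon inequality in $y$ to each mode $\widehat g_{k_1,k_3}$, and then sums over $(k_1,k_3)$ after dividing through by the summable weight $|k_1|^{-\alpha}(1+|k_3|^{\alpha})^{-1}$ and using H\"older's inequality; you instead propose a physical-space scheme built on a Poincar\'e reduction in $x$ plus anisotropic Agmon in $(y,z)$. Unfortunately the proposal as stated has a concrete gap in the $g^{(1)}$ estimate, so it does not establish $\eqref{sob_result_2}_1$ (and likewise $\eqref{sob_result_2}_2$).

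The gap is this: your first step replaces $g$ by $F(y,z)=\|\partial_x g(\cdot,y,z)\|_{L^2_x}$, so that every subsequent derivative you place on $F$ carries an implicit $\partial_x$. Concretely, $\|\partial_z^2 F\|_{L^2_{y,z}}\le\|\partial_x\partial_z^2 g\|_{L^2}$, not $\|\partial_z^2 g\|_{L^2}$. But the target factor in $\eqref{sob_result_2}_1$ is $\|\partial_z^2 g\|^{\alpha-\frac12}_{L^2}$ with \emph{no} $\partial_x$, and since all modes of $g$ obey $|k_1|\ge 1$, the version with an extra $\partial_x$ on this factor is a strictly weaker statement (for example, with $g={\rm e}^{iKx+iz}$ the right-hand side of $\eqref{sob_result_2}_1$ is of order $K^{2-\alpha}$, whereas the version with an additional $\partial_x$ on every factor is of order $K^{3-\alpha}$). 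Hence proving the weaker inequality does not give the one claimed. The paper avoids this issue because the $|k_1|$-budget does not get pushed onto every factor: it is placed once in the denominator weight $|k_1|^{\alpha}$, and the numerator is then split by H\"older so that exactly two of the three resulting factors carry a $|k_1|$. There is no direct physical-space analogue of this weight-dividing trick in your sketch, and an anisotropic Agmon step in $(y,z)$ applied to $g^{(1)}$ either produces a third-order derivative $\partial_x\partial_y\partial_z g$ (not in the statement) or reduces to exactly the Fourier computation you were trying to avoid.

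Your explanation of the constraint $\alpha\in(\tfrac12,\tfrac34)$ is also off. The lower bound $\alpha>\tfrac12$ does ensure $\alpha-\tfrac12>0$, but its operative role in the paper's argument is summability of $\sum_{k_1\neq 0,k_3}|k_1|^{-2\alpha}(1+|k_3|)^{-2\alpha}$ in the H\"older step. The upper bound $\alpha<\tfrac34$ has nothing to do with ``keeping $1-\alpha$ above $\tfrac14$ for convergence''; it enters because the interpolation $\|k_3\widehat g\|^{1/2}_{L^2_y}|k_1k_3|^{\alpha-\frac12}\le\|k_1k_3\widehat g\|^{1-\alpha}_{L^2_y}\|k_3^2\widehat g\|^{\alpha-\frac12}_{L^2_y}$ replaces the $|k_1|$-power $\alpha-\tfrac12$ on the left by $1-\alpha$ on the right, and this inequality is only in the correct direction (using $|k_1|\ge1$) when $\alpha-\tfrac12\le 1-\alpha$, i.e.\ $\alpha\le\tfrac34$.
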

\begin{proof}
	Due to $g_0=0$, we denote $	g(x,y,z)$ by
	$g=\sum_{k_1,k_3\in\mathbb{Z},k_1\neq0}\widehat{g}_{k_1,k_3}(y){\rm e}^{i(k_1x+k_3z)},$
	then 
	\begin{equation}\label{a40}
		\begin{aligned}
			\|g\|_{L^{\infty}}\leq \sum_{k_1\neq0,k_3\in\mathbb{Z}}\|\widehat{g}_{k_1,k_3}\|_{L^{\infty}_y}
			\leq C\sum_{k_1\neq0 ,k_3\in\mathbb{Z}}(\|\widehat{g}_{k_1,k_3}\|^{\frac{1}{2}}_{L^2_y}
			\|\partial_y\widehat{g}_{k_1,k_3}\|^{\frac{1}{2}}_{L^2_y}+\|\widehat{g}_{k_1,k_3}\|_{L^2_y}).
		\end{aligned}
	\end{equation}	
	First, there holds
		\begin{align*}
			&\quad \sum_{k_1\neq0 ,k_3\in\mathbb{Z}}\|\widehat{g}_{k_1,k_3}\|^{\frac{1}{2}}_{L^2_y}
			\|\partial_y\widehat{g}_{k_1,k_3}\|^{\frac{1}{2}}_{L^2_y}\\
			&=\sum_{k_1\neq0,k_3\in\mathbb{Z}}
			\frac{\|k_1\partial_y\widehat{g}_{k_1,k_3}\|^{\frac{1}{2}}_{L^2_y}\|k_3\widehat{g}_{k_1,k_3}\|^{\frac{1}{2}}_{L^2_y}
				|k_1k_3|^{\alpha-\frac{1}{2}}
				+\|k_1\partial_y\widehat{g}_{k_1,k_3}\|^{\frac{1}{2}}_{L^2_y}\|\widehat{g}_{k_1,k_3}\|^{\frac{1}{2}}_{L^2_y}
				|k_1|^{\alpha-\frac{1}{2}}}
			{|k_1|^{\alpha}(1+|k_3|^{\alpha})}\\
			&\leq\sum_{k_1\neq0,k_3\in\mathbb{Z}}
			\frac{\|k_1\partial_y\widehat{g}_{k_1,k_3}\|^{\frac12}_{L^2_y}
				\|k_1k_3\widehat{g}_{k_1,k_3}\|^{1-\alpha}_{L^2_y}\|k_3^2\widehat{g}_{k_1,k_3}\|^{\alpha-\frac12}_{L^2_y}
				+\|k_1\partial_y\widehat{g}_{k_1,k_3}\|^{\frac{1}{2}}_{L^2_y}\|\widehat{g}_{k_1,k_3}\|^{\frac{1}{2}}_{L^2_y}
				|k_1|^{\alpha-\frac12}}
			{|k_1|^{\alpha}(1+|k_3|^{\alpha})}.
		\end{align*}
	Using H$\rm \ddot{o}$lder's inequality, we get
	\begin{equation}\label{a41}
		\begin{aligned}
			&\sum_{k_1\neq0 ,k_3\in\mathbb{Z}}\|\widehat{g}_{k_1,k_3}\|^{\frac{1}{2}}_{L^2_y}
			\|\partial_y\widehat{g}_{k_1,k_3}\|^{\frac{1}{2}}_{L^2_y}\leq 
			C\big(\|\partial_x\partial_y g\|^{\frac{1}{2}}_{L^2}\|\partial_x\partial_z g\|^{1-\alpha}_{L^2}
			\|\partial_z^2g\|^{\alpha-\frac{1}{2}}_{L^2}+\|\partial_x\partial_y g\|_{L^2}^{\frac12}\|\partial_x g\|_{L^2}^{\frac12}\big).
		\end{aligned}
	\end{equation}	
	
	Thus, one deduces
	\begin{equation}\label{a42}
		\begin{aligned}
			\quad \sum_{k_1\neq0 ,k_3\in\mathbb{Z}}\|\widehat{g}_{k_1,k_3}\|_{L^2_y}
			&\leq \sum\frac{\|k_1\widehat{g}_{k_1,k_3}\|^{\frac12}_{L^2_y}
				\|k_1k_3\widehat{g}_{k_1,k_3}\|^{1-\alpha}_{L^2_y}\|k_3^2\widehat{g}_{k_1,k_3}\|^{\alpha-\frac12}_{L^2_y}
				+\|k_1\widehat{g}_{k_1,k_3}\|^{\alpha}_{L^2_y}\|\widehat{g}_{k_1,k_3}\|^{1-\alpha}_{L^2_y}}
			{|k_1|^{\alpha}(1+|k_3|^{\alpha})}\\
			&\leq C\big(\|\partial_x g\|^{\frac{1}{2}}_{L^2}\|\partial_x\partial_z g\|^{1-\alpha}_{L^2}
			\|\partial_z^2g\|^{\alpha-\frac{1}{2}}_{L^2}+\|\partial_x g\|_{L^2}\big).
		\end{aligned}
	\end{equation}
	Combining \eqref{a40}, \eqref{a41} and \eqref{a42}, we finish the proof of $\eqref{sob_result_1}_1.$ 
	
	Similarly, we can prove $\eqref{sob_result_1}_2.$ 
\end{proof}

\begin{lemma}\label{weak kappa}
	For a given function $g=g(x,y,z),$ if $g_{0}=\frac{1}{|\mathbb{T}|}\int_{\mathbb{T}}g(t,x,y,z)dx=0,$ there holds
	\begin{equation*}
		\begin{aligned}
			&\|g\|_{L^{\infty}_{y,z}L^{2}_{x}}\leq C\|(\partial_{x}, \partial_{z}-\kappa\partial_{y})g\|_{L^{2}}^{\frac12}\|\nabla(\partial_{x}, \partial_{z}-\kappa\partial_{y})g\|_{L^{2}}^{\frac12},\\
			&\|g\|_{L^{\infty}_{z}L^{2}_{y,x}}\leq C\|(\partial_{x}, \partial_{z}-\kappa\partial_{y})g\|_{L^{2}}.
		\end{aligned}
	\end{equation*}
\end{lemma}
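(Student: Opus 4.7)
The plan is to reduce both inequalities to standard anisotropic Sobolev embeddings, using (i) the smallness of $\kappa$ and its derivatives from Lemma \ref{lem:kappa} to bridge the gap between $\partial_z$ and the good derivative $\partial_z-\kappa\partial_y$, and (ii) the zero $x$-mean condition $g_0=0$ to invoke Poincaré's inequality in $x$ (so that $\|g\|_{L^2}\leq C\|\partial_x g\|_{L^2}$ and, applied to $\partial_y g$, $\|\partial_y g\|_{L^2}\leq C\|\partial_x\partial_y g\|_{L^2}\leq C\|\nabla(\partial_x,\partial_z-\kappa\partial_y)g\|_{L^2}$).

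For the second inequality, I would set $H(z):=\int_{\mathbb{T}^2_{x,y}}g^2\,dxdy$, so that $\|g\|_{L^\infty_z L^2_{x,y}}^2=\|H\|_{L^\infty_z}$, and invoke the one-dimensional embedding $\|H\|_{L^\infty_z}\leq C(\|H\|_{L^1_z}+\|\partial_z H\|_{L^1_z})$. Writing $\partial_z H=2\int g\,\partial_z g\,dxdy$ and expanding $\partial_z g=(\partial_z-\kappa\partial_y)g+\kappa\partial_y g$, the $\kappa\partial_y$ piece equals $\int\kappa\,\partial_y(g^2)\,dxdy=-\int(\partial_y\kappa)g^2\,dxdy$ after integration by parts in $y$. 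Since $\|\partial_y\kappa\|_{L^\infty}\leq C\epsilon$ by Lemma \ref{lem:kappa}, this commutator is absorbed by Young's inequality, and Poincaré in $x$ closes the estimate.

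For the first inequality, I plan to use the same philosophy but at two scales. Setting $F(y,z):=\int g^2\,dx=\|g(\cdot,y,z)\|_{L^2_x}^2$, I would apply the Fourier-based 2D embedding
\[
\|F\|_{L^\infty(\mathbb{T}^2)}^2\leq C\bigl(\|F\|_{L^2}^2+\|\partial_y F\|_{L^2}^2+\|\partial_z F\|_{L^2}^2+\|\partial_y\partial_z F\|_{L^2}^2\bigr),
\]
then express each derivative of $F$ as an integral of products of derivatives of $g$ via product rule, decomposing every $\partial_z g$ as $(\partial_z-\kappa\partial_y)g+\kappa\partial_y g$ and integrating by parts in $y$ to transfer $\partial_y$ onto $\kappa$. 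The resulting error terms carry a factor of $\|\kappa\|_{H^2}\leq C\epsilon$ and are absorbed; what survives is controlled by $\|(\partial_x,\partial_z-\kappa\partial_y)g\|_{L^2}$ and $\|\nabla(\partial_x,\partial_z-\kappa\partial_y)g\|_{L^2}$ after invoking Poincaré in $x$. An equivalent conceptual route is to pass to the near-identity coordinates $(y,z)\mapsto(V,z)$ with $V=y+\mathbf{U}_2/A$, under which $\partial_z-\kappa\partial_y$ becomes the honest partial derivative $\partial_Z|_V$, so standard Sobolev applies isotropically and one transfers back via the $O(\epsilon)$-close Jacobian.

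The main obstacle will be the mixed term $\partial_y\partial_z F$: when expanded, it produces $\partial_y\partial_z g=\partial_y(\partial_z-\kappa\partial_y)g+(\partial_y\kappa)\partial_y g+\kappa\,\partial_y^2 g$, the last contribution involving $\kappa\,\partial_y^2 g$, which is not bounded by the RHS via a naive triangle inequality. The trick is to integrate by parts once more in $y$ against the other factor in the $L^2_{y,z}$-pairing so that $\partial_y$ falls onto $\kappa$ (producing the small factor $\|\partial_y\kappa\|_{L^\infty}$) rather than onto $g$; combined with Poincaré in $x$ applied to $\partial_y g$, every remaining term fits cleanly into $\|(\partial_x,\partial_z-\kappa\partial_y)g\|_{L^2}^{1/2}\|\nabla(\partial_x,\partial_z-\kappa\partial_y)g\|_{L^2}^{1/2}$. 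Careful bookkeeping of the commutators $[\partial_y,\kappa]$, each contributing an $O(\epsilon)$ factor, is the main technical burden.
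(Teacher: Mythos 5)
The paper's proof is the one-line coordinate change you gesture at in your penultimate paragraph: define $G$ by $G(x,V(t,y,z),z)=g(x,y,z)$, so that $(\partial_z-\kappa\partial_y)g=\partial_Z G$ becomes an honest partial derivative and $\partial_X G=\partial_x g$, and then quote the anisotropic $\mathbb{T}^3$ embeddings $\eqref{sob_result_2}_3$, $\eqref{sob_result_2}_7$ for $G$ together with Poincar\'e in $x$ (justified since $g_0=0$ forces $G_0=0$ and $\|G\|_{L^2}\sim\|g\|_{L^2}\leq\|\partial_x g\|_{L^2}$). That is the entire argument.

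Your primary route for the first inequality, however, has a real gap. The $L^2$-based additive 2D embedding applied to $F(y,z)=\int g^2\,dx$ yields, after all the commutator bookkeeping, at best the additive bound
\begin{equation*}
\|g\|_{L^\infty_{y,z}L^2_x}\leq C\bigl(\|(\partial_x,\partial_z-\kappa\partial_y)g\|_{L^2}+\|\nabla(\partial_x,\partial_z-\kappa\partial_y)g\|_{L^2}\bigr),
\end{equation*}
which is strictly weaker (by AM--GM) than the multiplicative form $\|\cdot\|_{L^2}^{1/2}\|\nabla\cdot\|_{L^2}^{1/2}$ that the lemma asserts. The multiplicative exponents are not cosmetic: they are used downstream, e.g.\ in the proof of $(\ref{ineq:U1})_4$, where after inserting this bound under the time integral one applies Cauchy--Schwarz in $t$ to split the two $1/2$-powers into separate $L^2_tL^2$ norms. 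An additive version would instead produce the full square $\|\nabla(\partial_x,\partial_z-\kappa\partial_y)\partial_x u_{j,\neq}\|_{L^2_tL^2}^2$, which is worse by a positive power of $A$ and breaks the bookkeeping in Lemma \ref{lem:U1}. There is also a secondary awkwardness: since $\partial_y F=2\int g\,\partial_y g\,dx$, bounding $\|\partial_y F\|_{L^2(\mathbb{T}^2)}$ by Cauchy--Schwarz in $x$ reintroduces $\|g\|_{L^\infty_{y,z}L^2_x}$ on the right, which can be disentangled but only back to the additive form. The clean way to obtain the multiplicative exponents is exactly the paper's route: change variables, quote the already-multiplicative estimate $\eqref{sob_result_2}_3$ for $G$, and Poincar\'e in $x$; I would make that the main argument rather than an ``equivalent conceptual route.''

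Your treatment of the second inequality --- the 1D embedding applied to $H(z)=\int g^2\,dxdy$, integration by parts in $y$ to move $\partial_y$ off $g$ and onto $\kappa$, and Poincar\'e in $x$ --- is correct; there the claimed bound is already additive (degree one), so your route closes cleanly.
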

\begin{proof}
	Let $G(X,Y,Z)$ such that $G(x,V(t,y,z),z)=g(x,y,z).$ Note that $(\partial_{z}-\kappa\partial_{y})g(x,y,z)=\partial_{Z}G(x,V(t,y,z),z)$ and $\|g\|_{L^{2}}\leq\|\partial_{x}g\|_{L^{2}}.$ Then the results follow from $\eqref{sob_result_2}_{3}$ and $\eqref{sob_result_2}_{7}.$
\end{proof}

\begin{lemma}[{\bf Lemma 5.5} in \cite{wei2}]\label{sob_14}
	Under the conditions of Lemma \ref{lem:kappa},
	if $ \partial_{x}f_{1}=0, P_{0}f_{2}=0, $ it holds that
	\begin{equation*}
		\|f_{1}f_{2}\|_{L^{2}}\leq C\|f_{1}\|_{H^{1}}\left(\|f_{2}\|_{L^{2}}+\|(\partial_{z}-\kappa\partial_{y})f_{2}\|_{L^{2}} \right),
	\end{equation*}
	\begin{equation*}
		\|\nabla\triangle^{-1}(f_{1}f_{2})\|_{L^{2}}\leq C\|f_{1}\|_{L^{2}}\left(\|f_{2}\|_{L^{2}}+\|(\partial_{z}-\kappa\partial_{y})f_{2}\|_{L^{2}} \right),
	\end{equation*}
	\begin{equation*}
		\|\nabla(f_{1}f_{2})\|_{L^{2}}\leq C\|f_{1}\|_{H^{1}}\left(\|f_{2}\|_{H^{1}}+\|(\partial_{z}-\kappa\partial_{y})f_{2}\|_{H^{1}} \right),
	\end{equation*}
	and for $j\in\{2,3\}, $
	\begin{equation*}
		\|[\partial_{j}\triangle^{-1},f_{1}]\triangle f_{2}\|_{H^{1}}\leq C\|f_{1}\|_{H^{2}}\left(\|\nabla f_{2}\|_{L^{2}}+\|(\partial_{z}-\kappa\partial_{y})\nabla f_{2}\|_{L^{2}} \right).
	\end{equation*}
\end{lemma}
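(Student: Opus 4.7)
My plan is to exploit the change of variables that straightens out the twisted derivative $\partial_z-\kappa\partial_y$ into an ordinary coordinate derivative, thereby reducing all four estimates to a standard anisotropic Sobolev analysis on $\mathbb{T}^3$.

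\textbf{Step 1 (Straightening change of variables).} I would first introduce $(X,Y,Z)=(x,V(t,y,z),z)$ with $V=y+\mathbf{U}_2/A$, as in the construction preceding Lemma~\ref{lem W}. By Lemma~\ref{u1_hat2} and Lemma~\ref{lem:kappa}, $\partial_yV=1+O(\epsilon)$, so the map $(y,z)\mapsto(Y,Z)$ is a bi-Lipschitz diffeomorphism with Jacobian bounded above and below. A direct computation gives $\partial_X=\partial_x$, $(\partial_yV)\partial_Y=\partial_y$, and, at fixed $Y$, $\partial_Z=\partial_z-\kappa\partial_y$. Writing $F_i(X,Y,Z)=f_i(x,y,z)$, the hypothesis $\partial_xf_1=0$ translates to $F_1=F_1(Y,Z)$, and $P_0f_2=0$ translates to $\int F_2\,dX=0$ for every $(Y,Z)$. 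All $L^p$ and Sobolev norms are comparable up to universal constants, and $\|F_1\|_{H^s_{Y,Z}}\lesssim\|f_1\|_{H^s}$.

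\textbf{Step 2 (Estimates (i) and (iii)).} For (i) I work in the $(X,Y,Z)$ frame and bound, fiberwise in $X$,
\[
\|F_1 F_2(X,\cdot,\cdot)\|_{L^2_{Y,Z}}\leq \|F_1\|_{L^\infty_ZL^2_Y}\,\|F_2(X,\cdot,\cdot)\|_{L^2_YL^\infty_Z},
\]
then control $\|F_1\|_{L^\infty_ZL^2_Y}\lesssim\|F_1\|_{H^1_{Y,Z}}$ by 1D Sobolev in $Z$, and integrate in $X$ using the 1D bound $\|F_2(X,Y,\cdot)\|_{L^\infty_Z}^2\lesssim \|F_2\|_{L^2_Z}(\|F_2\|_{L^2_Z}+\|\partial_ZF_2\|_{L^2_Z})$. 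Undoing the change of variables, one picks up $\|(\partial_z-\kappa\partial_y)f_2\|_{L^2}$ in place of $\|\partial_ZF_2\|_{L^2}$. For (iii) I apply the Leibniz rule $\nabla(f_1f_2)=(\nabla f_1)f_2+f_1\nabla f_2$. The term $f_1\nabla f_2$ is handled by (i) with $f_2\rightsquigarrow\nabla f_2$, using $(\partial_z-\kappa\partial_y)\nabla f_2=\nabla(\partial_z-\kappa\partial_y)f_2-(\nabla\kappa)\partial_yf_2$ and $\|\nabla\kappa\|_{L^\infty}\lesssim\epsilon$ to absorb the commutator error. For the $(\nabla f_1)f_2$ term I swap roles: $\|(\nabla f_1)f_2\|_{L^2}\leq\|\nabla f_1\|_{L^2_{y,z}}\|f_2\|_{L^\infty_{y,z}L^2_x}$, then use Step~3 below.

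\textbf{Step 3 (Estimate (ii)).} Since $f_1f_2$ inherits zero $x$-average from $f_2$, the Fourier inequality
\[
\|\nabla\triangle^{-1}\partial_x g\|_{L^2}^2=\sum_{k_x\neq 0}\frac{k_x^2}{|k|^2}|\hat g_k|^2\leq\|g\|_{L^2}^2
\]
applied with $g=\partial_x^{-1}(f_1f_2)$ gives $\|\nabla\triangle^{-1}(f_1f_2)\|_{L^2}\leq\|f_1\,\partial_x^{-1}f_2\|_{L^2}$. Then I apply Hölder in $(y,z)$ to obtain $\|f_1\,\partial_x^{-1}f_2\|_{L^2}\leq\|f_1\|_{L^2_{y,z}}\|\partial_x^{-1}f_2\|_{L^\infty_{y,z}L^2_x}$, and Plancherel in $x$ (combined with $|k_x|\geq 1$) yields $\|\partial_x^{-1}f_2\|_{L^\infty_{y,z}L^2_x}\leq\|f_2\|_{L^\infty_{y,z}L^2_x}$. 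The heart of the argument is then the mixed-norm bound
\[
\|F_2\|_{L^\infty_{Y,Z}L^2_X}\lesssim \|F_2\|_{L^2}+\|\partial_ZF_2\|_{L^2},
\]
proved by applying the 2D embedding $W^{2,1}(\mathbb{T}^2)\hookrightarrow L^\infty$ to the fiber function $\phi(Y,Z)=\int|F_2|^2\,dX$, bounding $\phi$, $\partial_Y\phi$, $\partial_Z\phi$, $\partial_Y\partial_Z\phi$ via Cauchy–Schwarz on each fiber, and using the zero $X$-average of $F_2$ to trade $\partial_Y$ derivatives of $F_2$ for the dual derivative $\partial_X$ via Poincaré on $\mathbb{T}_X$.

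\textbf{Step 4 (Estimate (iv) via commutator expansion).} Using $f_1\triangle f_2=\triangle(f_1f_2)-2\nabla f_1\cdot\nabla f_2-(\triangle f_1)f_2$ and $\partial_j\triangle^{-1}\triangle f_2=\partial_jf_2$, the commutator decomposes as
\[
[\partial_j\triangle^{-1},f_1]\triangle f_2=(\partial_jf_1)f_2-2\partial_j\triangle^{-1}(\nabla f_1\cdot\nabla f_2)-\partial_j\triangle^{-1}((\triangle f_1)f_2).
\]
The first term is estimated by (iii) with $\partial_jf_1$ in place of $f_1$; the second uses (i) with $\nabla f_1\in H^1$ (recall $f_1\in H^2$) after applying the $L^2\to H^1$ bound $\|\partial_j\triangle^{-1}g\|_{H^1}\lesssim\|g\|_{L^2}$, which holds because $g$ has zero $x$-average via $f_2$; the third uses (ii) with $\triangle f_1\in L^2$ in the role of $f_1$.

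\textbf{Main obstacle.} The delicate step is the mixed-norm bound $\|F_2\|_{L^\infty_{Y,Z}L^2_X}\lesssim\|F_2\|_{L^2}+\|\partial_ZF_2\|_{L^2}$ from Step~3. The natural $W^{2,1}$ embedding yields control in terms of $\partial_YF_2$ and $\partial_Y\partial_ZF_2$, which are not permitted on the right-hand side; the crucial maneuver is to convert each $\partial_Y$ landing on $F_2$ into $\partial_X$ via integration by parts in $X$ together with the zero-$X$-mean Poincaré inequality $\|F_2\|_{L^2_X}\lesssim\|\partial_XF_2\|_{L^2_X}$ applied fiberwise. A secondary difficulty is keeping the small-$\epsilon$ commutator errors arising from $\nabla\kappa$ and $\triangle\kappa$ in step (iii) absorbed without inflating constants; Lemma~\ref{lem:kappa} furnishes exactly $\|\kappa\|_{H^3}\lesssim\epsilon$ that is needed.
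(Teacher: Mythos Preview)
The paper does not actually prove this lemma; it is quoted from Wei--Zhang \cite{wei2} without argument, so there is no in-paper proof to compare against. Your change-of-variables framework is the right one, and your treatment of (i), the $f_1\nabla f_2$ half of (iii), and the commutator decomposition in (iv) are essentially correct.

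The genuine gap is the mixed-norm bound at the heart of Step~3. The inequality
\[
\|F_2\|_{L^\infty_{Y,Z}L^2_X}\lesssim \|F_2\|_{L^2}+\|\partial_ZF_2\|_{L^2}
\]
is \emph{false}: take $F_2(X,Y,Z)=e^{iX}g(Y)$ with $g\in L^2(\mathbb{T})\setminus L^\infty(\mathbb{T})$. Then $P_0F_2=0$, $\partial_ZF_2=0$, the right-hand side is $C\|g\|_{L^2}$, but the left-hand side is $C\|g\|_{L^\infty}=+\infty$. Your proposed ``trade $\partial_Y$ for $\partial_X$ via Poincar\'e'' maneuver cannot rescue this: the zero $X$-mean of $F_2$ lets you dominate $\|F_2\|_{L^2_X}$ by $\|\partial_XF_2\|_{L^2_X}$, but it offers no mechanism to remove a $\partial_Y$ from $\partial_YF_2$ inside $\partial_Y\phi=2\int_X F_2\,\partial_YF_2\,dX$. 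This breaks (ii), and also the $(\nabla f_1)f_2$ term in (iii), which you route through the same bound.

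The correct route for (ii) is duality, which places the bad $\partial_Y$-embedding on the test function rather than on $F_2$. Since $f_1f_2$ has zero $x$-mean, $\|\nabla\triangle^{-1}(f_1f_2)\|_{L^2}=\sup\{\langle f_1f_2,\phi\rangle:\|\nabla\phi\|_{L^2}\leq 1,\ P_0\phi=0\}$; writing $\langle f_1f_2,\phi\rangle=\langle f_1,(f_2\phi)_0\rangle_{L^2_{y,z}}$ and splitting (after straightening) as
\[
\|(F_2\Phi)_0\|_{L^2_{Y,Z}}^2\lesssim\Big(\int_Y\sup_Z\|F_2\|_{L^2_X}^2\,dY\Big)\Big(\sup_Y\|\Phi\|_{L^2_{X,Z}}^2\Big)
\]
puts only a $\partial_Z$ on $F_2$ (first factor $\lesssim\|F_2\|_{L^2}^2+\|F_2\|_{L^2}\|\partial_ZF_2\|_{L^2}$) and the $\partial_Y$ on $\Phi$ (second factor $\lesssim\|\nabla\Phi\|_{L^2}^2$). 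The equivalence $\|\nabla\triangle^{-1}f\|_{L^2}\sim\|\nabla\triangle^{-1}F\|_{L^2}$ proved in Proposition~\ref{Lvf 0} lets you pass freely between the two coordinate systems. For the $(\nabla f_1)f_2$ piece of (iii), note that the right-hand side allows a full $H^1$ on $f_2$, so Lemma~\ref{weak kappa} (which costs the extra half-derivative your Step~3 was trying to avoid) is available and suffices there.
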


Next, we introduce a logarithmic Hardy-Littlewood-Sobolev inequality that plays an important role in estimation $\|n_{0}\|_{L^{2}}$ and can be found in \cite{SW2005}.
\begin{lemma}\label{lem:hls}
	Let $ \mathcal{M} $ be a 2D Riemannian compact manifold. For all $ m>0, $ there exists a constant $ C(m) $ such that for all nonnegative functions $ f\in L^{1}(\mathcal{M}) $ such that $ f\log f\in L^{1} $, if $ \int_{\mathcal{M}}fdx=m, $  then
	\begin{equation}\label{ineq:hls}
		\int_{\mathcal{M}}f\log fdx+\frac{2}{m}\int_{\mathcal{M}}\int_{\mathcal{M}}f(x)f(y)\log d(x,y)\geq -C(m),
	\end{equation}
	where $ d(x,y) $ is the distance on the Riemannian manifold.
\end{lemma}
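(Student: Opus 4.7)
The plan is to derive the inequality via Legendre-Fenchel duality with a Moser-Trudinger (Onofri-type) inequality on the compact surface $\mathcal{M}$. The key observation is that the Green's function $G(x,y)$ of $-\Delta$ on $\mathcal{M}$ (with the constant mode projected out) admits the expansion $G(x,y) = -\frac{1}{2\pi}\log d(x,y) + h(x,y)$ with $h$ bounded, so the double integral in the statement differs from $-2\pi \iint f(x) G(x,y) f(y)\, dxdy$ by a term controlled by $\|f\|_{L^1}^2 = m^2$, which can be absorbed into $C(m)$. This identifies the logarithmic interaction with the Dirichlet energy of the potential $c := (-\Delta)^{-1}(f - \bar f)$.

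First I would invoke (or prove by a standard symmetrization/concentration argument) the Moser-Trudinger inequality on $\mathcal{M}$: for every $u \in H^1(\mathcal{M})$ with $\int u\, dx = 0$, there exists $C_{MT} = C_{MT}(\mathcal{M})$ with
\begin{equation*}
\log \int_{\mathcal{M}} e^{u}\, dx \;\leq\; \frac{1}{16\pi} \int_{\mathcal{M}} |\nabla u|^{2}\, dx + C_{MT}.
\end{equation*}
Next, I would apply Jensen's inequality in the form $\int f u\, dx \leq \int f\log f\, dx - m\log m + m\log\int e^{u}\, dx$, which is the defining inequality of the Legendre dual between the entropy functional and the log-exponential functional. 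Choosing $u$ proportional to the potential $c$ (rescaled so the mean is zero) and invoking the Moser-Trudinger bound on $\log\int e^{u}$ produces an inequality linking $\int f\log f$, the Dirichlet energy $\int |\nabla c|^{2} = \iint f(x) G(x,y) f(y)\, dxdy$, and a bounded remainder. The scaling of the optimal multiplier for $u$ is fixed by matching powers of $f$, and the constant $\tfrac{2}{m}$ in front of the double integral emerges precisely from the combination of the $\frac{1}{16\pi}$ in Moser-Trudinger and the $-\frac{1}{2\pi}$ coefficient in the Green's function expansion (equivalently, from the critical mass $8\pi$ of the planar Keller-Segel functional).

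The main obstacle I anticipate is controlling the non-sharp/geometric contributions. On an arbitrary compact Riemannian 2-manifold the sharp Moser-Trudinger constant is not trivially $\frac{1}{16\pi}$ but requires Aubin-type refinements or a concentration-compactness argument ruling out bubbling. Moreover, the remainder $h(x,y)$ in the Green's function expansion depends on the geometry of $\mathcal{M}$ and must be estimated via $\|f\|_{L^1}$ bounds rather than $L^{2}$ bounds, which is delicate because $f \log f \in L^{1}$ does not give $f \in L^{2}$. Fortunately, for the application in this paper only the existence of a finite constant $C(m)$ matters (not the sharp value), so one can work with a non-sharp Moser-Trudinger and a crude estimate on $h$, sidestepping the sharp-constant issue entirely.
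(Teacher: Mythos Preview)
The paper does not prove this lemma; it is quoted from Shafrir--Wolansky \cite{SW2005}. Your duality approach via the Onofri/Moser--Trudinger inequality is indeed the standard route to the logarithmic HLS inequality, and the outline (Green's function expansion, Jensen/Legendre duality, choice of $u$ proportional to the potential $c$) is correct.

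However, your final paragraph contains a genuine error. You suggest that because ``only the existence of a finite constant $C(m)$ matters,'' one may use a non-sharp Moser--Trudinger inequality. This is wrong: the coefficient $\tfrac{2}{m}$ in \eqref{ineq:hls} \emph{is} the sharp constant, and it is not negotiable. A non-sharp Onofri inequality $\log\int e^{u}\le \tfrac{\lambda}{16\pi}\|\nabla u\|_{L^2}^{2}+C$ with $\lambda>1$ would only yield $\int f\log f + \tfrac{2}{\lambda m}\iint f(x)f(y)\log d(x,y)\ge -C(m)$, which does not imply the stated lemma. More importantly, when the paper applies \eqref{ineq:hls} in the proof of \eqref{eq:n0logn0}, the coefficient $\tfrac{2}{m}$ combines with the $\tfrac{1}{4\pi}$ from the Green's function to produce the factor $(1-\tfrac{m}{8\pi})$, and this is exactly what forces the critical-mass condition $m<8\pi$ (equivalently $M<16\pi^{2}$). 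A weakened constant would give $(1-\tfrac{m}{8\pi\lambda})$ and hence a strictly smaller, non-sharp mass threshold.

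Your worry about obtaining the sharp $\tfrac{1}{16\pi}$ on a general compact surface is also somewhat overstated. Fontana's extension of Moser--Trudinger to closed manifolds gives $\sup\int_{\mathcal{M}}e^{4\pi u^{2}}<\infty$ over mean-zero $u$ with $\|\nabla u\|_{L^2}\le 1$, and the pointwise Young inequality $u\le \tfrac{4\pi}{s}u^{2}+\tfrac{s}{16\pi}$ with $s=\|\nabla u\|_{L^2}^{2}$ then yields $\log\int e^{u}\le \tfrac{1}{16\pi}\|\nabla u\|_{L^2}^{2}+C(\mathcal{M})$ directly, without Aubin-type refinements or concentration-compactness. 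So the sharp constant is available; you simply must use it rather than discard it.
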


The following Gagliardo-Nirenberg-Sobolev inequality on $\mathbb{T}^{n}$ is frequently used in the proof.
\begin{lemma}[Lemma 9.2 in \cite{Kiselev1}]\label{lem:GNS}
	Suppose $f\in C^{\infty}(\mathbb{T}^{n}), n\geq 2,$ and the set where $f$ vanishes is nonempty. Assume that $q, r>0, \infty>q>r,$ and $\frac{1}{n}-\frac12+\frac{1}{r}>0.$ Then
	\begin{equation*}\label{eq:GN}
		\|f\|_{L^{q}}\leq C(n, q)\|\nabla f\|_{L^{2}}^{\theta}\|f\|_{L^{r}}^{1-\theta},\quad \theta=\frac{\frac{1}{r}-\frac{1}{q}}{\frac{1}{n}-\frac12+\frac{1}{r}}.
	\end{equation*}
	For a fixed $n,$ the constant $C(n,q)$ is bounded uniformly when $q$ varies in any compact set in $(0,\infty).$
\end{lemma}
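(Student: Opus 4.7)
The plan is to establish this interpolation inequality by combining the Sobolev embedding of $W^{1,2}$ into an intermediate Lebesgue space with H\"older's inequality, and then to use the vanishing hypothesis on $f$ to upgrade the Sobolev bound from its natural form on a compact manifold (which carries a lower-order $L^r$ correction from constants) to the sharp scaling-invariant form required in the statement.

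First, I would fix an intermediate exponent $p$ with $r<q\leq p$ such that $W^{1,2}(\mathbb{T}^n)\hookrightarrow L^p(\mathbb{T}^n)$: for $n\geq 3$ the natural choice is the critical Sobolev exponent $p=2n/(n-2)$, so that $1/p = 1/2-1/n$, and the hypothesis $1/n-1/2+1/r>0$ translates into $1/p<1/r$; the condition $q>r$ then places $1/q$ strictly below $1/r$, and one checks that $1/q\geq 1/p$ (this is implicit in the statement, since otherwise the stated $\theta$ would exceed $1$). For $n=2$ any finite $p\geq q$ works with constants depending on $p$. H\"older's inequality then yields
\begin{equation*}
\|f\|_{L^q}\leq \|f\|_{L^p}^{\theta}\,\|f\|_{L^r}^{1-\theta},
\end{equation*}
with $\theta$ determined by $1/q=\theta/p+(1-\theta)/r$; substitution of $1/p=1/2-1/n$ reproduces exactly the exponent $\theta=(1/r-1/q)/(1/n-1/2+1/r)$ in the statement.

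Next I would establish $\|f\|_{L^p}\leq C\|\nabla f\|_{L^2}$ for functions vanishing somewhere on $\mathbb{T}^n$. Decomposing $f=\bar f+(f-\bar f)$, where $\bar f=|\mathbb{T}^n|^{-1}\int f$, the Sobolev--Poincar\'e inequality gives $\|f-\bar f\|_{L^p}\leq C\|\nabla f\|_{L^2}$, so it suffices to bound $|\bar f|$ by $\|\nabla f\|_{L^2}$. For $n\geq 3$ this can be done via Morrey's inequality applied at the vanishing point $x_0$, namely $|\bar f|=|\bar f-f(x_0)|\leq C\|f-\bar f\|_{C^{0,\alpha}}\leq C\|\nabla f\|_{L^2}$ for an appropriate $\alpha$ when $2>n$; when $n\geq 3$ this fails pointwise but one can iterate on shrinking balls around $x_0$ using De Giorgi/Moser-type oscillation estimates, or proceed via a contradiction-compactness scheme (see below).

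The main obstacle is handling the borderline case $n=2$, where $W^{1,2}$ does not embed into $C^0$ and the vanishing of $f$ at a single point is a measure-zero condition; here a contradiction-compactness argument is cleaner than any direct pointwise estimate. Suppose the claimed inequality failed: there would exist a sequence $f_k\in C^\infty(\mathbb{T}^n)$ with $\|f_k\|_{L^q}=1$, vanishing points $x_k\in\mathbb{T}^n$, and $\|\nabla f_k\|_{L^2}^{\theta}\|f_k\|_{L^r}^{1-\theta}\to 0$; by Rellich--Kondrachov compactness a subsequence converges strongly in $L^r$ to some $f_*\in W^{1,2}(\mathbb{T}^n)$ with $\nabla f_*=0$, hence $f_*\equiv c$ is constant. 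Since $\|f_*\|_{L^q}=1$ one has $c\neq 0$; on the other hand, by compactness of $\mathbb{T}^n$ we may assume $x_k\to x_*$, and the quasi-continuity of $W^{1,2}$-functions (or, more elementarily, testing against a shrinking bump at $x_*$ combined with $f_k(x_k)=0$ and uniform $W^{1,2}$-control) forces $c=0$, a contradiction. Finally, the uniform bound on $C(n,q)$ as $q$ varies over compact subsets of $(0,\infty)$ follows from the continuity of the Sobolev constants in $p$ and of the interpolation parameter $\theta$ in $q$.
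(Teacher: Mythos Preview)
The paper does not prove this lemma itself; it is quoted as Lemma~9.2 of \cite{Kiselev1}, so there is no in-paper argument to compare against. Your H\"older reduction to $\|f\|_{L^p}\le C\|\nabla f\|_{L^2}$ and the Sobolev--Poincar\'e control of $f-\bar f$ are fine; the gap is the step where the vanishing hypothesis is supposed to yield $|\bar f|\le C\|\nabla f\|_{L^2}$. In your compactness argument you reach a nonzero constant limit $c$ and then claim that $f_k(x_k)=0$ forces $c=0$ via quasi-continuity or bump-testing. But for $n\ge 2$ single points have zero $W^{1,2}$-capacity, so pointwise vanishing carries no information through a $W^{1,2}$-limit and this step fails outright.

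Concretely, the logarithmic cutoff $u_\epsilon$ on $\mathbb{T}^2$ (zero on $B_\epsilon(0)$, one outside $B_{\sqrt\epsilon}(0)$, linear in $\log|x|$ in between) has $u_\epsilon(0)=0$, $\|\nabla u_\epsilon\|_{L^2}^2\sim(\log\epsilon^{-1})^{-1}\to 0$, and $u_\epsilon\to 1$ in every $L^s$; for $n\ge 3$ the cruder $f_k=\min(k|x|,1)$ does the same. These sequences show that the inequality with a constant $C(n,q)$ independent of $f$ is in fact false under the bare hypothesis ``$\{f=0\}\neq\emptyset$'', so the gap cannot be repaired without strengthening the assumption. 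In the paper's actual uses the functions either have mean zero (e.g.\ $\partial_x^2 n_{\neq}$, components of $\nabla c_0$) or vanish on a set whose measure is bounded below independently of the function (e.g.\ $(n_0-Q)_+$ with $Q$ large), and under either of those hypotheses your Sobolev--Poincar\'e plus interpolation scheme does go through cleanly.
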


\subsection{Elliptic estimates}
The following  elliptic estimates are necessary.
\begin{lemma}\label{lem:ellip_0}
	Let $c_0$ and $n_{0}$ be the zero mode of $c$ and $n$, respectively, satisfying
	$$-\triangle c_0+\bar{n}=n_{0},$$
	then there hold
	\ben\label{eq:elliptic}
	&&\|\triangle c_0(t)\|_{L^2}
	\leq \|n_{0}(t)\|_{L^2},\nonumber\\
	&&\|\nabla c_{0}(t)\|_{L^{\infty}}\leq C\|n_{0}(t)-\bar{n}\|_{L^{3}}\nonumber\\
	&&\|\nabla c_{0}(t)\|_{L^{4}}\leq C\|n_{0}(t)\|_{L^{2}},
	\een
	for any $t\geq0$.
\end{lemma}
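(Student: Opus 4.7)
\smallskip

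\noindent\textbf{Proof proposal for Lemma \ref{lem:ellip_0}.} The three estimates are all standard consequences of elliptic regularity for the Poisson equation on the 2D torus $\mathbb{T}^2$ (in the variables $(y,z)$), applied to the zero-$x$-mode equation rewritten as
\begin{equation*}
-\triangle c_0=n_0-\bar n,\qquad \int_{\mathbb{T}^2}(n_0-\bar n)\,dydz=0.
\end{equation*}
The key observation, which I will use in all three bounds, is that $n_0-\bar n$ has zero mean, so $c_0$ can be chosen with zero mean and full Calder\'on--Zygmund / $W^{2,p}$ regularity is available without the usual compatibility headache.

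For the first estimate, I would simply read off $\triangle c_0=\bar n-n_0$ from the equation and use that $\bar n$ is the orthogonal projection of $n_0$ onto constants in $L^2(\mathbb{T}^2)$:
\begin{equation*}
\|\triangle c_0\|_{L^2}^2=\|n_0-\bar n\|_{L^2}^2=\|n_0\|_{L^2}^2-|\mathbb{T}|^2\,\bar n^{\,2}\leq\|n_0\|_{L^2}^2.
\end{equation*}
For the third estimate, I would first use $W^{2,2}$ elliptic regularity on $\mathbb{T}^2$ (equivalently, Plancherel on nonzero Fourier modes of $c_0$) to get $\|c_0\|_{H^2}\leq C\|n_0-\bar n\|_{L^2}\leq C\|n_0\|_{L^2}$, and then invoke the 2D Sobolev embedding $H^1(\mathbb{T}^2)\hookrightarrow L^4(\mathbb{T}^2)$ applied to $\nabla c_0$.

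For the second estimate, the plan is parallel: apply $W^{2,3}$ elliptic regularity (Calder\'on--Zygmund on $\mathbb{T}^2$, valid for $1<p<\infty$) to obtain $\|c_0\|_{W^{2,3}}\leq C\|n_0-\bar n\|_{L^3}$, and then use the 2D Sobolev embedding $W^{1,3}(\mathbb{T}^2)\hookrightarrow L^\infty(\mathbb{T}^2)$ applied to $\nabla c_0$. There is no serious obstacle; the only mild issue is making sure every appeal to an elliptic estimate is on the mean-zero component so that the constant stays uniform and one does not pick up boundary terms, which is automatic here since $n_0-\bar n$ already has zero average over $\mathbb{T}^2$.
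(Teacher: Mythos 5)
Your argument is correct, and the first estimate is carried out exactly as in the paper (orthogonal decomposition of $n_0$ into its mean and mean-zero part). For the second and third estimates you take a slightly different route: you invoke full Calder\'on--Zygmund $W^{2,p}$ elliptic regularity on $\mathbb{T}^2$ to control the entire Hessian $\|D^2 c_0\|_{L^p}$ and then apply the Sobolev embeddings $W^{1,3}(\mathbb{T}^2)\hookrightarrow L^\infty$ and $H^1(\mathbb{T}^2)\hookrightarrow L^4$. The paper instead sidesteps Calder\'on--Zygmund entirely by using Gagliardo--Nirenberg interpolation inequalities phrased directly in terms of $\|\triangle c_0\|_{L^p}$, namely $\|\nabla c_{0}\|_{L^{\infty}}\leq C(\|\nabla c_{0}\|_{L^{2}}^{1/4}\|\triangle c_{0}\|_{L^{3}}^{3/4}+\|\nabla c_{0}\|_{L^{2}})$ and $\|\nabla c_{0}\|_{L^{4}}\leq C\|\nabla c_{0}\|_{L^{2}}^{1/2}\|\triangle c_{0}\|_{L^{2}}^{1/2}$ (plus lower-order terms absorbed using $L^p\hookrightarrow L^2$ on the bounded domain and Poincar\'e), and then reads $\|\triangle c_0\|_{L^p}=\|n_0-\bar n\|_{L^p}$ straight off the equation. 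The two approaches are logically equivalent on $\mathbb{T}^2$; yours imports the somewhat heavier singular-integral machinery, whereas the paper's interpolation route needs only the Laplacian rather than all second derivatives and so is marginally more self-contained, but there is no gap in your argument.
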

\begin{proof}
	The basic energy estimates yield
	\begin{equation}
		\begin{aligned}
			\|\triangle c_0(t)\|^2_{L^2}+|\bar{n}|^{2}|\mathbb{T}|^{2}
			=\|n_{0}(t)\|^2_{L^2},
			\nonumber
		\end{aligned}
	\end{equation}
	which implies $\eqref{eq:elliptic}_1$.
	Using Gagliardo-Nirenberg inequality and H$\ddot{\rm o}$lder's inequality, we have
	\begin{equation*}
		\begin{aligned}
			\|\nabla c_{0}(t)\|_{L^{\infty}}\leq& C\left(\|\nabla c_{0}(t)\|_{L^{2}}^{\frac14}\|\triangle c_{0}(t)\|_{L^{3}}^{\frac34}+\|\nabla c_{0}(t)\|_{L^{2}} \right)\\\leq& C\|\triangle c_{0}(t)\|_{L^{3}}\leq C\|n_{0}(t)-\bar{n}\|_{L^{3}},
		\end{aligned}
	\end{equation*}
	which gives $\eqref{eq:elliptic}_2$.
	
	Moreover, it follows from Lemma \ref{lem:GNS} and $\eqref{eq:elliptic}_1$ that
	\begin{equation*}
		\|\nabla c_{0}(t)\|_{L^{4}}\leq C\|\nabla c_{0}(t)\|_{L^{2}}^{\frac12}\|\triangle c_{0}(t)\|_{L^{2}}^{\frac12}\leq C\|\triangle c_{0}(t)\|_{L^{2}}\leq C\|n_{0}(t)\|_{L^{2}},
	\end{equation*}
	which gives $\eqref{eq:elliptic}_3.$
\end{proof}

\begin{lemma}\label{lem:ellip_2}
	There holds
	\begin{align*}
		&\|\partial_x^j\triangle c_{\neq}(t)\|_{L^2}\leq \|\partial_x^jn_{\neq}(t)\|_{L^2},\\
		&\|\partial_x^j\nabla c_{\neq}(t)\|_{L^4}
		\leq C\|\partial_x^j n_{\neq}(t)\|_{L^2},
	\end{align*}
	for any $t\geq 0$ and $j\geq 0$.
\end{lemma}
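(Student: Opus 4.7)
The plan is to exploit the fact that on the torus the average of $c_\neq$ vanishes in the $x$-direction, so we may invert $-\triangle$ on the non-zero modes without obstruction. Since $\bar n$ is purely the triple average, applying $P_{\neq}$ to the second equation of \eqref{ini11} yields $-\triangle c_{\neq}=n_{\neq}$, and then $-\triangle(\partial_x^j c_{\neq})=\partial_x^j n_{\neq}$ for every $j\ge 0$.

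For the first estimate I would multiply by $\partial_x^j\triangle c_{\neq}$ and integrate, or equivalently read it off the Fourier side: writing $c_{\neq}=\sum_{k_1\neq 0,k_2,k_3}\hat c_{k_1,k_2,k_3}e^{i(k_1x+k_2y+k_3z)}$, one obtains $|k|^2\hat c_{k_1,k_2,k_3}=\hat n_{k_1,k_2,k_3}$, so
\begin{equation*}
\|\partial_x^j\triangle c_{\neq}\|_{L^2}^2
=\sum_{k_1\neq 0}k_1^{2j}|k|^4|\hat c_{k_1,k_2,k_3}|^2
=\sum_{k_1\neq 0}k_1^{2j}|\hat n_{k_1,k_2,k_3}|^2
=\|\partial_x^j n_{\neq}\|_{L^2}^2,
\end{equation*}
which gives the first bound as an equality.

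For the second estimate I would combine a Gagliardo-Nirenberg embedding in $\mathbb{T}^3$ with the Poincar\'e inequality for non-zero modes. Specifically, applying $\|f\|_{L^4(\mathbb{T}^3)}\le C(\|f\|_{L^2}^{1/4}\|\nabla f\|_{L^2}^{3/4}+\|f\|_{L^2})$ to $f=\partial_x^j\nabla c_{\neq}$ yields
\begin{equation*}
\|\partial_x^j\nabla c_{\neq}\|_{L^4}\le C\big(\|\partial_x^j\nabla c_{\neq}\|_{L^2}^{1/4}\|\partial_x^j\nabla^2 c_{\neq}\|_{L^2}^{3/4}+\|\partial_x^j\nabla c_{\neq}\|_{L^2}\big).
\end{equation*}
The standard elliptic identity $\|\nabla^2 c_{\neq}\|_{L^2}=\|\triangle c_{\neq}\|_{L^2}$ on $\mathbb{T}^3$, together with the Poincar\'e inequality of Lemma \ref{lem: poincare} applied to $\partial_x^j\nabla c_{\neq}$ (a non-zero-mode function), gives
\begin{equation*}
\|\partial_x^j\nabla c_{\neq}\|_{L^2}\le C\|\partial_x^{j+1}\nabla c_{\neq}\|_{L^2}\le C\|\partial_x^j\triangle c_{\neq}\|_{L^2},\qquad \|\partial_x^j\nabla^2 c_{\neq}\|_{L^2}\le C\|\partial_x^j\triangle c_{\neq}\|_{L^2},
\end{equation*}
and plugging these into the Gagliardo-Nirenberg bound together with the first estimate delivers $\|\partial_x^j\nabla c_{\neq}\|_{L^4}\le C\|\partial_x^j n_{\neq}\|_{L^2}$.

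There is no real obstacle here, as the statement is a standard consequence of the fact that $-\triangle$ is invertible on functions with vanishing $x$-average; the only point worth care is that the Gagliardo-Nirenberg inequality on $\mathbb{T}^3$ requires the additive low-frequency term $\|f\|_{L^2}$, which is absorbed via Poincar\'e precisely because we are working with the non-zero mode $c_{\neq}$.
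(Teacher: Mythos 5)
Your proof is correct and follows the same strategy as the paper: invert $-\triangle$ on the non-zero modes to get the first bound (indeed as an equality), then apply Gagliardo--Nirenberg together with the Poincar\'e inequality for non-zero modes and the first bound to get the $L^4$ estimate. The only cosmetic difference is the interpolation window: you interpolate $\partial_x^j\nabla c_{\neq}$ between itself and its gradient (exponents $\tfrac14$, $\tfrac34$), whereas the paper interpolates $\nabla c_{\neq}$ directly between $c_{\neq}$ and $\triangle c_{\neq}$ (exponents $\tfrac18$, $\tfrac78$); both are valid and lead to the same conclusion.
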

\begin{proof}
	By integration by parts, we have
	\begin{equation}
		\begin{aligned}
			\|\triangle c_{\neq}(t)\|^2_{L^2}=\|n_{\neq}(t)\|^2_{L^2}.
			\nonumber
		\end{aligned}
	\end{equation}

	Using Gagliardo-Nirenberg inequality and Lemma \ref{lem: poincare}, we obtain
	\begin{equation*}
		\|\nabla c_{\neq}(t)\|_{L^{4}}\leq C\left(\|c_{\neq}(t)\|_{L^{2}}^{\frac18}\|\triangle c_{\neq}(t)\|_{L^{2}}^{\frac78}+\|c_{\neq}(t)\|_{L^{2}}\right)\leq C\|n_{\neq}(t)\|_{L^{2}}.
	\end{equation*}
\end{proof}

\subsection{Space-time estimates}
First, we need to prove the following space-time estimate, and this result 
is also an improvement on previous time-space estimates \cite{CWW1,wei2}, which allows us to estimate the non-zero mode $n_{\neq}$ in the periodic domain $\mathbb{T}^3.$
\begin{proposition}\label{Lvf 0}
	Assume that $f$ satisfies 
	$$\partial_t f-\frac{1}{A}\triangle f+\left(y+\frac{\mathbf{U}_2(t,y,z)}{A}\right)\partial_xf=\partial_xf_{1}+f_{2}
	+\nabla\cdot f_{3}$$
	for $ t\in[0,T], $ where $f$, $f_{1}$, $f_{2}$ and $f_{3}$ are given functions and $ P_{0}f=P_{0}f_{1}=P_{0}f_{2}=P_{0}f_{3}=0 $.
	As long as 
	\begin{equation}\label{condition}
		\frac{\|\triangle \mathbf{U}_2\|_{L^{\infty}H^2}}{A}
		+\|\partial_t \mathbf{U}_2\|_{L^{\infty}L^{\infty}}<c,
	\end{equation}
	for some small $c$ independent of $A$ and $t,$ then there holds  
	\begin{equation*}
		\begin{aligned}
			\|f_{\neq}\|^2_{X_a}
			\leq C\left(\|(f_{\rm in})_{\neq}\|_{L^2}^2
			+\|{\rm e}^{aA^{-\frac{1}{3}}t}\nabla f_{1,\neq}\|_{L^2L^2}^2
			+A^{\frac{1}{3}}\|{\rm e}^{aA^{-\frac{1}{3}}t}f_{2,\neq}\|_{L^2L^2}^2 +A\|{\rm e}^{aA^{-\frac{1}{3}}t}f_{3,\neq}\|_{L^2L^2}^2\right),
		\end{aligned}
	\end{equation*}
	where ``$a$'' is a positive constant.
\end{proposition}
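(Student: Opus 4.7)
My plan is to combine an exponential-weight trick with the Wei--Zhang multiplier method for the perturbed Couette transport--diffusion operator $\mathcal{L}_V = \partial_t + V\partial_x - A^{-1}\triangle$, where $V(t,y,z) = y + \mathbf{U}_2/A$. Setting $g = e^{aA^{-1/3}t}f_{\neq}$ (and analogously $g_i = e^{aA^{-1/3}t}f_{i,\neq}$), the new unknown $g$ satisfies
\begin{equation*}
\partial_t g + V\partial_x g - \frac{\triangle g}{A} - \frac{ag}{A^{1/3}} = \partial_x g_1 + g_2 + \nabla\cdot g_3,
\end{equation*}
so the desired bound on $\|f_{\neq}\|_{X_a}$ reduces to an unweighted energy--type bound for $g$. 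Because every function involved is a non-zero $x$-mode, I would decompose in the $k_1$-th Fourier mode ($k_1\neq 0$) where the enhanced dissipation for the model operator $\mathcal{L} = \partial_t + y\partial_x - A^{-1}\triangle$ produces decay at rate $A^{-1/3}|k_1|^{2/3}$; for $|k_1|\geq 1$ this absorbs the loss $aA^{-1/3}$ coming from the weight provided $a$ is chosen small enough.

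\textbf{Key steps in order.} (i) Establish the analogue of the four components of $\|\cdot\|_{X_a}$ for the model operator $\mathcal{L}$ without the $\mathbf{U}_2$ correction, via the Wei--Zhang hypocoercive functional: the standard parabolic energy identity (tested against $g$) supplies $\|g\|_{L^\infty L^2}$ and $A^{-1/2}\|\nabla g\|_{L^2L^2}$; testing against $-\triangle^{-1}\partial_x^2 g$ (or an equivalent multiplier) yields the mixing norm $\|\nabla\triangle^{-1}\partial_x g\|_{L^2L^2}$; and combining these with a weighted time-multiplier extracts the enhanced--dissipation norm $A^{-1/6}\|g\|_{L^2L^2}$. (ii) Handle the three forcing terms by duality: $\partial_x g_1$ pairs against $\nabla\triangle^{-1}\partial_x g$ and is controlled by $\|\nabla g_1\|_{L^2L^2}$; $g_2$ pairs in $L^2L^2$ with the enhanced-dissipation piece, producing the factor $A^{1/3}$; $\nabla\cdot g_3$ is integrated by parts against $\nabla g/A$, giving the factor $A$ through Cauchy--Schwarz and the parabolic gain. (iii) Reintroduce the perturbation $A^{-1}\mathbf{U}_2\partial_x g$ on the left-hand side: each of the three multiplier computations above produces commutator terms that involve $\partial_y V = 1 + A^{-1}\partial_y\mathbf{U}_2$, $\partial_z V = A^{-1}\partial_z\mathbf{U}_2$, $\triangle V = A^{-1}\triangle\mathbf{U}_2$, and $\partial_t V = A^{-1}\partial_t\mathbf{U}_2$; these are precisely the quantities bounded by the smallness hypothesis \eqref{condition}, so they can be absorbed into the dissipative left-hand side.

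\textbf{Main obstacle.} The hardest step will be (iii): ensuring that the Wei--Zhang hypocoercive bookkeeping for the model operator $\mathcal{L}$ remains robust when we replace $y$ by $V=y+A^{-1}\mathbf{U}_2$, without degrading the sharp $A^{-1/3}$ enhanced-dissipation rate. Concretely, the Wei--Zhang multiplier is engineered to balance the competing $y$-dependent transport and $A^{-1}$-diffusion; the smallness of $A^{-1}\|\triangle\mathbf{U}_2\|_{L^\infty H^2}$ and $\|\partial_t\mathbf{U}_2\|_{L^\infty L^\infty}$ in \eqref{condition} is exactly calibrated so that the commutators $[\partial_y, V\partial_x]$, $[\partial_z, V\partial_x]$ and the time derivative of the multiplier give contributions that can be absorbed by a fraction of the dissipation on the left-hand side. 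Once this absorption is carried out and the forcing contributions from step (ii) are collected with the correct powers of $A$, taking the supremum in time and integrating yields the stated bound on $\|f_{\neq}\|_{X_a}^2$ with a constant independent of $A$ and $T$.
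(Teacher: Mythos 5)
Your proposal is plausible and identifies the right class of ideas, but it takes a genuinely different technical route from the paper. You propose to work directly in the original variables, applying the Wei--Zhang hypocoercive multiplier to the perturbed operator $\mathcal{L}_V=\partial_t+V\partial_x-A^{-1}\triangle$ and absorbing the resulting commutators into the dissipation via the smallness of $A^{-1}\|\triangle\mathbf{U}_2\|_{L^\infty H^2}$ and $\|\partial_t\mathbf{U}_2\|_{L^\infty L^\infty}$. The paper instead performs a coordinate change $(x,y,z)\mapsto(X,Y,Z)=(x,V(t,y,z),z)$ so that in the new coordinates the transport becomes \emph{exactly} $Y\partial_X$ and the model time--space estimate for $\mathcal{L}=\partial_t+Y\partial_X-A^{-1}\triangle$ can be invoked as a black box; the price paid is a collection of lower-order Laplacian correction terms ($G\partial_Y^2F$, $2w_z\partial_Y\partial_Z F$, $H\partial_Y F$, $w_t\partial_Y F$) which are absorbed by the same smallness hypothesis, plus a nontrivial set of norm-equivalence lemmas under the change of variables (in particular the claim $\|\nabla\triangle^{-1}F\|_{L^2}\sim\|\nabla\triangle^{-1}f\|_{L^2}$, which requires a careful duality argument and is needed to transfer the $f_3$-forcing estimate and the mixing component of $\|\cdot\|_{X_a}$ back to the original variables).

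The trade-off is this: the paper's route avoids rebuilding the hypocoercive functional for a perturbed transport, but must prove that the $X_a$-norm is preserved under the nonlinear coordinate change, including the inverse-Laplacian piece. Your route keeps the $X_a$-norm intact but must re-derive the enhanced-dissipation rate for $V\partial_x$ with $\partial_y V=1+A^{-1}\partial_y\mathbf{U}_2$ a variable coefficient; the commutator $[\partial_y,V\partial_x]=\partial_y V\,\partial_x$ is no longer exactly $\partial_x$ and its variable-coefficient defect $A^{-1}\partial_y\mathbf{U}_2\,\partial_x$ threatens the fine cancellations on which the hypocoercive bookkeeping depends. You correctly identify this as the main obstacle in your step (iii), but the proposal stops at the level of ``these should be absorbable''; to actually close the estimate you would need to verify term-by-term that the commutator contributions scale compatibly with the $A^{-2/3}$ and $A^{-4/3}$ weights inside the Wei--Zhang functional, and in particular that the $\partial_x g$-weighted commutator can be bounded by a small multiple of the corresponding $X_a$-component. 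The paper's coordinate change is precisely engineered to sidestep this issue by converting transport perturbations into diffusion perturbations, which are easier to absorb. If you pursue your route, also make explicit how the $\|\nabla\triangle^{-1}\partial_x f\|_{L^2L^2}$ component of $X_a$ is extracted; your phrase ``testing against $-\triangle^{-1}\partial_x^2 g$ (or an equivalent multiplier)'' is plausible but not the standard form, and that is where the $A^{1/3}$ factor on the $f_2$-term must be generated.
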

\begin{proof}
	Our proof was inspired the coordinate transform method of \cite{wei2}.
	First, we state the following result, which is classical.
	\begin{lemma}\label{lem_1}
		Let $ g:\mathbb{T}^{3}\to\mathbb{R}^{3} $ be a $ C^{1} $ map such that $ \|\nabla g\|_{L^{\infty}}<\frac12. $ Then it holds that
		\begin{itemize}
			\item[(i)] $ \|f\circ({\rm{Id}}+g)\|_{L^{p}}\sim\|f\|_{L^{p}}, \|\nabla\left(f\circ({\rm{Id}+g}) \right)\|_{L^{p}}\sim\|\nabla f\|_{L^{p}} $ for every $ 1\leq p\leq +\infty $ and $ f\in W^{1,p}. $ Here $ A\sim B  $ means $ C^{-1}A\leq B\leq CA $ for some absolute constant $ C.$
			\item[(ii)] There exists a unique $ C^{1} $ solution $ h $ to $ h(x,y,z)=g\left((x,y,x)+h(x,y,z) \right) $ satisfying $ \|\nabla h\|_{L^{\infty}}\leq C\|\nabla g\|_{L^{\infty}}. $
		\end{itemize}
	\end{lemma}
	
	Then, we define the map $({\rm{Id}}+g): \mathbb{T}^{3}\to\mathbb{R}^{3}$ by
	\begin{equation*}
		\begin{aligned}
			(x,y,z)\mapsto (X,Y,Z)=\left(x, V(t,y,z), z \right),
			\quad V=y+\frac{\mathbf{U}_2(t,y,z)}{A}.
		\end{aligned}
	\end{equation*}
	Denote 
	\begin{equation}\label{Fi}
		\begin{aligned}
			F(t,x,V(t,y,z),z)&=f(t,x,y,z), \\ F_{j}(t,x,V(t,y,z),z)&=f_{j}(t,x,y,z),~~~{\rm for}~~~j=1,2,\\
			F_{3}(t,x,V(t,y,z),z)&={\rm{div}}~f_{3}(t,x,y,z).
		\end{aligned}
	\end{equation}
	According to  (\ref{condition}), by choosing $ c $ small enough, there holds 
	\begin{equation*}
		\frac{1}{A}\|\nabla\mathbf{U}_2\|_{L^{\infty}}\leq\frac{C}{A}\|\nabla\mathbf{U}_2\|_{H^{2}}\leq \frac{C}{A}\|\triangle\mathbf{U}_2\|_{H^{2}}<\frac12, 
	\end{equation*}
	which along with Lemma \ref{lem_1} implies that $ F(t,X,Y,Z)\in C^{1} $ is well-defined. 
	
	Let $ w_{t}(t,Y,Z), w_{y}(t,Y,Z) $ and $ w_{z}(t,Y,Z) $ be such that
	\begin{equation}\label{w}
		\begin{aligned}
			w_{t}(t,V(t,y,z),z)&=\frac{\partial_{t}\mathbf{U}_2(t,y,z)}{A}=\partial_{t}V(t,y,z), \\
			w_{y}(t,V(t,y,z),z)&=\frac{\partial_{y}\mathbf{U}_2(t,y,z)}{A}=\partial_{y}V(t,y,z)-1 , \\
			w_{z}(t,V(t,y,z),z)&=\frac{\partial_{z}\mathbf{U}_2(t,y,z)}{A}=\partial_{z}V(t,y,z).
		\end{aligned}
	\end{equation}
	Direct calculations indicate that
	\begin{equation}\label{partial_t f}
		\begin{aligned}
			\partial_{t}f&=\left(\partial_{t}+w_{t}\partial_{Y} \right)F,\quad \partial_{x}f=\partial_{X}F, \\\partial_{y}f&=\left(1+w_{y}\right)\partial_{Y}F,\quad \partial_{z}f=\left(\partial_{Z}+w_{z}\partial_{Y} \right)F.
		\end{aligned}
	\end{equation}
	Therefore, we obtain that
	$V\partial_{x}f=Y\partial_{X}F,$
	and
	\begin{equation}\label{f''}
		\begin{aligned}
			\triangle f=&\left(\partial_{x}^{2}+\partial_{y}^{2}+\partial_{z}^{2}\right)f\\=&\partial_{X}^{2}F+\partial_{y}\left[\left(1+w_{y} \right)\partial_{Y}F \right]+\partial_{z}\left[\left(\partial_{Z}+w_{z}\partial_{Y} \right)F \right] \\=&\left(\partial_{X}^{2}+\partial_{Y}^{2}+\partial_{Z}^{2} \right)F+\left[\left(1+w_{y} \right)^{2}+w_{z}^{2}-1 \right]\partial_{Y}^{2}F+2w_{z}\partial_{Z}\partial_{Y}F+
			\left(w_{yy}+w_{zz} \right)\partial_{Y}F\\=:&\triangle F+G\partial_{Y}^{2}F+2w_{z}\partial_{Z}\partial_{Y}F+H\partial_{Y}F,
		\end{aligned}
	\end{equation}
	where 
	\begin{equation*}
		\begin{aligned}
			G=\left(1+w_{y} \right)^{2}+w_{z}^{2}-1,~~~
			H(t, V(t,y,z),z)=\frac{\triangle\mathbf{U}_2(t,y,z)}{A}. 
		\end{aligned}
	\end{equation*}
	
	Using (\ref{partial_t f})-(\ref{f''}), we write the operator $ \mathcal{L}_{V} $ into
	\begin{equation}\label{Lv}
		\begin{aligned}
			\mathcal{L}_{V}f=
			&\partial_{t}f-\frac{1}{A}\triangle f+V\partial_{x}f\\=
			&\left(\partial_{t}+w_{t}\partial_{Y} \right)F-\frac{1}{A}\left(\triangle F+G\partial_{Y}^{2}F+2w_{z}\partial_{Z}\partial_{Y}F+H\partial_{Y}F \right)+Y\partial_{X}F\\=
			&\mathcal{L}F-\frac{1}{A}\left(G\partial_{Y}^{2}F+2w_{z}
			\partial_{Z}\partial_{Y}F \right)
			+\left(w_{t}-\frac{H}{A} \right)\partial_{Y}F,
		\end{aligned}
	\end{equation}
	where $\mathcal{L}=\partial_{t}-\frac{1}{A}\triangle+Y\partial_{X}.$
	Due to (\ref{w}) and (\ref{partial_t f}), there hold
	\begin{equation*}
		\begin{aligned}
			\partial_{Y}F=&\frac{\partial_{y}f}{\partial_{y}V},\quad \partial_{Z}F=\partial_{z}f-\frac{\partial_{z}V\partial_{y}f}{\partial_{y}V}, \\
			G(t,V(t,y,z),z)=&\left(1+\frac{\partial_{y}\mathbf{U}_2}{A} \right)^{2}+\left(\frac{\partial_{z}\mathbf{U}_2}{A} \right)^{2}-1\\=&(\partial_{y}V)^{2}+(\partial_{z}V)^{2}-1,
		\end{aligned}
	\end{equation*}
	which imply that
	\begin{equation}\label{G'}
		\begin{aligned}
			\partial_{Y}G+2\partial_{Z}w_{z}=&\frac{\partial_{y}\left((\partial_{y}V)^{2}+(\partial_{z}V)^{2}-1 \right)}{\partial_{y}V}+2\left[\partial_{z}-\left(\frac{\partial_{z}V}{\partial_{y}V} \right)\partial_{y} \right]\partial_{z}V\\=&2\partial_{y}^{2}V+\frac{2\partial_{z}V\partial_{y}\partial_{z}V}{\partial_{y}V}+2\partial_{z}^{2}V-2\frac{\partial_{z}V}{\partial_{y}V}\partial_{y}\partial_{z}V\\=&2\triangle V=\frac{2}{A}\triangle \mathbf{U}_2=2H.
		\end{aligned}
	\end{equation}
	
	Combining (\ref{Lv}) and (\ref{G'}), one deduces that
	\begin{equation}\label{Lv 1}
		\begin{aligned}
			\mathcal{L}_{V}f=\mathcal{L}F-\frac{1}{A}\partial_{Y}(G\partial_{Y}F)-\frac{2}{A}\partial_{Z}\left(w_{z}\partial_{Y}F \right)+\left(w_{t}+\frac{H}{A} \right)\partial_{Y}F.
		\end{aligned}
	\end{equation}
	From (\ref{Fi}), we get $ \mathcal{L}_{V}f=\partial_{X}F_{1}+F_{2}+F_{3}. $ Then (\ref{Lv 1}) yields
	\begin{equation*}
		\begin{aligned}
			\mathcal{L}F=&\partial_{X}F_{1}+F_{2}+F_{3}+\frac{1}{A}\partial_{Y}(G\partial_{Y}F)+\frac{2}{A}\partial_{Z}\left(w_{z}\partial_{Y}F \right)-\left(w_{t}+\frac{H}{A} \right)\partial_{Y}F.
		\end{aligned}
	\end{equation*}
	This implies that
	\begin{equation}\label{LV 2}
		\begin{aligned}
			\|F\|_{X_{a}}^{2}\leq&C\bigg(
			\|F(0)\|_{L^{2}}^{2}+\|{\rm e}^{aA^{-\frac13}t}\nabla F_{1}\|_{L^{2}L^{2}}^{2}
			+A^{\frac13}\|{\rm e}^{aA^{-\frac13}t}F_{2}\|_{L^{2}L^{2}}^{2}
			+A\|{\rm e}^{aA^{-\frac13}t}\nabla\triangle^{-1}F_{3}\|_{L^{2}L^{2}}^{2}
			\\&+\frac{1}{A}\left\|{\rm e}^{aA^{-\frac13}t}\left(G,2w_{z} \right)\partial_{Y}F\right\|_{L^{2}L^{2}}^{2}
			+A^{\frac13}\left\|{\rm e}^{aA^{-\frac13}t}\left(w_{t}+\frac{H}{A} \right)\partial_{Y}F\right\|_{L^{2}L^{2}}^{2}\bigg)\\:
			=&C\Big(\|F(0)\|_{L^{2}}^{2}+\|{\rm e}^{aA^{-\frac13}t}\nabla F_{1}\|_{L^{2}L^{2}}^{2}
			+A^{\frac13}\|{\rm e}^{aA^{-\frac13}t}F_{2}\|_{L^{2}L^{2}}^{2}\\&+A\|{\rm e}^{aA^{-\frac13}t}\nabla\triangle^{-1}F_{3}\|_{L^{2}L^{2}}^{2}+K_{1}+K_{2}
			\Big).
		\end{aligned}
	\end{equation}
	For $K_{1},$ recall that $w_{z}=\frac{\partial_{z}\mathbf{U}_2}{A}$ and
	$$ G=(\partial_{y}V)^{2}+(\partial_{z}V)^{2}-1
	=\left(\frac{\partial_{y}\mathbf{U}_2}{A}+1\right)^{2}
	+\left(\frac{\partial_{z}\mathbf{U}_2}{A} \right)^{2}-1
	=\frac{|\nabla \mathbf{U}_2|^{2}}{A^{2}}+\frac{2\partial_{y}\mathbf{U}_2}{A}.$$
	Then using (\ref{condition}), one deduces
	\begin{equation}\label{1}
		\begin{aligned}
			K_{1}\leq&\big(\|G\|_{L^{\infty}L^{\infty}}^{2}+\left\|2w_{z}\right\|_{L^{\infty}L^{\infty}}^{2} \big)
			\frac{\|{\rm e}^{aA^{-\frac13}t}\partial_{Y}F\|_{L^{2}L^{2}}^{2}}{A}
			\\\leq&C(c^{4}+c^{2})\|F\|_{X_{a}}^{2},
		\end{aligned}
	\end{equation}
	where we use the Poincar$\acute{\rm e}$ inequality $\|\nabla \mathbf{U}_2\|_{L^{2}}\leq C\|\triangle \mathbf{U}_2\|_{L^{2}}.$
	
	For $K_{2},$ as $w_{t}=\frac{\partial_{t}\mathbf{U}_2}{A}$ and $H=\frac{\triangle \mathbf{U}_2}{A},$ using (\ref{condition}), we have
	\begin{equation}\label{1'}
		\begin{aligned}
			K_{2}\leq&A^{\frac13}\left(\left\|w_{t}\right\|_{L^{\infty}L^{\infty}}^{2}
			+\left\|\frac{H}{A}\right\|_{L^{\infty}L^{\infty}}^{2} \right)\|{\rm e}^{aA^{-\frac13}t}\partial_{Y}F\|_{L^{2}L^{2}}^{2}
			\leq\frac{Cc^{2}}{A^{\frac23}}\|F\|_{X_{a}}^{2}.
		\end{aligned}
	\end{equation}
	Combining (\ref{1}) with (\ref{1'}), then choosing $c$ small enough, we get by (\ref{LV 2}) that
	\begin{equation}\label{LV 3}
		\begin{aligned}
			\|F\|_{X_{a}}^{2}\leq& C\bigg(\|F(0)\|_{L^{2}}^{2}+\|{\rm e}^{aA^{-\frac13}t}\nabla F_{1}\|_{L^{2}L^{2}}^{2}
			+A^{\frac13}\|{\rm e}^{aA^{-\frac13}t}F_{2}\|_{L^{2}L^{2}}^{2}\\&
			+A\|{\rm e}^{aA^{-\frac13}t}\nabla\triangle^{-1}F_{3}\|_{L^{2}L^{2}}^{2} \bigg).
		\end{aligned}
	\end{equation}
	It follows from Lemma \ref{lem_1} that
	\begin{equation}\label{F f}
		\begin{aligned}
			&\|{\rm e}^{aA^{-\frac13}t}\nabla F\|_{L^{2}L^{2}}^{2}\leq 	C\|{\rm e}^{aA^{-\frac13}t}\nabla f\|_{L^{2}L^{2}}^{2},~~~
			\|{\rm e}^{aA^{-\frac13}t}F\|_{L^{2}L^{2}}^{2}\leq 	C\|{\rm e}^{aA^{-\frac13}t}f\|_{L^{2}L^{2}}^{2}		
		\end{aligned}
	\end{equation}
	and  
	\begin{equation}\label{2}
		\begin{aligned}
			&\|{\rm e}^{aA^{-\frac13}t}\nabla F_{1}\|_{L^{2}L^{2}}^{2}\leq 	C\|{\rm e}^{aA^{-\frac13}t}\nabla f_{1}\|_{L^{2}L^{2}}^{2},
			~~~\|{\rm e}^{aA^{-\frac13}t}F_{2}\|_{L^{2}L^{2}}^{2}\leq 	C\|{\rm e}^{aA^{-\frac13}t}f_{2}\|_{L^{2}L^{2}}^{2}.			
		\end{aligned}
	\end{equation}
	
	Next we claim
	\begin{equation}\label{4}
		\|\nabla\triangle^{-1}F\|_{L^{2}}\sim\|\nabla\triangle^{-1}f\|_{L^{2}}.
	\end{equation}
	As $ V=y+\frac{\mathbf{U}_2(t,y,z)}{A} $ and (\ref{condition}), we get
	\begin{equation}\label{V' V''}
		\begin{aligned}
			\|\nabla V\|_{L^{\infty}}+\|\nabla^{2} V\|_{L^{\infty}}
			\leq 1+\frac{\|\nabla \mathbf{U}_2\|_{L^{\infty}}}{A}
			+\frac{\|\nabla^{2} \mathbf{U}_2\|_{L^{\infty}}}{A}
			\leq 1+C\frac{\|\triangle \mathbf{U}_2\|_{H^{2}}}{A}\leq C.
		\end{aligned}
	\end{equation}
	Denote
	\begin{equation*}
		\begin{aligned}
			&	\check{F}_{1}=\triangle^{-1}F,\quad \check{f}_{1}(t,x,y,z)=\check{F}_{1}(t,x,V(t,y,z),z),
			\\&\check{f}_{2}=\triangle^{-1}f,\quad \check{f}_{2}(t,x,y,z)=\check{F}_{2}(t,x,V(t,y,z),z).
		\end{aligned}
	\end{equation*}
	Then using (\ref{F f}) and (\ref{V' V''}), one deduces
	\begin{equation}\label{one hand}
		\begin{aligned}
			\|\nabla\triangle^{-1}F\|_{L^{2}}^{2}
			=&\|\nabla \check{F}_{1}\|_{L^{2}}^{2}=-<F, 		
			\check{F}_{1}>=-<f,\partial_{y}V \check{f}_{1}>\\
			=&<\nabla \check{f}_{2}, \nabla(\partial_{y}V \check{f}_{1})>
			\leq\|\nabla \check{f}_{2}\|_{L^{2}}
			\|\nabla(\partial_{y}V \check{f}_{1})\|_{L^{2}}\\
			\leq&C\|\nabla \check{f}_{2}\|_{L^{2}}
			\left(\|\nabla V\|_{L^{\infty}}\|\nabla \check{f}_{1}\|_{L^{2}}
			+\|\nabla^{2} V\|_{L^{\infty}}\|\check{f}_{1}\|_{L^{2}} \right)
			\\\leq&C\|\nabla \check{f}_{2}\|_{L^{2}}\|\nabla \check{f}_{1}\|_{L^{2}}
			\leq C\|\nabla \check{f}_{2}\|_{L^{2}}\|\nabla \check{f}_{1}\|_{L^{2}},
		\end{aligned}
	\end{equation}
	which implies 
	\begin{equation}\label{F1<f2}
		\|\nabla \check{F}_{1}\|_{L^{2}}
		\leq C\|\nabla \check{f}_{2}\|_{L^{2}}.
	\end{equation}
	On the other hand, there holds
	\begin{equation*}
		\begin{aligned}
			\|\nabla\triangle^{-1}f\|_{L^{2}}^{2}=&\|\nabla \check{f}_{2}\|_{L^{2}}^{2}=-<f,\check{f}_{2}>=-<F, \check{F}_{2}/\partial_{y}V>\\=&<\nabla \check{F}_{1}, \nabla(\check{F}_{2}/\partial_{y}V)>\leq\|\nabla \check{F}_{1}\|_{L^{2}}\|\nabla(\check{F}_{2}/\partial_{y}V)\|_{L^{2}}.
		\end{aligned}
	\end{equation*}
	Notice that $ \partial_{y}V=\frac{1}{A}\partial_{y}\mathbf{U}_2 +1 $ and $ \frac{1}{A}\|\nabla \mathbf{U}_2\|_{L^{\infty}}<\frac12, $ we have $ \frac12<\partial_{y}V<\frac32. $
	Along this with (\ref{V' V''}), we get
	$ \|\nabla(1/\partial_{y}V)\|_{L^{\infty}}\leq C, $ and
	\begin{equation}\label{other hand}
		\begin{aligned}
			\|\nabla\triangle^{-1}f\|_{L^{2}}=&\|\nabla \check{f}_{2}\|_{L^{2}}^{2}\leq C\|\nabla \check{F}_{1}\|_{L^{2}}\left(\|\nabla \check{F}_{2}\|_{L^{2}}+\|\check{F}_{2}\|_{L^{2}} \right)\\\leq&C\|\nabla \check{F}_{1}\|_{L^{2}}\|\nabla \check{F}_{2}\|_{L^{2}}\leq C\|\nabla \check{F}_{1}\|_{L^{2}}\|\nabla \check{f}_{2}\|_{L^{2}},
		\end{aligned}
	\end{equation}
	which gives 
	\begin{equation}\label{f2<F1}
		\|\nabla \check{f}_{2}\|_{L^{2}}\leq C\|\nabla \check{F}_{1}\|_{L^{2}}.
	\end{equation}
	
	Combining with (\ref{one hand})-(\ref{f2<F1}), we conclude that
	\begin{equation*}
		\|\nabla\triangle^{-1}F\|_{L^{2}}=\|\nabla \check{F}_{1}\|_{L^{2}}\sim\|\nabla \check{f}_{2}\|_{L^{2}}=\|\nabla\triangle^{-1}f\|_{L^{2}},
	\end{equation*}
	which indicates (\ref{4}) holds. Similarly, we obtain
	\begin{equation}\label{5}
		\|{\rm e}^{aA^{-\frac13}t}\nabla\triangle^{-1}F_{3}\|_{L^{2}L^{2}}^{2}\leq C\|{\rm e}^{aA^{-\frac13}t}f_{3}\|_{L^{2}L^{2}}^{2}.
	\end{equation}
	
	From (\ref{LV 3}), (\ref{2}) and (\ref{5}), we complete the proof.
\end{proof}	
To estimate the coupled terms $(\triangle u_{2,\neq}, \partial_{x}\omega_{2,\neq}),$ we also need the following proposition.
\begin{proposition}\label{timespace1}
	Let $(h_1,h_2)$ satisfy
	\begin{equation*}
		\left\{
		\begin{array}{lr}
			\mathcal{L}_{V}h_{1}=
			\nabla\cdot g_1 , \\
			\mathcal{L}_{V}h_{2}+\partial_x\partial_z\triangle^{-1}h_1
			=\nabla\cdot g_2,
		\end{array}
		\right.
	\end{equation*}
	for $ t\in[0,T]. $ Assume that
	\begin{equation}\label{condition U2}
		\frac{\|\triangle\mathbf{U}_{2}\|_{L^{\infty}H^{2}}}{A}+\|\partial_{t}\mathbf{U}_{2}\|_{L^{\infty}L^{\infty}}\leq c
	\end{equation}
	for some small constant $c$ independent of $A$ and $T.$ If 
	$ P_{0}h_{1}=P_{0}h_{2}=P_{0}g_{1}=P_{0}g_{2}=0,$ then for $a\geq 0,$ it holds that 
	\begin{equation*}
		\|h_{1,\neq}\|^2_{X_a}+	\|h_{2,\neq}\|^2_{X_a}\leq C\left(\|(h_{1,\rm in})_{\neq}\|^2_{L^2}+
		\|(h_{2,\rm in})_{\neq}\|^2_{L^2}
		+A\|{\rm e}^{aA^{-\frac{1}{3}}t}g_{1,\neq}\|_{L^2L^2}^2
		+A\|{\rm e}^{aA^{-\frac{1}{3}}t}g_{2,\neq}\|_{L^2L^2}^2
		\right).
	\end{equation*}
\end{proposition}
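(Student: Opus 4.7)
Since the equation for $h_1$ does not involve $h_2$, I would apply Proposition~\ref{Lvf 0} directly to $\mathcal{L}_V h_1=\nabla\cdot g_1$, taking $f_1=f_2=0$ and $f_3=g_1$ in that proposition. This immediately yields
\[
\|h_{1,\neq}\|_{X_a}^2 \le C\bigl(\|(h_{1,\rm in})_\neq\|_{L^2}^2 + A\|e^{aA^{-1/3}t}g_{1,\neq}\|_{L^2L^2}^2\bigr).
\]
The remaining task is to estimate $h_2$ under the forcing $-\partial_x\partial_z\triangle^{-1}h_1+\nabla\cdot g_2$. The starting point is the Fourier observation, valid on non-zero modes (where $|k_1|\ge 1$),
\[
\|\partial_x\partial_z\triangle^{-1}h_{1,\neq}\|_{L^2}^2 = \sum_{k_1\ne 0}\frac{k_1^2k_3^2}{|k|^4}\,|\widehat{h_1}|^2 \;\le\; \sum_{k_1\ne 0}\frac{k_1^2}{|k|^2}\,|\widehat{h_1}|^2 = \|\nabla\triangle^{-1}\partial_x h_{1,\neq}\|_{L^2}^2,
\]
so the coupling term is controlled in $L^2L^2$ by the enhanced-dissipation component $\|e^{aA^{-1/3}t}\nabla\triangle^{-1}\partial_x h_{1,\neq}\|_{L^2L^2}$ of $\|h_{1,\neq}\|_{X_a}$, with no extra factor of $A$.

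With this in hand, I would repeat the coordinate change $(x,y,z)\mapsto(X,Y,Z)=(x,V(t,y,z),z)$ used in the proof of Proposition~\ref{Lvf 0}, applied simultaneously to both equations. Under the smallness condition~\eqref{condition U2}, this reduces $\mathcal{L}_V$ to the constant-coefficient operator $\mathcal{L}=\partial_t+Y\partial_X-A^{-1}\triangle$ plus lower-order perturbations that can be absorbed exactly as in the proof of Proposition~\ref{Lvf 0}. Passing to Fourier in $X$ freezes $k_1\neq 0$, turning the coupled system into a family of 2D transport-diffusion systems in $(Y,Z)$, coupled by the uniformly bounded multiplier $k_1k_3/|k|^2$. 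A combined $X_a$-energy estimate for the pair $(H_1,H_2)$ at each Fourier mode -- in which the coupling contribution in the $H_2$ equation is Cauchy-Schwarzed against the enhanced-dissipation term of $H_1$ rather than against an $L^2$ bound -- yields
\[
\|h_{2,\neq}\|_{X_a}^2 \le C\bigl(\|(h_{2,\rm in})_\neq\|_{L^2}^2 + A\|e^{aA^{-1/3}t}g_{2,\neq}\|_{L^2L^2}^2 + \|h_{1,\neq}\|_{X_a}^2\bigr),
\]
and summing this with the Step~1 bound gives the claim.

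\emph{The main obstacle} is eliminating the extra $A^{1/3}$ that arises from a naive single-equation application of Proposition~\ref{Lvf 0} to the $h_2$-equation. Indeed, writing the coupling as $\partial_x f_1$ with $f_1=-\partial_z\triangle^{-1}h_1$, or as $f_2=-\partial_x\partial_z\triangle^{-1}h_1$, or as $\nabla\cdot f_3$ with $f_3=(-\partial_z\triangle^{-1}h_1,0,0)$, all produce a contribution of the form $A^{1/3}\|h_{1,\neq}\|_{X_a}^2$ (or worse); after inserting Step~1 this degrades the $g_1$-dependence from $A\|g_{1,\neq}\|^2$ to $A^{4/3}\|g_{1,\neq}\|^2$. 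Removing this spurious factor requires that the coupling be absorbed against $\|\nabla\triangle^{-1}\partial_x h_1\|_{L^2L^2}^2$ rather than against the weaker $L^2L^2$-norm of $h_1$, which in turn forces the coupled hypocoercive analysis above instead of an isolated application of Proposition~\ref{Lvf 0} to the $h_2$-equation.
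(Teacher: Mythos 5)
Your proposal is correct and follows essentially the same route as the paper. The paper's proof is a two-line citation: it invokes Proposition A.2 of \cite{CWW2025} for the constant-coefficient (pure Couette) coupled system and then performs the same coordinate change $(x,y,z)\mapsto(x,V,z)$ as in Proposition~\ref{Lvf 0} to handle the variable coefficient $V=y+\mathbf{U}_2/A$; what you do instead is spell out the content of that cited constant-coefficient estimate. Your key observations match the mechanism behind the citation: the pointwise Fourier bound $\|\partial_x\partial_z\triangle^{-1}h_{1,\neq}\|_{L^2}\le\|\nabla\triangle^{-1}\partial_x h_{1,\neq}\|_{L^2}$ shows the coupling lives at the level of the enhanced-dissipation component of $X_a$, and your final paragraph correctly diagnoses why a naive single-equation application of Proposition~\ref{Lvf 0} to the $h_2$-equation is fatal (it produces an extra $A^{1/3}$ factor and degrades the dependence on $g_1$ to $A^{4/3}$), forcing the coupled hypocoercivity estimate. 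The one stylistic difference is that you handle $h_1$ independently first and then run a coupled estimate for $h_2$, whereas the cited result treats $(h_1,h_2)$ in a single coupled energy functional; the outcome is the same, and the separate Step~1 is harmless (if slightly redundant).
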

\begin{proof}
	From Proposition A.2 in \cite{CWW2025}, we find that if
	$(h_1,h_2)$ satisfy
	\begin{equation*}\label{h1}
		\left\{
		\begin{array}{lr}
			\partial_th_1-\frac{1}{A}\triangle h_1+y\partial_x  h_1=
			\nabla\cdot g_1 , \\
			
			\\
			\partial_th_2-\frac{1}{A}\triangle h_2+y\partial_x  h_2+\partial_x\partial_z\triangle^{-1}h_2
			=\nabla\cdot g_2,
		\end{array}
		\right.
	\end{equation*}
	for $ t\in[0,T],$ then it holds that 
	\begin{equation*}
		\|h_{1,\neq}\|^2_{X_a}+	\|h_{2,\neq}\|^2_{X_a}\leq C\left(\|(h_{1,\rm in})_{\neq}\|^2_{L^2}+
		\|(h_{2,\rm in})_{\neq}\|^2_{L^2}
		+A\|{\rm e}^{aA^{-\frac{1}{3}}t}g_{1,\neq}\|_{L^2L^2}^2
		+A\|{\rm e}^{aA^{-\frac{1}{3}}t}g_{2,\neq}\|_{L^2L^2}^2
		\right).
	\end{equation*}
	Next following the same route as in Proposition \ref{Lvf 0}, we complete the proof.
\end{proof}

\begin{proposition}[{\bf Proposition 4.7} in \cite{wei2}]\label{Lvf}
	Let $ f $ satisfy
	\begin{equation*}
		\mathcal{L}_{V}f=f_{1}+f_{2}+f_{3}
	\end{equation*}
	for $ t\in[0,T]. $ Moreover, $\mathbf{U}_{2}$ satisfies \eqref{condition U2}
	and $ P_{0}f=P_{0}f_{1}=P_{0}f_{2}=P_{0}f_{3}=0,$ then for $ a\geq 0, $ it holds that
	\begin{equation*}
		\begin{aligned}
			\|\partial_{x}^{2}f_{\neq}\|_{X_{a}}^{2}+\|\partial_{x}(\partial_{z}-\kappa\partial_{y})f_{\neq}\|_{X_{a}}^{2}&\leq C\big(\|(f_{\rm in})_{\neq}\|_{H^{2}}^{2}+\|{\rm e}^{aA^{-\frac13}t}\triangle f_{1,\neq}\|_{L^{2}L^{2}}^{2}+A^{\frac13}\|{\rm e}^{aA^{-\frac13}t}\partial_{x}^{2}f_{2,\neq}\|_{L^{2}L^{2}}^{2}\\&+A^{\frac13}\|{\rm e}^{aA^{-\frac13}t}\partial_{x}(\partial_{z}-\kappa\partial_{y})f_{2,\neq}\|_{L^{2}L^{2}}^{2}+A\|{\rm e}^{aA^{-\frac13}t}\partial_{x}f_{3,\neq}\|_{L^{2}L^{2}}^{2}		
			\big).
		\end{aligned}
	\end{equation*}	
\end{proposition}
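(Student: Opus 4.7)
The plan is to reduce this statement to Proposition \ref{Lvf 0} via the same coordinate straightening $(x,y,z) \mapsto (X,Y,Z) = (x, V(t,y,z), z)$ that converts $\mathcal{L}_V$ into the simpler operator $\mathcal{L} = \partial_t - A^{-1}\triangle + Y\partial_X$. Writing $F(t,x,V(t,y,z),z) = f(t,x,y,z)$ and $F_i$ for $f_i$, the chain rule yields the two crucial identities
\[
\partial_X F = \partial_x f, \qquad \partial_Z F = (\partial_z - \kappa\partial_y) f,
\]
since $\kappa = \partial_z V / \partial_y V$. Hence the two quantities $\partial_x^2 f$ and $\partial_x(\partial_z - \kappa\partial_y) f$ correspond \emph{exactly} to $\partial_X^2 F$ and $\partial_X \partial_Z F$ in the straightened coordinates, which is precisely what makes this change of variables natural here. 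Moreover, the transported equation (derived as in \eqref{Lv 1}) reads $\mathcal{L} F = F_1 + F_2 + F_3 + R[F]$, where $R[F] = A^{-1}\partial_Y(G\partial_Y F) + 2A^{-1}\partial_Z(w_z\partial_Y F) - (w_t + H/A)\partial_Y F$ has coefficients of size $O(c)$ in the relevant norms by \eqref{condition U2}.

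The next step is to apply $\partial_X^2$ and $\partial_X\partial_Z$ to this transported equation. Both commutators $[\partial_X^2, \mathcal{L}]$ and $[\partial_X\partial_Z, \mathcal{L}]$ vanish, because $Y\partial_X$ commutes with $\partial_X$ and (since $Y$ is independent of $Z$) with $\partial_Z$. I would then apply Proposition \ref{Lvf 0} (in the special case $\mathbf{U}_2 \equiv 0$, so $\mathcal{L}_V = \mathcal{L}$) to $\partial_X^2 F$ and $\partial_X \partial_Z F$, splitting each differentiated forcing term into one of the three source types of that proposition so as to produce the correct weight. Concretely, $\partial_X^2 F_1 = \partial_X(\partial_X F_1)$ is treated as a ``$\partial_X f_1'$''-type term, giving weight $1$ times $\|\nabla(\partial_X F_1)\|_{L^2L^2}^2 \leq \|\triangle F_1\|_{L^2L^2}^2$; $\partial_X^2 F_2$ is treated as an ``$f_2'$''-type term, giving weight $A^{1/3} \|\partial_X^2 F_2\|_{L^2L^2}^2$; and $\partial_X^2 F_3 = \nabla \cdot (\partial_X F_3 \, \mathbf{e}_1)$ is treated as a ``$\nabla\cdot f_3'$''-type term, giving weight $A\|\partial_X F_3\|_{L^2L^2}^2$. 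Analogous splittings handle $\partial_X\partial_Z(F_1 + F_2 + F_3)$.

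The main obstacle will be twofold. First, the remainder $R[F]$, once differentiated by $\partial_X^2$ or $\partial_X\partial_Z$, must be absorbed into the left-hand side: this requires a careful bookkeeping analogous to \eqref{1}--\eqref{1'} but at one higher derivative, using Lemma \ref{lem:kappa} to control all coefficients built from $V$ and $\kappa$, and ultimately the smallness of $c$ (independently of $A$) to close the inequality. Second, I must establish norm equivalence between the transformed and original coordinates at the $H^2$ level: namely $\|\partial_X^2 F\|_{X_a} \sim \|\partial_x^2 f\|_{X_a}$ and $\|\partial_X\partial_Z F\|_{X_a} \sim \|\partial_x(\partial_z - \kappa\partial_y) f\|_{X_a}$, together with the forcing equivalences $\|\triangle F_1\| \sim \|\triangle f_1\|$, $\|\partial_X^2 F_2\| \sim \|\partial_x^2 f_2\|$, $\|\partial_X\partial_Z F_2\| \sim \|\partial_x(\partial_z-\kappa\partial_y) f_2\|$, and $\|\partial_X F_3\| \sim \|\partial_x f_3\|$. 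These equivalences follow from Lemma \ref{lem_1} combined with the uniform bounds $\|\nabla V\|_{L^\infty} \leq C$ and $\|\kappa\|_{H^3} \leq C\epsilon$, but the technical bookkeeping at two derivatives — in particular keeping track of commutators between $\partial_X\partial_Z$ and multiplication by $\partial_y V$ or $\kappa$ — is the heart of the argument. Transforming back then yields the claimed estimate.
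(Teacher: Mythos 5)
The paper does not prove this proposition: it is quoted verbatim from Wei--Zhang \cite{wei2} (their Proposition 4.7), so there is no in-paper argument to compare against. Your proposal supplies what the paper leaves implicit, and the structural backbone is correct. The identity $\partial_Z F=(\partial_z-\kappa\partial_y)f$ (and its exactness under $\partial_X=\partial_x$, since $\kappa$ is $x$-independent) is exactly the reason $\partial_x^2 f$ and $\partial_x(\partial_z-\kappa\partial_y)f$ are the two ``good'' quantities in the $V$-straightened coordinates, and your bookkeeping of the forcing weights --- $\partial_X^2F_1=\partial_X(\partial_X F_1)$ giving $\|\triangle F_1\|$, $\partial_X^2F_2$ giving $A^{1/3}\|\partial_X^2 F_2\|$, $\partial_X^2F_3=\nabla\cdot(\partial_XF_3\,\bfe_1)$ giving $A\|\partial_X F_3\|$, and analogously for $\partial_X\partial_Z$ --- lands precisely on the stated right-hand side. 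The commutators $[\partial_X^2,\mathcal L]=[\partial_X\partial_Z,\mathcal L]=0$ are also correctly identified.

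The genuine gap is in the step you describe as ``careful bookkeeping analogous to \eqref{1}--\eqref{1'} but at one higher derivative''. When $\partial_X\partial_Z$ hits the remainder $R[F]=\tfrac1A\partial_Y(G\partial_YF)+\tfrac2A\partial_Z(w_z\partial_YF)-(w_t+\tfrac HA)\partial_YF$, all coefficients commute with $\partial_X$, but $\partial_Z$ produces commutator terms $\partial_ZG\,\partial_X\partial_YF$, $\partial_Zw_z\,\partial_X\partial_YF$, and crucially $\partial_Z(w_t+\tfrac HA)\,\partial_X\partial_YF$. To absorb the last one into the $X_a$-norm on the left via an $f_2$-type or $\partial_xf_1$-type splitting, one needs pointwise control of $\partial_Zw_t\sim\tfrac1A(\partial_z-\kappa\partial_y)\partial_t\mathbf U_2$ and $\partial_ZH\sim\tfrac1A(\partial_z-\kappa\partial_y)\triangle\mathbf U_2$. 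The stated hypothesis \eqref{condition U2} gives only $\|\partial_t\mathbf U_2\|_{L^\infty L^\infty}\leq c$ and $\|\triangle\mathbf U_2\|_{L^\infty H^2}/A\leq c$, which (in two space variables) do not yield $L^\infty$ bounds on $\nabla\partial_t\mathbf U_2$ or $\nabla\triangle\mathbf U_2$. So the ``treat each commutator as one of the three forcing types and use an $L^\infty$ coefficient bound'' strategy does not literally close under \eqref{condition U2}; one must instead use mixed-norm/Sobolev-product estimates (an $L^2_ZL^\infty_Y$-type splitting on the coefficient against $L^\infty_ZL^2_{X,Y}$ on $\partial_X\partial_YF$, interpolating the latter against $\partial_Z\partial_X\partial_YF$, which is controlled), tacitly using the stronger $\|\partial_t\mathbf U_2\|_{L^\infty H^2}$ control that the paper's Lemma~\ref{u1_hat2} actually provides in the application (cf.\ the $\|\partial_t\kappa\|_{H^1}$ bound in Lemma~\ref{lem:kappa}). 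This mixed-norm commutator absorption, together with the $W^{2,p}$-level (not merely $W^{1,p}$) analogue of Lemma~\ref{lem_1} for the norm equivalence on the final transform back, is the genuine technical content of Wei--Zhang's Proposition~4.7 that your outline waves at but does not resolve.
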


\begin{proposition}[{\bf Proposition 4.9} in \cite{wei2}]\label{tilde Lv}
	Let $ f $ satisfy $ \widetilde{\mathcal{L}_{V}}f=f_{1} $ for $ t\in[0, T]. $ Moreover, $\mathbf{U}_{2}$ satisfies \eqref{condition U2}
	and $ P_{0}f=P_{0}f_{1}=0, $ then for $ a\geq 0, $ it holds that
	\begin{equation*}
		\|\triangle f_{\neq}\|_{X_{a}}^{2}\leq C\left(\|(\triangle f_{\rm in})_{\neq}\|_{L^{2}}^{2}+A\|{\rm e}^{aA^{-\frac13}t}\nabla f_{1,\neq}\|_{L^{2}L^{2}}^{2} \right).
	\end{equation*}
\end{proposition}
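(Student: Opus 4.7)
My plan is to mirror the Lagrangian-type coordinate-change strategy used in Proposition \ref{Lvf 0}, so that the quasi-linear operator $\widetilde{\mathcal{L}_V}$ becomes, to leading order, the standard linearized Couette-type operator $\mathcal{L} - 2\partial_Y\triangle^{-1}\partial_X$ acting on $F(t,X,Y,Z) := f(t,x,V^{-1}(t,Y,z),z)$ in the straightened variables $(X,Y,Z) = (x, V(t,y,z), z)$. The conclusion will then follow from a standard $H^2$ enhanced-dissipation estimate for this operator, combined with the transfer-back argument already developed in \eqref{one hand}--\eqref{f2<F1}.

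First I would redo the change of variables as in Proposition \ref{Lvf 0}: by \eqref{Lv 1}, $\mathcal{L}_V f$ becomes $\mathcal{L} F + R_0[F]$, where $R_0[F]$ is the coordinate-change remainder built from $G, w_z, w_t, H$, each $O(c)$ in $L^\infty$ by \eqref{condition U2}. Next I would compute how the non-local piece $2(\partial_y+\kappa\partial_z)\triangle^{-1}(\partial_y V\,\partial_x f)$ transforms. Using $\kappa=\partial_zV/\partial_yV$ one has $\partial_yV\cdot(\partial_y+\kappa\partial_z)=\nabla V\cdot\nabla$, which in the new variables is $((1+w_y)^2+w_z^2)\partial_Y+w_z\partial_Z$ divided by $\partial_yV=1+w_y$; combined with $\partial_yV\,\partial_x f=(1+w_y)\partial_X F$, the leading contribution is exactly $2\partial_Y\triangle^{-1}\partial_X F$, with corrections $O(c)$ absorbed into a second remainder $R_1[F]$. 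Consequently $F$ solves
\begin{equation*}
\partial_t F-\frac{1}{A}\triangle F + Y\partial_X F - 2\partial_Y\triangle^{-1}\partial_X F = F_1\circ({\rm Id}+g)^{-1} + R_0[F]+R_1[F].
\end{equation*}

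The second step is an $H^2$ space-time estimate for $\mathcal{L}-2\partial_Y\triangle^{-1}\partial_X$. The crucial observation is the skew-symmetry identity $\langle 2\partial_Y\triangle^{-1}\partial_X F,\triangle F\rangle = -2\langle\partial_X F,\partial_Y F\rangle=0$, so the non-local term does not obstruct coercivity at the $H^2$ level. A Fourier-in-$(X,Z)$ hypocoercivity argument (of the same flavor as Proposition A.2 of \cite{CWW2025} and as in Proposition \ref{timespace1} above) yields
\begin{equation*}
\|\triangle F_{\neq}\|_{X_a}^2 \leq C\Bigl(\|(\triangle F_{\rm in})_{\neq}\|_{L^2}^2+A\|{\rm e}^{aA^{-1/3}t}\nabla(F_1)_{\neq}\|_{L^2L^2}^2+A\|{\rm e}^{aA^{-1/3}t}\nabla(R_0+R_1)\|_{L^2L^2}^2\Bigr).
\end{equation*}
Transferring back via $\|\triangle F\|_{L^2}\sim\|\triangle f\|_{L^2}$ (modulo $H^1$ pieces absorbed under \eqref{condition U2}) as in \eqref{one hand}--\eqref{f2<F1} produces the stated bound on $\|\triangle f_{\neq}\|_{X_a}$.

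The hard part will be controlling the $H^2$-level remainders $\nabla R_0[F]$ and $\nabla R_1[F]$. Differentiating the coordinate-change terms $\frac{1}{A}\partial_Y(G\partial_Y F)$, $\frac{2}{A}\partial_Z(w_z\partial_Y F)$, and $(w_t+H/A)\partial_Y F$ raises the derivative count on $\mathbf{U}_2$ to $H^3$; these are handled by $\|\triangle\mathbf{U}_2\|_{H^2}/A\leq c$ from \eqref{condition U2}. For $R_1$, one must commute $\triangle$ past multiplication by $(\partial_yV)^{\pm 1}$ and $\kappa$ in the non-local expression; the resulting commutators $[\triangle,\partial_yV\rho]\triangle^{-1}\partial_X$ and analogous $\kappa$-commutators are controlled by Lemma \ref{sob_14} together with the $H^3$ bound on $\kappa$ from Lemma \ref{lem:kappa}. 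Choosing $c$ sufficiently small then allows every remainder contribution to be absorbed by the LHS coercivity $\|\triangle F\|_{X_a}^2$, closing the estimate.
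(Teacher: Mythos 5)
The paper does not prove this proposition; it is stated as a citation of Proposition 4.9 in Wei--Zhang \cite{wei2}, so there is no in-paper proof to compare against. Evaluating your sketch on its own merits: the overall strategy---straighten $V$ via the coordinate change $(x,y,z)\mapsto (x,V(t,y,z),z)$ as in Proposition \ref{Lvf 0}, reduce to the constant-coefficient operator $\widetilde{\mathcal{L}}=\mathcal{L}-2\partial_Y\triangle^{-1}\partial_X$ modulo $O(c)$ corrections, prove the space-time estimate there, transfer back---is indeed the Wei--Zhang route, and the transfer steps \eqref{one hand}--\eqref{f2<F1} are correctly identified as reusable.

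The genuine gap is your claimed ``skew-symmetry identity'' $\langle 2\partial_Y\triangle^{-1}\partial_X F,\triangle F\rangle=-2\langle\partial_X F,\partial_Y F\rangle=0$. The first equality is correct, but the last is false: $\int \partial_X F\,\partial_Y F\,dXdYdZ=\sum_k k_1 k_2|\widehat F_k|^2$ does not vanish in general (e.g.\ $F=\cos(X+Y)$ gives a nonzero value). So the nonlocal term does not drop out of a naive $H^2$ energy estimate; worse, the transport term $Y\partial_X F$ tested against $\triangle F$ also produces $-\int\partial_X F\,\partial_Y F$, so the two together leave a nonzero $\int\partial_X F\,\partial_Y F$ that you cannot discard. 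The mechanism that actually closes this estimate is an exact operator identity, not a quadratic-form cancellation: since $[\triangle,Y\partial_X]=2\partial_Y\partial_X$ and $\triangle\circ(2\partial_Y\triangle^{-1}\partial_X)=2\partial_Y\partial_X$, one has
\begin{equation*}
\triangle\bigl(\mathcal{L}-2\partial_Y\triangle^{-1}\partial_X\bigr)F=\mathcal{L}(\triangle F),
\end{equation*}
i.e.\ applying $\triangle$ to the straightened equation converts the Orr--Sommerfeld-type operator into the plain drift--diffusion operator $\mathcal{L}$ acting on $\triangle F$, with the nonlocal term annihilated at the operator level. The $X_a$ estimate for $\triangle F$ then follows from the standard enhanced-dissipation space-time estimate for $\mathcal{L}$ (as in Proposition \ref{Lvf 0} with $V\equiv y$) applied directly to $\triangle F$, with source $\triangle F_1=\nabla\cdot\nabla F_1$, which produces exactly the $A\|{\rm e}^{aA^{-1/3}t}\nabla f_{1,\neq}\|_{L^2L^2}^2$ term in the conclusion. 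You should replace your ``second step'' with this $\triangle$-conjugation identity; the remainder analysis for the $O(c)$ coordinate-change errors (your ``hard part'') is then as you describe, but it must also account for the fact that $\triangle^{-1}$ in physical variables corresponds to the inverse of the perturbed Laplacian $\triangle+G\partial_Y^2+2w_z\partial_Z\partial_Y+H\partial_Y$ in the straightened variables, a Neumann-series perturbation that needs one more line of justification under \eqref{condition U2}.
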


\section*{Acknowledgement}
W. Wang was supported by National Key R\&D Program of China (No.2023YFA1009200) and NSFC under grant 12471219 and 12071054.  The research of J. Wei is partially supported by GRF from RGC of Hong Kong
entitled ``New frontiers in singularity formations in nonlinear partial differential equations".



\begin{thebibliography}{99}	
	\bibitem{Bedro2} Bedrossian J. and He S. (2017). Suppression of blow-up in Patlak--Keller--Segel via shear flows.
	SIAM Journal on Mathematical Analysis, 49(6), 4722-4766.
	
	\bibitem{BCM2008} Blanchet A., Carrillo J. A. and Masmoudi N. (2008). Infinite time aggregation for the critical Patlak–Keller–Segel model in $\mathbb{R}^2$. Communications on Pure and Applied Mathematics, 61, 1449-1481.
	\bibitem{BDP2006} 	
	Blanchet A., Dolbeault J. and Perthame B. (2006). Two-dimensional Keller-Segel model: optimal critical mass and qualitative properties of the solutions. Electron. J. Differential Equations, No. 44, 32 pp. 	
	\bibitem{Calvez1}Calvez V. and Corrias L. (2008).
	The parabolic-parabolic Keller-Segel model in $\mathbb{R}^2$. Communications in Mathematical Sciences, 6(2), 417-447.
	\bibitem{Chen1}Chen Q., Wei D. and Zhang Z. (2024). Transition threshold for the 3-D Couette flow in a finite channel. Memoirs of the American Mathematical Society, 296, no. 1478, v+178 pp.
	\bibitem{cui1} Cui S. and Wang W. (2024). Suppression of blow-up in multi-species Patlak-Keller-Segel-Navier-Stokes system via the Poiseuille flow in a finite channel.  SIAM Journal on Mathematical Analysis, 56(6) 7683-7712.
	\bibitem{CWW1}{Cui S., Wang L. and Wang W. (2025). Suppression of blow-up in
		Patlak-Keller-Segel system coupled with linearized Navier-Stokes equations via the 3D Couette flow. Journal of Differential Equations, 432, 113196.}
	\bibitem{CWW2025} Cui S., Wang L. and Wang W. Suppression of blow-up for the 3D Patlak-Keller-Segel-Navier-Stokes system via the Couette flow. arXiv: 2412.19197. 
	\bibitem{DDDMW} Davila J., Pino M., Dolbeault J., Musso M. and Wei J. (2024). Existence and stability of infinite time blow-up in the Keller-Segel system.  Archive for Rational Mechanics and Analysis, 248(4),  61.		
	\bibitem{wangweike1}Deng S., Shi B. and Wang W. (2025). Suppression of blow-up in 3-D Keller-Segel model via the Couette flow in whole space.
	J. Differential Equations, 432, 113265.
	\bibitem{Feng1}Feng Y., Shi B. and Wang W. (2022). Dissipation enhancement of planar helical flows and applications to three-dimensional Kuramoto-Sivashinsky and Keller-Segel equations. Journal of Differential Equations, 313, 420-449.
	\bibitem{he0}He S. (2018). Suppression of blow-up in parabolic-parabolic Patlak-Keller-Segel via strictly monotone shear flows. Nonlinearity, 31(8), 3651.
	\bibitem{he05}He S. (2023). Enhanced dissipation and blow-up suppression in a chemotaxis-fluid system. SIAM Journal on Mathematical Analysis, 55(4), 2615-2643.		
	\bibitem{he24-1}He S. (2025) Time-dependent Flows and Their Applications in Parabolic-parabolic Patlak-Keller-Segel Systems Part I: Alternating Flows. Journal of Functional Analysis, 288(5), 110786.
	\bibitem{he24-2}He S. (2025). Time-dependent shear flows and their applications in parabolic-parabolic Patlak-Keller-Segel systems.
	Nonlinearity, 38(3), 035029.
	\bibitem{HT1}He S. and Tadmor E. (2021). Multi-species patlak-keller-segel system. Indiana University Mathematics Journal, 70(4), 1577-1624.
	\bibitem{HP1}Hillen T. and Painter K.J. (2009). A user’s guide to PDE models for chemotaxis. Journal of mathematical biology, 58(1), 183-217.
	\bibitem{Hu2023} Hu Z. (2023) Suppression of Chemotactic Singularity via Viscous Flow with Large Buoyancy. SIAM Journal on Mathematical Analysis, 56(6), 7866-7902.
	\bibitem{Hu1}Hu Z. and Kiselev A. (2023). Suppression of chemotactic blow up by strong buoyancy in stokes-boussinesq flow with cold boundary. Journal of Functional Analysis, 287(7), 110541.
	\bibitem{Hu0} Hu Z., Kiselev A. and Yao Y. Suppression of chemotactic singularity by buoyancy. arXiv: 2305.01036.
	\bibitem{Keller1}Keller E. and Segel L. (1970). Initiation of slime mold aggregation viewed as an instability.
	Journal of Theoretical Biology, 26(3), 399.
	\bibitem{Kiselev1}Kiselev A. and Xu X. (2016).
	Suppression of chemotactic explosion by mixing. Archive for Rational Mechanics and Analysis, 222, 1077-1112.
	\bibitem{Li0}Li H., Xiang Z. and Xu X. (2025). Suppression of blow-up in Patlak-Keller-Segel-Navier-Stokes system via the Poiseuille flow.
	J. Differential Equations, 434, 113301.
	\bibitem{Na2000} Nagai T. (2000). Behavior of solutions to a parabolic-elliptic system modelling chemotaxis. Journal of the Korean Mathematical Society, 37, 721-732.
	\bibitem{Patlak1}Patlak C. (1953). Random walk with persistence and external bias. The bulletin of mathematical biophysics, 15, 311-338.
	\bibitem{Schweyer1}Schweyer R. (2014). Stable blow-up dynamic for the parabolic-parabolic Patlak-Keller-Segel model. ariv:1403.4975
	\bibitem{SW2005} Shafrirand I. and Wolansky G. (2005). The logarithmic HLS inequality for systems on compact manifolds, J.Funct.Anal., 227 pp. 200-226
	\bibitem{wangweike2} Shi B. and Wang W. (2024). Enhanced dissipation and blow-up suppression for the three dimensional Keller-Segel equation with the plane Couette-Poiseuille flow. Journal of Differential Equations, 403, 368-405.
	\bibitem{SW2019} Souplet P. and Winkler M. (2019). Blow-up profiles for the parabolic-elliptic Keller-Segel system in dimensions $n\geq3$. Communications in Mathematical Physics, 367(2), 665-681.
	\bibitem{TW2016} Tao Y. and Winkler M. (2016). Blow-up prevention by quadratic degradation in a two-dimensional Keller-Segel-Navier-Stokes system. Z. Angew. Math. Phys., 67, 1-23.
	\bibitem{TW2017}  Tao Y. and Winkler M. (2017). Critical mass for infinite-time aggregation in a chemotaxis model with indirect signal production. Journal of the European Mathematical Society, 19(12), 3641-3678. 
	\bibitem{Wanglili} Wang L., Wang W. and Zhang Y. Stability of a class of supercritical volume-filling chemotaxis-fluid model near Couette flow. arXiv:2410.02214.
	\bibitem{wei11}Wei D. (2018). Global well-posedness and blow-up for the 2-D Patlak-Keller-Segel equation. Journal of Functional Analysis, 274(2), 388-401.
	\bibitem{wei2}Wei D. and Zhang Z. (2021). Transition threshold for the 3D Couette flow in Sobolev space. Communications on Pure and Applied Mathematics, 74(11), 2398-2479.
	\bibitem{winkler1}Winkler M. (2013). Finite-time blow-up in the higher-dimensional parabolic-parabolic Keller-Segel system. Journal de Mathematiques Pures et Appliquees, 100(5), 748-767.
	\bibitem{zeng}Zeng L., Zhang Z.  and Zi R. (2021). Suppression of blow-up in Patlak-Keller-Segel-Navier-Stokes system via the Couette flow. Journal of Functional Analysis, 280(10), 108-967.
	%
	%
	
	
	
	
	
	
	
\end{thebibliography}
\end{document}